\documentclass[a4paper,11pt]{amsart}
\title{Bounded deviations in higher genus I: closed geodesics}
\date{\today}

\author{Pierre-Antoine Guihéneuf}
\address{Pierre-Antoine Guihéneuf: Sorbonne Université, Université Paris Cité, CNRS, IMJ-PRG, F-75005 Paris, France --- 
IRL Jean-Christophe Yoccoz CNRS / IMPA, Estr. Dona Castorina, 110
Jardim Botânico, Rio de Janeiro, Brasil}
\email{pierre-antoine.guiheneuf@imj-prg.fr}
\thanks{P.-A.\ G.\ thanks the Jean-Christophe Yoccoz international laboratory CNRS/IMPA for the semester in Brazil during which the ideas of this work were born.}

\author{F\'abio Armando Tal}
\address{F\'abio Armando Tal: Instituto de Matem\'atica e Estat\'istica da Universidade de S\~ao Paulo, R. do Mat\~ao, 1010
- Vila Universitaria, S\~ao Paulo, Brasil}
\email{fabiotal@ime.usp.br}

\usepackage[utf8]{inputenc}
\usepackage[english]{babel}
\usepackage[T1]{fontenc}  
\usepackage{amsfonts}
\usepackage{amsmath}
\usepackage{amssymb}
\usepackage{amsthm}

\DeclareFontFamily{U}{mathx}{}
\DeclareFontShape{U}{mathx}{m}{n}{<-> mathx10}{} 
\DeclareSymbolFont{mathx}{U}{mathx}{m}{	n}
\DeclareMathAccent{\widehat}{0}{mathx}{"70}
\DeclareMathAccent{\widecheck}{0}{mathx}{"71}

\usepackage{enumitem}
\setlist{noitemsep}
\usepackage{tikz}
\usepackage[pdftex,colorlinks=true,linkcolor=blue,citecolor=blue,urlcolor=blue]{hyperref}

\newtheorem{lemma}{Lemma}[section]
\newtheorem{theorem}[lemma]{Theorem}
\newtheorem{conjecture}[lemma]{Conjecture}
\newtheorem{theo}{Theorem}
\newtheorem{corollary}[theo]{Corollary}

\newtheorem{prop}[lemma]{Proposition}
\newtheorem{coro}[lemma]{Corollary}

\theoremstyle{definition}
\newtheorem{definition}[lemma]{Definition}
\theoremstyle{remark}
\newtheorem{rem}[lemma]{Remark}

\newcommand{\B}{\mathcal{B}}

\newcommand{\Sp}{\mathbf{S}}

\newcommand{\F}{\mathcal{F}}

\newcommand{\Hy}{\mathbf{H}}
\newcommand{\N}{\mathbf{N}}

\newcommand{\R}{\mathbf{R}}

\newcommand{\T}{\mathbf{T}}

\newcommand{\G}{\mathcal{G}}

\newcommand{\Z}{\mathbf{Z}}
\newcommand{\M}{\mathcal{M}^\textrm{erg}_{\vartheta>0}}

\newcommand{\Homeo}{\operatorname{Homeo}}
\newcommand{\supp}{\operatorname{supp}}

\newcommand{\Fix}{\operatorname{Fix}}
\newcommand{\rot}{\operatorname{rot}}
\newcommand{\rote}{\operatorname{rot}_{\mathrm{erg}}}

\newcommand{\diam}{\operatorname{diam}}
\newcommand{\inte}{\operatorname{int}}
\newcommand{\dom}{\operatorname{dom}}
\newcommand{\Id}{\operatorname{Id}}

\newcommand{\cl}{\mathcal{N}}
\newcommand{\wt}{\widetilde}
\newcommand{\wh}{\widehat}
\newcommand{\wc}{\widecheck}

\newcommand{\Me}{\mathcal{M}^{\mathrm{erg}}}
\newcommand{\Merg}{\mathcal{M}^{\mathrm{erg}}_{\vartheta>0}}

\setlength\arraycolsep{2pt}
\setlength{\leftmargini}{12pt}
\setlength{\marginparwidth}{2.7cm}

\hyphenation{Auto}
\hyphenation{Homeo}

\selectlanguage{english}

\begin{document}

\maketitle

\begin{abstract}
This is the first article of a series of two where we study the problem of bounded deviations for homeomorphisms of closed surfaces of genus $\ge 2$. This first part studies bounded deviations with respect to closed geodesics. As a byproduct of our proofs, we also get a criterion of existence of periodic orbits in terms of big deviation with respect to some closed geodesic.
The combination with the second part \cite{paper2PAF} generalises to the higher genus case most of the bounded deviations results already known for the torus.
\end{abstract}

\tableofcontents

\section{Introduction}

The concept of \emph{bounded deviations} plays an increasingly central role in rotation theory and in the study of surface homeomorphisms. Roughly speaking, it measures how well certain dynamical invariants capture the displacement of orbits up to sublinear errors. To illustrate, consider a lift $\widetilde f:\R\to\R$ of a circle endomorphism of degree one, with rotation interval $[a,b]$. Then a simple but interesting result states that for every $\widetilde x\in\R$ and every positive integer $n$, one has
\[
-1+an \;\leq\; \widetilde f^n(\widetilde x)-\widetilde x \;\leq\; 1+bn.
\]
This inequality shows that the rotation interval provides sharp linear control of the long-term displacement, with deviations bounded by a uniform constant.

Already in the case of the torus $\T^2$, the picture becomes richer and more subtle. The notion of bounded deviations has been intensively studied there and is closely tied to the geometry of the rotation set. The main results for torus homeomorphisms will be recalled in the next subsection. 
However, it is already possible to see that beyond their intrinsic interest, bounded deviations statements have proven to be a powerful method in applications, as well as a fundamental tool in the development of rotation theory on the torus and the annulus. For example, for both the torus and the closed annulus case, bounded deviations were the key point in the proofs of the strong form of Boyland’s conjecture (\cite{lct1,zbMATH06425076,conejerostal2}). Also, it is used as a criterion for semi-conjugacy results of torus homeomorphisms to a circle rotation \cite{zbMATH06782951, zbMATH07391928} or a torus rotation \cite{zbMATH05574011}. 

Still for $\T^2$, in the absence of bounded deviations, the dynamics is called ``fully essential'' \cite{zbMATH06294042} and has a lot of nice features (see also \cite{zbMATH06908424} for the higher genus case). 

Finally, very recently, there has been an increasing interest in understanding the action induced by surface homeomorphisms on the fine curve graph and its classification into hyperbolic, parabolic or elliptic action (defined in \cite{zbMATH07420183}), a topic that has several connections with bounded deviations. For instance, a torus homeomorphism homotopic to the identity acts elliptically on the fine curve graph (defined in \cite{zbMATH07420183}) if and only if it has bounded deviations in some rational direction \cite{guiheneuf2023parabolicisometriesfinecurve}.


\bigskip

In this series of two papers we attempt to provide a comprehensive study of bounded deviations for homeomorphisms of closed surfaces of genus $g\ge 2$. While this topic is quite well understood for torus homeomorphisms, up to now the results in higher genus were rather partial.

In Part I we are interested in bounded deviations with respect to closed geodesics (which is a higher genus equivalent of the case of torus homeomorphisms whose rotation set is a segment with rational slope) while Part II deals with bounded deviations with respect to simple non-closed geodesics (which is a higher genus equivalent of the case of torus homeomorphisms whose rotation set is a segment with irrational slope).

Both parts are completely independent; they share the preliminary section which is essentially made of non-new results.

\subsection*{Bounded deviations for torus homeomorphisms}

Let us review the results for the torus. For a surface $S$, denote $\Homeo_0(S)$ the set of homeomorphisms of $S$ that are homotopic (or, equivalently, isotopic, see \cite{zbMATH03221970}) to the identity. For $f\in\Homeo_0(\T^2)$, choose $\wt f\in\Homeo_0(\R^2)$ a lift of $f$ to the universal cover $\R^2$ of $\T^2$. We say that the homeomorphism $f$ \emph{has bounded deviations in the direction $v\in\R^2\setminus\{0\}$} if there exists $\rho\in\R^2$ and $C>0$ such that for any $\wt x\in\R^2$ and any $n\in\Z$, we have
\begin{equation}\label{EqBndDevTorus}
\big|\langle\wt f^n(\wt x) - \wt x - n\rho,\, v \rangle\big|\le C.
\end{equation}
Roughly speaking, if a homeomorphism has bounded deviations in some rational direction, then its dynamics looks like the one of an annulus homeomorphism. 

A lot of criteria for the existence of bounded deviations are expressed in terms of the rotation set of the homeomorphism. Given $f\in\Homeo_0(\T^2)$, the \emph{rotation set} of its lift $\wt f\in\Homeo_0(\R^2)$ is the set 
\[\rot(\wt f) = \Big\{\rho\in\R^2 \ \big|\ \exists (\wt x_k)\in(\R^2)^\N,\, n_k\underset{k\to+\infty}\nearrow +\infty : \rho = \frac{\wt f^{n_k}(\wt x_k)-\wt x_k}{n_k}\Big\}.\]
This is a conjugacy invariant in $\Homeo_0(\R^2)$ that contains all the asymptotic rotation speeds around the torus. This is a compact and convex subset of $\R^2$ \cite{zbMATH04084609}. 

Bounded deviations in some direction hold for torus homeomorphisms having a periodic point and whose rotation set is a nondegenerate line segment: this can be obtained as a combination of the case of the rotation set being a segment with rational slope \cite{zbMATH06304088, zbMATH06914177} and the case where it has irrational slope \cite{zbMATH07488214} (using \cite{lct1} to rule out a case). Moreover, the hypothesis of having a periodic point is unnecessary if we suppose that the homeomorphism is minimal \cite{zbMATH07548721}.

There is also a (different) notion of bounded deviation in the case of nonempty interior rotation sets (one can obtain in a few lines that a homeomorphism whose rotation set has nonempty interior cannot have bounded deviation in any direction). Such bounded deviations ``from $\rho(\wt f)$'' were obtained in \cite{zbMATH06425076} ($C^{1+\alpha}$ case) and \cite{lct1} (general case).
Yet another bounded deviation result holds for homeomorphisms having as a rotation set a segment with irrational slope and a rational endpoint ``in the opposite direction of the one of the segment'' \cite[Theorem 1.3]{zbMATH07867510}. 
There are also some bounded deviation results for torus homeomorphisms homotopic to Dehn twists \cite{zbMATH06296542}.

A collection of (counter) examples of torus homeomorphisms shows that this is more or less all one can hope for results of bounded deviations for the torus.
\bigskip




\section*{Known results in higher genus}

For higher genus, the only known results up to now hold under hypotheses of ``big rotation set'': The first of them states bounded deviations ``from the rotation set'' for $C^{1+\alpha}$ diffeomorphisms of surfaces $S$ of genus $g\ge 2$ under the so-called condition of \emph{fully essential system of curves} (which implies the fact that the homological rotation set has 0 in its interior) \cite{zbMATH07282570}. Later on, \cite{lellouch} also obtained bounded deviations ``from the rotation set'' under the weaker hypothesis that the homological rotation set has 0 in its interior (without regularity assumption). 

Still, a global picture of bounded deviations for surfaces of higher genus was lacking, and in particular the counterparts of the case of torus homeomorphisms having a segment as a rotation set. This is the goal of this series of two articles, the first one treating the counterpart of torus homeomorphisms having as rotation set a segment with rational slope, and the second one treating the irrational slope case. 

Note that, as a homeomorphism of a surface of genus $g\ge 2$ has at least one fixed point (this is a consequence of the Lefschetz fixed point theorem), one cannot hope to have higher genus counterparts of bounded deviation results that hold for torus homeomorphisms without periodic points.

\subsection*{Crossing lifts and tracking geodesics}

The condition \eqref{EqBndDevTorus} for bounded deviations does not adapt directly in higher genus. In this paper, we will consider a bounded deviation notion involving the crossing number with some closed geodesic on the surface.

Let $S$ be a closed surface (compact, connected, orientable, without boundary) of genus $g\ge 2$. We equip $S$ with a Riemannian metric $d$ of constant curvature $-1$. Let $\wt S$ be the universal cover of $S$; by the uniformisation theorem $\wt S$ is isometric to the hyperbolic plane $\Hy^2$ (with a metric we also denote by $d$). This universal cover 
has a boundary at infinity that we will denote by $\partial\wt S$; this boundary is homeomorphic to the circle $\Sp^1$. We also denote $\G$ the group of deck transformations of $\wt S$ (\emph{i.e.}~the set of lifts of $\Id_S$ to $\wt S$). Every homeomorphism $f\in\Homeo_0(S)$ has a preferred lift $\wt f\in\Homeo_0(\wt S)$ (the only one homotopic to $\Id_{\wt S}$); this lift commutes with elements of $\G$ and extends continuously to $\wt S\cup\partial \wt S$ with $\Id_{\partial\wt S}$. The compactification $\wt S\cup \partial\wt S$ will be equipped with a finite diameter distance (\emph{e.g.}~coming from the Euclidean distance on the unit disc in the Poincaré disc model).

\begin{definition}\label{DefCrossLifts}
Let $f\in\Homeo_0(S)$. We say that an orbit segment $y,\dots,f^{n_0}(y)$ of $S$ \emph{crosses $N$ different lifts} of some closed geodesic $\gamma$ of $S$ if there exist lifts $\wt y$ and $\wt\gamma$ of $y$ and $\gamma$ to $\wt S$, and $R_1,\dots, R_N\in \G$ some deck transformations such that the $R_i \wt\gamma$ are pairwise different lifts of $\gamma$ and such that for any $1\le i\le N$, the points $\wt y$ and $\wt f^{n_0}(\wt y)$ belong to two different connected components of the complement of $R_i\wt\gamma$.
\end{definition}

This is equivalent to asking that the points $\wt y$ and $\wt f^{n_0}(\wt y)$ are separated by all the geodesics $R_i\wt\gamma$, in other words that the minimal geometric intersection number between $\gamma$ and a curve homotopic (relative to endpoints) to $I^{[0,n_0]}(y)$ is at least $N$.

Note that, applied to the torus, this definition gives back the classical definition \eqref{EqBndDevTorus} of bounded deviation in a rational direction. 
\bigskip

The condition on the homeomorphism that will imply bounded deviation involves the notion of tracking geodesic introduced in \cite{alepablo}. Denote $\Me(f)$ the set of $f$-invariant ergodic Borel probability measures. 
The following is a direct consequence of Kingman's subadditive ergodic theorem \cite[Lemma~1.6]{alepablo}.

\begin{lemma}\label{LemErgoRotSpeed}
Let $\mu\in\Me(f)$. Then there exists a constant \(\vartheta_\mu\in\R_+\) --- called the \emph{rotation speed} of $\mu$ --- such that
\[\lim\limits_{n \to +\infty}\frac{1}{n}d\big(\wt z, \wt f^n(\wt z)\big) = \lim\limits_{n \to +\infty}\frac{1}{n}d\big(\wt z, \wt f^{-n}(\wt z)\big) = \vartheta_\mu,\]
for \(\mu\)-almost every point \(z \in S\).	
\end{lemma}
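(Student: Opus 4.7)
The plan is to apply Kingman's subadditive ergodic theorem to the displacement cocycle in the universal cover. Define $\varphi_n : S \to \R_+$ by $\varphi_n(z) = d(\wt z, \wt f^n(\wt z))$ for any choice of lift $\wt z \in \wt S$ of $z$. This is well-defined independent of the lift: if $\wt z' = R\wt z$ for some $R \in \G$, then because $\wt f$ is the preferred lift and hence commutes with deck transformations, $\wt f^n(\wt z') = R\,\wt f^n(\wt z)$, and the hyperbolic metric on $\wt S$ is $\G$-invariant.

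Next I would verify subadditivity. Using the triangle inequality together with the fact that $\wt f^n(\wt z)$ is itself a legitimate lift of $f^n(z)$, one gets
\[
d(\wt z, \wt f^{n+m}(\wt z)) \;\le\; d(\wt z, \wt f^n(\wt z)) + d(\wt f^n(\wt z), \wt f^{n+m}(\wt z)) \;=\; \varphi_n(z) + \varphi_m(f^n(z)),
\]
so $(\varphi_n)$ is a subadditive cocycle over $(S,f,\mu)$. For integrability, observe that $z \mapsto d(\wt z, \wt f(\wt z))$ is continuous on the compact surface $S$ (it descends from a $\G$-equivariant continuous function on $\wt S$), so $\varphi_1$ is bounded by some constant $M$; in particular $\varphi_1 \in L^1(\mu)$, and subadditivity then gives $\varphi_n \le nM$, which forces any limit to be finite and nonnegative.

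Kingman's subadditive ergodic theorem, combined with ergodicity of $\mu$, now provides a constant $\vartheta_\mu \in [0,M]$ such that $\frac{1}{n}\varphi_n(z) \to \vartheta_\mu$ for $\mu$-almost every $z$, which is exactly the first equality. For the backward statement, apply the same argument to $f^{-1}$ (which preserves and is ergodic for $\mu$) to obtain some limit $\vartheta'_\mu$ for $\frac{1}{n}d(\wt z, \wt f^{-n}(\wt z))$ $\mu$-a.e. Symmetry of the distance together with $f^n$-invariance of $\mu$ gives that $\varphi_n$ and $\varphi_n \circ f^{-n}$ have the same law, and since $\varphi_n(f^{-n}(z)) = d(\wt f^{-n}(\wt z), \wt z)$, the two almost-sure limits must coincide, yielding $\vartheta'_\mu = \vartheta_\mu$.

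The argument involves no serious obstacle, which is consistent with the statement being advertised as a direct consequence of Kingman's theorem; the only points demanding care are the well-definedness of $\varphi_n$ on the quotient $S$ and the reduction of the backward limit to the forward one via $f^{-1}$-invariance.
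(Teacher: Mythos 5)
Your proposal is correct and follows the standard route the paper alludes to when it calls the lemma ``a direct consequence of Kingman's subadditive ergodic theorem'' (the paper itself defers to \cite[Lemma~1.6]{alepablo} rather than writing it out). The only place worth tightening is the final sentence: saying that $\varphi_n$ and $\varphi_n\circ f^{-n}$ have the same law does not, in general, force two almost-sure limits to coincide; what makes it work here is that the Kingman limit for an ergodic system equals the deterministic constant $\inf_n \frac{1}{n}\int \varphi_n\,\mathrm{d}\mu$, and $\int(\varphi_n\circ f^{-n})\,\mathrm{d}\mu = \int \varphi_n\,\mathrm{d}\mu$ by $f$-invariance of $\mu$, so the two constants are indeed equal. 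You have all the pieces; just make this identification of the two infima explicit.
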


We denote by \(\M(f)\) the set of $\mu\in\Me(f)$ such that $\vartheta_\mu>0$. As usual, we will parametrise geodesics by arclength. Points that are typical for some ergodic measure of $\M(f)$ follow a so-called \emph{tracking geodesic} \cite[Theorem~B]{alepablo}.

\begin{theorem}\label{DefTrackGeod}
Let $\mu\in\M(f)$. Then $\mu$-a.e.~$z\in S$ admits a \emph{tracking geodesic} $\gamma$: for each lift \(\wt z\) of \(z\), there exists a lift $\wt \gamma$ of $\gamma$ such that:
\begin{equation}\label{eq:trackingequation}
\lim\limits_{n \to +\infty}\frac{1}{n}d\big(\wt f^n(\wt z), \wt \gamma(n \vartheta_\mu)\big) = \lim\limits_{n \to +\infty}\frac{1}{n}d\big(\wt f^{-n}(\wt z), \wt \gamma(-n \vartheta_\mu)\big) = 0.
\end{equation}	
\end{theorem}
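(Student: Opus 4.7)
The plan is to exhibit the tracking geodesic $\wt\gamma$ as the bi-infinite geodesic in $\wt S$ joining two boundary points $\xi^-,\xi^+\in\partial\wt S$, namely the limits of $\wt f^{-n}(\wt z)$ and $\wt f^n(\wt z)$ respectively. The proof then splits into three parts: (i) showing that both forward and backward orbits of a $\mu$-typical lift $\wt z$ converge in the compactification $\wt S\cup\partial\wt S$, (ii) checking that the two boundary limits are distinct, and (iii) deducing the tracking equation from the hyperbolic geometry of $\wt S\cong\Hy^2$ together with the value $\vartheta_\mu$ of the rotation speed.

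For (i), since $\wt S$ is isometric to $\Hy^2$ and hence Gromov hyperbolic, a sequence $(x_n)$ in $\wt S$ converges to a boundary point iff its Gromov products $(x_n\mid x_m)_{\wt z}$ tend to infinity. I would establish this starting from Lemma~\ref{LemErgoRotSpeed} together with the identity
\[
2(\wt f^n\wt z \mid \wt f^m\wt z)_{\wt z} = d(\wt z,\wt f^n\wt z)+d(\wt z,\wt f^m\wt z)-d(\wt f^n\wt z,\wt f^m\wt z).
\]
Applying the subadditive ergodic theorem to the displacement cocycle $\Phi_k(\wt w)=d(\wt w,\wt f^k(\wt w))$ at the starting point $\wt z$ yields $d(\wt z,\wt f^k\wt z)=k\vartheta_\mu+o(k)$, while applying it along the forward orbit gives $d(\wt f^n\wt z,\wt f^m\wt z)=\Phi_{m-n}(\wt f^n\wt z)=(m-n)\vartheta_\mu+o(m-n)$ for $m>n$. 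A careful Luzin-type uniformization of the Kingman convergence along the orbit then forces the Gromov products to grow roughly like $2n\vartheta_\mu$. A parallel argument applied to $\wt f^{-1}$ yields $\xi^-\in\partial\wt S$ for the backward orbit.

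For (ii), if on a set of positive measure one had $\xi^+=\xi^-$, a Busemann-function argument would force both forward and backward orbits to penetrate arbitrarily deep into a common horoball based at $\xi^+$; combined with Poincaré recurrence of $f$ on the compact surface $S$ and the equivariance of horoballs under $\G$, this would contradict the strict positivity of $\vartheta_\mu$. For (iii), let $\wt\gamma$ be the bi-infinite geodesic from $\xi^-$ to $\xi^+$, parametrized by arclength with origin chosen so that $\wt\gamma(0)$ is the nearest-point projection of $\wt z$. Using the Morse-lemma-style exponential contraction of the nearest-point projection onto a geodesic in $\Hy^2$, one obtains $d(\wt f^n\wt z,\wt\gamma)=o(n)$, and together with $d(\wt z,\wt f^n\wt z)=n\vartheta_\mu+o(n)$ the arclength along $\wt\gamma$ of the projection of $\wt f^n\wt z$ is $n\vartheta_\mu+o(n)$, giving the tracking equation~\eqref{eq:trackingequation}.

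The main obstacle is step (i). Since $\wt f$ is not an isometry, the cocycle $\Phi_k$ is only subadditive and Kingman's theorem by itself gives linear escape without a built-in direction. Upgrading this to genuine convergence at $\partial\wt S$ is where the negative curvature of $\wt S$ must be used essentially; the uniformity needed along the orbit is the content of the noncommutative ergodic theorems of Karlsson--Ledrappier type for $\mathrm{CAT}(-1)$ spaces, which one could invoke directly in this geometric setting.
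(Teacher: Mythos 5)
Theorem~\ref{DefTrackGeod} is not proved in the present paper: it is imported verbatim from \cite[Theorem~B]{alepablo}, so there is no proof here to compare against, and I can only assess your sketch on its own merits. You correctly isolate the main obstruction --- Kingman gives linear escape but no direction --- and you point at the right family of tools (Karlsson--Ledrappier/Karlsson--Margulis noncommutative ergodic theorems for negatively curved spaces). But two steps, as written, would fail.

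The first problem is that those theorems are stated for cocycles of \emph{isometries} (or semicontractions) of the space, and $\wt f$ is neither; invoking them ``directly in this geometric setting'' is not legitimate. Before applying them one must exchange the orbit $\wt f^n(\wt z)$ for an isometry orbit: since $\wt f$ commutes with $\G$ and a compact fundamental domain $D_0$ can be fixed, there is a measurable $\G$-valued cocycle $(g_n)$ over $(S,\mu,f)$ with $\wt f^n(\wt z)\in g_n D_0$, hence $d(\wt f^n\wt z, g_n\wt z)\le\diam D_0$ uniformly in $n$; the multiplicative ergodic theorem is then applied to $(g_n)$ and the tracking transferred back through the bounded error. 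The second, more serious, problem is step (iii): knowing $\wt f^n\wt z\to\xi^+$ in $\partial\wt S$ does \emph{not} give $d(\wt f^n\wt z,\wt\gamma)=o(n)$. In $\Hy^2$ a sequence at distance $\epsilon n$ from the geodesic ray toward $\xi^+$ (for any fixed $\epsilon<1/2$) still converges to $\xi^+$, because the angle it subtends at the basepoint decays exponentially; exponential contraction of nearest-point projections therefore cannot recover the $o(n)$ estimate from the boundary limit alone. The $o(n)$ geodesic tracking is precisely the quantitative conclusion of the Karlsson-type theorem and must be extracted directly from the positive-density ``good times'' argument, so your steps (i) and (iii) are really one and the same step and cannot be separated the way you have. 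Finally, step (ii) is best derived \emph{after} the tracking, not before: if $\xi^+=\xi^-$ held on a positive-measure set, then applying Birkhoff to the Busemann increment $\psi(z)=b_{\xi^+}(\wt f\wt z)-b_{\xi^+}(\wt z)$ along forward and backward orbits, together with the tracking estimate, would force $\int\psi\,d\mu=-\vartheta_\mu$ and $\int\psi\,d\mu=+\vartheta_\mu$ simultaneously, contradicting $\vartheta_\mu>0$; the horoball-plus-recurrence heuristic you sketch does not by itself produce a contradiction.
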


The geodesic associated to a $\mu$-typical $z\in S$ will be denoted by $\gamma_z$ and the one associated to the lift $\wt z$ will be denoted $\wt\gamma_{\wt z}$, and parametrised such that $d(\wt z,\wt\gamma_{\wt z}) = d(\wt z,\wt\gamma_{\wt z}(0))$. 

Note that if a tracking geodesic of a $\mu$-typical point $z\in S$ is closed, then all tracking geodesics associated to $\mu$-typical points are equal to this tracking geodesic \cite[Theorem~D]{alepablo}.

\subsection*{Main result}

The following is the main theorem of Part I. It is a higher genus counterpart of D\'avalos' bounded deviations result for the torus \cite{zbMATH06914177}, where he proves that bounded deviations hold if a torus homeomorphism has a segment with rational slope and containing a rational point as a rotation set. 

\begin{theo}\label{ThBndedDevRat}
Let $f\in\Homeo_0(S)$, where $S$ is a closed surface of genus $g\ge 2$. Let $\gamma$ be a closed geodesic that is a tracking geodesic for some $\mu\in\M(f)$. Then there exists $N>0$ such that if an orbit $\wt y,\dots,\wt f^{n_0}(\wt y)$ of $\wt f$ crosses $N$ different lifts of $\gamma$, then there exists an $f$-periodic orbit with one lift to $\wt S$ having its tracking geodesic intersecting at least two of these lifts of $\gamma$.
\end{theo}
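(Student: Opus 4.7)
My plan is to combine a Le Calvez transverse foliation of a maximal identity isotopy of $f$ with the equivariant forcing theorem of Le Calvez-Tal. Heuristically, $\mu$-typical orbits tracking $\gamma$ trace transverse trajectories that stay near the corresponding lifts of $\gamma$; an orbit of $f$ that crosses too many lifts of $\gamma$ is then forced to have a topological transverse intersection with the transverse trajectory of one lifted tracking orbit, and forcing theory produces the required periodic orbit.

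\textbf{Setup and localisation of tracking orbits.} Since $\chi(S)\ne 0$, the Lefschetz fixed point theorem gives $\Fix(f)\ne\emptyset$. I would fix a maximal identity isotopy $I$ of $f$ and, via Jaulent's extension of Le Calvez's theorem, a singular oriented foliation $\F$ on $S$ transverse to $I$. Lift to $\G$-equivariant objects $\wt I$ and $\wt\F$ on $\wt S$, and for a regular point $x$ write $I_\F^J(x)$ for its transverse trajectory over time $J$ (and $\wt I_\F^J(\wt x)$ upstairs). For a $\mu$-typical $z$ tracking $\gamma$, I would first prove that $\wt I_\F^{\R}(\wt z)$ lies in a uniformly bounded neighborhood of $\wt\gamma_{\wt z}$ and has its two ends at the endpoints of $\wt\gamma_{\wt z}$ on $\partial\wt S$. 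This combines Theorem~\ref{DefTrackGeod} with the standard estimate that the isotopy segment $I^{[n,n+1]}$ moves points by a uniformly bounded amount on $\wt S$, together with the closedness of $\gamma$.

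\textbf{Transverse intersection and forcing.} Suppose the orbit $\wt y,\dots,\wt f^{n_0}(\wt y)$ crosses the lifts $R_1\wt\gamma,\dots,R_N\wt\gamma$. For each $i$, pick a $\mu$-typical $z_i$ with a lift tracking $R_i\wt\gamma$, so that $\tau_i := \wt I_\F^\R(\wt z_i)$ is contained in a fixed tubular neighborhood of $R_i\wt\gamma$ and has the same endpoints at infinity. Since $\wt y$ and $\wt f^{n_0}(\wt y)$ are separated by each $R_i\wt\gamma$, the trajectory $\wt I_\F^{[0,n_0]}(\wt y)$ must go from one side of every $R_i\wt\gamma$ to the other; using that any two distinct lifts of $\gamma$ are at uniformly bounded below distance from each other in $\wt S$, I would argue that for $N$ sufficiently large (depending only on $f$, $\mu$, $\gamma$ and the tubular width), at least one $\tau_i$ admits a \emph{topological transverse intersection} with $\wt I_\F^{[0,n_0]}(\wt y)$ in the Le Calvez-Tal sense. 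The equivariant forcing theorem then yields an $f$-periodic orbit $q$ with lift $\wt q$, integer $k\ge 1$ and hyperbolic $R\in\G$ such that $\wt f^k(\wt q)=R\wt q$; the axis of $R$, which by Theorem~\ref{DefTrackGeod} is the tracking geodesic of $q$, is forced to intersect the configuration $\{R_i\wt\gamma\}$ non-trivially.

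\textbf{Main obstacle.} The delicate step is the final geometric accounting: ensuring that the axis of $R$ meets \emph{at least two} of the $R_i\wt\gamma$, rather than being parallel to one or intersecting only a single one. I expect this to follow by iterating the forcing (to obtain versions of $R$ with arbitrarily large translation length) combined with the pigeonhole principle that a hyperbolic geodesic in $\wt S$ with translation length large enough, relative to the systole of $S$ and the tubular width chosen above, cannot avoid intersecting many elements of the family $\{R_i\wt\gamma\}_{i=1}^N$ once $N$ is taken sufficiently large.
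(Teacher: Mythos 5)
Your overall framework (Le Calvez foliation plus the Le Calvez--Tal forcing theorem) is the right one and is the one the paper uses, but the central assertion of your argument is not justified, and it is precisely what makes the theorem hard. You claim that if the trajectory $\wt I_\F^{[0,n_0]}(\wt y)$ crosses enough lifts $R_i\wt\gamma$, then ``at least one $\tau_i$ admits a topological transverse intersection with $\wt I_\F^{[0,n_0]}(\wt y)$.'' This is not automatic, and the bulk of the paper (Sections~\ref{SecSpecClosed} and \ref{sec:transverseintersections}) is devoted to the obstructions. There are several independent ways this can fail. First, the transverse trajectory of a $\mu$-typical tracking orbit need not be a simple path in $\wt S$, in which case it is not even clear how to set up a clean band; the paper's Proposition~\ref{LemRealizPeriod} (via Lemmas~\ref{LemAlphaSimple2}--\ref{LemPerPtSimple2}) replaces it by a simple $T$-invariant transverse path $\wt\alpha_0$, and this already requires a forcing argument of its own. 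Second, even with such a band $\wt B$, the orbit of $\wt y$ may merely \emph{visit} or \emph{stay in} a translate $R_i\wt B$ without crossing it (this is Proposition~\ref{PropBndedDevRatCase2}, a nontrivial case requiring several copies of the band and a delicate argument to produce a transverse intersection between translates of an auxiliary admissible path, not between $\tau_i$ and the orbit of $\wt y$). Third, and most seriously, even when the orbit does cross $R_i\wt B$, the tracking path $R_i\wt\alpha_0$ may \emph{accumulate} on $\wt I_\F^{[0,n_0]}(\wt y)$ in the sense of Proposition~\ref{Prop3.3GLCP}, in which case there is no $\F$-transverse intersection at all between them; this forces the paper into the Lellouch-style construction of Proposition~\ref{PropBndedDevRatCase1}, where the forcing is applied to an auxiliary path built from $\wt z$'s orbit crossing fundamental domains of the band, not directly to $\tau_i$ against the orbit of $\wt y$.

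A secondary gap: you assert that ``any two distinct lifts of $\gamma$ are at uniformly bounded below distance from each other in $\wt S$.'' This is false in general (if $\gamma$ is not simple, lifts can intersect; even if simple, they need not be uniformly separated). The paper needs Lemma~\ref{LemUseResidFinite} (residual finiteness of $\pi_1(S)$ and separability of cyclic subgroups) to extract, from a large collection of crossed lifts, a sub-collection that is pairwise disjoint, coherently oriented, and well-separated; this is where the numerical constant $N$ gets amplified, and without it the geometric configuration you need in your final pigeonhole step is not available. Finally, your proposed ``iterate the forcing to grow translation length'' for the last step is not how the paper proceeds and it is unclear that it works: the paper instead reads off the precise deck transformation, of the form $R_jT^{\pm3}R_i^{-1}$, that conjugates the periodic lift, and verifies directly that its axis must cross both $R_i\wt\gamma$ and $R_j\wt\gamma$ because those lifts are disjoint and coherently oriented.Your overall framework (Le Calvez foliation plus the Le Calvez--Tal forcing theorem) is the right one and matches the paper's, but the central assertion of your argument is not justified, and it is precisely what makes the theorem hard. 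You claim that if $\wt I_\F^{[0,n_0]}(\wt y)$ crosses enough lifts $R_i\wt\gamma$, then ``at least one $\tau_i$ admits a topological transverse intersection with $\wt I_\F^{[0,n_0]}(\wt y)$.'' This is not automatic, and the bulk of the paper (Sections~\ref{SecSpecClosed} and \ref{sec:transverseintersections}) is devoted to the obstructions. First, the transverse trajectory of a $\mu$-typical tracking orbit need not be a simple path in $\wt S$; the paper's Proposition~\ref{LemRealizPeriod} (via Lemmas~\ref{LemAlphaSimple2}--\ref{LemPerPtSimple2}) replaces it by a simple $T$-invariant transverse path $\wt\alpha_0$, and this already requires a forcing argument of its own. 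Second, even with such a band $\wt B$, the orbit of $\wt y$ may merely \emph{visit} or \emph{stay in} a translate $R_i\wt B$ without crossing it (this is Proposition~\ref{PropBndedDevRatCase2}, handled by producing a transverse intersection between translates of an auxiliary admissible path $\wh\beta$ built from pieces of the orbits of both $\wt y$ and $\wt z$, not between $\tau_i$ and the orbit of $\wt y$). Third, and most seriously, even when the orbit does cross $R_i\wt B$, the path $R_i\wt\alpha_0$ may \emph{accumulate} on $\wt I_\F^{[0,n_0]}(\wt y)$ in the sense of Proposition~\ref{Prop3.3GLCP}, in which case there is no $\F$-transverse intersection between them at all; this forces the Lellouch-style construction of Proposition~\ref{PropBndedDevRatCase1}, which is the hard case and is entirely absent from your proposal.

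A secondary gap: you assert that ``any two distinct lifts of $\gamma$ are at uniformly bounded below distance from each other in $\wt S$.'' This is false in general (if $\gamma$ is not simple, lifts can intersect transversally). The paper needs Lemma~\ref{LemUseResidFinite} (residual finiteness of $\pi_1(S)$ and subgroup separability) to extract, from many crossed lifts, a sub-collection that is pairwise disjoint, coherently oriented, and well-separated, at the cost of enlarging $N$ by a fixed multiplicative factor. Finally, your proposed remedy for the last step---iterating forcing to grow translation length, then pigeonhole---is not what the paper does and it is unclear it works: the paper instead reads off the precise deck transformation produced by forcing (of the form $R_jT^{\pm3}R_i^{-1}$ or $R_0T^{-q}R_1^{-1}$) and shows directly that its axis must cross both $R_i\wt\gamma$ and $R_j\wt\gamma$, using that those two lifts are disjoint and coherently oriented and that the deck transformation maps one onto the other.
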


Note that, in particular, the conclusion implies that there is a periodic orbit whose tracking geodesic intersects $\gamma$. Note also that this theorem, and the other ones we will state, have their hypotheses stated in terms of \emph{ergodic} properties and not properties of ``Misiurewicz-Ziemian'' types of rotation sets.

We conjecture that a stronger statement should hold under the additional assumption that the set of fixed points is inessential:

\begin{conjecture}
Let $f\in\Homeo_0(S)$, where $S$ is a closed surface of genus $g\ge 2$. Suppose that the set of contractible fixed points of $f$ is inessential. Let $\gamma$ be a closed geodesic that is a tracking geodesic for some $\mu\in\M(f)$. Then there exists $C>0$ such that if an orbit $\wt y,\dots,\wt f^{n_0}(\wt y)$ of $\wt f$ crosses $V_C(\wt\gamma) = \{\wt x\in \wt S \mid d(\wt x,\wt\gamma)<R\}$ for some lift $\wt\gamma$ of $\gamma$, then there exists an $f$-periodic orbit with one lift to $\wt S$ having its tracking geodesic intersecting $\wt\gamma$.
\end{conjecture}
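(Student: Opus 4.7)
The plan is to apply the Le Calvez--Tal forcing theory in the framework of maximal isotopies and transverse foliations. First I would fix a maximal hereditary singular isotopy $I$ from $\Id_S$ to $f$ (via Jaulent and B\'eguin--Crovisier--Le Roux) whose fixed-point set $\Fix(I)$ of unlinked contractible fixed points is maximal, and equip $S\setminus\Fix(I)$ with an oriented foliation $\F$ transverse to $I$ using Le Calvez's equivariant foliation theorem. Any $z\in S\setminus\Fix(I)$ then has a transverse trajectory $I_\F^{\Z}(z)$; lifting to $\wt S$, the Le Calvez--Tal forcing theorem asserts that an $\F$-transverse intersection between two such lifted trajectories produces an $f$-periodic orbit whose lifted transverse trajectory is homotopic, with controlled endpoints at infinity, to a concatenation of subarcs of the two original ones. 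This is what will allow us to produce the periodic orbit from the crossing hypothesis, and to read off its associated deck transformation from the combinatorics of the concatenation.

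Next I would exploit that $\gamma$ is a closed tracking geodesic. For $\mu$-typical $z$, Theorem~\ref{DefTrackGeod} gives a lift $\wt z$ whose orbit stays at sublinear distance from the tracking lift $\wt\gamma_{\wt z}$ travelled at speed $\vartheta_\mu>0$. Since $\wt f$ and the isotopy maps have bounded displacement, the lifted transverse trajectory $\wt I_\F^{\Z}(\wt z)$ remains at bounded distance from the orbit of $\wt z$ and therefore converges in $\wt S\cup\partial\wt S$ to the two endpoints of $\wt\gamma_{\wt z}$ at infinity (by hyperbolic Morse-type estimates, since a sublinear quasi-shadow of a geodesic in $\Hy^2$ shares its endpoints at infinity). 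By $\G$-equivariance, every lift of $\gamma$ is the tracking lift of a $\mu$-typical transverse trajectory, yielding a $\G$-invariant family of ``sweeping'' lifted transverse trajectories indexed by the lifts of $\gamma$.

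Now assume the orbit segment $\wt y,\dots,\wt f^{n_0}(\wt y)$ crosses $N$ distinct lifts $R_1\wt\gamma,\dots,R_N\wt\gamma$. The transverse trajectory $\wt I_\F^{[0,n_0]}(\wt y)$ joins $\wt y$ to $\wt f^{n_0}(\wt y)$ and must geometrically cross each $R_i\wt\gamma$. A pigeonhole argument using the abundance of sweeping trajectories from the previous step shows that, for $N$ large enough, this trajectory of $\wt y$ must $\F$-transversely intersect some $\G$-translate of a typical transverse trajectory. This is the main obstacle: one needs to upgrade a purely topological crossing into an $\F$-transverse crossing. I would address this by selecting a sweeping trajectory following some $R_j\wt\gamma$ strictly between $R_1\wt\gamma$ and $R_N\wt\gamma$, and exploiting the linear ordering of the $R_i\wt\gamma$ together with standard transverse-foliation crossing lemmas to force the two trajectories to cross the same leaves of $\F$ in opposite directions.

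Finally, the forcing theorem applied to this $\F$-transverse intersection yields an $f$-periodic orbit with lift $\wt p$ satisfying $\wt f^q(\wt p)=T\wt p$ for some $T\in\G$ dictated by the concatenation. By construction the concatenation glues a subarc of the sweeping trajectory (whose ends at infinity are the endpoints of $R_j\wt\gamma$) to a subarc of the $\wt y$-trajectory that crosses further $R_i\wt\gamma$'s, so the translation axis of $T$ connects two points at infinity separated by at least two of the lifts $R_i\wt\gamma$. The tracking geodesic of the periodic orbit is precisely this axis of $T$ in $\Hy^2$, and it therefore intersects at least two of these lifts, as required.
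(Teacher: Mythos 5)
The statement you are trying to prove is labelled a \emph{Conjecture} in the paper, and indeed the paper does not prove it; the authors explicitly state ``We conjecture that a stronger statement should hold.'' So there is no proof in the paper to compare yours against, and the first question is simply whether your argument establishes the conjecture. It does not, because you have actually proved the wrong statement.

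Look carefully at the hypotheses. In the third paragraph you write ``Now assume the orbit segment $\wt y,\dots,\wt f^{n_0}(\wt y)$ crosses $N$ distinct lifts $R_1\wt\gamma,\dots,R_N\wt\gamma$,'' and your conclusion is that the tracking geodesic of the produced periodic orbit ``intersects at least two of these lifts.'' That is precisely the hypothesis and conclusion of the paper's Theorem~\ref{ThBndedDevRat}, which the authors \emph{do} prove (and via essentially the scheme you sketch, though you gloss over the hardest steps, notably the reduction to a simple $T$-invariant transverse path $\wt\alpha_0$ in Section~\ref{SecSpecClosed} and the accumulation dichotomy in Section~\ref{sec:transverseintersections}). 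The conjecture, by contrast, assumes only that the orbit segment crosses the tube $V_C(\wt\gamma)$ around a \emph{single} lift $\wt\gamma$, and asks for a periodic tracking geodesic intersecting that same $\wt\gamma$. This is strictly stronger: crossing one fat tube does not give you $N$ distinct lifts to pigeonhole among, so your whole mechanism of selecting a sweeping trajectory ``strictly between $R_1\wt\gamma$ and $R_N\wt\gamma$'' has no input to run on.

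The telltale sign of the gap is that your argument never invokes the hypothesis that the contractible fixed-point set is inessential. That hypothesis is exactly what the conjecture adds, and it must be doing real work, because without some such restriction the statement is false: an essential component of $\Fix(I)$ can separate $S$ and trap a trajectory that swings deep into a single tube $V_C(\wt\gamma)$ and back, never being forced into a transverse intersection with any translate of $\wt\alpha_0$. Any correct proof of the conjecture would have to explain how inessentiality of the fixed-point set replaces the pigeonhole over $N$ lifts with a single-tube argument; your proposal does not address that, and so it remains an (unsurprisingly correct) sketch of the already-proved Theorem~\ref{ThBndedDevRat} rather than an attack on the open conjecture.
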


If such a result is true, its combination with \cite[Theorem~D]{paper2PAF} could allow to classify all surface homeomorphisms with inessential fixed point set and without rotational horseshoe, for example by associating to each measure class (defined in the next paragraph) an invariant open set together with a pseudo-lamination (possibly under non-wandering assumptions), expressing that the rotational dynamics mimics the one of a flow.

\subsection*{Tracking sets and consequences}

Theorem~\ref{DefTrackGeod} allows us to define a set of geodesics associated to an ergodic measure of $\M(f)$ \cite[Theorem~ C]{alepablo}. 

\begin{theorem}\label{theoExistTrackSet}
To any $\mu\in\M(f)$ is associated a set $\dot\Lambda_\mu\subset T^1 S$ that is invariant under the geodesic flow on $T^1 S$, and such that for $\mu$-a.e.~$z\in S$, we have 
\[\overline{\dot\gamma_z(\R)} = \dot\Lambda_\mu.\]
\end{theorem}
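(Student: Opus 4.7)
The plan is to define the map $z\mapsto\overline{\dot\gamma_z(\R)}$ on a full $\mu$-measure subset $S_0\subset S$ (on which the tracking geodesic exists, by Theorem~\ref{DefTrackGeod}), view it as a measurable function into the space of closed subsets of $T^1S$ equipped with the Hausdorff topology, show that it is $f$-invariant $\mu$-almost everywhere, and then apply ergodicity of $\mu$ to conclude that it is $\mu$-a.e.\ equal to a constant subset, which is taken as the definition of $\dot\Lambda_\mu$.

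The key step is the $f$-invariance of this assignment. Fix a lift $\wt z$ of $z\in S_0$. The tracking equation \eqref{eq:trackingequation} forces the orbit $(\wt f^n(\wt z))_{n\in\Z}$ to remain at sublinear distance from the bi-infinite geodesic $\wt\gamma_{\wt z}\subset\wt S$, so, in the compactification $\wt S\cup\partial\wt S$, the orbit and $\wt\gamma_{\wt z}$ share their forward and backward endpoints at infinity. Since a bi-infinite geodesic of $\Hy^2$ is uniquely determined by its pair of endpoints on $\partial\wt S$, and the orbit of $\wt f(\wt z)$ has the same set of endpoints at infinity as that of $\wt z$, its tracking geodesic must be the same geodesic $\wt\gamma_{\wt z}$ (up to a time-shift reparametrisation). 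Projecting to $T^1S$ yields $\dot\gamma_{f(z)}(\R)=\dot\gamma_z(\R)$ as subsets of $T^1 S$, and hence $\overline{\dot\gamma_{f(z)}(\R)}=\overline{\dot\gamma_z(\R)}$.

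By ergodicity of $\mu$, this $f$-invariant measurable map is $\mu$-almost surely equal to some fixed closed set $\dot\Lambda_\mu\subset T^1S$. Invariance of $\dot\Lambda_\mu$ under the geodesic flow is then automatic: for each $z\in S_0$, the set $\dot\gamma_z(\R)$ is itself a single orbit of the geodesic flow, hence its closure is flow-invariant, and the property passes to the $\mu$-a.e.\ value $\dot\Lambda_\mu$.

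The main technical point is to make the ``measurable and hence constant a.e.'' step fully rigorous, since the target space is a space of closed sets. The cleanest route, in my view, is to pick a countable basis of open sets $(U_i)_{i\in\N}$ of $T^1S$ and observe that each set $A_i=\{z\in S_0 : \overline{\dot\gamma_z(\R)}\cap U_i\neq\emptyset\}$ is measurable and $f$-invariant by the argument above, hence of $\mu$-measure $0$ or $1$; then $\dot\Lambda_\mu$ is reconstructed as the complement of the union of those $U_i$ with $\mu(A_i)=0$. Measurability of each $A_i$ can be obtained either directly from continuity properties of $z\mapsto\wt\gamma_{\wt z}$ on suitable subsets, or bypassed by passing to a countable $\sigma$-algebra generated by such hitting events.
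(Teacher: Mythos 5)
This statement is quoted from \cite[Theorem~C]{alepablo}; the paper does not prove it, so there is no in-paper argument to compare your proposal against. That said, your reconstruction is sound and is the natural way to deduce the statement from Theorem~\ref{DefTrackGeod} together with ergodicity. The $f$-invariance step is correct: since $\wt f^n(\wt f(\wt z))=\wt f^{n+1}(\wt z)$, the tracking estimate for $\wt z$ immediately yields the same tracking for $\wt f(\wt z)$ relative to the reparametrised geodesic $t\mapsto\wt\gamma_{\wt z}(t+\vartheta_\mu)$; these project to the same unparametrised geodesic of $S$, whence $\dot\gamma_{f(z)}(\R)=\dot\gamma_z(\R)$ in $T^1S$. (Your ``same endpoints on $\partial\wt S$'' justification is fine too, via the standard Gromov-product argument for sublinear divergence, but the direct shift computation is cleaner.) The hitting-set device with a countable basis $(U_i)$ then makes the ``invariant measurable function is a.e.\ constant'' step rigorous; the one detail worth writing out is that to get $\dot\Lambda_\mu\subset\overline{\dot\gamma_z(\R)}$, and not merely the reverse inclusion, you observe that for a full-measure set of $z$ the closed set $\overline{\dot\gamma_z(\R)}$ meets every basic open $U_i$ with $\mu(A_i)=1$, which forces it to contain each point of $\dot\Lambda_\mu$. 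Flow-invariance of $\dot\Lambda_\mu$ is then automatic, as you note, since each $\overline{\dot\gamma_z(\R)}$ is the closure of a geodesic-flow orbit.
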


This allows us to define the \emph{tracking set} of $f$ as 
\[\rote^{\text{htopic}}(f) = \bigcup_{\mu\in\M(f)}\dot\Lambda_\mu.\]

This also allows us to define an equivalence relation $\sim$ on $\M(f)$ by: $\mu_1\sim \mu_2$ if one of the following is true:
\begin{itemize}
\item $\dot\Lambda_{\mu_1} = \dot\Lambda_{\mu_2}$;
\item There exist $\nu_1,\dots,\nu_m\in\M$ such that $\nu_1=\mu_1$, $\nu_m=\mu_2$ and for all $1\le i<m$, there exist two geodesics of $\dot\Lambda_{\nu_i}$ and $\dot\Lambda_{\nu_{i+1}}$ that intersect transversally. 
\end{itemize} 

We denote $(\cl_i)_i$ the classes of this equivalence relation. By \cite{alepablo} (Section 6.2, and in particular Lemmas 6.7 and 6.8), we have the following:
\begin{itemize}
\item The set of classes is of cardinal at most $5g-5$;
\item For any class $\cl_i$, the set $\Lambda_i:=\bigcup_{\mu\in\cl_i}\dot\Lambda_\mu$ is connected\footnote{Because it is either a single geodesic, or the closure of a single geodesic, or a path-connected set in the case of a chaotic class (defined just below).} (be careful, these sets need not be closed); 
\item To any class $\cl_i$ is associated a surface $S_i\subset S$ whose boundary is made of a finite collection of closed geodesics and minimal for inclusion among such surfaces such that for any $\mu\in \cl_i$ we have $\dot\Lambda_\mu \subset T^1 S_i$. If $\inte(S_i)\neq\emptyset$, then $S_i$ is open. Moreover, the surfaces $S_i$ are pairwise disjoint. 
\end{itemize}

\begin{definition}\label{DefClassMeas}
There are three types of classes: classes $\cl_i$ such that $\bigcup_{\mu\in\cl_i}\dot\Lambda_\mu$:
\begin{itemize}[nosep]
\item is a single closed geodesic are called \emph{closed} classes;
\item is a minimal lamination that is not a closed geodesic are called \emph{minimal non-closed} classes;
\item has transverse intersection are called \emph{chaotic} classes.
\end{itemize}
\end{definition}

Note that if $\cl_i$ is a closed class, then $S_i$ is a single closed geodesic, while for other classes the associated surface $S_i$ has nonempty interior. 
\bigskip

Theorem~\ref{ThBndedDevRat} implies Corollary~\ref{CoroBndedDevRat2}, which together with \cite[Corollary~C]{paper2PAF} implies the following result.

\begin{corollary}\label{CoroBndedDevRat}
Let $S$ be a compact boundaryless hyperbolic surface and $f\in \Homeo_0(S)$. Let $\gamma$ be a closed geodesic that is the boundary component of the surface associated to a class $\cl_i$.
Let $\wt f$ be the canonical lift of $f$ to the universal cover $\wt S$ of $S$. Then there exists $N>0$ such that an orbit of $\wt f$ cannot cross more than $N$ different lifts of $\gamma$.
\end{corollary}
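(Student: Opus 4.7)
The plan is to dichotomize according to whether $\gamma$ is the single tracking geodesic of some closed class. Both options produce the required uniform $N$: the first via Theorem~\ref{ThBndedDevRat}, the second via \cite[Corollary~C]{paper2PAF}.

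Suppose first that there is a closed class $\cl_*$ whose tracking set is $\{\dot\gamma\}$; this is automatic when $\cl_i$ itself is closed, since then $S_i=\gamma$ and $\gamma$ is the tracking geodesic of every $\mu\in\cl_i$. Picking $\mu\in\cl_*$, the geodesic $\gamma$ is a tracking geodesic of $\mu$, so Theorem~\ref{ThBndedDevRat} applies and yields $N>0$ with the stated property. I would then argue by contradiction. Assume an orbit of $\wt f$ crosses $N$ different lifts $R_1\wt\gamma,\dots,R_N\wt\gamma$ of $\gamma$. The theorem produces an $f$-periodic orbit whose ergodic measure lies in some class $\cl_j$, with tracking geodesic $\delta\in\dot\Lambda_j$ admitting a lift $\wt\delta$ meeting two of these lifts, say $R_{i_1}\wt\gamma\neq R_{i_2}\wt\gamma$. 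Since the boundary components of the surfaces $S_\ell$ are simple closed geodesics, distinct lifts of $\gamma$ are disjoint in $\wt S$; hence $\wt\delta$ cannot coincide with any lift of $\gamma$, so $\delta\neq\gamma$ and the two intersections must be transverse, projecting to a transverse crossing of $\delta$ and $\gamma$ in $S$. By the very definition of $\sim$, this forces $\cl_j=\cl_*$, and consequently $\dot\delta\in\dot\Lambda_*=\{\dot\gamma\}$, contradicting $\delta\neq\gamma$. This is the content of Corollary~\ref{CoroBndedDevRat2}.

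Otherwise no closed class has $\gamma$ as its tracking geodesic. Then $\cl_i$ must be non-closed, so $S_i$ is open and $\gamma$ is a genuine boundary component of it; by pairwise disjointness of the surfaces $S_\ell$, no other class can have $\gamma$ in the interior of its supporting surface either. This is precisely the configuration treated by \cite[Corollary~C]{paper2PAF}, which provides the bound $N$ directly via Part~II's analysis of bounded deviations near the laminations approaching $\gamma$ from inside $S_i$.

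The hard part, of course, lies in the underlying ingredients---Theorem~\ref{ThBndedDevRat} and the Part~II corollary---rather than in the short bookkeeping above. The rigidity argument used to pass from the conditional conclusion of Theorem~\ref{ThBndedDevRat} to a strict uniform bound relies only on the definition of $\sim$, the singleton-tracking-set property of closed classes, and the simpleness of boundary geodesics of the surfaces $S_\ell$; no further quantitative estimate is required.
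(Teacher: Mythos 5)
Your dichotomy misses a case, and the way you patch the two halves together hides a real gap.

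You split according to whether some \emph{closed} class has $\gamma$ as its tracking geodesic. When the answer is yes, your argument matches the paper's first bullet of Corollary~\ref{CoroBndedDevRat2} (invoke Theorem~\ref{ThBndedDevRat}, then use disjointness of the $S_\ell$ and the definition of $\sim$ to derive a contradiction); this part is fine. When the answer is no, you conclude only that $\cl_i$ is non-closed and assert that \cite[Corollary~C]{paper2PAF} covers the remaining cases. But ``non-closed'' splits into \emph{minimal non-closed} and \emph{chaotic}, and the paper's own logic makes clear that Part~II's Corollary~C is invoked for the minimal non-closed case (Part~II is explicitly about bounded deviations with respect to \emph{simple non-closed} geodesics). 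It does not cover the chaotic case, where the geodesics of $\Lambda_i$ have transverse intersections and there is no minimal lamination ``approaching $\gamma$ from inside $S_i$'' for Part~II's machinery to act upon.

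The chaotic case with no closed class tracking $\gamma$ is handled in the paper by the second bullet of Corollary~\ref{CoroBndedDevRat2}, and that argument is genuinely nontrivial: one first uses the density of tracking geodesics of periodic points in $\Lambda_i$ (from \cite{alepablo}) to extract finitely many closed tracking geodesics $\gamma_1,\dots,\gamma_k$ of periodic orbits in $\cl_i$ that cut $S_i$ into discs and peripheral annuli; one then takes $N$ to be the maximum of the constants from Theorem~\ref{ThBndedDevRat} for these $\gamma_\ell$; and a pigeonhole argument converts ``the orbit crosses $2Nk+1$ lifts of $\gamma$'' into ``the orbit crosses $N$ lifts of a single $\gamma_\ell$'', at which point Theorem~\ref{ThBndedDevRat} produces a periodic orbit whose tracking geodesic would have to exit a copy of $\wt S_i$, contradicting the fact that tracking geodesics stay inside the relevant $T^1 S_j$. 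Nothing in your write-up produces the finite collection $\gamma_1,\dots,\gamma_k$ or the pigeonhole step, and your appeal to Part~II does not substitute for it. You would need to either reproduce this argument or justify that \cite[Corollary~C]{paper2PAF} really does apply to chaotic classes, which the paper's own decomposition (``Corollary~\ref{CoroBndedDevRat2} together with \cite[Corollary~C]{paper2PAF}'') strongly suggests it does not.
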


This result implies the following statement about the fine curve graph (see \cite{zbMATH07420183, zbMATH07588408} for definitions, and \cite[Lemma~18]{guiheneuf2023hyperbolic} for a criterion saying that bounded deviations with respect to a closed geodesic implies elliptic action on $\mathcal C^\dagger(S)$).

\begin{corollary}
Let $S$ be a compact boundaryless hyperbolic surface and $f\in \Homeo_0(S)$. Suppose that $f$ has a class $\cl_i$ such that $S_i \neq S$. Then $f$ acts elliptically on $\mathcal C^\dagger(S)$. 
\end{corollary}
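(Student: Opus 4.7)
The plan is to reduce the statement to a direct application of Corollary~\ref{CoroBndedDevRat} combined with the fine curve graph criterion cited from \cite{guiheneuf2023hyperbolic}. The key step is producing, from the hypothesis $S_i\neq S$, a closed geodesic $\gamma$ to which Corollary~\ref{CoroBndedDevRat} applies.

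First, I would split into cases according to Definition~\ref{DefClassMeas}. If the class $\cl_i$ is a closed class, then by definition $S_i$ is itself a single closed geodesic, and I would take $\gamma=S_i$ (which coincides with its own boundary). Otherwise $\cl_i$ is minimal non-closed or chaotic, and by the bulleted properties preceding Definition~\ref{DefClassMeas} the associated surface $S_i$ has nonempty interior and is open in $S$. Since $S$ is a closed connected surface without boundary, an open proper subsurface cannot equal $S$ without having nonempty topological boundary; so from $S_i\neq S$ I get $\partial S_i\neq\emptyset$, and the same bulleted list guarantees that $\partial S_i$ is a finite collection of closed geodesics. I would then take $\gamma$ to be any boundary component of $S_i$.

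Second, I would invoke Corollary~\ref{CoroBndedDevRat} applied to this geodesic $\gamma$: it furnishes a constant $N>0$ such that no orbit of the canonical lift $\wt f$ crosses more than $N$ distinct lifts of $\gamma$ in the sense of Definition~\ref{DefCrossLifts}. This is precisely the form of bounded deviation needed to feed into the fine curve graph criterion.

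Finally, I would apply \cite[Lemma~18]{guiheneuf2023hyperbolic}: it states that bounded deviation with respect to a closed geodesic implies that $f$ acts elliptically on $\mathcal C^\dagger(S)$, which is the desired conclusion. The whole argument is thus a short assembly of previously stated results; the only minor obstacle is the case distinction on the type of class and the verification that $S_i\neq S$ forces a nonempty geodesic boundary, since all the real dynamical work is already encoded in Corollary~\ref{CoroBndedDevRat} and in the cited criterion.
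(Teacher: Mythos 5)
Your proposal is correct and follows the same route the paper intends: identify a closed geodesic $\gamma$ that is either a boundary component of $S_i$ (non-closed classes) or $S_i$ itself (closed class), apply Corollary~\ref{CoroBndedDevRat} to obtain the uniform bound $N$ on crossings of lifts of $\gamma$, and conclude via the criterion of \cite[Lemma~18]{guiheneuf2023hyperbolic}. The only cosmetic remark is that the paper's own derivation is entirely implicit — it simply notes that Corollary~\ref{CoroBndedDevRat} plus the cited criterion gives the statement — so your case split on the type of class (plus the observation that $S_i$ open, proper, and $S$ connected force $\partial S_i\neq\emptyset$) is the verification the authors take for granted rather than an additional idea.
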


Hence, by \cite{guiheneuf2023hyperbolic}, the only cases where $f$ does not act elliptically is when $f$ has a class $\cl_i$ such that $S_i = S$ (\emph{i.e.}~either if $f$ has a single filling chaotic class, and in this case the ergodic homological rotation set of $f$ has nonempty interior by \cite{alepablo}, or if $f$ has a single filling minimal non-closed class) or when $f$ is irrotational, that is, when  the rotation speed of any $f$-invariant ergodic measure is null.

\subsection*{Other potential applications}

In a forthcoming series of two articles \cite{PABig1, PABig2}, the first author develops rotational hyperbolic theory for
surface homeomorphisms: the idea is to define a relation similar to heteroclinic connection for chaotic classes. A key tool in this study is one of our results of creation of periodic points (Corollary~\ref{CoroKiSert}). This theory is then applied to homeomorphisms whose rotation set spans the whole homology to get results about the shape of the rotation set, realization of subsets of the rotation set as rotation vectors of compact subsets, bounded deviations in homology from the rotation set, etc.

Our bounded deviation results may also be applied to prove the existence of invariant open sub-surfaces, \emph{e.g.}~a higher genus version of \cite{zbMATH06304088}.

\subsection*{Plan of the paper}

The proof of Theorem~\ref{ThBndedDevRat} is based on the forcing theory of Le Calvez and the second author \cite{lct1, lct2}. The preliminaries needed on the subject are developed in Section~\ref{SecPrelim}, the proof, which is here explained using the language of forcing theory, starts in Section~\ref{SecSpecClosed}. The goal of this section is to prove Proposition~\ref{LemRealizPeriod}, where we replace the orbit of a $\mu$-typical point $z\in S$ by the one of $z'\in S$ that is typical for $\mu'\in \M(f)$, having also $\gamma$ as a tracking geodesic, and such that the transverse trajectory $I^\Z_{\wt\F}(\wt z')$ of  one of its lifts $\wt z'\in \wt S$ is $\wt\F$-equivalent to a simple $T$-invariant\footnote{$T$ is a primitive deck transformation of $\wt S$ leaving $\wt\gamma$ invariant.} transverse path $\wt\alpha_0$ (and in particular is simple).

Section~\ref{sec:transverseintersections} deals with the rest of the proof of Theorem~\ref{ThBndedDevRat}: using repeatedly the traditional forcing lemma, we patch together pieces of the transverse trajectory of a lift of $y$ and of different copies of $\wt \alpha_0$ to obtain an admissible transverse path $\wt\beta$ and a deck transformation $T_1$ such that $\wt \beta$ and $T_1\wt \beta$ have an $\wt\F$-transverse intersection, and such that the axis of $T_1$ intersects the one of $T$. 

The whole section is split into two very different cases from the technical point: When considering the set $\wt\B$ of leaves met by $\wt\alpha_0$ and some deck transformation $R$, one cannot assume that $\wt \B$ is disjoint from $R\wt \B$. The trajectory of $\wt y$ will meet different copies of $\wt\B$, but we divide the proof into whether the trajectory of $\wt y$ stays in these different copies (Proposition~\ref{PropBndedDevRatCase2}), or if it crosses these copies of this set $\wt \B$ (Propositions~\ref{PropBndedDevRatCase1a} and \ref{PropBndedDevRatCase1}).

\section{Preliminaries on forcing theory}\label{SecPrelim}

Let us start with two results independent from forcing theory. 
The following lemma is a direct consequence of \cite[Lemma 3.1]{MR837985}. It implies that a loop of the annulus winding twice or more around it cannot be simple.

\begin{lemma}\label{LemBrown}
Let $v\in\R^2\setminus\{0\}$ and $K\subset \R^2$ be an arcwise connected set.
If $K\cap (K+iv)\neq\emptyset$ for some $i\in\Z\setminus\{0\}$, then  $K\cap (K+v) \neq \emptyset$. 
\end{lemma}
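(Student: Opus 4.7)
The plan is to lift the problem to the universal cover of the open cylinder $A := \R^2/v\Z$ and reduce it to the classical fact that a simple closed curve in an annulus has winding number at most one in absolute value.

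First I would reduce to a single injective arc. By arcwise connectedness of $K$ and the hypothesis $K\cap(K+iv)\ne\emptyset$ (WLOG $i>0$), I pick $p\in K$ with $p+iv\in K$ and an injective arc $\gamma\subset K$ joining them. Suppose for contradiction that $K\cap(K+v)=\emptyset$, and let $d\ge 2$ be the smallest positive integer with $K\cap(K+dv)\ne\emptyset$ (which exists since $i$ qualifies). Replacing $\gamma$ if needed, I take it to join some $p\in K$ to $p+dv$; its projection $\bar\gamma:=\pi\circ\gamma$ to $A$ is then a closed loop of winding number $d$.

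The heart of the argument is to extract from $\bar\gamma$ a simple subloop of winding at least $d$. Consider
\[
S := \{(s,t)\in [0,1]^2 : s\le t \text{ and } \bar\gamma(s)=\bar\gamma(t)\}.
\]
For $(s,t)\in S$ with $s<t$, any lift satisfies $\gamma(t)-\gamma(s)=nv$ for some $n\in\Z\setminus\{0\}$ (injectivity of $\gamma$ rules out $n=0$); since $\gamma\subset K$, this yields $K\cap(K+nv)\ne\emptyset$, so $|n|\ge d$ by minimality of $d$. In particular $|\gamma(t)-\gamma(s)|\ge d|v|>0$, hence uniform continuity of $\gamma$ on $[0,1]$ bounds $t-s$ away from $0$ on the off-diagonal part of $S$. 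Thus the infimum of $t-s$ over $\{(s,t)\in S : s<t\}$ is strictly positive and attained at some pair $(s^*,t^*)$. By this minimality $\bar\gamma|_{[s^*,t^*]}$ has no self-intersection other than the identification of its two endpoints, hence is a simple closed curve in $A$ of winding number $|n^*|\ge d\ge 2$. This contradicts the fact that every simple closed curve in the open cylinder $A$ has winding $0$ or $\pm 1$.

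The main subtlety I anticipate is ensuring that the infimum above is attained and strictly positive. It is precisely here that the uniform lower bound $|n|\ge d>0$ on the winding of each self-intersection pair is crucial, as it rules out accumulations of self-intersections along $\gamma$; without it, one would only know that every loop of winding $\ge 2$ in a cylinder self-intersects, but not that a simple subloop of winding $\ge 2$ can be isolated, which is what drives the contradiction.
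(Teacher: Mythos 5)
The paper does not prove this lemma; it simply cites it as a direct consequence of Lemma~3.1 of the reference \cite{MR837985}. Your argument supplies a correct self-contained proof, which is a genuine addition rather than a rederivation of the paper's route. The key device is clean and worth highlighting: taking $d$ to be the \emph{minimal} positive integer with $K\cap(K+dv)\neq\emptyset$ serves double duty, since it both forces every off-diagonal self-intersection pair to satisfy $|\gamma(t)-\gamma(s)|\geq d|v|>0$ (so, via uniform continuity, $t-s$ is bounded below and the infimum over the closed set $\{(s,t)\in S:t-s\geq\delta\}$ is attained) and guarantees that the extracted simple subloop $\bar\gamma|_{[s^*,t^*]}$ has winding number at least $d\geq 2$. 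The contradiction then rests on the classical fact that a simple closed curve in an open cylinder has winding number in $\{0,\pm 1\}$; this is proved independently via the Jordan curve theorem in $\R^2$ (the lift is a properly embedded line disjoint from its $v$-translate, and iterating the translation gives a nested contradiction), so there is no circularity. One minor remark: the existence of an \emph{injective} arc in $K$ between $p$ and $p+dv$ is automatic if ``arcwise connected'' means connected by arcs; if one only assumes path-connectedness, it still holds because the image of a path in $\R^2$ is a Peano continuum, whose points are joined by arcs.
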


The following lemma expresses that if a path of $\wt S$ intersects a certain amount of lifts of a fixed closed geodesic, then up to considering a fixed fraction of these lifts, one can suppose that they are pairwise disjoint.

\begin{lemma}\label{LemUseResidFinite}
Let $\gamma$ be a closed geodesic on $S$. Then for any $M_0>0$ and any $R>0$, there exists $N_0\in\N$ such that for any path $\alpha : [0,1]\to S$ whose geometric intersection number with $\gamma$ is bigger than $N_0$, any lift $\wt\alpha$ of $\alpha$ to $\wt S$ crosses geometrically $M_0$ lifts of $\gamma$ that are pairwise disjoint, have the same orientation and are pairwise at a distance $\ge R$.
\end{lemma}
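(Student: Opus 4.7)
The plan is to prove the lemma by three successive pigeonhole-style reductions --- one for each of the three requested properties (same orientation, pairwise disjoint, pairwise distance $\geq R$) --- with the disjointness reduction powered by residual finiteness of $\pi_1(S)$. I would begin by noting that, since the geometric intersection number of $\alpha$ with $\gamma$ exceeds $N_0$, after minimising within the homotopy class of $\alpha$ one may assume that $\wt\alpha$ meets more than $N_0$ distinct lifts of $\gamma$, each separating the endpoints of $\wt\alpha$ in $\wt S$. Fixing an orientation on $\gamma$, each lift inherits one, and $\wt\alpha$ traverses each of them in one of two transverse directions; a first pigeonhole on this binary choice retains at least $N_0/2$ crossings of the same type, delivering the ``same orientation'' condition.

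For the disjointness reduction I would invoke Scott's theorem that surface groups of genus $\geq 2$ are LERF. Applied to the cyclic subgroup $\langle T\rangle \subset \pi_1(S)$ (the stabiliser of a distinguished lift $\wt\gamma$) together with representatives of the finitely many elements of $\pi_1(S)$ that realise self-intersections of $\gamma$, LERF yields a finite-index subgroup $H_0 \supset \langle T\rangle$ for which $\gamma$ lifts to an embedded closed geodesic in $S_{H_0}$. Passing to the normal core $H := \bigcap_{g \in \pi_1(S)} g H_0 g^{-1}$ --- still of finite index --- the Galois group $\pi_1(S)/H$ acts transitively on the components of $p^{-1}(\gamma) \subset S_H$, so the embeddedness of one component propagates to every component. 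Calling $k$ the number of components, the set of lifts of $\gamma$ in $\wt S$ partitions into $k$ classes according to which component they cover, and lifts within the same class are pairwise disjoint (being lifts of an embedded curve in $S_H$). A second pigeonhole then concentrates at least $N_0/(2k)$ of the crossings in a single class, producing that many pairwise disjoint, same-oriented lifts.

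For the distance condition, fix any lift $\wt\gamma^\ast$ of a component of $p^{-1}(\gamma)$; by compactness of $S_H$ the number $K_R$ of $\pi_1(S_H)$-translates of $\wt\gamma^\ast$ lying within distance $R$ of itself is finite, and by equivariance it is independent of the base lift. A greedy extraction --- pick a lift from the current pool, discard those within distance $R$, iterate --- produces at least $N_0/(2 k K_R)$ lifts at pairwise distance $\geq R$, so the choice $N_0 := 2 k K_R M_0$ suffices. The main technical obstacle I anticipate is the LERF step: Scott's theorem only directly guarantees embeddedness of \emph{one} distinguished lift of $\gamma$, and the normal-core construction --- exploiting the transitivity of the Galois action on components of $p^{-1}(\gamma)$ in a regular cover --- is what extends this to every component simultaneously; once this is secured, the remainder of the proof is a straightforward combination of pigeonhole and a greedy argument.
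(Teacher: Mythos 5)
Your overall strategy is the same as the paper's: use subgroup separability (LERF) to pass to a finite cover $\check S$ in which the preimage of $\gamma$ is a union of simple closed geodesics, and then run pigeonhole-style arguments. You treat the LERF step with more care than the paper does --- the observation that Scott's theorem only embeds one distinguished component, and that one should pass to the normal core so that the Galois action propagates embeddedness to every component, is correct and is exactly what justifies the paper's appeal to ``a classical result''. The different ordering of the three pigeonhole reductions (orientation first rather than component first) is harmless.

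However, the distance step has a genuine gap. You define $K_R$ as the number of $\pi_1(S_H)$-translates of $\wt\gamma^\ast$ lying within distance $R$ of $\wt\gamma^\ast$ and assert it is finite by compactness of $S_H$. That assertion is false: $\wt\gamma^\ast$ is a complete geodesic, its $R$-neighbourhood in $\wt S$ is non-compact, and in particular if $T$ generates the stabiliser of $\wt\gamma^\ast$ and $g\wt\gamma^\ast$ is any other lift at distance $\le R$, then every $T^n g\wt\gamma^\ast$ is also at the same distance from $\wt\gamma^\ast$ (because $\wt\gamma^\ast$ is $T$-invariant), so $K_R$ is infinite. Your greedy extraction therefore discards unboundedly many lifts per step and gives no lower bound on the surviving set. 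What rescues the step --- and what the paper actually uses --- is that the lifts under consideration all cross a fixed compact arc $\wt\alpha$. Since they are pairwise disjoint complete geodesics each separating the endpoints of $\wt\alpha$, they are linearly ordered along $\wt\alpha$, and any two consecutive ones are at distance at least $d$ (the minimal separation from the finite cover). For ordered separating geodesics the triangle inequality gives $d(\wt\gamma_1,\wt\gamma_m)\ge (m-1)d$ because a path between them must cross each intermediate one, so taking every $\lceil R/d\rceil$-th geodesic produces the required family at pairwise distance $\ge R$. You should replace the $K_R$ count by this ordering argument; once that is done the proof is correct and essentially coincides with the paper's.
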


\begin{proof}
It is a classical result that there exists a finite cover $\check S$ of $S$ on which the lifts $\check\gamma_1,\dots,\check\gamma_k$ of $\gamma$ are simple (it is a consequence of the facts that $\pi_1(S)$ is residually finite and that finitely generated subgroups of $\pi_1(S)$ are separable \cite{ResidFinit, ResidFinit2}). 

For $1\le i\le k$, define $d_i>0$ as the minimum distance between two different lifts of $\check\gamma_i$ to $\wt S$. Let $d = \max_{1\le i\le k} d_i$.  

Let $M_0>0$ and suppose that the geometric intersection number of $\alpha$ and $\gamma$ is bigger than $2kM_0\lceil R/d\rceil$. By the pigeonhole principle, this implies that there exists $1\le i_0\le k$ such that a lift $\wt\alpha$ of $\alpha$ to $\wt S$ crosses geometrically at least $2M_0\lceil R/d\rceil$ lifts of $\check\gamma_{i_0}$. Because they lift a simple geodesic, these lifts are pairwise disjoint. Moreover, at least $M_0\lceil R/d\rceil$ of them have the same orientation. 

As these geodesics are ordered and pairwise at a distance $\ge d_{i_0}$, at least $M_0$ of them are pairwise at a distance $\ge R$. These are the geodesics that satisfy the conclusion of the lemma.
\end{proof}

\paragraph{Foliations and isotopies.}

Given an isotopy $I = \{f_t\}_{t\in[0,1]}$ from the identity to $f$, its \emph{fixed point set} is $\Fix(I) = \bigcap_{t\in[0,1]} \Fix(f_t)$, and its \emph{domain} is $\dom(I) := S \backslash \Fix(I)$. Note that $\dom(I)$ is an oriented boundaryless surface, not necessarily closed, not necessarily connected, not necessarily of finite type.	

In this section we will consider an oriented surface $\Sigma$ without boundary, not necessarily closed or connected (with the idea to apply it to $\textnormal{dom}(I)$), and a non singular oriented topological foliation $\F$ on $\Sigma$. We will denote
$\wh\Sigma$ the universal covering space of $\Sigma$, $\wh\pi: \wh\Sigma\to \Sigma$ the covering projection and $\wh{\F}$ the lift of $\F$ to $\wh{\Sigma}$.

For every $x\in\Sigma$, we denote $\phi_{x}$ the leaf of $\F$ that contains $x$. 
The complement of any simple injective proper path $\wh\gamma$ of $\wh\Sigma$ inside the connected component  of $\wh\Sigma$ containing $\wh\gamma$ has two connected components, that we denote $L(\wh\gamma)$ and $R(\wh\gamma)$, chosen according to some fixed orientation of $\wh\Sigma$ and the orientation of $\wh\gamma$. 
Given a simple injective oriented proper path $\wh\gamma$ of $\wh\Sigma$ and $\wh x\in \wh\gamma$, we denote $\wh\gamma^+_{\wh x}$ and $\wh\gamma^-_{\wh x}$ the connected components of $\wh\gamma\setminus\{\wh x\}$, chosen accordingly to the orientation of $\wh\gamma$; their respective projections on $\Sigma$ are denoted respectively $\gamma^+_{x}$ and $\gamma^-_{x}$.

\paragraph{$\F$-transverse paths and $\F$-transverse intersections.}

A path $\alpha:J\to\Sigma$ is called \emph{positively transverse}\footnote{In the sequel, ``transverse'' will mean ``positively transverse''.} to $\F$ if it locally crosses each leaf of $\F$ from left to right. 
Note that every lift $\wh\alpha:J\to\wh\Sigma$ of a positively transverse path $\alpha$ is positively transverse to $\wh{\F}$. Moreover, asking that $\alpha$ is transverse is equivalent to requiring that for any lift $\wh\alpha$ and for every $a<b$ in $J$, the path $\wh\alpha|_{[a,b]}$ meets once every leaf $\wh\phi$ of $\wh \F$ such that $L(\wh\phi_{\wh \alpha(a)})\subset L(\wh\phi)\subset L(\wh\phi_{\wh \alpha(b)})$, and $\wh\alpha|_{[a,b]}$ does not meet any other leaf.
We will say that two transverse paths $\wh\alpha_1:J_1\to\wh\Sigma$ and $\wh\alpha_2:J_2\to\wh\Sigma$ are \emph{$\wh\F$-equivalent} if they meet the same leaves of $\wh{\F}$. Two transverse paths $\alpha_1:J_1\to\Sigma$ and $\alpha_2:J_2\to\Sigma$ are said to be \emph{$\F$-equivalent} if they have lifts to $\wh \Sigma$ that are $\wh\F$-equivalent. When it is clear from the context, we will say that the paths are equivalent and not $\F$-equivalent.

\begin{figure}
\begin{center}

\tikzset{every picture/.style={line width=0.6pt}} 

\begin{tikzpicture}[x=0.75pt,y=0.75pt,yscale=-1,xscale=1]

\draw [color={rgb, 255:red, 245; green, 166; blue, 35 }  ,draw opacity=1 ]   (261.43,50.73) .. controls (259.25,85.76) and (271.02,112.03) .. (273.64,145.74) .. controls (276.25,179.45) and (300.23,213.16) .. (286.72,229.8) ;
\draw [shift={(266.37,101.91)}, rotate = 78.81] [fill={rgb, 255:red, 245; green, 166; blue, 35 }  ,fill opacity=1 ][line width=0.08]  [draw opacity=0] (8.04,-3.86) -- (0,0) -- (8.04,3.86) -- (5.34,0) -- cycle    ;
\draw [shift={(285.23,191.05)}, rotate = 71.75] [fill={rgb, 255:red, 245; green, 166; blue, 35 }  ,fill opacity=1 ][line width=0.08]  [draw opacity=0] (8.04,-3.86) -- (0,0) -- (8.04,3.86) -- (5.34,0) -- cycle    ;
\draw [color={rgb, 255:red, 245; green, 166; blue, 35 }  ,draw opacity=1 ]   (156.35,52.92) .. controls (164.64,69.56) and (165.07,76.12) .. (155.92,84.88) .. controls (146.76,93.64) and (130.19,100.2) .. (120.6,95.83) ;
\draw [shift={(162.66,72.57)}, rotate = 103] [fill={rgb, 255:red, 245; green, 166; blue, 35 }  ,fill opacity=1 ][line width=0.08]  [draw opacity=0] (8.04,-3.86) -- (0,0) -- (8.04,3.86) -- (5.34,0) -- cycle    ;
\draw [shift={(136.28,95.94)}, rotate = 172] [fill={rgb, 255:red, 245; green, 166; blue, 35 }  ,fill opacity=1 ][line width=0.08]  [draw opacity=0] (8.04,-3.86) -- (0,0) -- (8.04,3.86) -- (5.34,0) -- cycle    ;
\draw [color={rgb, 255:red, 245; green, 166; blue, 35 }  ,draw opacity=1 ]   (391,50.6) .. controls (378.79,61.98) and (385.69,73.93) .. (392.23,82.25) .. controls (398.77,90.57) and (420.57,95.39) .. (430.6,85.32) ;
\draw [shift={(385.1,69.68)}, rotate = 66] [fill={rgb, 255:red, 245; green, 166; blue, 35 }  ,fill opacity=1 ][line width=0.08]  [draw opacity=0] (8.04,-3.86) -- (0,0) -- (8.04,3.86) -- (5.34,0) -- cycle    ;
\draw [shift={(414.19,90.98)}, rotate = -5] [fill={rgb, 255:red, 245; green, 166; blue, 35 }  ,fill opacity=1 ][line width=0.08]  [draw opacity=0] (8.04,-3.86) -- (0,0) -- (8.04,3.86) -- (5.34,0) -- cycle    ;
\draw [color={rgb, 255:red, 245; green, 166; blue, 35 }  ,draw opacity=1 ]   (209.98,50.29) .. controls (206.49,112.03) and (133.24,101.96) .. (135.86,135.67) .. controls (138.48,169.38) and (228.73,165.88) .. (216.09,224.11) ;
\draw [shift={(176.16,100.54)}, rotate = 149.51] [fill={rgb, 255:red, 245; green, 166; blue, 35 }  ,fill opacity=1 ][line width=0.08]  [draw opacity=0] (8.04,-3.86) -- (0,0) -- (8.04,3.86) -- (5.34,0) -- cycle    ;
\draw [shift={(189.24,174.67)}, rotate = 28.12] [fill={rgb, 255:red, 245; green, 166; blue, 35 }  ,fill opacity=1 ][line width=0.08]  [draw opacity=0] (8.04,-3.86) -- (0,0) -- (8.04,3.86) -- (5.34,0) -- cycle    ;
\draw [color={rgb, 255:red, 245; green, 166; blue, 35 }  ,draw opacity=1 ]   (322.47,48.1) .. controls (322.47,103.27) and (410.46,94.79) .. (412.2,125) .. controls (413.94,155.21) and (323.34,172.01) .. (347.76,227.17) ;
\draw [shift={(364.02,94.87)}, rotate = 22.65] [fill={rgb, 255:red, 245; green, 166; blue, 35 }  ,fill opacity=1 ][line width=0.08]  [draw opacity=0] (8.04,-3.86) -- (0,0) -- (8.04,3.86) -- (5.34,0) -- cycle    ;
\draw [shift={(364.67,172.23)}, rotate = 140.57] [fill={rgb, 255:red, 245; green, 166; blue, 35 }  ,fill opacity=1 ][line width=0.08]  [draw opacity=0] (8.04,-3.86) -- (0,0) -- (8.04,3.86) -- (5.34,0) -- cycle    ;
\draw [color={rgb, 255:red, 84; green, 0; blue, 213 }  ,draw opacity=1 ][line width=1.5]    (155.92,84.88) .. controls (175.97,129.54) and (235.71,91.89) .. (273.64,145.74) .. controls (311.57,199.59) and (356.04,167.19) .. (381.77,186.02) ;
\draw [shift={(219.54,113.55)}, rotate = 188.99] [fill={rgb, 255:red, 84; green, 0; blue, 213 }  ,fill opacity=1 ][line width=0.08]  [draw opacity=0] (8.75,-4.2) -- (0,0) -- (8.75,4.2) -- (5.81,0) -- cycle    ;
\draw [shift={(327.1,178.63)}, rotate = 187.51] [fill={rgb, 255:red, 84; green, 0; blue, 213 }  ,fill opacity=1 ][line width=0.08]  [draw opacity=0] (8.75,-4.2) -- (0,0) -- (8.75,4.2) -- (5.81,0) -- cycle    ;
\draw [color={rgb, 255:red, 0; green, 116; blue, 201 }  ,draw opacity=1 ][line width=1.5]    (151.12,196.96) .. controls (196.46,209.66) and (233.96,202.65) .. (273.64,145.74) .. controls (313.31,88.82) and (366.07,123.85) .. (392.23,82.25) ;
\draw [shift={(225.84,192.34)}, rotate = 153.5] [fill={rgb, 255:red, 0; green, 116; blue, 201 }  ,fill opacity=1 ][line width=0.08]  [draw opacity=0] (8.75,-4.2) -- (0,0) -- (8.75,4.2) -- (5.81,0) -- cycle    ;
\draw [shift={(335.43,108.69)}, rotate = 169.41] [fill={rgb, 255:red, 0; green, 116; blue, 201 }  ,fill opacity=1 ][line width=0.08]  [draw opacity=0] (8.75,-4.2) -- (0,0) -- (8.75,4.2) -- (5.81,0) -- cycle    ;
\draw [color={rgb, 255:red, 245; green, 166; blue, 35 }  ,draw opacity=1 ]   (121.47,179.01) .. controls (134.99,171.57) and (152.43,182.51) .. (151.12,196.96) .. controls (149.81,211.41) and (137.17,224.98) .. (125.4,224.55) ;
\draw [shift={(144.21,181.45)}, rotate = 45.25] [fill={rgb, 255:red, 245; green, 166; blue, 35 }  ,fill opacity=1 ][line width=0.08]  [draw opacity=0] (8.04,-3.86) -- (0,0) -- (8.04,3.86) -- (5.34,0) -- cycle    ;
\draw [shift={(140.71,217.82)}, rotate = 141.55] [fill={rgb, 255:red, 245; green, 166; blue, 35 }  ,fill opacity=1 ][line width=0.08]  [draw opacity=0] (8.04,-3.86) -- (0,0) -- (8.04,3.86) -- (5.34,0) -- cycle    ;
\draw [color={rgb, 255:red, 245; green, 166; blue, 35 }  ,draw opacity=1 ]   (428.42,198.71) .. controls (412.72,179.45) and (387.87,180.76) .. (381.77,186.02) .. controls (375.66,191.27) and (355.61,205.72) .. (373.92,221.48) ;
\draw [shift={(403.94,183.63)}, rotate = 185.09] [fill={rgb, 255:red, 245; green, 166; blue, 35 }  ,fill opacity=1 ][line width=0.08]  [draw opacity=0] (8.04,-3.86) -- (0,0) -- (8.04,3.86) -- (5.34,0) -- cycle    ;
\draw [shift={(366.62,205.34)}, rotate = 88.52] [fill={rgb, 255:red, 245; green, 166; blue, 35 }  ,fill opacity=1 ][line width=0.08]  [draw opacity=0] (8.04,-3.86) -- (0,0) -- (8.04,3.86) -- (5.34,0) -- cycle    ;
\draw  [fill={rgb, 255:red, 0; green, 0; blue, 0 }  ,fill opacity=1 ] (270.32,145.74) .. controls (270.32,143.9) and (271.81,142.41) .. (273.64,142.41) .. controls (275.47,142.41) and (276.95,143.9) .. (276.95,145.74) .. controls (276.95,147.58) and (275.47,149.07) .. (273.64,149.07) .. controls (271.81,149.07) and (270.32,147.58) .. (270.32,145.74) -- cycle ;
\draw  [fill={rgb, 255:red, 0; green, 0; blue, 0 }  ,fill opacity=1 ] (388.92,82.25) .. controls (388.92,80.41) and (390.4,78.92) .. (392.23,78.92) .. controls (394.06,78.92) and (395.55,80.41) .. (395.55,82.25) .. controls (395.55,84.09) and (394.06,85.58) .. (392.23,85.58) .. controls (390.4,85.58) and (388.92,84.09) .. (388.92,82.25) -- cycle ;
\draw  [fill={rgb, 255:red, 0; green, 0; blue, 0 }  ,fill opacity=1 ] (378.45,186.02) .. controls (378.45,184.18) and (379.94,182.69) .. (381.77,182.69) .. controls (383.6,182.69) and (385.08,184.18) .. (385.08,186.02) .. controls (385.08,187.86) and (383.6,189.35) .. (381.77,189.35) .. controls (379.94,189.35) and (378.45,187.86) .. (378.45,186.02) -- cycle ;
\draw  [fill={rgb, 255:red, 0; green, 0; blue, 0 }  ,fill opacity=1 ] (147.8,196.96) .. controls (147.8,195.12) and (149.29,193.63) .. (151.12,193.63) .. controls (152.95,193.63) and (154.44,195.12) .. (154.44,196.96) .. controls (154.44,198.8) and (152.95,200.29) .. (151.12,200.29) .. controls (149.29,200.29) and (147.8,198.8) .. (147.8,196.96) -- cycle ;
\draw  [fill={rgb, 255:red, 0; green, 0; blue, 0 }  ,fill opacity=1 ] (152.6,84.88) .. controls (152.6,83.04) and (154.09,81.55) .. (155.92,81.55) .. controls (157.75,81.55) and (159.23,83.04) .. (159.23,84.88) .. controls (159.23,86.72) and (157.75,88.21) .. (155.92,88.21) .. controls (154.09,88.21) and (152.6,86.72) .. (152.6,84.88) -- cycle ;

\draw (284.07,145.38) node [anchor=west] [inner sep=0.75pt]  [font=\small]  {$\wh{\alpha }_{1}( t_{1}) =\wh{\alpha }_{2}( t_{2})$};
\draw (205.68,114.4) node [anchor=north] [inner sep=0.75pt]  [color={rgb, 255:red, 84; green, 0; blue, 213 }  ,opacity=1 ]  {$\wh{\alpha }_{1}$};
\draw (244.15,176.67) node [anchor=south east] [inner sep=0.75pt]  [color={rgb, 255:red, 0; green, 116; blue, 201 }  ,opacity=1 ]  {$\wh{\alpha }_{2}$};
\draw (394.23,78.85) node [anchor=south west] [inner sep=0.75pt]  [font=\small]  {$\wh{\alpha }_{2}( b_{2})$};
\draw (383.77,189.42) node [anchor=north west][inner sep=0.75pt]  [font=\small]  {$\wh{\alpha }_{1}( b_{1})$};
\draw (145.8,196.96) node [anchor=east] [inner sep=0.75pt]  [font=\small]  {$\wh{\alpha }_{2}( a_{2})$};
\draw (153.92,81.48) node [anchor=south east] [inner sep=0.75pt]  [font=\small]  {$\wh{\alpha }_{1}( a_{1})$};

\end{tikzpicture}

\caption{Example of $\wh{\F}$-transverse intersection.\label{Fig:extransverse}}
\end{center}
\end{figure}
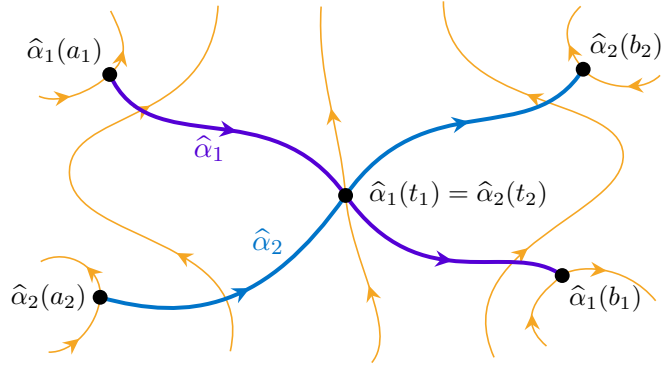

\begin{definition}\label{DefInterTrans}
Let $\wh \phi_1$, $\wh\phi_2$ and $\wh\phi_3$ be three leaves of $\wh \F$. We say that $\wh\phi_1$ \emph{is above $\wh \phi_2$ relative to $\wh\phi_3$} if there exist disjoint paths $\wh \delta_1$ and $\wh\delta_2$ linking $\wh\phi_1$, respectively $\wh\phi_2$, to $\wh\phi_3$, disjoint from these leaves but at their extremities, and such that $\wh\delta_1\cap\wh\phi_3$ is after $\wh\delta_2\cap\wh\phi_3$ for the order on $\wh\phi_3$.

Let $\wh\alpha_1:J_1\to \wh\Sigma$ and $\wh\alpha_2:J_2\to \wh \Sigma$ be two transverse paths such that there exist $t_1\in J_1$ and $t_2\in J_2$ satisfying $\wh\alpha_1(t_1)=\wh\alpha_2(t_2)$. We say that $\wh\alpha_1$ and $\wh\alpha_2$ have an \emph{$\wh{\F}$-transverse intersection} at $\wh\alpha_1(t_1)=\wh\alpha_2(t_2)$ (see Figure~\ref{Fig:extransverse}) 
if there exist $a_1, b_1\in J_1$ with $a_1<t_1<b_1$, and $a_2, b_2\in J_2$ with $a_2<t_2<b_2$, such that
$\wh\phi_{\wh\alpha_1(a_1)}$ is above $\wh\phi_{\wh\alpha_2(a_2)}$ relative to $\wh\phi_{\wh\alpha_2(t_2)}$, and $\wh\phi_{\wh\alpha_1(b_1)}$ is below $\wh\phi_{\wh\alpha_2(b_2)}$ relative to $\wh\phi_{\wh\alpha_2(t_2)}$.
\end{definition}

A transverse intersection means that there is a ``crossing'' between the two paths naturally defined by $\hat\alpha_1$ and $\hat\alpha_2$ in the space of leaves of $\widehat{\F}$, which is a one-dimensional topological manifold, usually non Hausdorff.

\bigskip
We say that two transverse paths $\alpha_1$ and $\alpha_2$ of $\Sigma$ have an \emph{$\F$-transverse intersection} if they have lifts to $\wh\Sigma$ having an $\wh\F$-transverse intersection.
If $\alpha_1=\alpha_2$ one speaks of a \emph{$\F$-transverse self-intersection}. In this case, if $\widehat \alpha_1$ is a lift of $\alpha_1$, then there exists a deck transformation $T\in\mathcal G$ such that $\widehat\alpha_1$ and $T\widehat\alpha_1$ have an $\widehat{\F}$-transverse intersection.

\paragraph{Recurrence and equivalence for $\F$.}

We will say a transverse path $\alpha:\R\to \Sigma$ is \emph{positively recurrent} (resp. \emph{negatively recurrent}) if, for every $a<b$, there exist $c<d$, with $b<c$ (resp. with $d<a$), such that $\alpha|_{[a,b]}$ and $\alpha|_{[c,d]}$  are equivalent.
Finally, $\alpha$ is \emph{recurrent} if it is both positively and negatively recurrent.

Two transverse paths $\alpha_1:\R\to \Sigma$ and $\alpha_2:\R\to \Sigma$ are said to be \emph{equivalent at $+\infty$} (denoted $\alpha_1\sim_{+\infty}\alpha_2$) if there exist $a_1,a_2\in \R$ such that $\alpha_1{}|_{[a_1,+\infty)}$ and  $\alpha_2{}|_{[a_2,+\infty)}$ are equivalent. Similarly $\alpha_1$ and $\alpha_2$ are said \emph{equivalent at $-\infty$} (denoted $\alpha_1\sim_{-\infty}\alpha_2$) if there exist $b_1,b_2\in \R$ such that $\alpha_1{}|_{(-\infty,b_1]}$ and  $\alpha_2{}|_{(-\infty,b_2]}$ are equivalent.

\subsection{Accumulation property}

We say that a transverse path $\alpha_1:\R\to \Sigma$ \emph{accumulates positively} on the transverse path $\alpha_2:\R\to \Sigma$ if there exist real numbers $a_1$ and $a_2<b_2$ such that  $\alpha_1{}|_{[a_1,+\infty)}$ and $\alpha_2{}|_{[a_2,b_2)}$ are $\F$-equivalent. Similarly, $\alpha_1$ \emph{accumulates negatively} on $\alpha_2$ if there exist real numbers $b_1$ and $a_2<b_2$ such that  $\alpha_1{}|_{(-\infty,b_1]}$ and $\alpha_2{}|_{(a_2,b_2]}$ are $\F$-equivalent. 
Finally, we say that $\alpha_1$ \emph{accumulates} on $\alpha_2$ if it accumulates either positively or negatively on $\alpha_2$.

The following is \cite[Lemme 2.1.3]{lellouch}, see also \cite[Corollary 3.10]{guiheneuf2023area}:

\begin{prop}\label{Prop2.1.3Lellouch}
If $\alpha : \R\to \Sigma$ is a transverse recurrent path, then it cannot accumulate on itself.
\end{prop}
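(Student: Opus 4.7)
I argue by contradiction, assuming $\alpha$ is a recurrent transverse path that accumulates on itself, say positively (the negative case being analogous, using negative recurrence throughout). Thus there exist $a_1 \in \R$ and $a_2 < b_2$ and lifts $\wh\alpha_1$ of $\alpha|_{[a_1,+\infty)}$ and $\wh\alpha_2$ of $\alpha|_{[a_2,b_2)}$ that meet the same set $\Lambda$ of leaves of $\wh\F$. Since a positively transverse path in $\wh\Sigma$ meets each leaf of $\wh\F$ at most once (by the leaf-by-leaf characterisation of transversality recalled above), the set $\Lambda$ is totally ordered and stands in order-preserving bijection with each of the two parameter intervals. Extending $\wh\alpha_2$ by continuity to the closed interval $[a_2,b_2]$, its endpoint lies on a leaf $\wh\phi_*$ strictly above every leaf of $\Lambda$; in particular $\wh\phi_* \notin \Lambda$.

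Let $\wh\alpha: \R \to \wh\Sigma$ denote the unique lift of $\alpha$ extending $\wh\alpha_2$, and write $\wh\alpha_1 = T\,\wh\alpha|_{[a_1,+\infty)}$ for some deck transformation $T$. The injectivity of $t \mapsto \wh\phi_{\wh\alpha(t)}$ forces $T \neq \Id$, since otherwise equality of leaf sets would yield $[a_1,+\infty)=[a_2,b_2)$, absurd. Up to enlarging $a_1$ (and correspondingly shrinking $[a_2,b_2)$ via the bijection between parameter intervals), I may assume $a_1 > b_2$, so the intervals are disjoint. The identification of leaf sets then translates into an order-preserving bijection $\tau: [a_1,+\infty) \to [a_2,b_2)$ characterised by $T\wh\phi_{\wh\alpha(t)} = \wh\phi_{\wh\alpha(\tau(t))}$, satisfying $\tau(t) \to b_2^-$ as $t \to +\infty$.

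The heart of the argument is to construct infinitely many distinct lifts of the single $\F$-leaf $\phi_* := \wh\pi(\wh\phi_*)$ inside $\Lambda$. Choose $\varepsilon > 0$ so small that $\alpha|_{[b_2-\varepsilon, b_2+\varepsilon]}$ meets $\phi_*$ only at time $b_2$. Iterating positive recurrence (applied to progressively larger initial arcs and then restricted to the sub-piece corresponding to $[b_2-\varepsilon, b_2+\varepsilon]$), I obtain a sequence of intervals $[c_n, d_n]$ with $c_n \to +\infty$ together with deck transformations $R_n$ such that $R_n \wh\alpha|_{[b_2-\varepsilon, b_2+\varepsilon]}$ is $\wh\F$-equivalent to the sub-path $\wh\alpha|_{[c_n,d_n]}$ (the element $R_n$ absorbs any discrepancy between the lifts appearing in the $\F$-equivalence). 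Matching the leaf at time $b_2$ on the left with its correspondent on the right, one finds $R_n\wh\phi_* = \wh\phi_{\wh\alpha(t_n^*)}$ for some $t_n^* \in [c_n,d_n] \subset [a_1,+\infty)$ (for $n$ large). Applying $T$ and invoking the identification $T\wh\phi_{\wh\alpha(t)} = \wh\phi_{\wh\alpha(\tau(t))}$ gives
\[
TR_n\wh\phi_* \;=\; \wh\phi_{\wh\alpha(\tau(t_n^*))} \;\in\; \Lambda.
\]
Since $t_n^* \to +\infty$ and $\tau$ is injective, the leaves $\{TR_n\wh\phi_*\}_n$ are pairwise distinct, all in $\Lambda$, and all lifts of the single $\F$-leaf $\phi_*$.

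This contradicts finiteness: extending $\wh\alpha_2$ to the compact interval $[a_2,b_2]$ yields a positively transverse path whose set of intersection times with $\phi_*$ is closed and locally discrete (two transverse crossings of $\phi_*$ cannot occur in a single flow box, as the path would otherwise be on the wrong side between them), hence finite; by transversality in $\wh\Sigma$, each such intersection meets a distinct lift of $\phi_*$, so $\wh\alpha_2$ meets only finitely many lifts of $\phi_*$, contradicting the infinite family produced in the previous paragraph. The main technical hurdle in this plan is precisely the bookkeeping of deck transformations in that paragraph: one must check that iterated positive recurrence does push the return times $c_n$ to infinity while preserving the equivalence of the fixed short arc to its copy, and that the lifts involved can be consistently aligned as sub-paths of $\wh\alpha$ by absorbing ambiguities into the $R_n$.
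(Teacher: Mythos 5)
Your argument is sound up to the construction of the infinite family of leaves $TR_n\wh\phi_*$ lying in $\Lambda$, each a distinct lift of the single $\F$-leaf $\phi_*$: the reduction to $a_1>b_2$, the order-preserving bijection $\tau$, and the bookkeeping with the $R_n$ (which you flag as the main hurdle) all work. The gap is in the final paragraph, where you claim that a compact positively transverse arc meets $\phi_*$ at a locally discrete, hence finite, set of times. This is false, and the parenthetical justification does not hold: a leaf can intersect a single flow box in many distinct plaques, and crossing one plaque from left to right says nothing about which side of a different plaque of the same leaf the path lies on. Concretely, in a Denjoy-type foliation of the torus, a leaf in the exceptional minimal set meets a compact transverse arc in a Cantor set, so the intersection set can even be uncountable.

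As a consequence, the configuration you reach --- countably many pairwise disjoint lifts of $\phi_*$ in $\wh\Sigma$, all crossed by the compact arc $\wh\alpha|_{[a_2,b_2]}$, with crossing times accumulating at $b_2$ --- is perfectly consistent with the topology of a non-singular foliation of the plane, and no contradiction follows from cardinality considerations alone. To finish, one has to exploit the finer structure that the accumulation imposes, in the spirit of Proposition~\ref{Prop3.3GLCP}: the accumulation forces $\alpha$ to be trapped in the band of a simple transverse loop with the accumulated leaf lying on the boundary of that band, and it is the interplay between this band structure and recurrence that yields the contradiction. Your construction of the accumulating family is useful raw material, but the final step must be replaced by an argument of that kind.
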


The following property asserts that the accumulation of a recurrent transverse path on another transverse path only occurs in a very specific configuration \cite[Proposition 3.3]{guiheneuf2023area}:

\begin{prop}\label{Prop3.3GLCP}
Suppose that $\alpha_1:\R\to \Sigma$ is a positively recurrent transverse path that accumulates positively on a transverse path $\alpha_2:\R\to \Sigma$. 
Then, there exists a transverse simple loop $\Gamma_*\subset \Sigma$ with the following properties.
\begin{enumerate}
\item The set $B$ of leaves met by $\Gamma_*$ is an open annulus of $\Sigma$.
\item The path $\alpha_1$ stays in $B$ and is equivalent to the natural lift of $\Gamma_*$.
\item If $\wh\alpha_1$, $\wh\alpha_2$ are lifts of $\alpha_1$, $\alpha_2$ to the universal covering space $\wh \Sigma$ such that $\wh\alpha_1\vert_{[a_1,+\infty)}$ is equivalent to $\wh\alpha_2\vert_{[a_2,b_2)}$ and if $\wh B$ is the lift of $B$ that contains $\wh\alpha_1$, then one of the inclusions $\wh\phi_{\wh\alpha_2(b_2)}\subset \partial \wh B^R$, $\wh\phi_{\wh\alpha_2(b_2)}\subset \partial \wh B^L$ holds. In the first case, we have $\wh B\subset L(\wh \phi)$ for every $\wh \phi\subset \partial \wh B^R$ and in the second case, we have $\wh B\subset R(\wh \phi)$ for every $\wh \phi\subset \partial \wh B^L$. 
\end{enumerate}
\end{prop}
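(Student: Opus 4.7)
The plan is to work in the universal cover and use the positive recurrence of $\alpha_1$ to extract a nontrivial deck transformation $T$ that shifts $\wh\alpha_1$ forward in the leaf space of $\wh\F$; the annulus $B$ and the simple transverse loop $\Gamma_*$ will then arise as a fundamental domain of $\langle T\rangle$ inside a $T$-invariant strip of leaves, and Proposition~\ref{Prop2.1.3Lellouch} will enforce the simplicity of $\Gamma_*$.

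First, I would fix lifts $\wh\alpha_1,\wh\alpha_2$ realising the accumulation, so that $\wh\alpha_1|_{[a_1,+\infty)}$ and $\wh\alpha_2|_{[a_2,b_2)}$ are $\wh\F$-equivalent, and denote by $\mathcal L$ their common set of met leaves. Because a positively transverse path in $\wh\Sigma$ meets each leaf of $\wh\F$ at most once, $\wh\alpha_1|_{[a_1,+\infty)}$ traverses $\mathcal L$ monotonically in the order inherited from $\wh\alpha_2$ and converges in leaf space to the unmet limit leaf $\wh\phi_{\wh\alpha_2(b_2)}$. Applying positive recurrence to a short sub-interval $[a,b]\subset[a_1,+\infty)$ yields $c<d$ with $b<c$ such that $\alpha_1|_{[a,b]}$ and $\alpha_1|_{[c,d]}$ are $\F$-equivalent; since $\wh\alpha_1|_{[a,b]}$ and $\wh\alpha_1|_{[c,d]}$ meet disjoint sets of leaves in $\wh\Sigma$, this equivalence is realised by a nontrivial deck transformation $T$ with $T\wh\phi_{\wh\alpha_1(a)}=\wh\phi_{\wh\alpha_1(c)}$. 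The monotonicity of $\wh\alpha_1$ in $\mathcal L$ then forces $T$ to shift leaves of $\mathcal L$ strictly forward.

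Next, I would let $\wh B_0$ be the open region of $\wh\Sigma$ swept by leaves of $\wh\F$ lying strictly between $\wh\phi_{\wh\alpha_1(a)}$ and $T\wh\phi_{\wh\alpha_1(a)}$, and set $\wh B:=\bigcup_{k\in\Z}T^k\wh B_0$. Since $T$ shifts leaves of $\mathcal L$ strictly forward, the iterates $T^k\wh B_0$ are pairwise disjoint, so $B:=\wh\pi(\wh B)$ is an open annulus of $\Sigma$ with $\pi_1(B)\simeq\langle T\rangle$. A transverse arc in $\wh B_0$ joining $\wh\phi_{\wh\alpha_1(a)}$ to $T\wh\phi_{\wh\alpha_1(a)}$ projects to a transverse loop $\Gamma\subset B$ generating $\pi_1(B)$, and iterating the recurrence argument shows that the entire tail $\wh\alpha_1|_{[a,+\infty)}$ remains in $\wh B$, so $\alpha_1$ stays in $B$ and is $\F$-equivalent to the natural lift of $\Gamma$. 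Simplicity of $\Gamma$ then follows from Proposition~\ref{Prop2.1.3Lellouch}: a self-intersection of $\Gamma$ in $\Sigma$ would place two points of $\wh\alpha_1$ belonging to distinct $T^k\wh B_0$ on a common leaf of $\wh\F$, and combining this with the $T$-periodicity of the tail of $\wh\alpha_1$ would exhibit an $\wh\F$-equivalence between a sub-tail of $\wh\alpha_1$ and an earlier proper subpath of itself --- the self-accumulation ruled out by that proposition. Set $\Gamma_*:=\Gamma$.

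For property~(3), $\wh B$ is an open strip bounded in leaf space by two (possibly non-Hausdorff) limit leaf-sets $\partial\wh B^L,\partial\wh B^R$, arising as leaf-space limits of $T^k\wh\phi_{\wh\alpha_1(a)}$ as $k\to-\infty$ and $k\to+\infty$ respectively. The monotone increasing sequence $\{T^k\wh\phi_{\wh\alpha_1(a)}\}_{k\ge 0}$ in $\mathcal L$ converges to $\wh\phi_{\wh\alpha_2(b_2)}$, so this leaf lies in whichever of $\partial\wh B^L$ or $\partial\wh B^R$ corresponds to the $T$-forward side of the strip (this selects the two alternatives of property~(3)); the global inclusion of $\wh B$ on one side of every boundary leaf on that component is then immediate from the strip structure of $\wh B$. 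The main obstacle I anticipate is the iteration step that propagates the single recurrence equivalence $\alpha_1|_{[a,b]}\sim_\F\alpha_1|_{[c,d]}$ to a consistent $T$-periodicity of the whole forward tail of $\wh\alpha_1$: this requires invoking positive recurrence repeatedly while ensuring that the resulting deck transformations are all powers of a single $T$, which in turn relies on a careful use of non-self-accumulation to rule out inconsistent return patterns (and, if $\Gamma$ should \emph{a priori} be a multiple cover of a shorter loop, passing to a primitive generator of $\langle T\rangle$).
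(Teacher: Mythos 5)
This proposition is cited in the paper from \cite{guiheneuf2023area} without proof, so there is no internal argument to compare your proposal against; I evaluate it on its own merits.

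Your general strategy is the right one (extract a deck transformation $T$ from recurrence, take a $T$-invariant strip $\wh B$, project to an annulus, invoke Proposition~\ref{Prop2.1.3Lellouch} to get simplicity, and then read off property (3) from the boundary-of-the-strip structure), and it matches the architecture one expects for this type of statement. But there are two genuine gaps, one of which you partly acknowledge.

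First and most seriously, the step ``the entire tail $\wh\alpha_1|_{[a,+\infty)}$ remains in $\wh B$'' is not established. A single application of recurrence produces a deck transformation $T$ with $T\wh\alpha_1|_{[a,b]}\sim\wh\alpha_1|_{[c,d]}$; but this only controls an initial piece. To conclude, you need $T\wh\alpha_1\sim_{+\infty}\wh\alpha_1$, and this is exactly where the non-Hausdorffness of the leaf space bites: two positively transverse paths that agree up to a leaf $\wh\phi_0$ can branch at $\wh\phi_0$. Your gesture at Proposition~\ref{Prop2.1.3Lellouch} does not obviously rule this out --- the putative branching of $T\wh\alpha_1$ and $\wh\alpha_1$ does not immediately translate into a self-accumulation of $\alpha_1$ in $\Sigma$. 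A cleaner endgame, once $T\wh\alpha_1\sim_{+\infty}\wh\alpha_1$ is established, would be to invoke Proposition~\ref{Prop2.1.17lellouch} to produce the $T$-loop directly; but that just relocates the difficulty, it does not resolve it.

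Second, your treatment of property (3) dismisses the key assertion as ``immediate from the strip structure,'' and it is not. The claim is not merely that $\wh\phi_{\wh\alpha_2(b_2)}$ lies in one component of $\partial\wh B$, but that the entire band $\wh B$ lies on one fixed side of \emph{every} leaf of that frontier component (e.g.\ $\wh B\subset L(\wh\phi)$ for all $\wh\phi\subset\partial\wh B^R$). For a general $T$-band this can fail; it is a special consequence of the fact that $\wh\alpha_1$ accumulates from one side onto the limit leaf. This needs an argument of its own, likely again using positive recurrence plus non-self-accumulation.

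Finally, a smaller point worth noting: part (2) of the proposition asserts that \emph{all} of $\alpha_1$, not just its forward tail, stays in $B$; your construction only produces control of $\wh\alpha_1|_{[a,+\infty)}$, so the backward part needs to be addressed as well, presumably via the $\F$-equivalence of $\alpha_1$ with the bi-infinite natural lift of $\Gamma_*$ once that equivalence has been fully justified.
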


Let us get some additional properties of this configuration, that will be used in the sequel.

\begin{lemma}\label{LemAccumul}
Suppose that $\alpha_0 : \R\to \Sigma$ is a positively recurrent transverse path that accumulates positively on a transverse path $\alpha_1 : J_1\to \Sigma$, where $J_1$ is an interval of $\R$.
Then, $\alpha_0$ accumulates positively on any transverse path $\alpha_2 : J_2\to \Sigma$ that crosses $B$.
\end{lemma}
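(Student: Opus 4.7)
The plan is to apply Proposition~\ref{Prop3.3GLCP} to the hypothesis in order to extract the annulus structure around $\alpha_0$, and then to show that any positively transverse path entering the associated lift $\wh B$ must sweep out the same infinite tail of leaves as the forward part of $\wh\alpha_0$. First I would apply Proposition~\ref{Prop3.3GLCP} to $\alpha_0$ and $\alpha_1$ (in the roles of the recurrent path and the accumulated-upon path respectively), yielding the simple transverse loop $\Gamma_*$, the annulus $B$, and compatible lifts $\wh\alpha_0\subset\wh B$ and $\wh\alpha_1$ with $\wh\alpha_0|_{[a_0,+\infty)}$ and $\wh\alpha_1|_{[a_1,b_1)}$ being $\wh\F$-equivalent; up to swapping $L\leftrightarrow R$, the leaf $\wh\phi_\infty:=\wh\phi_{\wh\alpha_1(b_1)}$ lies in $\partial\wh B^R$ and $\wh B\subset L(\wh\phi)$ for every $\wh\phi\subset\partial\wh B^R$. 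The lift of the natural lift of $\Gamma_*$ meets each leaf of $\wh B$ exactly once, which provides a parametrization of the leaves of $\wh B$ by $\lambda\in\R$ such that positively transverse paths inside $\wh B$ have strictly increasing $\lambda$-coordinate and the deck transformation $T$ preserving $\wh B$ acts on $\lambda$ as a translation. The equivalence of $\wh\alpha_0|_{[a_0,+\infty)}$ with $\wh\alpha_1|_{[a_1,b_1)}$ asserts that both paths meet exactly the leaves with $\lambda\ge\lambda_0$ for some $\lambda_0\in\R$; since $\wh\alpha_1$ sweeps this infinite family in finite parameter time, the leaves of $\wh B$ must accumulate in $\wh\Sigma$ (as $\lambda\to+\infty$) onto $\wh\phi_\infty$.

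Next, since $\alpha_2$ crosses $B$, I pick a lift $\wh\alpha_2$ with $\wh\alpha_2(s_*)\in\wh B$ for some $s_*\in J_2$, and set
\[b_2=\sup\bigl\{s\in J_2\cap[s_*,+\infty) : \wh\alpha_2([s_*,s])\subset\wh B\bigr\}.\]
On $[s_*,b_2)$ the path $\wh\alpha_2$ lies in $\wh B$ with strictly increasing $\lambda$-coordinate, so $\lambda(\wh\alpha_2(s))$ converges to some $\lambda_\infty\in(\lambda(\wh\alpha_2(s_*)),+\infty]$. The key step is to prove $\lambda_\infty=+\infty$. If instead $\lambda_\infty<+\infty$, then as $s\to b_2^-$ the leaves $\wh\phi_{\wh\alpha_2(s)}$ converge to the leaf at parameter $\lambda_\infty$, which lies in $\wh B$: in the case $b_2\in J_2$ this forces $\wh\alpha_2(b_2)$ to belong to $\wh B$, contradicting the maximality of $b_2$ since $\wh B$ is open; in the case $b_2\notin J_2$, the path $\wh\alpha_2|_{[s_*,b_2)}$ asymptotes to a single leaf of $\wh B$, which should be excluded using the positive recurrence of $\alpha_0$ together with Proposition~\ref{Prop2.1.3Lellouch}. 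Hence $\lambda_\infty=+\infty$ and $\wh\alpha_2$ accumulates on $\wh\phi_\infty$ as $s\to b_2^-$.

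With $\lambda\to+\infty$ established, the sub-arc $\wh\alpha_2|_{[s_*,b_2)}$ meets exactly the leaves of $\wh B$ with $\lambda\ge\lambda(\wh\alpha_2(s_*))$; picking $a_0'$ such that $\wh\alpha_0|_{[a_0',+\infty)}$ meets the same family gives the required $\wh\F$-equivalence, and therefore $\alpha_0$ accumulates positively on $\alpha_2$. The main obstacle I anticipate is precisely the second alternative in the key step, namely the ``asymptoting'' case with $\lambda_\infty<+\infty$ and $b_2\notin J_2$: ruling it out requires either an appropriate reading of ``$\alpha_2$ crosses $B$'' (interpreted as the lifted path actually passing through $\wh B$ rather than merely grazing or asymptoting to a leaf), or a careful use of the positive recurrence of $\alpha_0$ via Proposition~\ref{Prop2.1.3Lellouch}, leveraging that $\alpha_0$ visits the limit leaf infinitely often.
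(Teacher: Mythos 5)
The proposal takes a genuinely different route from the paper: you parametrize the leaves of $\wh B$ by the projection $\lambda$ onto $\wh\alpha_0$ and try to show directly that the $\lambda$-coordinate of $\wh\alpha_2$ escapes to $+\infty$, while the paper instead builds an auxiliary path $\alpha'_1$ (equivalent at $+\infty$ to $\alpha_1$ but actually crossing $B$), translates it by $T^{k_0}$ to put it on the opposite side of $\wt\alpha_2$ from a tail of $\wt\alpha_0$, and uses the leaf segments joining $\wt\alpha_0(t)$ to $T^{k_0}\wt\alpha'_1$ to exhibit the required equivalence. The paper's argument circumvents the exact difficulty your proof runs into.

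The gap is in the key step, and it is a genuine one. You argue that $\lambda_\infty<+\infty$ together with $b_2\in J_2$ forces $\wh\alpha_2(b_2)\in\wh B$, because ``the leaves $\wh\phi_{\wh\alpha_2(s)}$ converge to the leaf at parameter $\lambda_\infty$, which lies in $\wh B$.'' That inference is not valid. The leaves $\wh\phi_{\wh\alpha_2(s)}$ do converge to $\wh\phi_{\wh\alpha_0(\lambda_\infty)}$ \emph{in the leaf space of $\wh B$}, i.e.\ their plaques near $\wh\alpha_0(\lambda_\infty)$ converge to the plaque of $\wh\phi_{\wh\alpha_0(\lambda_\infty)}$. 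But the points $\wh\alpha_2(s)$ themselves converge to $\wh\alpha_2(b_2)$, and near that point the plaques of $\wh\phi_{\wh\alpha_2(s)}$ converge to the plaque of the (distinct) boundary leaf $\wh\phi_{\wh\alpha_2(b_2)}\subset\partial\wh B$. In other words, the sequence of leaves can have \emph{two} non-separated limits, one inside $\wh B$ and one on $\partial\wh B$; this is precisely the non-Hausdorff phenomenon in the leaf space that the accumulation hypothesis produces, so it cannot be dismissed. Concretely, Lemma~\ref{LemAccumul2} in the paper shows that when a transverse path leaves $\wh B$ through the ``wrong'' side the projection $H=\lambda$ stays bounded, which is exactly the scenario you need to exclude; doing so requires an argument of the type the paper gives (the monotone sweep of leaf segments linking the two sides of $\wt\alpha_2$), not just the openness of $\wh B$. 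Incidentally, the second alternative you flag ($b_2\notin J_2$, the asymptoting case) is a red herring: by the definition of ``$\alpha_2$ crosses $B$'' from Subsection~\ref{SecBand}, the crossing component has both endpoints on $\partial\wh B$ and is compact, so $b_2\in J_2$ automatically.
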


\begin{proof}
Let us consider a lift $\wh \alpha_0$ of $\alpha_0$ to $\wh \Sigma$, the $T$-band $\wh B$ of leaves of $\wh \F$ met by $\wh\alpha_0$, and a lift $\wh\alpha_1$ of $\alpha_1$ to $\wh S$ such that $\wh\alpha_0$ accumulates positively in $\wh\alpha_1$. Let $\wh\alpha_2 : J_2\to\wh S$ be a transverse path that crosses $\wh B$.

By construction the boundary of $\wh B$ is made of leaves, some of them on the left of $\wh\alpha_0$ (and their union is denoted by $\partial \wh B^L$) and some of them on the right of $\wh\alpha_0$ (and their union is denoted by $\partial \wh B^R$). 
By hypothesis, $\wh\alpha_0$ accumulates positively in $\wh\alpha_1$: there exist $a_0$ and $a_1<b_1$ such that $\wh\alpha_0|_{[a_0,+\infty)}$ is equivalent to $\wh\alpha_1|_{[a_1,b_1)}$. Without loss of generality we can suppose that $\wh\phi_{\wh\alpha_1(b_1)} \in\partial\wh B^L$ (the other case being symmetric).

By Proposition~\ref{Prop3.3GLCP}.3, any transverse trajectory crossing $\partial\wh B^L$ has to cross it from right to left. Let us replace $\alpha_1$ by a transverse path $\alpha'_1$ that crosses $B$ in the following way (see Figure~\ref{FigLemAccumul}, left). Let us consider $a_2<b_2$ such that $\alpha_2((a_2,b_2))\subset B$ and $\alpha_2(a_2)\in \partial B^R$ and $\alpha_2(b_2)\in \partial B^L$. 
There exist $c_2\in (a_2,b_2)$ and $c_0\in\R$ such that $\wh\alpha_0(c_0) = \wh\alpha_2(c_2)$. Finally, there exists $c'_0>c_0$ and $c_1\in (a_1,b_1)$ such that $\wh\phi_{\wh\alpha_1(c_1)} = \wh\phi_{\wh\alpha_0(c'_0)}$. We then consider a transverse loop $\wh\alpha'_1$ that is $\wh\F$-equivalent to $\wh\alpha_2|_{[a_2,c_2]} \wh\alpha_0|_{[c_0, c'_0]}\wh\alpha_0|_{[c_1,b_1]}$. This path $\wh\alpha'_1$ -- as $\wh\alpha_1$ -- has the property that $\wh\alpha_0$ accumulates in it, with the additional property that it crosses $\wh B$. 

The previous property means that $\alpha_2$ crosses $B$ in the same direction as $\alpha'_1$. Now, by restricting $\alpha_2$ if necessary, we can suppose that $J_2$ is compact (and hence bounded).

\begin{figure}
\begin{center}

\tikzset{every picture/.style={line width=0.75pt}} 

\begin{tikzpicture}[x=0.75pt,y=0.75pt,yscale=-.92,xscale=.9]

\draw  [draw opacity=0][fill={rgb, 255:red, 245; green, 166; blue, 35 }  ,fill opacity=0.15 ] (86,96) .. controls (86,76.12) and (102.12,60) .. (122,60) -- (270,60) .. controls (289.88,60) and (306,76.12) .. (306,96) -- (306,204) .. controls (306,223.88) and (289.88,240) .. (270,240) -- (122,240) .. controls (102.12,240) and (86,223.88) .. (86,204) -- cycle ;
\draw [color={rgb, 255:red, 74; green, 144; blue, 226 }  ,draw opacity=1 ]   (86,150) .. controls (126,120) and (146,180) .. (186,150) ;
\draw [color={rgb, 255:red, 74; green, 144; blue, 226 }  ,draw opacity=1 ]   (286,150) .. controls (296.32,142.76) and (302.46,141.48) .. (306.04,141.19) ;
\draw [color={rgb, 255:red, 245; green, 166; blue, 35 }  ,draw opacity=1 ][fill={rgb, 255:red, 255; green, 255; blue, 255 }  ,fill opacity=1 ]   (186,60) .. controls (207.83,85.46) and (232.33,93.96) .. (236,60) ;
\draw [color={rgb, 255:red, 245; green, 166; blue, 35 }  ,draw opacity=1 ][fill={rgb, 255:red, 255; green, 255; blue, 255 }  ,fill opacity=1 ]   (246,60) .. controls (244.83,83.46) and (263.83,93.96) .. (276,60) ;
\draw [color={rgb, 255:red, 245; green, 166; blue, 35 }  ,draw opacity=1 ][fill={rgb, 255:red, 255; green, 255; blue, 255 }  ,fill opacity=1 ]   (146,60) .. controls (144.83,83.46) and (163.83,93.96) .. (176,60) ;
\draw [color={rgb, 255:red, 245; green, 166; blue, 35 }  ,draw opacity=1 ][fill={rgb, 255:red, 255; green, 255; blue, 255 }  ,fill opacity=1 ]   (86,60) .. controls (107.83,85.46) and (132.33,93.96) .. (136,60) ;
\draw  [draw opacity=0][fill={rgb, 255:red, 255; green, 255; blue, 255 }  ,fill opacity=1 ] (80.51,63.22) .. controls (80.51,58.25) and (84.54,54.22) .. (89.51,54.22) .. controls (94.48,54.22) and (98.51,58.25) .. (98.51,63.22) .. controls (98.51,68.19) and (94.48,72.22) .. (89.51,72.22) .. controls (84.54,72.22) and (80.51,68.19) .. (80.51,63.22) -- cycle ;
\draw  [draw opacity=0][fill={rgb, 255:red, 255; green, 255; blue, 255 }  ,fill opacity=1 ] (353.8,59) .. controls (353.8,54.03) and (357.83,50) .. (362.8,50) .. controls (367.77,50) and (371.8,54.03) .. (371.8,59) .. controls (371.8,63.97) and (367.77,68) .. (362.8,68) .. controls (357.83,68) and (353.8,63.97) .. (353.8,59) -- cycle ;
\draw [color={rgb, 255:red, 245; green, 166; blue, 35 }  ,draw opacity=1 ][fill={rgb, 255:red, 255; green, 255; blue, 255 }  ,fill opacity=1 ]   (216,240) .. controls (206.14,202.67) and (246.81,214.33) .. (266,240) ;
\draw [color={rgb, 255:red, 245; green, 166; blue, 35 }  ,draw opacity=1 ][fill={rgb, 255:red, 255; green, 255; blue, 255 }  ,fill opacity=1 ]   (106,240) .. controls (96.14,202.67) and (136.81,214.33) .. (156,240) ;
\draw  [draw opacity=0][fill={rgb, 255:red, 255; green, 255; blue, 255 }  ,fill opacity=1 ] (347.9,241.02) .. controls (347.9,238.84) and (349.67,237.08) .. (351.84,237.08) .. controls (354.02,237.08) and (355.79,238.84) .. (355.79,241.02) .. controls (355.79,243.2) and (354.02,244.96) .. (351.84,244.96) .. controls (349.67,244.96) and (347.9,243.2) .. (347.9,241.02) -- cycle ;
\draw  [draw opacity=0][fill={rgb, 255:red, 255; green, 255; blue, 255 }  ,fill opacity=1 ] (99.7,238.16) .. controls (99.7,236.06) and (101.4,234.36) .. (103.51,234.36) .. controls (105.61,234.36) and (107.31,236.06) .. (107.31,238.16) .. controls (107.31,240.26) and (105.61,241.97) .. (103.51,241.97) .. controls (101.4,241.97) and (99.7,240.26) .. (99.7,238.16) -- cycle ;
\draw [color={rgb, 255:red, 39; green, 179; blue, 219 }  ,draw opacity=1 ]   (160.05,114.05) .. controls (172.05,86.45) and (229.65,119.65) .. (214.45,63.25) ;
\draw [shift={(203.47,98.15)}, rotate = 167.85] [fill={rgb, 255:red, 39; green, 179; blue, 219 }  ,fill opacity=1 ][line width=0.08]  [draw opacity=0] (8.04,-3.86) -- (0,0) -- (8.04,3.86) -- (5.34,0) -- cycle    ;
\draw [color={rgb, 255:red, 39; green, 179; blue, 219 }  ,draw opacity=1 ] [dash pattern={on 0.84pt off 2.51pt}]  (151.62,125.18) .. controls (155.99,123.52) and (157.62,119.38) .. (160.05,114.05) ;
\draw [color={rgb, 255:red, 245; green, 166; blue, 35 }  ,draw opacity=1 ]   (180.75,60.54) .. controls (178.84,93.29) and (305,125.08) .. (287.5,235.58) ;
\draw [color={rgb, 255:red, 21; green, 146; blue, 0 }  ,draw opacity=1 ]   (133.15,220.03) .. controls (155.51,190.62) and (173.15,178.15) .. (210.56,142.5) ;
\draw [color={rgb, 255:red, 21; green, 146; blue, 0 }  ,draw opacity=1 ]   (210.56,142.5) .. controls (234.74,144.42) and (244.55,160.99) .. (266.37,159.58) ;
\draw [color={rgb, 255:red, 21; green, 146; blue, 0 }  ,draw opacity=1 ]   (266.37,159.58) .. controls (249.74,126.65) and (232.07,116.22) .. (213.45,93.14) ;
\draw [color={rgb, 255:red, 21; green, 146; blue, 0 }  ,draw opacity=1 ]   (213.45,93.14) .. controls (216.27,90.05) and (217.18,86.69) .. (217.45,82.96) ;
\draw [color={rgb, 255:red, 136; green, 74; blue, 226 }  ,draw opacity=1 ]   (126,230) .. controls (151.25,180.85) and (251.65,124.45) .. (261.65,65.65) ;
\draw [shift={(202.19,148.95)}, rotate = 136.88] [fill={rgb, 255:red, 136; green, 74; blue, 226 }  ,fill opacity=1 ][line width=0.08]  [draw opacity=0] (8.04,-3.86) -- (0,0) -- (8.04,3.86) -- (5.34,0) -- cycle    ;
\draw [color={rgb, 255:red, 74; green, 144; blue, 226 }  ,draw opacity=1 ]   (186,150) .. controls (226,120) and (246,180) .. (286,150) ;
\draw [shift={(239.15,151.57)}, rotate = 206.47] [fill={rgb, 255:red, 74; green, 144; blue, 226 }  ,fill opacity=1 ][line width=0.08]  [draw opacity=0] (8.04,-3.86) -- (0,0) -- (8.04,3.86) -- (5.34,0) -- cycle    ;
\draw  [draw opacity=0][fill={rgb, 255:red, 245; green, 166; blue, 35 }  ,fill opacity=0.15 ] (345.49,96) .. controls (345.49,76.12) and (361.6,60) .. (381.49,60) -- (609.49,60) .. controls (629.37,60) and (645.49,76.12) .. (645.49,96) -- (645.49,204) .. controls (645.49,223.88) and (629.37,240) .. (609.49,240) -- (381.49,240) .. controls (361.6,240) and (345.49,223.88) .. (345.49,204) -- cycle ;
\draw [color={rgb, 255:red, 74; green, 144; blue, 226 }  ,draw opacity=1 ]   (345.49,150.64) .. controls (385.49,120.64) and (405.49,180.64) .. (445.49,150.64) ;
\draw [color={rgb, 255:red, 245; green, 166; blue, 35 }  ,draw opacity=1 ][fill={rgb, 255:red, 255; green, 255; blue, 255 }  ,fill opacity=1 ]   (445.49,60) .. controls (467.32,85.46) and (491.82,93.96) .. (495.49,60) ;
\draw [color={rgb, 255:red, 245; green, 166; blue, 35 }  ,draw opacity=1 ][fill={rgb, 255:red, 255; green, 255; blue, 255 }  ,fill opacity=1 ]   (505.49,60.64) .. controls (504.32,84.1) and (523.32,94.6) .. (535.49,60.64) ;
\draw [color={rgb, 255:red, 245; green, 166; blue, 35 }  ,draw opacity=1 ][fill={rgb, 255:red, 255; green, 255; blue, 255 }  ,fill opacity=1 ]   (405.49,60.64) .. controls (404.32,84.1) and (423.32,94.6) .. (435.49,60.64) ;
\draw [color={rgb, 255:red, 245; green, 166; blue, 35 }  ,draw opacity=1 ][fill={rgb, 255:red, 255; green, 255; blue, 255 }  ,fill opacity=1 ]   (345.49,60.64) .. controls (367.32,86.1) and (391.82,94.6) .. (395.49,60.64) ;
\draw  [draw opacity=0][fill={rgb, 255:red, 255; green, 255; blue, 255 }  ,fill opacity=1 ] (340,63.87) .. controls (340,58.9) and (344.03,54.87) .. (349,54.87) .. controls (353.97,54.87) and (358,58.9) .. (358,63.87) .. controls (358,68.84) and (353.97,72.87) .. (349,72.87) .. controls (344.03,72.87) and (340,68.84) .. (340,63.87) -- cycle ;
\draw [color={rgb, 255:red, 245; green, 166; blue, 35 }  ,draw opacity=1 ][fill={rgb, 255:red, 255; green, 255; blue, 255 }  ,fill opacity=1 ]   (475.99,240.64) .. controls (466.13,203.31) and (506.79,214.98) .. (525.99,240.64) ;
\draw [color={rgb, 255:red, 245; green, 166; blue, 35 }  ,draw opacity=1 ][fill={rgb, 255:red, 255; green, 255; blue, 255 }  ,fill opacity=1 ]   (365.49,240.64) .. controls (355.63,203.31) and (396.29,214.98) .. (415.49,240.64) ;
\draw  [draw opacity=0][fill={rgb, 255:red, 255; green, 255; blue, 255 }  ,fill opacity=1 ] (359.19,238.81) .. controls (359.19,236.7) and (360.89,235) .. (362.99,235) .. controls (365.09,235) and (366.8,236.7) .. (366.8,238.81) .. controls (366.8,240.91) and (365.09,242.61) .. (362.99,242.61) .. controls (360.89,242.61) and (359.19,240.91) .. (359.19,238.81) -- cycle ;
\draw [color={rgb, 255:red, 21; green, 146; blue, 0 }  ,draw opacity=1 ]   (372.3,230.22) .. controls (416.59,103.36) and (489.13,120.29) .. (473.93,63.89) ;
\draw [shift={(424.21,140.55)}, rotate = 132.59] [fill={rgb, 255:red, 21; green, 146; blue, 0 }  ,fill opacity=1 ][line width=0.08]  [draw opacity=0] (8.04,-3.86) -- (0,0) -- (8.04,3.86) -- (5.34,0) -- cycle    ;
\draw [color={rgb, 255:red, 74; green, 144; blue, 226 }  ,draw opacity=1 ]   (445.49,150.64) .. controls (485.49,120.64) and (505.49,180.64) .. (545.49,150.64) ;
\draw [shift={(498.64,152.21)}, rotate = 206.47] [fill={rgb, 255:red, 74; green, 144; blue, 226 }  ,fill opacity=1 ][line width=0.08]  [draw opacity=0] (8.04,-3.86) -- (0,0) -- (8.04,3.86) -- (5.34,0) -- cycle    ;
\draw [color={rgb, 255:red, 74; green, 144; blue, 226 }  ,draw opacity=1 ]   (545.49,150.64) .. controls (585.49,120.64) and (605.49,180.64) .. (645.49,150.64) ;
\draw [color={rgb, 255:red, 245; green, 166; blue, 35 }  ,draw opacity=1 ][fill={rgb, 255:red, 255; green, 255; blue, 255 }  ,fill opacity=1 ]   (545.49,60) .. controls (567.32,85.46) and (591.82,93.96) .. (595.49,60) ;
\draw [color={rgb, 255:red, 245; green, 166; blue, 35 }  ,draw opacity=1 ][fill={rgb, 255:red, 255; green, 255; blue, 255 }  ,fill opacity=1 ]   (605.49,60) .. controls (604.32,83.46) and (623.32,93.96) .. (635.49,60) ;
\draw  [draw opacity=0][fill={rgb, 255:red, 255; green, 255; blue, 255 }  ,fill opacity=1 ] (626.49,60) .. controls (626.49,55.03) and (630.52,51) .. (635.49,51) .. controls (640.46,51) and (644.49,55.03) .. (644.49,60) .. controls (644.49,64.97) and (640.46,69) .. (635.49,69) .. controls (630.52,69) and (626.49,64.97) .. (626.49,60) -- cycle ;
\draw [color={rgb, 255:red, 245; green, 166; blue, 35 }  ,draw opacity=1 ][fill={rgb, 255:red, 255; green, 255; blue, 255 }  ,fill opacity=1 ]   (585.49,240) .. controls (575.63,202.67) and (616.29,214.33) .. (635.49,240) ;
\draw  [draw opacity=0][fill={rgb, 255:red, 255; green, 255; blue, 255 }  ,fill opacity=1 ] (626.49,240) .. controls (626.49,235.03) and (630.52,231) .. (635.49,231) .. controls (640.46,231) and (644.49,235.03) .. (644.49,240) .. controls (644.49,244.97) and (640.46,249) .. (635.49,249) .. controls (630.52,249) and (626.49,244.97) .. (626.49,240) -- cycle ;
\draw [color={rgb, 255:red, 136; green, 74; blue, 226 }  ,draw opacity=1 ]   (485.49,230) .. controls (510.73,180.85) and (611.13,124.45) .. (621.13,65.65) ;
\draw [shift={(561.68,148.95)}, rotate = 136.88] [fill={rgb, 255:red, 136; green, 74; blue, 226 }  ,fill opacity=1 ][line width=0.08]  [draw opacity=0] (8.04,-3.86) -- (0,0) -- (8.04,3.86) -- (5.34,0) -- cycle    ;
\draw  [draw opacity=0][fill={rgb, 255:red, 74; green, 144; blue, 226 }  ,fill opacity=1 ] (604.44,156.51) .. controls (604.44,154.88) and (605.76,153.56) .. (607.39,153.56) .. controls (609.02,153.56) and (610.34,154.88) .. (610.34,156.51) .. controls (610.34,158.14) and (609.02,159.46) .. (607.39,159.46) .. controls (605.76,159.46) and (604.44,158.14) .. (604.44,156.51) -- cycle ;
\draw [color={rgb, 255:red, 245; green, 166; blue, 35 }  ,draw opacity=1 ]   (348.55,81.81) .. controls (398.8,97.56) and (390.5,74.22) .. (400.1,73.82) .. controls (409.7,73.42) and (406.15,89.23) .. (413.3,91.02) .. controls (420.44,92.81) and (433.7,71.82) .. (444.1,71.42) .. controls (454.5,71.02) and (449.07,101.28) .. (552.74,100.94) .. controls (656.4,100.61) and (634.4,189.61) .. (646.07,198.94) ;

\draw (173.33,102.4) node [anchor=north west][inner sep=0.75pt]  [color={rgb, 255:red, 30; green, 149; blue, 182 }  ,opacity=1 ,xscale=1.2,yscale=1.2]  {$\wh{\alpha }_{1}$};
\draw (137.9,146.4) node [anchor=north east] [inner sep=0.75pt]  [color={rgb, 255:red, 74; green, 144; blue, 226 }  ,opacity=1 ,xscale=1.2,yscale=1.2]  {$\wh{\alpha }_{0}$};
\draw (256.83,85.9) node [anchor=north west][inner sep=0.75pt]  [color={rgb, 255:red, 120; green, 66; blue, 199 }  ,opacity=1 ,xscale=1.2,yscale=1.2]  {$\wh{\alpha }_{2}$};
\draw (173.33,177.9) node [anchor=north west][inner sep=0.75pt]  [color={rgb, 255:red, 21; green, 146; blue, 0 }  ,opacity=1 ,xscale=1.2,yscale=1.2]  {$\wh{\alpha } '_{1}$};
\draw (93.33,96.9) node [anchor=north west][inner sep=0.75pt]  [color={rgb, 255:red, 199; green, 135; blue, 28 }  ,opacity=1 ,xscale=1.2,yscale=1.2]  {$\wh{B}$};
\draw (435,132.38) node [anchor=south east] [inner sep=0.75pt]  [color={rgb, 255:red, 21; green, 146; blue, 0 }  ,opacity=1 ,xscale=1.2,yscale=1.2]  {$T^{k_{0}}\wh{\alpha } '_{1}$};
\draw (382.39,144) node [anchor=north east] [inner sep=0.75pt]  [color={rgb, 255:red, 74; green, 144; blue, 226 }  ,opacity=1 ,xscale=1.2,yscale=1.2]  {$\wh{\alpha }_{0}$};
\draw (509.55,194.43) node [anchor=south east] [inner sep=0.75pt]  [color={rgb, 255:red, 120; green, 66; blue, 199 }  ,opacity=1 ,xscale=1.2,yscale=1.2]  {$\wh{\alpha }_{2}$};
\draw (350,97.54) node [anchor=north west][inner sep=0.75pt]  [color={rgb, 255:red, 199; green, 135; blue, 28 }  ,opacity=1 ,xscale=1.2,yscale=1.2]  {$\wh{B}$};
\draw (590.48,160) node [anchor=north] [inner sep=0.75pt]  [color={rgb, 255:red, 74; green, 144; blue, 226 }  ,opacity=1 ,xscale=1.2,yscale=1.2]  {$\scriptstyle \wh{\alpha }_{0}( a_{0} +k_{0})$};
\draw (529.02,100.37) node [anchor=north east] [inner sep=0.75pt]  [color={rgb, 255:red, 218; green, 147; blue, 29 }  ,opacity=1 ,xscale=1.2,yscale=1.2]  {$\wh{\phi }_{t}$};

\end{tikzpicture}
\caption{proof of Lemma~\ref{LemAccumul}. Left: construction of the path $\wh\alpha'_1$. Right: final argument of the proof. Leaves of $\wh \F$ are in orange.}\label{FigLemAccumul}
\end{center}
\end{figure}
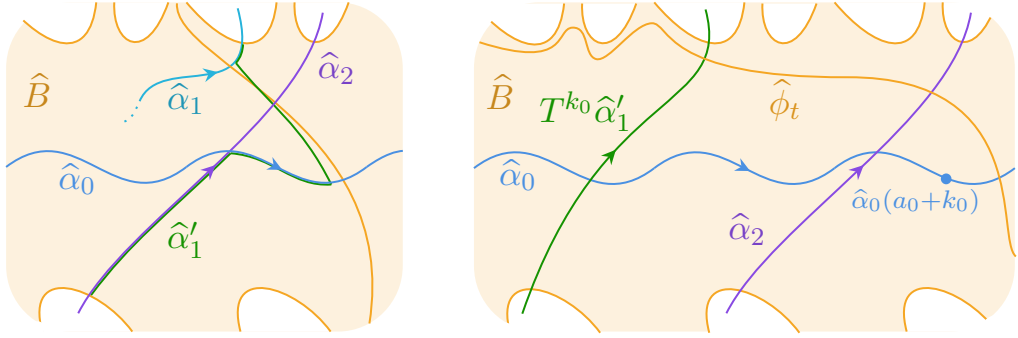

Recall that by Proposition~\ref{Prop3.3GLCP}, $\alpha_0$ is $\F$-equivalent to a $T$-invariant $\F$-transverse path, so $\wt\alpha_0$ also accumulates in $T^{k}\wt\alpha_1$ for any $k\in\Z$: there exist $(t_k)$ such that the path $\wt\alpha_0|_{[a_0+t_k,+\infty)}$ is equivalent to $T^{k}\wt\alpha'_1|_{[a_1,b_1)}$. 
As $\wt\alpha'_1$ and $\wt\alpha_2$ are simple and separate $\wt B$, their respective complements in $\wt B$ are made of two connected components, one on their left and one on their right.
As $\wt\alpha'_1$ and $\wt\alpha_2$ are compact, there exists $-k_0$ large enough such that $T^{k_0} \wt\alpha_1$ sits on the left of $\wt\alpha_2$ in $\wt B$. By choosing a bigger $a_0$ if necessary, we can suppose that $\wt\alpha_0|_{[a_0+t_{k_0},+\infty)}$ is on the right of $\wt\alpha_2$ in $\wt B$.
Because $\wt\alpha_0|_{[a_0+t_{k_0},+\infty)}$ is equivalent to $T^{k_0}\wt\alpha'_1|_{[a_1,b_1)}$, for any $t\ge a_0+t_{k_0}$ there is a leaf segment $\wt\phi_t$ linking $\wt\alpha_0(t)$ to $T^{k_0}\wt\alpha_1$. 

As the leaf segments $\wt\phi_t$ link points from the left of $\wt\alpha_2$ (in $\wt B$) to the right of $\wt \alpha_2$ (in $\wt B$) and stay in $\wt B$, they have to cross the transverse path $\wt\alpha_2$ at a unique point; for the order on $\wt\alpha_2$, this point varies $C^0$ and monotonically in $t$. Hence, the path $\wt\alpha_0|_{[a_0+t_{k_0},+\infty)}$ is equivalent to a subpath of $\wt\alpha_2$; this proves the lemma. 
\end{proof}

\begin{lemma}\label{LemAccumul2}
Suppose that $\alpha_0 : \R\to \Sigma$ is a positively recurrent transverse path that accumulates positively on a transverse path $\alpha_1 : J_1\to \Sigma$, where $J_1$ is an interval of $\R$. Denote $\wh\alpha_0$ and $\wh\alpha_1$ some lifts of these paths to $\wh\Sigma$ such that $\wh\alpha_0$ accumulates positively in $\wh\alpha_1$.
Suppose that the accumulated leaf belongs to $\partial \wh B^L$, and that $\wh\alpha_2 : J_2\to\wh S$, with $J_2$ a compact interval, is a transverse path that enters in $\wh B$ but does not meet $\partial\wh B^L$. 
Then, for $t$ large enough, the leaf $\wh\phi_{\wh\alpha_0(t)}$ does not meet $\wh\alpha_2$.
\end{lemma}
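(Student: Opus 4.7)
The plan is to reduce, via the $\wh\F$-equivalence of $\wh\alpha_0|_{[a_0,+\infty)}$ with $\wh\alpha_1|_{[a_1,b_1)}$, the conclusion on the tail of $\wh\alpha_0$ to a statement about leaves along $\wh\alpha_1$ near its endpoint, and then use continuity of the foliation together with Proposition~\ref{Prop3.3GLCP}.3 to conclude.

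First, since $\wh\alpha_0|_{[a_0,+\infty)}$ and $\wh\alpha_1|_{[a_1,b_1)}$ are $\wh\F$-equivalent and positively transverse paths in the universal cover meet each leaf at most once (by the characterization of transverse paths recalled at the beginning of this subsection), there is a continuous strictly increasing bijection $\sigma : [a_0,+\infty) \to [a_1,b_1)$ with $\wh\phi_{\wh\alpha_0(t)} = \wh\phi_{\wh\alpha_1(\sigma(t))}$, satisfying $\sigma(t) \to b_1$ as $t \to +\infty$. The conclusion thus reduces to showing that for every $\sigma$ sufficiently close to $b_1$, $\wh\phi_{\wh\alpha_1(\sigma)} \cap \wh\alpha_2(J_2) = \emptyset$.

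I would argue this by contradiction: suppose there exist $\sigma_n \to b_1$ and $s_n \in J_2$ with $\wh\alpha_2(s_n) \in \wh\phi_{\wh\alpha_1(\sigma_n)}$. Up to extracting a subsequence using compactness of $J_2$, $s_n \to s_* \in J_2$; set $\wh p := \wh\alpha_2(s_*)$, which by hypothesis satisfies $\wh p \notin \partial\wh B^L$. Each leaf $\wh\phi_n := \wh\phi_{\wh\alpha_1(\sigma_n)}$ contains both $\wh\alpha_2(s_n) \to \wh p$ and $\wh\alpha_1(\sigma_n) \to \wh\alpha_1(b_1) \in \wh\phi_\infty$. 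Using flow boxes around $\wh p$ and around $\wh\alpha_1(b_1)$, the plaques of $\wh\phi_n$ locally Hausdorff-converge to the plaques of $\wh\phi_{\wh p}$ and of $\wh\phi_\infty$ respectively; the leaf arcs of $\wh\phi_n$ joining $\wh\alpha_2(s_n)$ to $\wh\alpha_1(\sigma_n)$ then provide the link between the two limit leaves. Invoking Proposition~\ref{Prop3.3GLCP}.3 --- which gives $\wh B \subset R(\wh\phi_\infty)$ and so constrains the leaves $\wh\phi_n \subset \wh B$ to accumulate on $\wh\phi_\infty$ only from the side $R(\wh\phi_\infty)$ --- rules out the non-Hausdorff branching scenario where $\wh\phi_n$ could have distinct local limits, and forces $\wh\phi_{\wh p} = \wh\phi_\infty$. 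Hence $\wh p \in \wh\phi_\infty \subset \partial\wh B^L$, contradicting $\wh\alpha_2(J_2) \cap \partial\wh B^L = \emptyset$.

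The main obstacle I expect is handling the potentially non-Hausdorff leaf space of $\wh\F$ when passing the leaf arcs to the limit: \emph{a priori} a sequence of leaves could accumulate on two distinct non-separated leaves. The key tool to surmount this is the one-sided accumulation statement of Proposition~\ref{Prop3.3GLCP}.3, which forces the leaves $\wh\phi_n$ --- all lying in $R(\wh\phi_\infty)$ --- to approach $\wh\phi_\infty$ from a fixed side, so that the local Hausdorff limit of $\wh\phi_n$ near $\wh p$ is compatible with the local limit near $\wh\alpha_1(b_1)$ only if the two limits agree.
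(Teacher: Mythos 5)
Your overall plan --- reduce along the $\wh\F$-equivalence to a statement about leaves near the right endpoint of $\wh\alpha_1$, then use compactness of $J_2$ to extract a limit --- is reasonable, and you have correctly identified the genuine difficulty: the non-Hausdorff leaf space of $\wh\F$ allows a sequence of leaves to have two distinct, non-separated local limits. However, your proposed resolution of this difficulty does not work. The claim that Proposition~\ref{Prop3.3GLCP}.3 (one-sided accumulation on $\wh\phi_\infty$) ``rules out the non-Hausdorff branching scenario'' is only asserted, and it is not true. Knowing that the leaves $\wh\phi_n\subset\wh B$ approach $\wh\phi_\infty$ only from $R(\wh\phi_\infty)$ constrains the local Hausdorff limit of their plaques in a flow box around $\wh\alpha_1(b_1)$, but it says nothing about the local limit in a flow box around $\wh p$. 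The leaf arcs of $\wh\phi_n$ joining $\wh\alpha_2(s_n)$ to $\wh\alpha_1(\sigma_n)$ can perfectly well have lengths going to infinity, in which case their two endpoints converge to points on two distinct non-separated leaves $\wh\phi_{\wh p}\neq\wh\phi_\infty$; this is precisely the Reeb-component--like configuration, and one-sidedness near one of the two limit leaves does not prevent it. So the step ``forces $\wh\phi_{\wh p}=\wh\phi_\infty$'' is a gap.

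The paper closes this gap by a different, more structural argument. It first uses compactness of $J_2$ to reduce to the case $\wh\alpha_2|_{\inte J_2}\subset\wh B$, $\wh\alpha_2(\min J_2)\in\partial\wh B^R$. It then introduces the continuous ``projection along the leaves'' $H:\wh B\to\R$ defined by $\wh\phi_{\wh p}=\wh\phi_{\wh\alpha_0(H(\wh p))}$, and observes that $W:=H\circ\wh\alpha_2$ is continuous and monotone increasing on $\inte J_2$ because $\wh\alpha_2$ is transverse. Either $\wh\alpha_2(\max J_2)\in\wh B$ and then $W$ extends continuously, giving a finite bound; or $\wh\alpha_2(\max J_2)\in\partial\wh B^R$, and if $\sup W=+\infty$ then $\wh\alpha_2$ is $\wh\F$-equivalent to $\wh\alpha_0$ near $\max J_2$, so $\wh\alpha_0$ accumulates positively on $\wh\alpha_2$ with accumulated leaf in $\partial\wh B^R$; the first case of Proposition~\ref{Prop3.3GLCP}.3 then forbids any transverse path from entering $\wh B$ through $\partial\wh B^R$, contradicting the fact that $\wh\alpha_2$ does enter there. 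This second application of Proposition~\ref{Prop3.3GLCP}.3, triggered by the monotone unbounded projection, is the mechanism that actually rules out the bad configuration; a purely local flow-box argument near the two limit points cannot replace it.
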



\begin{proof}
Let us first show that the path $\wh\alpha_2$ goes in and out of $\wh B$ a finite number of times

For any $t\in J_2$, there exists an open interval $I_t\ni t$ such that:
\begin{itemize}
\item either the path $\wh\alpha_2$ is in $\wh B$ in a neighbourhood of $t$: for any $s\in I_t\cap J_2$, we have $\wh\alpha_2(s)\in\wh B$;
\item or the path $\wh\alpha_2$ is out of $\wh B$ in a neighbourhood of $t$: for any $s\in I_t\cap J_2$, we have $\wh\alpha_2(s)\notin\wh B$;
\item or the path $\wh\alpha_2$ goes out of $\wh B$ at time $t$: for any $s\in I_t\cap J_2\cap (-\infty,t)$, we have $\wh\alpha_2(s)\in\wh B$ and for any $s\in I_t\cap J_2\cap [t,+\infty)$, we have $\wh\alpha_2(s)\notin\wh B$;
\item or the path $\wh\alpha_2$ enters $\wh B$ at time $t$: for any $s\in I_t\cap J_2\cap (-\infty,t]$, we have $\wh\alpha_2(s)\notin\wh B$ and for any $s\in I_t\cap J_2\cap (t,+\infty)$, we have $\wh\alpha_2(s)\in\wh B$.
\end{itemize}
By compactness of $J_2$, one can cover the interval $J_2$ with a finite number of such intervals $I_t$; in particular the path $\wh\alpha_2$ goes in and out of $\wh B$ a finite number of times. Hence, to prove the lemma, we do not lose generality by supposing that $\wh\alpha_2|_{\inte J_2}\subset \wh B$ and that $\wh\alpha_2(\min J_2)\in \partial \wh B^R$ (as $\wh\alpha_2$ can only enter $\wh B$ from the right).
\bigskip

Consider now the ``projection on $\wh\alpha_0$ along the leaves'' $H:\wh B\to \R$ such that $\wh\phi_{\wh{p}}=\wh\phi_{\wh\alpha_0(H(\wh{p}))}$, which is continuous.
Since $\wh \alpha_2(t)$ is contained in $\wh B$ if $t\in\inte J_2$, one can consider the function $W:\inte J_2\to\R$ such that $W(t)=H(\wh \alpha_2(t))$, which is continuous, and as $\wh \alpha_2$ is transverse to $\wh \F$, $W$ must be monotone increasing. If $\wh\alpha_2(\max J_2)$ belongs to $\wh B$, then $W$ extends continuously in its right endpoint to $W(\max J_2)=H(\wh\alpha_2(\max J_2))=L<\infty$ and of course the result holds for any $t>L$. If not, then as $\wh \alpha_2(\max J_2)$ does not belong to $\partial\wh B^L$ it must lie on $\partial\wh B^R$. We claim that in this situation, we still have that $\sup_{t\in\inte J2} W(t)<+\infty$, which shows the lemma. Indeed, if that was not the case, then picking some $t_2\in\inte J_2$, we would have that $\wh \alpha_2:[t_2,\max J_2)$ is $\wh \F$ equivalent to $\wh \alpha_0$, but this would imply, by Proposition~\ref{Prop3.3GLCP}, that no path can enter $\wh B$ from the right either, a contradiction.
\end{proof}

\subsection{Brouwer-Le Calvez foliations and forcing theory}

Let $\F$ be a singular foliation of a surface $S$; we denote $\mathrm{Sing}(\F)$ its set of singularities and $\dom(\F) := S \setminus \mathrm{Sing}(\F)$. 
The forcing theory is grounded on the following result of existence of transverse foliations, which can be obtained as a combination of \cite{lecalvezfoliations} and \cite{bguin2016fixed}.

\begin{theorem}\label{ThExistIstop}
Let $S$ be a surface and $f\in\Homeo_0(S)$.
Then there exist an identity isotopy $I$ for $f$ and a transverse topological oriented singular foliation $\F$ of $S$ with $\dom(\F) = \dom(I)$, such that:

	For any $z\in \dom(\F)$, there exists an $\F$-transverse path denoted by $\big(I_\F^t(z)\big)_{t\in[0,1]}$, linking $z$ to $f(z)$, that is homotopic in $\dom(\F)$, relative to its endpoints, to the arc $(I^t(z))_{t\in[0,1]}$.
\end{theorem}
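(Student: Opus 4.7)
The plan is to combine the two ingredient results that the authors explicitly invoke. First, I would apply the result from \cite{bguin2016fixed} to obtain an identity isotopy $I=(f_t)_{t\in[0,1]}$ from $\Id_S$ to $f$ that is \emph{maximal} in the sense of Jaulent: one cannot enlarge $\Fix(I)$ by incorporating an additional contractible fixed point of $f$ (a fixed point $z$ being contractible for $I$ when its trajectory loop $t\mapsto f_t(z)$ is homotopic to a constant in $S\setminus\Fix(I)$). The construction of such an $I$ is essentially a Zorn-type argument: one orders isotopies according to inclusion of fixed point sets (plus a compatibility condition on the trajectories of added contractible fixed points) and extracts a maximal element, the nontrivial point being that the ordered set satisfies the hypotheses of Zorn's lemma in the topological setting.

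Second, with such a maximal $I$ fixed, I would invoke the construction of \cite{lecalvezfoliations} producing a singular oriented topological foliation $\F$ on $S$ with $\mathrm{Sing}(\F)=\Fix(I)$, hence $\dom(\F)=\dom(I)$, together with the dynamical transversality statement: for every $z\in\dom(I)$, the arc $t\mapsto I^t(z)$ is homotopic in $\dom(I)$, rel endpoints, to an $\F$-transverse path $I_\F^t(z)$. The key mechanism is that, thanks to maximality, when one lifts $f_1$ restricted to a connected component $U$ of $\dom(I)$ to the universal cover $\wt U$, the lifted map has no fixed point at all (a fixed point there would correspond precisely to a contractible fixed point that could be added to $\Fix(I)$, contradicting maximality). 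One then applies an equivariant version of the Brouwer plane translation theorem to the lifted time-$1$ map to obtain a Brouwer foliation on $\wt U$ that is invariant under the deck transformation group of $\wt U\to U$, and hence descends to $U$. Assembling these components over all of $\dom(I)$, adding the singular points, and orienting the foliation coherently yields $\F$. The homotopy relating $I^t(z)$ to $I_\F^t(z)$ is finally read off from the local planar description: in each Brouwer chart the arc $I^t(z)$ can be pushed to cross the local leaves in the positive direction while staying in $\dom(I)$.

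The main obstacle is concentrated in the ingredient \cite{lecalvezfoliations}, which is the technical heart of the whole story and cannot realistically be reproved in a few lines: patching locally defined Brouwer foliations into a global oriented singular foliation, and then verifying the dynamical transversality for every orbit simultaneously, requires delicate planar arguments (equivariant Brouwer theory, free disc lemmas, control of the behaviour near singularities). By contrast, once both \cite{bguin2016fixed} and \cite{lecalvezfoliations} are accepted as black boxes, the combination producing Theorem~\ref{ThExistIstop} is essentially formal: the maximality provided by the first exactly matches the hypothesis required by the second, and the conclusion is a direct transcription of the output of Le Calvez's construction.
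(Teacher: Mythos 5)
Your proposal is correct and matches the paper's approach exactly: the paper gives no proof but simply asserts that the theorem is a combination of \cite{bguin2016fixed} (existence of a maximal isotopy) and \cite{lecalvezfoliations} (equivariant foliated Brouwer translation theorem applied to that maximal isotopy), and your write-up faithfully unpacks how the maximality hypothesis from the first reference is precisely what makes the second applicable.
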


This allows us to define the path $I_{\F}^\Z (x)$ as the concatenation of the paths $\big(I_\F^t(f^n(z))\big)_{t\in[0,1]}$ for $n\in\Z$.

In \cite{paper2PAF} we will need the following lemma:

\begin{lemma}\label{lemma:Maxsize}
If $S$ is a closed surface and $\Fix(I)$ is contained in a topological disc, then there exists $M>0$ such that, for every $z\in \dom(\F)$, one can choose $\big(I_\F^t(z)\big)_{t\in[0,1]}$ such that, if $\big(I_{\wt\F}^t(\wt z)\big)_{t\in[0,1]}$ is a lift of $\big(I_\F^t(z)\big)_{t\in[0,1]}$ to $\widetilde S$, then the diameter of $\big(I_{\wt\F}^t(\wt z)\big)_{t\in[0,1]}$ is at most $M$.
\end{lemma}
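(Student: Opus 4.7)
The plan is to combine two uniform bounds---one on the diameter of isotopy arcs (from compactness of $S$) and one on the diameter of lifts of the disc $D\supset \Fix(I)$---to confine the lifted transverse path to a bounded neighbourhood of the lifted isotopy arc.

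First, since $I:[0,1]\times S\to S$ is continuous and $S$ is compact, there exists $M_0>0$ such that $d(z,I^t(z))\le M_0$ for all $z\in S$ and $t\in[0,1]$. Because $\pi:\wt S\to S$ is a local isometry, any lift $\wt I^t(\wt z)$ of an isotopy arc is contained in the ball $B(\wt z,M_0)$; in particular $d(\wt z, \wt f(\wt z))\le M_0$. Second, we may assume that the topological disc $D\supset \Fix(I)$ has compact closure; let $d_0$ denote the intrinsic diameter of $\overline D$. Since $D$ is simply connected, $\pi^{-1}(\overline D)$ decomposes as a disjoint union of compact topological discs $\{\wt D_i\}_i$, each mapped homeomorphically onto $\overline D$ by $\pi$. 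Lifting paths of length at most $d_0$ inside $\overline D$ yields $\diam(\wt D_i)\le d_0$, so any $\wt D_i$ meeting $B(\wt z, M_0)$ is contained in $V:=B(\wt z, M_0+d_0)$.

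The main step is to choose, within the $\F$-equivalence class of transverse paths $I_\F^t(z)$ provided by Theorem~\ref{ThExistIstop}, a representative whose lift stays in $V$. The lifted transverse path $\wt I_{\wt\F}^t(\wt z)$ and the lifted isotopy arc $\wt I^t(\wt z)$ share the endpoints $\wt z,\wt f(\wt z)$ and are homotopic in $\wt S\setminus \pi^{-1}(\Fix(I))$ relative to endpoints. Hence any leaf of $\wt\F$ crossed by the transverse path must be crossed by $\wt I^t(\wt z)$ as well (both have algebraic intersection $+1$), so every such leaf meets $B(\wt z, M_0)$. Similarly, every singularity of $\wt\F$ relevant to the homotopy class---namely, every singularity lying in a $\wt D_i$ ``wound around'' by $\wt I^t(\wt z)$---is contained in $V$ by Step~2. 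One can therefore choose the transverse representative to cross each of the relevant leaves at a point of $V$, preserving transversality by adapting the local construction of Le Calvez's foliation: all of the leaf/singular structure needed to realise a transverse representative in the correct $\F$-equivalence class is contained in $V$. This gives $\diam\bigl((\wt I_{\wt\F}^t(\wt z))_{t\in[0,1]}\bigr)\le 2(M_0+d_0)=:M$.

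The main obstacle is this last step: producing a transverse representative supported in $V$ requires a careful use of the Brouwer--Le Calvez foliation structure, especially the behaviour of leaves near the singular set $\pi^{-1}(\Fix(I))\subset \bigsqcup \wt D_i\subset V$. A rigorous argument adapts the construction of transverse paths from \cite{lecalvezfoliations, bguin2016fixed} to the relative setting, using that the isotopy arc lies in $V$ and that the sliding of crossing points within each leaf is obstructed only by singularities, which are confined to $V$. Equivariance of the choice under deck transformations is automatic once the construction is made starting from $\wt z$, so the resulting choice of $I_\F^t(z)$ descends to a well-defined map on $S$.
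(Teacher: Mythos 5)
Your approach is genuinely different from the paper's, and it contains a gap that you yourself flag but do not close. The paper splits $S$ into a small neighbourhood $V_0$ of $\Fix(I)$ and its complement. Near $\Fix(I)$ it invokes a nontrivial external result (\cite[Proposition~5.7]{alepablo}) guaranteeing that $I_\F^{[0,1]}(z)$ can be chosen to stay inside a prescribed disc $U_0\supset\Fix(I)$; away from $\Fix(I)$ it runs a local perturbation argument (choose $W_z,W_{f(z)}$ trivialising the foliation, graft short transverse segments $\sigma_{z'},\sigma_{f(z')}$ onto the middle part of $I_\F^{[\delta,1-\delta]}(z)$) and concludes by compactness of $S\setminus V_0$. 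Your proposal instead tries to bound the lift directly by the set $V=B(\wt z,M_0+d_0)$, reducing the problem to showing that a transverse representative can be realised entirely inside $V$.

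The gap is precisely the step you call ``the main obstacle.'' Knowing that every leaf of $\wt\F$ crossed by the transverse path meets $V$ (which is correct, by the separation argument in $\wh\dom(\F)$ projected down) does not yield a transverse path inside $V$. A leaf can enter and leave $V$ many times, so $\wh\phi\cap V$ may be disconnected, and there is no reason the crossing points can be chosen inside $V$ and joined consecutively by transverse arcs staying in $V$. More seriously, near a point of $\pi^{-1}(\Fix(I))$ the foliation can spiral, and a transverse path through a narrow corridor between two singularities may be forced to wind many times; nothing in your argument controls this. This is exactly the situation that \cite[Proposition~5.7]{alepablo} is quoted to control in the paper, and your sketch replaces that reference with the sentence ``a rigorous argument adapts the construction of transverse paths from \cite{lecalvezfoliations,bguin2016fixed} to the relative setting,'' which asserts rather than proves the needed relative refinement of the Le~Calvez foliation theorem. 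As written the argument does not go through; to repair it you would either have to prove this relative refinement from scratch (a substantial undertaking) or fall back on the paper's compactness-plus-local-neighbourhood strategy.

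Two smaller remarks. The decomposition of $\pi^{-1}(\overline D)$ into discs $\wt D_i$ and the notion of a singularity ``wound around'' by $\wt I^t(\wt z)$ are never used in a load-bearing way, so Step~2 contributes little. And your stated bound $M=2(M_0+d_0)$ is only meaningful once the realisation-in-$V$ step is justified, so it should not be presented as a conclusion.
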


\begin{proof}
Let $U_0$ be an open topological disc containing $\Fix(I)$. Since $\Fix(I)$ is compact, it is at a positive distance from the boundary of $U_0$ and we may assume therefore that $\overline{U_0}$ is a closed topological disc, which implies that if $\wt U_0$ is a connected component of the lift of $U_0$ to $\wt{S}$, then $\diam(\wt{U_0}) \le M_0< +\infty$. By \cite[Proposition 5.7]{alepablo}, there exists $V_0$ an open neighbourhood of $\Fix(I)$ such that, if $z\in V_0\setminus \Fix(I)=V_0\cap \dom(\F)$, then $I_{\F}^{[0,1]}(z)$ can be chosen having its image in $U_0$. 
So it suffices to prove the result for all $z\notin V_0$. We claim that, for any $z \notin V_0$, there exists an open set $U_z\ni z$ and some $M_z>0$ such that, for any $z'$ in $U_z$ one can choose $I_{\F}^{[0,1]}(z')$ such that the diameter of a lift $I_{\wt \F}^{[0,1]}(\wt z')$ is smaller than $M_z$. This will finish the result by compactness of $S\setminus V_0$.

To see the claim, choose $W_z$ and $W_{f(z)}$ trivialisation neighbourhoods of $\F$ containing $z$ and $f(z)$ respectively, and a path $I_{\F}^{[0,1]}(z)$. Let $\delta>0$ be such that $I_{\F}^{[0,\delta]}(z)$ is contained in $W_z$ and $I_{\F}^{[1-\delta,1]}(z)$ is contained in $W_{f(z)}$. Note that, by continuity, if $z'$ is sufficiently close to $z$, then the path $\beta_{z'}=\sigma_{f(z')}I_{\F}^{[\delta, 1-\delta]}(z)\sigma_{z'}$, where $\sigma_{z'}$ is a transverse path in $W_z$ connecting $z'$ to $I_{\F}^{\delta}(z)$ and $\sigma_{f(z')}$ is a transverse path in $W_{f(z)}$ connecting $I_{\F}^{ 1-\delta}(z)$ to $f(z')$, is homotopic with fixed endpoints to $I^{[0,1]}(z')$, which implies the claim.    
\end{proof}

We will say that a transverse path $\alpha : [a, b] \to \dom(I )$ is \emph{admissible of order $n$} if it is $\F$-equivalent to a path $I^{[0,n]}_\F(z)$ for some $z\in\dom(I)$.

The following is \cite[Lemma~17]{lct1}, it is a straightforward consequence of continuity properties of $\F$. 

\begin{lemma}\label{Lem17Lct1}
Let $z\in\dom(\F)$ and $n\ge 1$. 
Then there exists a neighbourhood $W$ of $z$ such that, for every $z',z''\in W$, the path $I^n_\F(z')$ is $\F$-equivalent to a subpath of $I^{n+2}_\F(f^{-1}(z''))$.
\end{lemma}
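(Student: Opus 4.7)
The plan is to combine continuity of $f$ with the local product structure of the foliation $\F$ along the orbit segment of $z$. For each $k \in \{-1, 0, \ldots, n+1\}$ I would first pick a trivialisation neighbourhood $U_k$ of $f^k(z)$ for $\F$, namely an open set in which $\F$ looks like a product foliation and in which the transverse order on the leaves is well defined. Using continuity of the finitely many maps $f^k$ with $k\in\{-1,0,\ldots,n+1\}$, I can then choose a neighbourhood $W$ of $z$ with $W\subset U_0$ and $f^k(W) \subset U_k$ for all such $k$.

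The strategy is then to compare, leaf by leaf, the paths $I^n_\F(z')$ and $I^{n+2}_\F(f^{-1}(z''))$ for $z', z'' \in W$. Within each chart $U_k$ (for $k \in \{0, \ldots, n\}$) the leaves $\phi_{f^k(z')}$ and $\phi_{f^k(z'')}$ are close to $\phi_{f^k(z)}$ in the local order, but possibly on either side of it. The two ``buffer'' iterates (the one from $f^{-1}(z'')$ to $z''$ at the start and from $f^n(z'')$ to $f^{n+1}(z'')$ at the end) are there precisely to absorb this discrepancy at the endpoints: the leaf $\phi_{f^{-1}(z'')}$ is separated from $\phi_z$ by a full one-step transverse trajectory, so by Theorem~\ref{ThExistIstop} together with the continuity of $I^{[0,1]}_\F$ in its basepoint it lies strictly before $\phi_{z'}$ in the transverse order; symmetrically, $\phi_{f^{n+1}(z'')}$ lies strictly after $\phi_{f^n(z')}$.

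To conclude, I would verify that the family of leaves crossed by $I^n_\F(z')$ is a monotone sub-family of the leaves crossed by $I^{n+2}_\F(f^{-1}(z''))$, which is exactly what it means for the former to be $\F$-equivalent to a subpath of the latter. At each intermediate iterate $k$ with $0 < k < n$, the path $I^{n+2}_\F(f^{-1}(z''))$ traverses the chart $U_k$ and meets, monotonically, every leaf in the local transverse order between the leaf along which it enters $U_k$ and the leaf along which it leaves; since $f^k(z')$ and $f^k(z'')$ both sit in $U_k$, the leaf $\phi_{f^k(z')}$ is among these. A straightforward induction on $k$ then assembles the local comparisons into the global subpath statement.

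The main obstacle will be making the comparison at each chart transition precise: I must ensure that the leaves met by $I^1_\F(f^{k-1}(z''))$ upon entering $U_k$ differ, in the transverse order, from those met by $I^1_\F(f^{k-1}(z'))$ by only a bounded amount absorbed by the trivialisation chart. This is a continuity statement that follows from Theorem~\ref{ThExistIstop}, but it has to be phrased carefully in terms of the (non-Hausdorff, one-dimensional) leaf space inside each chart. Once this step is established, the argument reduces to bookkeeping across the finitely many charts $U_0, \ldots, U_n$.
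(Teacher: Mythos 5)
The paper itself does not supply a proof here; it quotes the statement from \cite[Lemma~17]{lct1}. Your leading idea is correct: shrink $W$ using continuity of the finitely many iterates $f^{-1},\dots,f^{n+1}$, and use the two buffer iterates so that, for compatible lifts to $\wh\dom(\F)$, the leaf $\wh\phi_{\wh f^{-1}(\wh z'')}$ lies strictly before $\wh\phi_{\wh z'}$ and $\wh\phi_{\wh f^{n+1}(\wh z'')}$ lies strictly after $\wh\phi_{\wh f^n(\wh z')}$. But there is a genuine gap in what follows: you then set out to compare the two paths chart by chart at every intermediate iterate $0<k<n$ and flag this bookkeeping as the main obstacle. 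It is neither necessary nor, without further ideas, a route that closes. The missing observation is the characterisation of transverse paths recalled in the preliminaries: in $\wh\dom(\F)$, a transverse lift $\wh\alpha|_{[a,b]}$ meets exactly the leaves $\wh\phi$ with $L(\wh\phi_{\wh\alpha(a)})\subset L(\wh\phi)\subset L(\wh\phi_{\wh\alpha(b)})$, so the family of leaves met is determined entirely by the two extreme leaves. Consequently the two inclusions
\[
L\big(\wh\phi_{\wh f^{-1}(\wh z'')}\big) \subset L\big(\wh\phi_{\wh z'}\big)
\qquad\text{and}\qquad
L\big(\wh\phi_{\wh f^{n}(\wh z')}\big) \subset L\big(\wh\phi_{\wh f^{n+1}(\wh z'')}\big)
\]
already say that the leaves met by the lift of $I^n_\F(z')$ form a sub-interval of those met by the lift of $I^{n+2}_\F(f^{-1}(z''))$, which is exactly the desired subpath statement.

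To obtain the two displayed inclusions, pick an interior point $\wh a$ on the arc $I^{[-1,0]}_{\wh\F}(\wh z)$ and $\wh b$ interior to $I^{[n,n+1]}_{\wh\F}(\wh z)$. Then $\wh z\in R(\wh\phi_{\wh a})$, $\wh f^{-1}(\wh z)\in L(\wh\phi_{\wh a})$, $\wh f^{n}(\wh z)\in L(\wh\phi_{\wh b})$, $\wh f^{n+1}(\wh z)\in R(\wh\phi_{\wh b})$, and these are open conditions; by continuity one gets a lifted neighbourhood $\wh W$ of $\wh z$ realising all four, with trivialisation charts needed only around those four base points to make the nearby leaves consistently oriented and comparable. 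So the local-chart argument you have in mind is used, but at the four endpoints only, not along the whole orbit segment; there is no induction on $k$ to run, and the ``non-Hausdorff leaf space'' worry is localised to these four charts.
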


The following statement is a reformulation of the main technical result of the forcing theory \cite{lct1} (Proposition~20):

\begin{prop}\label{propFondalct1}
Suppose that $I^{[t, t']}_\F(z)$ and $I^{[s,s']}_\F(z')$ intersect $\F$-transversally at $I^{t''}_\F(z) = I^{s''}_\F(z')$. Then the path $I^{[t, t'']}_\F(z) I^{[s'',s']}_\F(z')$ is $f$-admissible of order $\lceil t'-t\rceil+ \lceil s'-s\rceil$.
\end{prop}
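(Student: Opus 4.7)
The plan is to establish this statement as a direct restatement of \cite[Proposition~20]{lct1}, so I would follow the classical Le~Calvez--Tal forcing strategy, taking care to recover the exact ceilings.

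First, I would lift everything to the universal cover $\wh\Sigma$ of $\dom(\F)$: choose lifts $\wh z$, $\wh{z'}$ such that the lifted transverse paths $I^{[t,t']}_{\wh\F}(\wh z)$ and $I^{[s,s']}_{\wh\F}(\wh{z'})$ actually meet at a common point $\wh p = I^{t''}_{\wh\F}(\wh z) = I^{s''}_{\wh\F}(\wh{z'})$ with the above/below configuration of Definition~\ref{DefInterTrans}. This produces four distinguished leaves of $\wh\F$ around $\wh p$ in a ``crossing'' pattern, as in Figure~\ref{Fig:extransverse}, which will serve as a topological trap.

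Second, I would apply Lemma~\ref{Lem17Lct1} at both $z$ and $z'$ to obtain small neighbourhoods $W\ni z$ and $W'\ni z'$ such that transverse admissible paths of points inside them are $\F$-equivalent to subpaths of transverse trajectories of $z$ and $z'$ respectively. The task then becomes to exhibit a single point $w\in\dom(\F)$ whose orbit enters $W$ near time $0$, passes close to $p = I^{t''}_\F(z)$, then enters $W'$ near time $N = \lceil t'-t\rceil + \lceil s'-s\rceil$, and whose transverse trajectory $I^{[0,N]}_\F(w)$ is $\F$-equivalent to the concatenation $I^{[t,t'']}_\F(z)I^{[s'',s']}_\F(z')$.

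Third, and this is the heart of the matter, one must actually produce such a $w$. The crossing configuration at $\wh p$ creates a topological trap: any transverse path that enters a neighbourhood of the ``upper-left'' corner and leaves through the ``lower-right'' corner is forced, by a Brouwer-plane-translation type connectedness argument on the space of leaves of $\wh\F$, to realize the desired concatenation up to $\wh\F$-equivalence. Transporting this back to actual orbits of $\wh f$ through the isotopy provided by Theorem~\ref{ThExistIstop}, combined with a compactness/limit argument in $\dom(\F)$, yields the required $w$.

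The main obstacle is to obtain the \emph{exact} order $N = \lceil t'-t\rceil + \lceil s'-s\rceil$: a naive application of Lemma~\ref{Lem17Lct1} on each side would produce extra $+2$ summands. The key observation is that $\wh p$ sits inside a single iterate $I^{[k,k+1]}_{\wh\F}(w)$ of $f$, so the two ``overshoot'' costs coming from Lemma~\ref{Lem17Lct1} can be merged into that single shared iterate; and the initial and terminal iterates of $w$ may be chosen freely on either side of $t$ and $s'$, which lets any remaining fractional parts be absorbed into the ceilings. This careful bookkeeping is the only additional content beyond \cite[Proposition~20]{lct1}.
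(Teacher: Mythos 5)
Note first that the paper does \emph{not} prove this proposition: it is presented as ``a reformulation of the main technical result of the forcing theory [lct1] (Proposition~20)'', and the citation is the entire intended justification. You correctly recognize the statement as a restatement of that result, which already matches what the paper does; everything after your first sentence is an attempt to reprove lct1's Proposition~20 from scratch, which the paper does not do, and which your sketch does not succeed at.

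Concretely, the ``topological trap'' paragraph claims that a ``Brouwer-plane-translation type connectedness argument on the space of leaves'' forces the existence of the desired orbit. That one-line appeal hides the entire content of the forcing lemma, whose proof in lct1 is a delicate several-page Brouwer-theoretic argument (adapted isotopies, ordering of leaves, an intermediate-value-type analysis) and cannot be summarized as a bare connectedness observation, nor is it obtained by hunting for a point whose orbit visits prescribed neighbourhoods at prescribed times. The ceiling bookkeeping is also misdirected: you invoke Lemma~\ref{Lem17Lct1} and speak of ``merging overshoot costs'', but Lemma~\ref{Lem17Lct1} is a stability-under-perturbation statement that plays no role in lct1's proof of Proposition~20, so there are no extra $+2$ summands to manage. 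The orders $\lceil t'-t\rceil$ and $\lceil s'-s\rceil$ are simply the admissibility orders of the two trajectory segments one feeds into the cited result, and the sum $\lceil t'-t\rceil+\lceil s'-s\rceil$ then comes out directly from its conclusion; no mechanism of the sort you describe is needed or correct. If your intent was only to point to lct1 Proposition~20, stop there; if your intent was to actually reprove it, the sketch has real gaps at both of the steps above.
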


Another important (but much more technical) result of the forcing theory is a simple criterion (in terms of transverse intersections of paths) of existence of horseshoes. It is the main technical result of \cite{lct2} (Theorem M):

\begin{theorem}\label{thmMlct2}
Suppose that  $\gamma:[a,b]\to \mathrm{dom}(I)$ is an admissible path of order $r$. 
Let $\wh \gamma$ be a lift of $\gamma$ to the universal covering space $\wh{\dom}(\F)$ and suppose there exists a covering automorphism $T$ such that $\wh\gamma$ and $T(\wh\gamma)$ have an $\wh{\mathcal F}$-transverse intersection at $\wh\gamma(t)=T(\wh\gamma)(s)$, with $s<t$. 

Then there exists a point $\wh z\in \wh\dom(\F)$ such that $\wh f^{r}(\wh z)=T(\wh z)$.
\end{theorem}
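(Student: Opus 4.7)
The plan is to iterate the forcing Proposition~\ref{propFondalct1} to construct admissible paths displaying arbitrarily large $T$-displacement in the universal cover, and then to extract a genuine fixed point of the homeomorphism $T^{-1}\wh f^{r}$ by a Brouwer-theoretic argument on the associated cyclic cover.

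First I would apply Proposition~\ref{propFondalct1} to the two admissible paths $\wh\gamma$ and $T\wh\gamma$ (each of order $r$) at their $\wh\F$-transverse intersection $\wh\gamma(t)=T\wh\gamma(s)$. Since $s<t$, the proposition yields the admissible concatenation $\beta_{1}=\wh\gamma|_{[a,t]}\cdot T\wh\gamma|_{[s,b]}$ of order $2r$, running from a neighbourhood of $\wh\gamma(a)$ to a neighbourhood of $T\wh\gamma(b)$. By $T$-equivariance of $\wh f$ and $\wh \F$, the same combinatorial intersection reproduces between $T^{k}\wh\gamma$ and $T^{k+1}\wh\gamma$ for every $k\in\Z$, and this new intersection lies inside the terminal subpath $T^{n}\wh\gamma|_{[s,b]}$ of $\beta_{n}$. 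Proposition~\ref{propFondalct1} can therefore be applied inductively to produce admissible staircase paths
\[
\beta_{n} = \wh\gamma|_{[a,t]}\cdot T\wh\gamma|_{[s,t]}\cdots T^{n-1}\wh\gamma|_{[s,t]}\cdot T^{n}\wh\gamma|_{[s,b]}
\]
of order $(n+1)r$, going from $\wh\gamma(a)$ to $T^{n}\wh\gamma(b)$. A symmetric argument using the transverse intersection between $T^{-k-1}\wh\gamma$ and $T^{-k}\wh\gamma$ yields admissible paths with arbitrarily large \emph{negative} $T$-displacement as well.

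Next I would pass to the cyclic cover $A = \wh{\dom}(\F)/\langle T\rangle$, which is an open annulus (the deck group being torsion-free). The map $T^{-1}\wh f^{r}$ commutes with $T$ and so descends to a homeomorphism $g$ of $A$. Each $\beta_{n}$ is $\F$-equivalent to a true orbit segment $I_{\F}^{[0,(n+1)r]}(z_{n})$ of some $z_{n}\in\dom(\F)$, and Lemma~\ref{Lem17Lct1} together with a compactness argument on the realising points $z_{n}$ allows us to take a well-controlled limit. In $A$, these orbit segments project to pieces of $g$-orbits with unbounded positive winding, and the symmetric construction supplies unbounded negative winding. From this one extracts a point of $A$ whose $g$-orbit fails to escape uniformly to any end of $A$, and a Brouwer-plane/index argument (the free disc lemma, or equivalently a Poincaré--Birkhoff-style argument on the annulus) then upgrades such a recurrent point into a genuine fixed point of $g$. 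Any lift of this fixed point to $\wh{\dom}(\F)$ is a point $\wh z$ with $\wh f^{r}(\wh z)=T(\wh z)$.

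The hard part will be this final step: extracting an actual fixed point of $g$ from the sole knowledge that $g$ admits orbit pieces with arbitrarily large winding in both directions around $A$. This is exactly the technical heart of Theorem~M in \cite{lct2} and rests on the interplay between maximal isotopies, brick decompositions and classical Brouwer theory on the plane; by contrast, the staircase construction and the passage to $A$ are routine inductive consequences of Proposition~\ref{propFondalct1} together with $T$-equivariance.
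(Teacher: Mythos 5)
There is no in-paper proof to compare against: Theorem~\ref{thmMlct2} is quoted verbatim as Theorem~M of \cite{lct2}, which the paper uses as a black box; so your job here is really to verify that the statement is a faithful citation, not to reprove it.

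Your sketch of the first half is the standard ``staircase'' construction and is essentially right: iterating Proposition~\ref{propFondalct1} on the $T$-equivariant family of transverse self-intersections produces admissible paths $\beta_n$ of order $(n+1)r$ joining $\wh\gamma(a)$ to a neighbourhood of $T^n\wh\gamma(b)$, and the symmetric construction gives paths with displacement $T^{-n}$. However, the final step is not merely deferred — as sketched it would not give the stated conclusion. The realising points of $\beta_n$ have $\wh f^{r}$-rotation rate at most $n/(n+1)<1$, and those of the negative staircase have negative rate; so every rotation rate you exhibit on the cyclic cover $A$ lies \emph{strictly below} the value $1$ that a fixed point of the lift $T^{-1}\wh f^{r}$ must have (equivalently, strictly below $0$ for the lift $g$). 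A Poincar\'e--Birkhoff or free-disc argument needs rotation behaviour straddling the target; rates accumulating on the target from one side only give nothing, and indeed the descended map on an open annulus need have no fixed point at all without further index or recurrence input. The unbounded ``winding in both directions'' you invoke is winding of the $\wh f^r$-lift, not of the lift $g$, and it does not feed a squeeze at rate $1$. In short, the gap is not just technical complexity: the mechanism you describe (free disc lemma / Poincar\'e--Birkhoff on $A$) is not what produces the $T$-shifted fixed point, and the actual argument of Theorem~M in \cite{lct2} is of a different nature (careful compactness on the realising points combined with Brouwer translation theory adapted to the maximal isotopy). So the proposal is a reasonable overview of where the proof starts, but it mislocates what is genuinely hard and outlines a final step that does not close.
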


\subsection{Band defined by a transverse loop}\label{SecBand}

Fix $f\in\Homeo_0(S)$; let $I$ and $\F$ be the isotopy and the foliation given by Theorem~\ref{ThExistIstop}. 
In the sequel we will denote with $\tilde{}$ the lifts to the universal cover $\wt S$ of $S$, and with $\hat{}$ the lifts to the universal cover $\wh\dom(\F)$ of $\dom(\F)$. In particular, $\wt\F$ and $\wh\F$ will be the lifts of $\F$ to respectively $\wt S$ and $\wh\dom(\F)$. 

Let $\wh\beta :\R \to \wh\dom(\F)$ be an $\F$-transverse path (note that this implies that $\wh\beta$ is a topological line).
We say that $\wh\beta$ is a \emph{$T$-loop} if it is invariant under $T\in\G$. The union of leaves of $\wh\F$ met by $\wh\beta$, denoted $\wh B$, is called the \emph{band} or the \emph{$T$-band} defined by $\wh\beta$. The frontier $\partial \wh B$ of $\wh B$ is a (possibly empty) union of leaves which can be written $\partial\wh B=\partial\wh B^R\sqcup\partial\wh B^L$, with
\[\partial\wh B^R=\partial\wh B\cap R(\wh\beta)
\qquad \textrm{and}\qquad 
\partial\wh B^L=\partial\wh B\cap L(\wh\beta).\]

Let $\wh\alpha : \wh\dom(\F)\to \R$ be a transverse path, and suppose that 
\[\big\{ t\in\R \mid \wh\alpha(t)\in\wh B\big\}=(a,b),\]
where $-\infty\leq a<b\leq\infty$. We say that
\begin{itemize}
\item $\wh\alpha$ {\it draws $\wh B$} if there exist $t<t'$ in $(a,b)$ such that $\wh\phi_{\wh\alpha(t')}=T\wh\phi_{\wh\alpha(t)}$.
\end{itemize}
If, moreover, we suppose that $-\infty<a<b<+\infty$, say that:
\begin{itemize}
\item $\wh\alpha$ \emph{crosses $\wh B$ from right to left} if $\wh\alpha(a)\in \partial\wh B^R$ and $\wh\alpha(b)\in \partial\wh B^L$;
\item $\wh\alpha$ \emph{crosses $\wh B$ from left to right} if $\wh\alpha(a)\in \partial\wh B^L$ and $\wh\alpha(b)\in \partial\wh B^R$;
\item $\wh\alpha$ \emph{visits $\wh B$ on the right} if $\wh\alpha(a)\in \partial\wh B^R$ and $\wh\alpha(b)\in \partial\wh B^R$;
\item $\wh\alpha$ \emph{visits $\wh B$ on the left} if $\wh\alpha(a)\in \partial\wh B^L$ and $\wh\alpha(b)\in \partial\wh B^L$.
\end{itemize}
We say that $\wh\alpha$ \emph{crosses} $\wh B$ if it crosses it from right to left or from left to right. Similarly, $\wh\alpha$ \emph{visits} $\wh B$ if it visits it on the right or on the left. In the case where $\wh\alpha$ draws, crosses or visits $\wh B$, we will say that $(a,b)$ is a \emph{drawing, crossing} or \emph{visiting component} in $\wh B$.  

\begin{rem}\label{RemPossibBand}
A transverse path $\wh\alpha$ drawing $\wh B$ either crosses or visits $\wh B$, or accumulates in $\wh\gamma$, or is equivalent to $\wh \gamma$ at $+\infty$ or $-\infty$.
\end{rem}

If $\alpha$ is a transverse path meeting both a leaf $\wh\phi$ and its image $T\wh\phi$ by $T\in\G$, and if $\gamma$ is a $T$-loop meeting $\wh\phi$, then we say that $\wh\gamma$ is an \emph{approximation} of $\wh\alpha$.
\bigskip

We first give a criterion for a trajectory to stay in a band \cite[Proposition 2.1.17]{lellouch}.

\begin{prop}\label{Prop2.1.17lellouch}
Let $\alpha :  \R\to\dom(\F)$ be a transverse recurrent path, and $\wt\alpha$ be a lift of $\alpha$ to $\wt S$. If there exists $T\in\G$ such that $T\wt\alpha\sim_{+\infty}\wt\alpha$ (resp. $T\wt\alpha\sim_{-\infty}\wt\alpha$), then there exists a transverse $T$-loop $\wt \beta : \R\to \wt S$ such that $\wt\alpha\sim_{+\infty}\wt\beta$ (resp. $\wt\alpha\sim_{-\infty}\wt\beta$).
\end{prop}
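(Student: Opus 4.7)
The plan is to build $\wt\beta$ by iteratively invoking the hypothesis $T\wt\alpha \sim_{+\infty}\wt\alpha$ in order to extract a $T$-invariant arc in the leaf space of $\wt\F$, and then realising it as an actual transverse path.

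First I would choose $a_1<a_2$ such that $T\wt\alpha|_{[a_1,+\infty)}$ and $\wt\alpha|_{[a_2,+\infty)}$ are $\wt\F$-equivalent. Since the two paths meet the same leaves in a monotone bijection, after possibly replacing $a_2$ by the time corresponding to $a_1$ under this bijection I may assume $\wt\phi_{\wt\alpha(a_2)}=T\wt\phi_{\wt\alpha(a_1)}$. Let $\wt\alpha_0=\wt\alpha|_{[a_1,a_2]}$, which starts on $\wt\phi_{\wt\alpha(a_1)}$ and ends on $T\wt\phi_{\wt\alpha(a_1)}$. Denoting by $\psi:[a_1,+\infty)\to[a_2,+\infty)$ the order-preserving time-change coming from the leaf equivalence, I define inductively $a_{n+1}=\psi(a_n)$ for $n\ge 1$. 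A straightforward induction then gives that $\wt\alpha|_{[a_n,a_{n+1}]}$ is $\wt\F$-equivalent to $T^{n-1}\wt\alpha_0$ for every $n\ge 1$; in particular $\wt\alpha|_{[a_1,+\infty)}$ meets exactly the leaves met by the formal concatenation $\wt\alpha_0\cdot T\wt\alpha_0\cdot T^2\wt\alpha_0\cdots$.

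Next I would extend this concatenation to all $n\in\Z$, forming the biinfinite object $\cdots T^{-1}\wt\alpha_0\cdot\wt\alpha_0\cdot T\wt\alpha_0\cdots$. The endpoint of $T^n\wt\alpha_0$ and the starting point of $T^{n+1}\wt\alpha_0$ both lie on the leaf $T^{n+1}\wt\phi_{\wt\alpha(a_1)}$, so the underlying arc in the leaf space of $\wt\F$ is well defined and $T$-invariant by construction. Using the local product structure of the nonsingular foliation $\wt\F$ on $\dom(\wt\F)$, I realise this leaf-arc as a genuine positively transverse path $\wt\beta:\R\to\wt S$: near each junction leaf I use a trivialising chart to replace the short terminal and initial sub-arcs of $T^n\wt\alpha_0$ and $T^{n+1}\wt\alpha_0$ by a single transverse sub-arc crossing the junction leaf once, keeping the same leaf set on both sides. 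Doing this equivariantly under $T$, the resulting $\wt\beta$ satisfies $T\wt\beta=\wt\beta$ as a set, hence is a $T$-loop.

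The equivalence $\wt\alpha\sim_{+\infty}\wt\beta$ is then immediate from the inductive step above, since $\wt\alpha|_{[a_1,+\infty)}$ and a suitable positive tail of $\wt\beta$ meet exactly the same leaves of $\wt\F$ (in the same order). The case $T\wt\alpha\sim_{-\infty}\wt\alpha$ is treated symmetrically by reversing time. The main obstacle is the realisation step: although the formal biinfinite concatenation has a well defined $T$-invariant leaf set, consecutive pieces only share a junction leaf rather than a junction point, so one must genuinely use the local product structure of $\wt\F$ to rewrite each junction into a single transverse crossing without altering the global leaf set, and verify that this can be done consistently under $T$.
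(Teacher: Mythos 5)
The overall plan — extract from the $+\infty$-equivalence a ``fundamental period'' $\wt\alpha_0=\wt\alpha|_{[a_1,a_2]}$ starting on $\wt\phi_{\wt\alpha(a_1)}$ and ending on $T\wt\phi_{\wt\alpha(a_1)}$, iterate the induced time-change $\psi$ to exhaust $\wt\alpha|_{[a_1,+\infty)}$, then build a $T$-invariant transverse path from the bi-infinite concatenation — is the natural one, and I believe it is the same skeleton as Lellouch's argument. But there are two real gaps, and they are correlated with the fact that your proof never invokes the hypothesis that $\alpha$ is \emph{recurrent}, which should be a warning sign.

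First, the iteration step: you need both that $a_1<a_2$ (otherwise $\wt\alpha_0$ is degenerate or runs backwards) and that $a_n=\psi^{n-1}(a_1)\to+\infty$ (otherwise the concatenation does not account for all of $\wt\alpha|_{[a_1,+\infty)}$). If $\psi$ has a fixed point $t_0$ in $[a_1,+\infty)$, then $a_n$ converges to it, and $T$ fixes the leaf $\wt\phi_{\wt\alpha(t_0)}$, so that $\wt\alpha$ accumulates on the $T$-orbit of $\wt\alpha_0$. Ruling this out (and choosing the sign of $T$ so that $\psi$ is expanding) is exactly where recurrence should enter, via Proposition~\ref{Prop2.1.3Lellouch}: a recurrent transverse path cannot accumulate on itself. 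As written you simply assume $a_1<a_2$ and never address convergence of $(a_n)$.

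Second, and more seriously, the realisation step is where the real content is and you treat it as a routine local surgery. Two difficulties are hidden there. (i) You speak of the ``underlying arc in the leaf space of $\wt\F$'' as if it were well defined, but you are working in $\wt S$, not in the universal cover $\wh\dom(\F)$: the lift of $\F$ to $\wt S$ is not the Brouwer lift, so a transverse path in $\wt S$ may cross the same leaf of $\wt\F$ several times, and the map $t\mapsto\wt\phi_{\wt\alpha(t)}$ need not be injective. Thus the formal concatenation need not trace out an embedded arc in the leaf space, and the claim that one can ``keep the same leaf set on both sides'' after a local modification is not automatic. Again this is the kind of pathology that recurrence (via non-self-accumulation) is meant to exclude, and a correct proof must say so. (ii) The points $T^n\wt\alpha(a_2)$ and $T^{n+1}\wt\alpha(a_1)$ lie on the same junction leaf $T^{n+1}\wt\phi_{\wt\alpha(a_1)}$ but may be arbitrarily far apart along it, so the modification is not ``near'' in any uniform sense: one needs a (possibly long and thin) product neighbourhood of a compact sub-arc of that leaf, must check that the connecting transverse arc picks up no extra leaves of $\wt\F$, and must verify that all of this can be done $T$-equivariantly while keeping $\wt\beta$ a properly embedded line. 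None of these checks is routine and they should appear explicitly. As it stands the argument records the right picture but leaves the technical heart of the proposition unproved.
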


Let us give two criteria of existence of transverse intersections in terms of the notions we just defined.
The first one is \cite[Proposition 2.1.15]{lellouch}\footnote{The fact that the transverse intersection occurs with the $T$-translate is not contained in the lemma's statement but is stated at the end of the proof.}.

\begin{prop}\label{Prop2.1.15Lellouch}
Let $\wh\alpha : \R\to\wh\dom(\F)$ be a transverse path and $\wh\gamma$ an approximation of $\wh\alpha$ that is a $T$-loop. 
If $\wh\alpha$ visits the band $\wh B$ defined by $\wh\gamma$, then $\wh\alpha$ and $T\wh\alpha$ intersect $\wh\F$-transversally.
\end{prop}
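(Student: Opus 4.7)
The plan is to work inside the band $\wh B$ using flow-box coordinates along $\wh\gamma$. Since $\wh\gamma$ is a $T$-invariant transverse path meeting each leaf of $\wh B$ exactly once, one gets a homeomorphism identifying $\wh B$ with a simply connected strip in which the leaves of $\wh\F|_{\wh B}$ are vertical lines $\{s\}\times\R$, the loop $\wh\gamma$ is $\R\times\{0\}$, the region $R(\wh\gamma)\cap\wh B$ is the upper half, $L(\wh\gamma)\cap\wh B$ is the lower half, and $T$ acts as $(s,t)\mapsto(s+\tau,t)$ for some $\tau>0$. Leaves in $\partial\wh B^R$ correspond to the asymptotic direction ``$t\to +\infty$ within $\wh B$''.

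Let $(a,b)$ be a visiting component of $\wh\alpha$ in $\wh B$ on the right; the key observation is that we may assume the approximation occurs within this component, so that $\wh\alpha|_{(a,b)}$ meets both $\wh\phi$ and (after possibly replacing $T$ by $T^{-1}$) later also $T\wh\phi$. Since $\wh\alpha$ is positively transverse to $\wh\F$, its $s$-coordinate is strictly monotone on $(a,b)$; parametrising by $s$, we write $\wh\alpha$ inside $\wh B$ as a graph $(s,Y(s))$ for $s\in(S_a,S_b)$. The visit-on-the-right condition forces $Y(s)\to+\infty$ as $s\to S_a^+$ or $s\to S_b^-$, while the approximation forces $s_0, s_0+\tau\in(S_a,S_b)$, so in particular $S_b-S_a>\tau$. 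In the same coordinates, $T\wh\alpha$ is the graph $(s,Y(s-\tau))$ on $(S_a+\tau,S_b+\tau)$.

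Setting $\psi(s)=Y(s)-Y(s-\tau)$ on the overlap $(S_a+\tau,S_b)$, the boundary behaviour of $Y$ gives $\psi(s)\to-\infty$ as $s\to(S_a+\tau)^+$ (since $Y(s-\tau)\to+\infty$ while $Y(s)$ stays bounded) and $\psi(s)\to+\infty$ as $s\to S_b^-$ (the symmetric statement). The intermediate value theorem then supplies $s_*\in(S_a+\tau,S_b)$ with $\psi(s_*)=0$ and additionally some $s_-<s_*<s_+$ in this interval with $\psi(s_-)<0<\psi(s_+)$, so that the two graphs $\wh\alpha$ and $T\wh\alpha$ meet at the point $P=(s_*,Y(s_*))$ and have their leafwise relative order reversed between $s_-$ and $s_+$.

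The last step is to upgrade this geometric crossing in the band into an $\wh\F$-transverse intersection in the sense of Definition~\ref{DefInterTrans}. Choosing $a_1<t_1<b_1$ on $\wh\alpha$ and $a_2<t_2<b_2$ on $T\wh\alpha$ corresponding to $s$-coordinates in the windows $(s_-,s_*)$ and $(s_*,s_+)$, the connecting arcs $\wh\delta_1,\wh\delta_2$ realising the ``above''/``below'' relations on $\wh\phi_P$ are built by concatenating short leaf segments with short arcs of $\wh\alpha$ and $T\wh\alpha$; the sign reversal of $\psi$ then forces the two intersection points with $\wh\phi_P$ to lie in opposite orders for the ``before'' and the ``after'' choices. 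The main technical obstacle is precisely this last point: one needs to check carefully that the chosen arcs are disjoint from each other and from the three leaves involved (except at their extremities), which in the simply connected band follows from the strict reversal of the transverse order around $P$ but requires some care when the boundary leaves of $\wh B$ behave pathologically.
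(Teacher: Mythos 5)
The paper does not contain its own proof of this statement: it is quoted directly from Lellouch's thesis (\cite[Proposition 2.1.15]{lellouch}), so there is nothing to compare against at the level of detail. I can, however, evaluate the argument on its own terms, and there is a genuine gap.

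Your intermediate–value setup inside the band is a sound first step: parametrising $\wh\alpha|_{(a,b)}$ as a graph $(s,Y(s))$ over the leaf space of $\wh B$ and comparing with $T\wh\alpha$ via $\psi(s)=Y(s)-Y(s-\tau)$ does produce a topological crossing $P$ at some $s_*$, and the boundedness of the overlap interval $(S_a+\tau,S_b)$ together with the escape of $Y$ at the two ends is correct once one observes that $\wh\alpha(a)$ and $\wh\alpha(b)$ lie on specific leaves of $\partial\wh B^R$, which pins $S_a$ and $S_b$ to be finite (you take this for granted, but it does hold). The problem is the upgrade step. You propose to verify the $\wh\F$-transverse intersection of Definition~\ref{DefInterTrans} by choosing $a_1,a_2,b_1,b_2$ at $s$-coordinates in small windows $(s_-,s_*)$, $(s_*,s_+)$ inside $\wh B$. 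But for any three leaves $\wh\phi_{s_1},\wh\phi_{s_2},\wh\phi_{s_*}$ of $\wh B$ with $s_1,s_2<s_*$, one of $\wh\phi_{s_1},\wh\phi_{s_2}$ separates the other from $\wh\phi_{s_*}$ in the plane $\wh\Sigma$ (these are pairwise disjoint proper lines and the leaf space of $\wh B$ is an embedded interval). Consequently the disjoint arcs $\wh\delta_1,\wh\delta_2$ required by the definition of \emph{above/below} cannot exist: the relation is simply undefined when all three leaves are taken inside the band, no matter how carefully the arcs are chosen. This is not, as you suggest, a technicality about pathological boundary leaves; it is a structural obstruction that rules out your choice of comparison times.

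The missing idea is precisely where the hypothesis ``$\wh\alpha$ visits $\wh B$'' (as opposed to ``crosses'' or ``stays in'' $\wh B$) must be used. Because $\wh\alpha$ enters and leaves $\wh B$ on the same side, the four boundary leaves $\wh\phi_a=\wh\phi_{\wh\alpha(a)}$, $T\wh\phi_a$, $\wh\phi_b=\wh\phi_{\wh\alpha(b)}$, $T\wh\phi_b\subset\partial\wh B^R$ are pairwise distinct and none of them separates any other from $\wh\phi_P$ (all of $\wh B$, and hence $\wh\phi_P$, lies on the same side of each of them, as does each remaining boundary leaf). It is these branching leaves — not interior leaves of $\wh B$ — that make the above/below relations well-defined, and the sign change of $\psi$ across $s_*$ then translates into the required order reversal on $\wh\phi_P$. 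You should therefore take $a_1\le a$, $a_2$ at or before the entry time of $T\wh\alpha$, and similarly $b_1\ge b$, $b_2$ at or after the exit of $T\wh\alpha$, and build $\wh\delta_1,\wh\delta_2$ from these boundary leaves, not from short windows around $P$.
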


The second one is \cite[Proposition 2.1.16]{lellouch}.

\begin{prop}\label{Prop2.1.16Lellouch}
Let $\wh\alpha, \wh\beta : \R\to\wh\dom(\F)$ be two transverse paths and $\wh\gamma$ an approximation of $\wh\alpha$ that is a $T$-loop. 

If $\wh\alpha$ crosses the band $\wh B$ defined by $\wh\gamma$ from left to right, and $\wh\beta$ crosses the band $\wh B$ from right to left, then there exists $n\in\Z$ such that $\wh\alpha$ and $T^n\wh\beta$ intersect $\wh\F$-transversally.

Similarly, if $\wh\alpha$ crosses the band $\wh B$ defined by $\wh\gamma$ from right to left, and $\wh\beta$ crosses the band $\wh B$ from left to right, then there exists $n\in\Z$ such that $\wh\alpha$ and $T^n\wh\beta$ intersect $\wh\F$-transversally.
\end{prop}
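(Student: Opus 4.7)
The plan is to exploit the $T$-translational symmetry of the band $\wh B$ to shift $\wh\beta$ by a suitable $T^n$ and force an $\wh\F$-transverse crossing with $\wh\alpha$ inside $\wh B$. First I parametrise the leaves of $\wh B$ using $\wh\gamma$: since $\wh\gamma$ is a $T$-loop positively transverse to $\wh\F$ and each leaf contained in $\wh B$ meets $\wh\gamma$ exactly once, the leaves of $\wh B$ inherit a continuous total order $s\mapsto \wh\phi_s$ on which $T$ acts by translation by the period $\tau$. Let $(a_1,b_1)$ and $(a_2,b_2)$ be the crossing components of $\wh\alpha$ and $\wh\beta$ in $\wh B$. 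Transversality to $\wh\F$ forces the leaf parameter along each crossing to be strictly monotone, and positivity of the transversality makes these parametrisations increase in the same direction (the one induced by $\wh\gamma$). Denote the resulting leaf parameter ranges by $I_\alpha=[s_\alpha^-,s_\alpha^+]$ and $I_\beta=[s_\beta^-,s_\beta^+]$.

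Next, I choose $n\in\Z$ so that the shifted range $I_\beta+n\tau$ overlaps $I_\alpha$ with the needed interleaving (for instance so that $s_\alpha^-$ lies in the interior of $I_\beta+n\tau$), which is possible because $T$ acts by a nonzero shift. With this choice, the arcs $\wh\alpha|_{[a_1,b_1]}$ and $T^n\wh\beta|_{[a_2,b_2]}$ are two transverse arcs in $\wh B$ joining opposite boundary components: $\partial\wh B^L\to\partial\wh B^R$ for $\wh\alpha$ and $\partial\wh B^R\to\partial\wh B^L$ for $T^n\wh\beta$. Since each arc is a graph over its leaf parameter range (it meets each leaf at most once), an intermediate value argument in the annular band yields a point $p=\wh\alpha(t_1)=T^n\wh\beta(t_2)$ where the two arcs actually meet.

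It then remains to check that $p$ is an $\wh\F$-transverse intersection in the sense of Definition~\ref{DefInterTrans}. One selects $a_i<t_i<b_i$ so that $\wh\alpha(a_1)\in\partial\wh B^L$, $\wh\alpha(b_1)\in\partial\wh B^R$, $T^n\wh\beta(a_2)\in\partial\wh B^R$ and $T^n\wh\beta(b_2)\in\partial\wh B^L$, and builds the two disjoint paths required by Definition~\ref{DefInterTrans} from subarcs of $\wh\alpha$, $T^n\wh\beta$ and short transverse segments, using the annular structure of $\wh B$ to route them through different sides of the reference leaf $\wh\phi_p$. The fact that the entry leaves come from opposite boundary components of $\wh B$ — and similarly the exit leaves, but with the roles reversed — produces precisely the exchange of the ``above'' and ``below'' relations between the $a$-side and the $b$-side that encode a transverse intersection.

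The main obstacle is exactly this last verification. Because both $\wh\alpha$ and $T^n\wh\beta$ cross leaves in the same monotone direction, the crossing in the leaf space is degenerate, and the ``above/below'' conditions cannot come from leaf-order but must be read off from the boundary component ($\partial\wh B^L$ versus $\partial\wh B^R$) in which each entry/exit leaf lies. Carefully constructing disjoint connecting paths around $\wh\phi_p$ that respect this data — in a band whose foliation is in general non-Hausdorff in the leaf space and whose leaves need not separate — is the technical crux of the argument.
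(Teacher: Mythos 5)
Your strategy — use the $T$-periodicity of $\wh B$ to shift $\wh\beta$ so its crossing range overlaps that of $\wh\alpha$, locate a geometric intersection by an intermediate value argument in the band, then verify that the intersection is $\wh\F$-transverse — is the natural approach here. But the write-up has three problems that keep it from being a proof.

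First, the justification for the existence of the shift $n$ is wrong as stated. ``Because $T$ acts by a nonzero shift'' does not by itself guarantee that some translate $I_\beta+n\tau$ meets $I_\alpha$ (let alone contains $s_\alpha^-$ in its interior): if both $I_\alpha$ and $I_\beta$ had length $<\tau$ with incompatible phases, every translate of $I_\beta$ could miss $I_\alpha$. What actually makes this work is the hypothesis that $\wh\gamma$ is an \emph{approximation} of $\wh\alpha$: by the definition in Section~\ref{SecBand}, $\wh\alpha$ meets both a leaf $\wh\phi$ and its image $T\wh\phi$, so the leaf-parameter range $I_\alpha$ has length at least $\tau$, and hence necessarily overlaps some translate of $I_\beta$. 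This hypothesis is never invoked in your argument, and without it the intermediate-value step has nothing to bite on.

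Second, you explicitly leave the verification of the $\wh\F$-transverse intersection unfinished — you sketch the right idea (read the above/below data at the crossing from which side of $\wh B$ each entry/exit leaf lies on) but then declare the construction of the two disjoint connecting paths required by Definition~\ref{DefInterTrans} to be ``the technical crux'' without carrying it out. As written, the proof stops precisely where the actual content of the statement lies.

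Third, the difficulty you appeal to in excusing that gap is largely illusory, and for a reason your own first step already secretly exploits. You worry about ``a band whose foliation is in general non-Hausdorff in the leaf space and whose leaves need not separate.'' But a transverse path in $\wh\dom(\F)$ meets each leaf of $\wh\F$ at most once, so the $T$-loop $\wh\gamma$ meets each leaf of $\wh B$ exactly once; the leaf space of $\wh B$ is therefore canonically identified with $\wh\gamma\cong\R$, and $\wh B$ is a trivially foliated topological plane (this is exactly the fact used repeatedly in the paper, e.g.~around Lemma~\ref{LemCantCrossLotLeaves}). Inside $\wh B$ leaves do separate and the leaf space is Hausdorff. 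Non-Hausdorff phenomena concern the ambient foliation and the frontier leaves, but the $\wh\delta_i$ in Definition~\ref{DefInterTrans} need only touch the boundary leaves at one endpoint and can otherwise run inside $\wh B$, where the picture is as simple as your intermediate-value argument assumes. In short, the crux is more routine than you make it sound, so it should be done, not deferred.
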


The following lemma is straightforward.

\begin{lemma}\label{LemPasAccImplTrans}
Let $\wh\alpha, \wh\beta : \R\to\wh\dom(\F)$ be two transverse paths such that $\wh\beta$ is a $T$-loop for some $T\in\G\setminus\{\Id\}$. 

If $\wh\alpha$ crosses the band $\wh B$ defined by $\wh\beta$ and $\wh\beta$ does not accumulate in $\alpha$, then the transverse paths $\wh\beta$ and $\wh\alpha$ intersect $\wh\F$-transversally.
\end{lemma}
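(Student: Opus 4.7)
The approach is by contradiction: assume $\wh\alpha$ and $\wh\beta$ have no $\wh\F$-transverse intersection, and derive that $\wh\beta$ accumulates in $\wh\alpha$, contradicting the hypothesis.

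First I would produce a geometric intersection. Let $(a,b)$ be a crossing component of $\wh\alpha$ through $\wh B$; say, without loss of generality, that $\wh\alpha$ crosses from right to left, so that $\wh\alpha(a)\in\partial\wh B^R\subset R(\wh\beta)$ and $\wh\alpha(b)\in\partial\wh B^L\subset L(\wh\beta)$. Since $\wh\beta$ is a simple proper line separating $\wh\dom(\F)$ into $R(\wh\beta)$ and $L(\wh\beta)$, continuity forces $\wh\alpha|_{[a,b]}$ to meet $\wh\beta$ at some point $\wh p=\wh\alpha(t_0)=\wh\beta(s_0)$. Both paths are positively transverse and cross the common leaf $\wh\phi_{\wh p}$ from right to left at $\wh p$; the assumed absence of an $\wh\F$-transverse intersection at $\wh p$ then forces that, locally in the leaf space of $\wh\F$, the two paths traverse the same family of leaves in the same order, i.e.~$\wh\alpha$ and $\wh\beta$ are $\wh\F$-equivalent on small intervals around $t_0$ and $s_0$.

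I then extend this local $\wh\F$-equivalence to maximal intervals $I\subset(a,b)$ around $t_0$ and $J\subset\R$ around $s_0$, yielding a monotonic bijection $\sigma\colon J\to I$ of parameters with $\wh\phi_{\wh\alpha(\sigma(s))}=\wh\phi_{\wh\beta(s)}$ for every $s\in J$. The key claim is that $J=\R$. Indeed, if the right endpoint $s_+$ of $J$ were finite, the failure of the equivalence to extend past $s_+$ could only be caused by a branching in the leaf space at $\wh\phi_{\wh\beta(s_+)}$ or by $\wh\alpha$ leaving $\wh B$ at $\sup I$; in either case, a local analysis of the leaf space at $\wh\phi_{\wh\beta(s_+)}$ combined with positive transversality of both paths produces an $\wh\F$-transverse intersection between $\wh\alpha$ and $\wh\beta$ near $s_+$, contradicting the standing assumption. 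By the symmetric argument $s_-=-\infty$, hence $J=\R$.

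Finally, the $\wh\F$-equivalence of $\wh\beta|_\R$ with $\wh\alpha|_I$, where $I\subset(a,b)$ is bounded, yields positive accumulation: picking any $s_1>s_0$, the piece $\wh\beta|_{[s_1,+\infty)}$ is $\wh\F$-equivalent to $\wh\alpha|_{[\sigma(s_1),\sup I)}$ with $\sup I\le b<+\infty$, which is precisely the definition of $\wh\beta$ accumulating positively in $\wh\alpha$, contradicting the hypothesis. The delicate point of the proof is the extension step: one must carefully analyze the (possibly non-Hausdorff) local structure of the leaf space at the endpoint $s_+$ to guarantee that any premature termination of the equivalence forces a transverse intersection there.
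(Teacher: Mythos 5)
The paper does not spell out a proof (it marks the lemma as ``straightforward''), so your argument has to stand on its own. The overall plan is reasonable, but there is a genuine gap in the extension step, and you never invoke the one piece of geometric information that actually forces the transverse crossing.

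Your key claim is that if the maximal equivalence interval $J$ has a finite right endpoint $s_+$, then ``a local analysis of the leaf space at $\wh\phi_{\wh\beta(s_+)}$ \dots produces an $\wh\F$-transverse intersection \dots near $s_+$.'' This is not true. At a non-Hausdorff branch point of the leaf space, two positively transverse paths that share all leaves on one side and then diverge onto the two non-separated branch leaves do \emph{not} have an $\wh\F$-transverse intersection: Definition~\ref{DefInterTrans} requires a \emph{switch} --- $\wh\phi_{\wh\alpha(a_1)}$ strictly above $\wh\phi_{\wh\beta(a_2)}$ for some parameters before, and strictly below after --- and with the paths $\wh\F$-equivalent on the whole interval below $s_+$ there is no strict inequality on that side. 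So the localized conclusion at $s_+$ alone does not follow, and with it the chain ``$s_+=+\infty$, then $s_-=-\infty$, then $J=\R$'' collapses.

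What is actually true (and what the argument should be built on) is this. The non-accumulation hypothesis forces \emph{both} endpoints $s_-$ and $s_+$ of $J$ to be finite: if, say, $s_+=+\infty$, then $\wh\beta|_{[s_1,+\infty)}$ is $\wh\F$-equivalent to the bounded subpath $\wh\alpha|_{[\sigma(s_1),\,\sup I)}$ (with $\sup I\le b<+\infty$), which is exactly positive accumulation; symmetrically for $s_-$. Once both branch points exist, the transverse intersection is then produced \emph{globally}, using the fact that $\wh\alpha$ \emph{crosses} $\wh B$: one has $\wh\phi_{\wh\alpha(a)}\subset\partial\wh B^R\subset R(\wh\beta)$ and $\wh\phi_{\wh\alpha(b)}\subset\partial\wh B^L\subset L(\wh\beta)$, i.e.\ the two leaves through which $\wh\alpha$ enters and leaves $\wh B$ lie on \emph{opposite} sides of $\wh\beta$, whereas the two branch leaves $\wh\phi_{\wh\beta(s_-)}$, $\wh\phi_{\wh\beta(s_+)}$ of $\wh\beta$ lie on $\wh\beta$ itself. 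Because positive transversality fixes the correspondence between the two sides of $\wh\beta$ and the positive/negative ends of any leaf of $\wh B$, one branch of $\wh\alpha$ is above its $\wh\beta$-partner relative to $\wh\phi_{\wh\beta(s_0)}$ while the other is below, yielding the $\wh\F$-transverse intersection at $\wh\alpha(t_0)=\wh\beta(s_0)$. Your write-up never uses that $\wh\phi_{\wh\alpha(a)}$ and $\wh\phi_{\wh\alpha(b)}$ are on opposite sides of $\wh\beta$ --- that is precisely the place where the crossing hypothesis enters, and without it the lemma would be false.
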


The following specifies the rotational properties of a $\mu$-typical point whose trajectory is $\F$-equivalent to a transverse loop \cite[Proposition 2.2.13]{lellouch}.

\begin{prop}\label{Prop2.2.13Lellouch}
Let $\mu\in\Me(f)$ be a measure that is not supported in a single fixed point of the isotopy $I$, and $z\in\dom(\F)$ be a $\mu$-typical point. Let $\alpha : \R\to S$ be an $\F$-transverse loop.
If $I^\Z_{\F}(z)$ and $\alpha$ are $\F$-equivalent (at $+\infty$), then for any lift $\wh z$ of $z$ to $\wh\dom (\F)$ there exists a bounded neighbourhood $\wh W$ of $\wh z$ and two increasing sequences $(\ell_n)$ and $(q_n)$ such that for any $n\in\N$, one has $\wh f^{\ell_n}(\wh z) \in T^{q_n}(\wh W)$. 
\end{prop}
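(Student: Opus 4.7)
The plan is to combine Poincaré recurrence for $\mu$ with the annular structure of the band $B$ induced by $\alpha$ in $\dom(\F)$, and to identify explicitly the deck transformation producing each near-return of the orbit of $\wh z$.

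First I would set up the annular picture. Let $B\subset\dom(\F)$ be the union of leaves met by $\alpha$; by Proposition~\ref{Prop3.3GLCP} applied to a simple transverse loop $\F$-equivalent to $\alpha$, $B$ is an open annulus. Lift $\alpha$ to a transverse path $\wh\alpha:\R\to\wh\dom(\F)$ invariant under a primitive deck transformation $T$ of the covering $\wh\pi:\wh\dom(\F)\to\dom(\F)$, and let $\wh B$ be the connected component of $\wh\pi^{-1}(B)$ containing $\wh\alpha$: it is $T$-invariant, the subgroup of deck transformations preserving $\wh B$ is exactly $\langle T\rangle$, and distinct components of $\wh\pi^{-1}(B)$ are pairwise disjoint. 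The $\F$-equivalence at $+\infty$ yields $N$ with $f^n(z)\in B$ for all $n\ge N$; upon replacing $z$ by $f^N(z)$ (and enlarging $\wh W$ at the very end so it still contains the original $\wh z$), I may assume $z\in B$ and $\wh z\in\wh B$. Pick a bounded open neighbourhood $\wh W_0\subset \wh B$ of $\wh z$ projecting injectively onto a neighbourhood $W$ of $z$ in $\dom(\F)$.

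Next I would apply Poincaré recurrence. Since $\mu$ is not supported on a fixed point of $I$, $\mu$-typical points are recurrent, so there is $\ell_n\nearrow+\infty$ with $f^{\ell_n}(z)\to z$. For $n$ large, $f^{\ell_n}(z)\in W$ and hence $\wh f^{\ell_n}(\wh z)\in G_n\wh W_0$ for a unique deck transformation $G_n$. By the $\F$-equivalence at $+\infty$, $\wh f^{\ell_n}(\wh z)$ lies on a leaf of $\wh\F$ met by $\wh\alpha$, so it belongs to $\wh B$; together with $\wh W_0\subset\wh B$ this yields $G_n\wh B\cap\wh B\neq\emptyset$, whence $G_n\wh B=\wh B$ by disjointness of components, and $G_n\in\langle T\rangle$, i.e. $G_n=T^{q_n}$ for some $q_n\in\Z$. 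Parametrising $\wh\alpha$ by $T\wh\alpha(s)=\wh\alpha(s+1)$, the $\wh\F$-equivalence at $+\infty$ provides a monotone position function $s(t)\to+\infty$ such that $\wh I^t_{\wh\F}(\wh z)$ lies on the leaf through $\wh\alpha(s(t))$; since $T^{-q_n}\wh f^{\ell_n}(\wh z)\in\wh W_0$ is bounded, $|s(\ell_n)-q_n|$ stays bounded, whence $q_n\to+\infty$.

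The main delicacy will be the identification $G_n=T^{q_n}$, which crucially uses $\wh W_0\subset\wh B$. This forces the preliminary reduction $z\mapsto f^N(z)$, because the original $z$ could a priori lie on $\partial B$, together with a final enlargement of $\wh W$ so that it still contains the original lift while remaining bounded. Once this setup is in place, the remaining steps reduce to the topological fact that distinct connected components of a cover of an annulus are disjoint, combined with the monotone-winding structure of $\wh\alpha$.
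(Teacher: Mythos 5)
Your overall strategy is the right one: reduce to $z\in B$ and $\wh z\in\wh B$, invoke Poincaré recurrence, lift returns to identify deck transformations $G_n$, constrain $G_n$ via the component structure of $\wh\pi^{-1}(B)$, and conclude $q_n\to+\infty$ from the monotone leaf coordinate. The reduction at the beginning and the final monotonicity argument are correct. The paper does not prove this statement (it cites Lellouch), so the comparison can only be against correctness, not against a proof in the text.

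The genuine gap is the middle step, the assertion that $\operatorname{Stab}(\wh B)=\langle T\rangle$, which you derive from ``$B$ is an open annulus'' and justify by Proposition~\ref{Prop3.3GLCP}. That proposition's hypothesis is that a positively recurrent path \emph{accumulates} on another path, which is not what you have here: you have $\F$-\emph{equivalence} at $+\infty$, and indeed by Proposition~\ref{Prop2.1.3Lellouch} a recurrent transverse path cannot accumulate on itself, so the hypothesis of Proposition~\ref{Prop3.3GLCP} actually \emph{fails} in your setting. So the citation does not apply, and the annular structure of $B$ (equivalently, that $\operatorname{Stab}(\wh B)$ is cyclic generated by $T$) is left unproved. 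This is not a cosmetic omission: $\wh B$ could a priori be all of $\wh\dom(\F)$ (if $\partial\wh B=\emptyset$), in which case $\operatorname{Stab}(\wh B)=\G$ and the identification $G_n=T^{q_n}$ collapses. You need an argument ruling this out and pinning the stabiliser down to $\langle T\rangle$ — for example by showing that the natural projection of $\wh\alpha$ to $\wh\dom(\F)/\langle T\rangle$ is a simple essential loop whose band is an essential annulus, and that any $R\in\operatorname{Stab}(\wh B)$ with $R\notin\langle T\rangle$ would contradict this, or by appealing to a statement from Lellouch's thesis or \cite{lct2} on bands of transverse loops. Without some such argument the step ``$G_n\wh B=\wh B\Rightarrow G_n\in\langle T\rangle$'' is unjustified.

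Two smaller points. First, Poincaré recurrence holds for every invariant probability measure; the hypothesis that $\mu$ is not carried by a single fixed point of $I$ is not what makes $z$ recurrent, it is what ensures $\mu$-typical points lie in $\dom(\F)$ and have a nontrivial transverse trajectory (and hence, downstream, that $q_n\to\infty$). Second, when you enlarge $\wh W$ at the end to contain the original $\wh z$, you should make sure it remains a (connected, bounded) neighbourhood of the original $\wh z$; this is easily arranged but worth stating.
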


Let us give a criterion of $\wt\F$-transverse intersection in terms of drawing components. It is based on the following result \cite[Proposition~24]{lct2}:

\begin{lemma}\label{LemProp24Lct2}
Suppose that $\wt\alpha : J \to \wt S$ is a transverse path with no $\wt\F$-transverse self-intersection. Suppose $\wt\alpha$ draws a transverse simple loop $\wt\Gamma_0$. Then:
\begin{enumerate}[label=(\arabic*)]
\item there exists a unique drawing component of $\gamma$ in the band defined by $\Gamma_0$;
\item if $\wt\alpha$ crosses the loop $\wt\Gamma_0$, then there exists a unique crossing component of $\wt\alpha$ in $\wt\Gamma_0$.
\item if $\wt\alpha$ does not cross $\wt\Gamma_0$, then the drawing component
contains a neighbourhood of at least one end of $J$;
\item if $\wt\alpha$ draws two non-equivalent transverse simple loops $\wt\Gamma_0$ and $\wt\Gamma_1$, then the drawing component of $\wt\alpha$ in $\wt\Gamma_0$ is on the right of the drawing component of $\wt\alpha$ in $\wt\Gamma_1$ or on its left.
\end{enumerate}
\end{lemma}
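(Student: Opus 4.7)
The strategy is to leverage the absence of $\wt\F$-transverse self-intersection of $\wt\alpha$ together with the two criteria for transverse intersection provided by Propositions~\ref{Prop2.1.15Lellouch} and~\ref{Prop2.1.16Lellouch}. Throughout, let $\wt B$ denote the band defined by $\wt\Gamma_0$ and let $T\in\G$ be the primitive deck transformation under which $\wt\Gamma_0$ is invariant.

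For part~(1) I would argue by contradiction: if $(a_1,b_1)$ and $(a_2,b_2)$ are two drawing components with $b_1\le a_2$, the drawing property produces times $t_i<t_i'$ in $(a_i,b_i)$ with $\wt\phi_{\wt\alpha(t_i')}=T\wt\phi_{\wt\alpha(t_i)}$, which yields a $T$-loop $\wt\gamma_i$ extracted from $\wt\alpha|_{[t_i,t_i']}$ that is an approximation of $\wt\alpha$. Between the two drawing components $\wt\alpha$ exits $\wt B$ and later returns. Depending on whether $\wt\alpha$ reenters $\wt B$ on the same boundary side it exited from or on the opposite side, one exploits the second drawing component, possibly after applying a suitable translate $T^k$, to produce either a visit of the band of $\wt\gamma_1$ (yielding via Proposition~\ref{Prop2.1.15Lellouch} a transverse intersection of $\wt\alpha$ with $T\wt\alpha$) or two opposite crossings of it (yielding via Proposition~\ref{Prop2.1.16Lellouch} a transverse intersection of $\wt\alpha$ with some $T^k\wt\alpha$). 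Either way, the no-self-intersection hypothesis is violated. Part~(2) is identical in spirit, with crossing components replacing drawing components.

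For part~(3), uniqueness from~(1) combined with Remark~\ref{RemPossibBand} leaves three possibilities for the unique drawing component: it visits $\wt B$, it accumulates on $\wt\Gamma_0$, or it is equivalent to $\wt\Gamma_0$ at an end of $J$. The last two possibilities immediately provide a neighbourhood of an end of $J$ inside the drawing component. The visiting case is excluded by the same mechanism as in~(1): the approximating $T$-loop extracted from the drawing property together with Proposition~\ref{Prop2.1.15Lellouch} would force a $\wt\F$-transverse self-intersection of $\wt\alpha$.

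For part~(4), let $\wt B_0,\wt B_1$ be the bands associated with the two non-equivalent loops $\wt\Gamma_0,\wt\Gamma_1$, and let $(a_i,b_i)$ be the corresponding drawing components (unique by~(1)). If the two drawing components overlapped in $J$, then on the overlap $\wt\alpha$ would simultaneously lie in $\wt B_0$ and $\wt B_1$, and the simultaneous $T_0$- and $T_1$-translation of visited leaves would force the two loops to meet the same set of leaves of $\wt\F$, contradicting non-equivalence. Hence the two drawing components are disjoint in $J$, and the left/right dichotomy is a consequence of the topological position of the boundary leaves $\partial\wt B_0$ and $\partial\wt B_1$ relative to one another. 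The main technical difficulty I anticipate lies in the casework of~(1) and~(3): one has to choose the translate $T^k$ and the exact subpaths of $\wt\alpha$ with enough care to ensure that the crossings fed into Proposition~\ref{Prop2.1.16Lellouch} genuinely occur in opposite directions, and that the visits of bands defined by the extracted approximating $T$-loops are genuine visits and not crossings in disguise.
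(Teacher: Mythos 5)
The paper does not actually prove Lemma~\ref{LemProp24Lct2}: it is quoted verbatim as Proposition~24 of \cite{lct2}, so there is no ``paper's own proof'' to compare against. I will therefore evaluate your attempt on its own terms.

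For items~(1)--(3) your general strategy --- manufacture a forbidden $\wt\F$-transverse self-intersection by feeding visits or opposite crossings of a band into Propositions~\ref{Prop2.1.15Lellouch} and~\ref{Prop2.1.16Lellouch} --- is the right family of tools, but as you concede, the casework is not carried out and it is not routine. One simplification: you do not need to ``extract'' an auxiliary $T$-loop from the drawing component. Drawing $\wt B$ already means $\wt\alpha$ meets some leaf $\wh\phi\subset\wt B$ together with $T\wh\phi$, so $\wt\Gamma_0$ itself is an approximation of $\wt\alpha$ (in the sense of Section~\ref{SecBand}), and the Lellouch criteria apply directly with $\wh\gamma=\wt\Gamma_0$ and the band $\wt B$. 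The remaining work is to show that a visiting drawing component immediately yields a self-intersection via Proposition~\ref{Prop2.1.15Lellouch}, that two crossing drawing components are necessarily in opposite directions (which uses that $\wt B$ separates $\wt S$ and that the intermediate piece of $\wt\alpha$ cannot pass from one side to the other without re-entering $\wt B$), and to deal with the cases where a drawing component is a neighbourhood of an end of $J$. None of this is spelled out.

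Item~(4) has a genuine gap. You argue that if the two drawing components $J_0$ and $J_1$ overlapped, then on the overlap $\wt\alpha$ would be in $\wt B_0\cap\wt B_1$, and this would ``force the two loops to meet the same set of leaves,'' contradicting non-equivalence. That inference is false: $\wt B_0\cap\wt B_1\neq\emptyset$ does not imply $\wt B_0=\wt B_1$, and two non-equivalent transverse simple loops can certainly have overlapping bands. Furthermore, disjointness of $J_0$ and $J_1$ is not what item~(4) asserts. As it is used in the proof of Lemma~\ref{LemAdaptProp24Lct2} (``by permuting the $\wt\Gamma_i$ if necessary, one can suppose that $\inf J_0\le\inf J_1\le\inf J_2$ and $\sup J_0\le\sup J_1\le\sup J_2$''), ``$J_0$ is on the left of $J_1$'' means the product partial order on intervals, which allows overlap; what must actually be excluded is one drawing component being nested strictly inside the other, and nothing in your argument addresses that possibility. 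You need a different argument for~(4), which should exploit items~(1)--(3) (e.g.\ applying the no-nesting conclusion to the restriction $\wt\alpha|_{J_1}$ if $J_0\subset J_1$) rather than a claimed disjointness.
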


From this lemma we deduce the following:

\begin{lemma}\label{LemAdaptProp24Lct2}
Let $\wt\Gamma_0, \wt\Gamma_1,\wt\Gamma_2 : \Sp^1\to\wt S$ be three non-equivalent simple $\wt\F$-transverse loops, such that none of them is included in the bounded connected component of the complement of one other. If a transverse loop $\wt\alpha : J\to\wt S$ draws $\wt\Gamma_0, \wt\Gamma_1$ and $\wt\Gamma_2$, then $\wt\alpha$ has an $\wt\F$-transverse self-intersection. 
\end{lemma}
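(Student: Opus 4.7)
The plan is to argue by contradiction: suppose that $\wt\alpha$ has no $\wt\F$-transverse self-intersection. By Lemma~\ref{LemProp24Lct2}(1), for each $i\in\{0,1,2\}$ there is a unique drawing component $J_i\subset J$ of $\wt\alpha$ in $\wt\Gamma_i$, and by Lemma~\ref{LemProp24Lct2}(4), for every ordered pair $(i,j)$ the component $J_i$ is entirely on the right or on the left of $J_j$. I would then translate this into a clean spatial statement in $\wt S$: the band $\wt B_i$ of leaves of $\wt\F$ swept by $\wt\Gamma_i$ — and, in particular, the loop $\wt\Gamma_i$ itself — lies entirely in exactly one of the two sides $R(\wt\Gamma_j)$ or $L(\wt\Gamma_j)$.

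The second step is to invoke the non-nesting hypothesis. Since $\wt\Gamma_j$ is a simple closed curve in the topological plane $\wt S$, one of $R(\wt\Gamma_j), L(\wt\Gamma_j)$ is the bounded disk $D_j$ of its complement, and the hypothesis $\wt\Gamma_i\not\subset D_j$ rules out the case where $\wt\Gamma_i$ lies on that bounded side. Hence for every pair $(i,j)$ with $i\neq j$, the loop $\wt\Gamma_i$, together with its entire band $\wt B_i$, sits in the unbounded component of $\wt S\setminus\wt\Gamma_j$.

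The contradiction then comes from the natural ordering of the drawing components along $\wt\alpha$. Up to relabeling, assume $J_0<J_1<J_2$ in $J$. Since $J_1$ lies in the interior of $J$ (it is disjoint from the nonempty intervals $J_0$ and $J_2$), Lemma~\ref{LemProp24Lct2}(3) forces $\wt\alpha$ to cross $\wt\Gamma_1$, so $\wt\alpha|_{J_0}$ and $\wt\alpha|_{J_2}$ lie in opposite components of $\wt S\setminus\wt\Gamma_1$. But the second step places both $\wt B_0$ and $\wt B_2$ — and hence $\wt\alpha|_{J_0}\subset\wt B_0$ and $\wt\alpha|_{J_2}\subset\wt B_2$ — in the same unbounded component of $\wt S\setminus\wt\Gamma_1$, a contradiction.

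The main obstacle I anticipate is making the translation in the first step fully rigorous: Lemma~\ref{LemProp24Lct2}(4) is a statement about drawing components, which are intervals in $J$, and one has to unpack the meaning of ``on the right of'' in order to conclude that the entire band $\wt B_i$ sits in $R(\wt\Gamma_j)$ rather than merely straddles $\wt\Gamma_j$. Once this leaf-space-to-plane correspondence is established, the remaining topology is an immediate application of the Jordan curve theorem in $\wt S$.
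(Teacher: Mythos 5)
Your proposal is in the same family as the paper's (contradiction, Lemma~\ref{LemProp24Lct2}, non-nesting hypothesis), but the execution has two real problems, one small and one that you flagged yourself and that I believe is not repairable in the form you propose.

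The small problem: Lemma~\ref{LemProp24Lct2}(4) does not give disjointness of the drawing components $J_0,J_1,J_2$ — only the ordering $\inf J_0 \le \inf J_1 \le \inf J_2$ and $\sup J_0 \le \sup J_1 \le \sup J_2$, and the intervals may very well overlap. What non-equivalence of the $\wt\Gamma_i$ adds is merely the two strict inequalities $\inf J_0 < \inf J_1$ and $\sup J_1 < \sup J_2$, and that is exactly enough to trigger Lemma~\ref{LemProp24Lct2}(3) (the drawing component $J_1$ does not contain a neighbourhood of either end of $J$). Your parenthetical justification for interiority is wrong; the conclusion can still be saved, but only via the strict inequalities, not via disjointness.

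The real gap is the ``leaf-space-to-plane'' upgrade you anticipate: the claim that the whole band $\wt B_i$ (or even the whole segment $\wt\alpha|_{J_i}$) sits in one component of $\wt S\setminus\wt\Gamma_j$. Nothing in the hypotheses forces the bands $\wt B_i$ to be disjoint from $\wt\Gamma_j$ or from one another — they are open annuli of leaves, and the non-nesting hypothesis only says $\wt\Gamma_i\not\subset D_j$, which is compatible with $\wt\Gamma_i$ crossing $\wt\Gamma_j$, hence with $\wt B_i$ straddling $\wt\Gamma_j$. The paper sidesteps this entirely by making a much weaker assertion: only the \emph{two boundary points} $\wt\alpha(\inf J_0)$ and $\wt\alpha(\sup J_2)$ (the extreme endpoints of the outer drawing components) are claimed to lie in the unbounded component of $\wt S\setminus\wt\Gamma_1$, and this follows directly from the non-nesting hypothesis. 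From there the contradiction is not phrased via ``opposite sides'' — which you assert but which itself silently uses the uniqueness of the crossing component from Lemma~\ref{LemProp24Lct2}(2) — but in the logically equivalent but more directly checkable form: since $\wt\alpha$ crosses $B(\wt\Gamma_1)$ yet both extreme endpoints are on the unbounded side, there must be a second crossing component in $B(\wt\Gamma_1)$, contradicting part~(2). I'd recommend replacing your whole-band claim with this two-endpoint claim and invoking part~(2) explicitly: that is both easier to prove and is what actually closes the argument.
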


\begin{proof}
By contradiction, suppose that $\wt\alpha$ has no $\wt\F$-transverse self-intersection. 
Consider $J_0,J_1,J_2\subset J$ some drawing components of $\wt\alpha$ in respectively $\wt\Gamma_0$, $\wt\Gamma_1$ and $\wt\Gamma_2$.
By Lemma~\ref{LemProp24Lct2}.(4), by permuting the $\wt\Gamma_i$ if necessary, one can suppose that $\inf J_0\le \inf J_1\le \inf J_2$ and $\sup J_0\le \sup J_1\le \sup J_2$. 
More precisely, as the $\wt\Gamma_i$ are not equivalent, we have $\inf J_0 < \inf J_1$ and $\sup J_1 < \sup J_2$. By Lemma~\ref{LemProp24Lct2}.(3), this implies that $\wt\alpha$ crosses the band $B(\wt\Gamma_1)$ defined by $\wt\Gamma_1$. 
Note that, by the hypothesis that for $i\neq j$, $\wt\Gamma_i$ is not included in the bounded connected component of the complement of $\wt\Gamma_j$, we have that both $\wt\alpha(\inf J_0)$ and $\wt\alpha(\sup J_2)$ belong to the unbounded connected component of the complement of $\wt\Gamma_1$.
This implies that there is another crossing component in $B(\wt\Gamma_1)$, contradicting Lemma~\ref{LemProp24Lct2}.(2).
\end{proof}

\section{A special orbit having $\gamma$ as a tracking geodesic}\label{SecSpecClosed}

In the following, we fix a geodesic $\gamma$ as in Theorem~\ref{ThBndedDevRat}: $\gamma$ is the tracking geodesic (defined in Theorem~\ref{DefTrackGeod}) of some $\mu\in \Merg(f)$ (defined after Lemma~\ref{LemErgoRotSpeed}).
By convention, $z$ is a $\mu$-typical point and $y$ is a point whose orbit is supposed to have big deviation.

Fix a lift $\wt\gamma$ of $\gamma$ to the universal cover $\wt S$ of $S$.
We denote by $T$ the primitive deck transformation of $\wt S$ such that the geodesic $\wt\gamma$ is $T$-invariant. 

The goal of this section is to prove the following proposition:

\begin{prop}\label{LemRealizPeriod}
There exists $\mu'\in \Merg(f)$, and a $\mu'$-typical point $z'$, having one lift $\wt z'$ to $\wt S$ whose orbit stays at a finite distance from $\wt\gamma$, and such that $I^\Z_{\wt\F}(\wt z')$ is $\wt\F$-equivalent to a simple $T$-invariant transverse path $\wt \alpha_0 \subset \wt S$.
\end{prop}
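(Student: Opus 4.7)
The strategy is to construct $\mu'$ in two stages: first use tracking plus an ergodic/compactness argument to extract a point whose lifted orbit stays bounded near $\wt\gamma$, and then apply Proposition~\ref{Prop2.1.17lellouch} to realize its $\wt\F$-transverse trajectory as a $T$-loop, which we finally replace by a simple representative in the same $\wt\F$-equivalence class.

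\textbf{Producing a bounded orbit and $\mu'$.} Starting from $z\in S$ typical for $\mu$ with a lift $\wt z$ tracking $\wt\gamma$ (Theorem~\ref{DefTrackGeod}), I descend to the quotient annulus $A:=\wt S/\langle T\rangle$, where the induced homeomorphism $f_A$ has positive rotation number $\omega=\vartheta_\mu/\ell(\gamma)$ around the core curve $\gamma_A$ (the projection of $\wt\gamma$). Combining the Birkhoff ergodic theorem applied to the intersection count with a cross-section of $\gamma_A$ together with Poincar\'e recurrence in a fundamental domain of $\langle T\rangle$, I obtain sequences $n_k\to+\infty$ and $\ell_k\in\Z$ such that $\wt z_k:=T^{-\ell_k}\wt f^{n_k}(\wt z)$ remains in a fixed compact $K\subset\wt S$. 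Taking a subsequential limit $\wt z^*$, averaging empirical measures $N^{-1}\sum_{j=0}^{N-1}\delta_{f^j(z^*)}$ and picking an ergodic component yields $\mu'\in\Me(f)$; since $\mu'$-typical orbits asymptotically follow $\wt\gamma$ with speed $\vartheta_\mu>0$, one has $\mu'\in\Merg(f)$ with tracking geodesic $\gamma$. A second application of Poincar\'e recurrence, now for $\mu'$ restricted near $z^*$, produces a $\mu'$-typical point $z'$ whose lift $\wt z'$ has orbit globally bounded from $\wt\gamma$.

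\textbf{From bounded orbit to a simple $T$-loop.} Let $\wt\alpha:=I^\Z_{\wt\F}(\wt z')$. Boundedness of the orbit of $\wt z'$ from $\wt\gamma$ combined with Poincar\'e recurrence and the primitivity of $T$ in the stabilizer of $\wt\gamma$ yields $m_k\to+\infty$ with $T^{-1}\wt f^{m_k}(\wt z')\to\wt z'$ (the exponent is $1$ by taking minimal returns modulo $T$). Lemma~\ref{Lem17Lct1} then upgrades this point-proximity to an $\wt\F$-equivalence of tails, $T\wt\alpha\sim_{+\infty}\wt\alpha$. Proposition~\ref{Prop2.1.17lellouch} produces a transverse $T$-loop $\wt\beta$ with $\wt\alpha\sim_{+\infty}\wt\beta$, and the symmetric argument gives some $\wt\beta'$ with $\wt\alpha\sim_{-\infty}\wt\beta'$. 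An exchange argument based on Proposition~\ref{Prop3.3GLCP} (describing the accumulation configuration), on Lemma~\ref{LemAccumul}, and on the $T$-invariance identifies $\wt\beta$ and $\wt\beta'$ up to equivalence and shows that $\wt\alpha$ is in fact globally $\wt\F$-equivalent to $\wt\beta$. Finally, the band $\wt B$ of leaves met by $\wt\beta$ is a $T$-invariant open annular band, projecting to an open annulus $B\subset A$; choosing any simple transverse cross-section of $B$ lifts to a simple $T$-invariant transverse path $\wt\alpha_0$ meeting exactly the leaves of $\wt B$, so $\wt\alpha_0$ is $\wt\F$-equivalent to $\wt\beta$ and hence to $\wt\alpha$, as required.

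\textbf{Main obstacle.} The subtle point is the upgrade from the point-proximity $T^{-1}\wt f^{m_k}(\wt z')\to\wt z'$ to the $\wt\F$-equivalence $T\wt\alpha\sim_{+\infty}\wt\alpha$ at infinity. Lemma~\ref{Lem17Lct1} provides only local control, so extending it along an infinite tail requires a careful inductive matching of leaves met by $\wt f^n(\wt z')$ and by $T^{-1}\wt f^{n+m_k}(\wt z')$, using the recurrence of $\wt\alpha$ to prevent drift. A related difficulty is verifying that the resulting $T$-loop is really $T$-invariant and not merely $T^p$-invariant for some $p\ge 2$, which would correspond in $A$ to winding multiple times around $\gamma_A$; this uses the minimality of return times together with the primitivity of $T$.
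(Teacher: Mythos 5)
Your first step—passing to the annulus $\wt S/T$, applying Krylov--Bogolyubov and Poincar\'e recurrence, and re-choosing a typical point with bounded lifted orbit—is sound and essentially coincides with the paper's Lemma~\ref{LemRecur}. However, two of the later steps contain genuine gaps.

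The $T$-invariance cannot be obtained by ``minimal returns modulo $T$ together with primitivity of $T$.'' Poincar\'e recurrence in $\wt S/T$ produces returns $\wt f^{m_k}(\wt z')\in T^{i_k}\wt U$, but the integers $i_k$ are dictated by the dynamics: they can all be multiples of some $p\ge 2$ (for instance if the transverse trajectory happened to be a $T^p$-loop), and primitivity of $T$ as a group element gives no control over them. In that situation $T\wt\alpha\sim_{+\infty}\wt\alpha$ would simply fail, and Proposition~\ref{Prop2.1.17lellouch} applied with $T^p$ would only produce a $T^p$-loop. The paper's way out is different: it first shows (Lemma~\ref{LemAlphaSimple2}, relying on Lemma~\ref{LemAdaptProp24Lct2} and Theorem~\ref{thmMlct2}) that either the sought periodic orbit already exists or the transverse trajectory is simple, then builds a simple $T^{i_1}$-periodic approximation $\wt\alpha_1$ and applies Lemma~\ref{LemBrown} to the arc $\wt\alpha_1|_{[0,1]}$ to deduce $\wt\alpha_1|_{[0,1]}\cap T\wt\alpha_1|_{[0,1]}\neq\emptyset$; only then can one glue an arc of $\wt\alpha_1$ into a genuinely $T$-invariant simple path $\wt\alpha_0$. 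You never invoke Lemma~\ref{LemBrown}, and without it this step does not close.

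The claim that $\wt\alpha\sim_{+\infty}\wt\beta$ and $\wt\alpha\sim_{-\infty}\wt\beta'$, combined with an ``exchange argument,'' forces $\wt\alpha$ to be globally $\wt\F$-equivalent to $\wt\beta$ is also incomplete: matching tails does not prevent the trajectory from leaving the band $\wt B$ in between. The paper handles this by a dichotomy. If no $f$-periodic orbit has tracking geodesic $\gamma$, one shows that leaving $\wt B$ forces the trajectory to draw and then either visit or cross $\wt B$ (Remark~\ref{RemPossibBand}), whereupon Propositions~\ref{Prop2.1.15Lellouch}, \ref{Prop2.1.16Lellouch} and Theorem~\ref{thmMlct2} manufacture a periodic orbit tracking $\gamma$—a contradiction. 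If such a periodic orbit does exist, a separate argument (Lemmas~\ref{LemPerPtSimple} and \ref{LemPerPtSimple2}) produces the $T$-invariant simple representative directly. Your argument makes neither this dichotomy nor any use of the forcing Theorem~\ref{thmMlct2}, and therefore cannot rule out the escaping configuration.
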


By hypothesis, $\wt \gamma$ is the tracking geodesic of a point $\wt z \in \wt S$ whose projection $z$ on $S$ is typical for some $f$-ergodic measure. 

By \cite[Proposition~4.3]{pa} (based on \cite[Lemma 2.1 p.343]{zbMATH04196929}), either the orbit of $z$ stays at finite distance to $\wt\gamma$, or there exists a periodic point having $\wt\gamma$ as a tracking geodesic. Hence, by changing the measure $\mu$ by another $f$-ergodic measure $\nu$ (supported on a periodic orbit) and changing $z$ for another $\nu$-typical point if necessary, one can suppose that the orbit of $\wt z$ stays at finite distance to $\wt\gamma$.

\begin{lemma}\label{LemRecur}
Up to changing $z$ to another $\mu$-typical point, the following is true.
Let $U$ be a topological disc containing $z$. Denote $\wt U$ the lift of $U$ that contains $\wt z$.
Then there exist two sequences $(i_k)_{k\ge 1}$ and $(m_k)_{k\ge 1}$ of integers, $m_k$ tending to $+\infty$, such that $\wt f^{m_k}(\wt z) \in T^{i_k} \wt U$.
\end{lemma}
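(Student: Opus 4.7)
The plan is to transfer the question to the annular cover $A := \wt S / \langle T \rangle$ associated to the closed geodesic $\gamma$, and then invoke Poincaré recurrence there. Since the canonical lift $\wt f$ commutes with every deck transformation, and in particular with $T$, it descends to a homeomorphism $\bar f : A \to A$ covering $f$; let $p : A \to S$ denote the covering projection and $\pi_A : \wt S \to A$ the intermediate projection. Writing $\bar z := \pi_A(\wt z)$ and $\bar U := \pi_A(\wt U)$, the desired conclusion $\wt f^{m_k}(\wt z) \in T^{i_k}\wt U$ is equivalent to $\bar f^{m_k}(\bar z) \in \bar U$ for a sequence $m_k \to +\infty$. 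So the statement reduces to a Poincaré recurrence assertion on $A$ after a well-chosen replacement of $z$.

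Because the orbit of $\wt z$ stays at finite distance from $\wt\gamma$ (by the remark preceding the lemma), the orbit $\{\bar f^n(\bar z)\}_{n\ge 0}$ is contained in a compact subset of $A$ (the image of a hyperbolic strip around $\wt\gamma$, which is compact modulo $T$). I would first produce a $\bar f$-invariant Borel probability measure $\nu$ on $A$ as a weak-$*$ limit of the tight sequence of empirical measures $\nu_N := \frac{1}{N}\sum_{n=0}^{N-1} \delta_{\bar f^n(\bar z)}$. Since $z$ is $\mu$-typical, the sequence $p_*\nu_N$ converges to $\mu$ on $S$, so $p_*\nu = \mu$. Decomposing $\nu$ ergodically under $\bar f$ as $\nu = \int \bar\mu_\alpha \, d\lambda(\alpha)$, each $p_*\bar\mu_\alpha$ is an $f$-invariant probability on $S$, and their $\lambda$-average equals $\mu$; by uniqueness of the ergodic decomposition on $S$ together with the ergodicity of $\mu$, this forces $p_*\bar\mu_\alpha = \mu$ for $\lambda$-almost every $\alpha$. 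I would fix such an $\alpha$ and set $\bar\mu := \bar\mu_\alpha$, which is an ergodic $\bar f$-invariant probability on $A$ projecting to $\mu$.

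The final step is then to pick the new $\mu$-typical point. For $\bar\mu$-a.e. $\bar z' \in A$, the point $\bar z'$ lies in $\supp(\bar\mu)$, has a Birkhoff-generic $\bar f$-orbit, and projects to a $\mu$-typical point $z' := p(\bar z')$. I would replace $z$ by $z'$ and take for $\wt z'$ the lift of $z'$ with $\pi_A(\wt z') = \bar z'$; its $\wt f$-orbit projects to a bounded orbit in $A$, hence stays in the $C$-neighbourhood of $\wt\gamma$ in $\wt S$ for some $C$, so $\wt\gamma$ is its tracking geodesic. Now, given any topological disc $U \ni z'$, the lift $\wt U$ containing $\wt z'$ projects to an open neighbourhood $\bar U$ of $\bar z'$ of positive $\bar\mu$-measure (because $\bar z' \in \supp(\bar\mu)$). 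Birkhoff's theorem applied to $\mathbf{1}_{\bar U}$ then yields times $m_k \to +\infty$ with $\bar f^{m_k}(\bar z') \in \bar U$, and lifting each such return produces the required integer $i_k \in \Z$ with $\wt f^{m_k}(\wt z') \in T^{i_k}\wt U$.

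The main obstacle I anticipate is the ergodic-decomposition bookkeeping: one must justify carefully that some (in fact $\lambda$-a.e.) ergodic component of $\nu$ projects to the whole measure $\mu$ rather than to a strict sub-measure of it, and also that the lift $\wt z'$ chosen through $\pi_A$ is compatible with the tracking geodesic $\wt\gamma$ fixed at the start of the section. Both points are essentially forced, the first by ergodicity of $\mu$ and uniqueness of the ergodic decomposition, the second by the fact that $\pi_A^{-1}$ of a compact subset of $A$ is contained in a tubular neighbourhood of $\wt\gamma$ in $\wt S$.
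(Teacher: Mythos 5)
Your proposal is correct and follows essentially the same route as the paper: pass to the annular cover $\wt S/\langle T\rangle$, use the boundedness of the orbit there to run Krylov--Bogolyubov and obtain an invariant probability measure $\nu$ whose push-forward to $S$ equals $\mu$, and then replace $z$ by the projection of a well-chosen $\nu$-generic point in the support. The one place where you diverge is the extra detour through the ergodic decomposition of $\nu$ and Birkhoff's theorem; the paper avoids this by observing that Poincaré recurrence already holds for a.e.\ point of the (possibly non-ergodic) measure $\nu$, which is all the lemma needs, and that $p_*\nu=\mu$ implies $\nu$-a.e.\ point projects to a $\mu$-typical point. So your ergodicity step is sound but superfluous; the rest, including the observations about the orbit staying in a compact subset of the annulus and the choice of lift $\wt z'$, matches the paper's argument.
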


\begin{proof}[Proof of Lemma~\ref{LemRecur}]
Let $\wc z$ be the projection of $\wt z$ on the open annulus $\wt S/T$, and $\wc f$ the projection of $\wt f$ on $\wt f/T$.

Let us apply the Krylov-Bogolyubov procedure: as the orbit of $\wc z$ is bounded in the open annulus $\wt S/T$, the sequence of measures $\frac 1n \sum_{k=0}^{n-1} \delta_{\wc f^k(\wc z)}$ has a subsequence converging for the weak-$*$ topology, to an $\wc f$-invariant measure we call $\wc \mu$. One easily checks that the projection of this measure on $S$ is equal to $\mu$, hence there is a set of points of $\wc\mu$-measure 1 that are recurrent and whose projection on $S$ are $\mu$-typical; moreover the orbit of any point in the support of $\wc\mu$ is bounded.

One can replace $\wc z$ by another of these points $\wc z'$. It is recurrent in $\wt S/T$ and has $\wt\gamma$ as a tracking geodesic (it stays at a finite distance from $\wt\gamma$ as is $\supp\mu$, and has a positive speed of escape to infinity). Taking $z'$ as the projection of $\wc z'$ and $\wt z'$ as a lift of $\wc z'$ to $\wt S$ proves the lemma.
\end{proof}

From now on, we replace the point $z$ by $z'$ given by Lemma~\ref{LemRecur}.

\begin{lemma}\label{LemAlphaSimple2}
Either there exists a periodic orbit whose tracking geodesic is $\gamma$, or the transverse trajectory $I^\Z_{\wt \F}(\wt z)$ meets each leaf of $\wt \F$ at most once (and in particular is simple).
\end{lemma}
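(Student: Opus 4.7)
The plan is to show the contrapositive: if $\wt\alpha := I^\Z_{\wt\F}(\wt z)$ meets some leaf $\wt\phi_0$ of $\wt\F$ at two distinct times $t_1<t_2$, then $f$ admits a periodic orbit whose tracking geodesic is $\gamma$. A preliminary observation that I would use throughout is that in the universal cover $\wh\dom(\F)$ the leaves of $\wh\F$ are properly embedded separating lines, so a positively transverse path meets each $\wh\F$-leaf at most once; hence a lift $\wh\alpha$ of $\wt\alpha$ must meet, at times $t_1$ and $t_2$, two distinct leaves $\wh\phi_1$ and $R\wh\phi_1$ of $\wh\F$ both projecting to $\wt\phi_0$, where $R$ is a nontrivial deck transformation of the covering $\wh\dom(\F)\to\wt S$.

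The first step would be to iterate this ``crossing at two leaves in the same $\wt\F$-class'' configuration along the orbit. By Lemma~\ref{LemRecur} applied to a small trivialising neighbourhood $\wt U$ of $\wt z$ together with Lemma~\ref{Lem17Lct1}, there exist $m_k\to+\infty$ and $i_k\in\Z$, $|i_k|\to+\infty$, and lifts $T_k$ of $T^{i_k}$ to $\wh\dom(\F)$, such that $\wh\alpha$ meets $T_k\wh\phi_1$ and $T_k R\wh\phi_1$ at times close to $t_1+m_k$ and $t_2+m_k$ respectively. The second step is to extract an $\wh\F$-transverse self-intersection. Concatenating the $R$-translates of $\wh\alpha|_{[t_1,t_2]}$ (joined by leaf segments on $R^k\wh\phi_1$) produces a transverse $R$-loop $\wh\beta$ whose band $\wh B$ contains both $\wh\phi_1$ and $R\wh\phi_1$; the path $\wh\alpha$ draws $\wh B$ on the interval $[t_1,t_2]$ and, for large $k$, also draws the translated band $T_k\wh B$. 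By Remark~\ref{RemPossibBand}, each such draw is a visit, a crossing, an accumulation, or an equivalence at infinity; Proposition~\ref{Prop2.1.15Lellouch} (visit case) or Proposition~\ref{Prop2.1.16Lellouch} (two crossings in opposite senses, produced by the translates $T_k\wh B$) would then yield an $\wh\F$-transverse intersection between $\wh\alpha$ and a deck-translate of itself, while the accumulation alternative would force $\wt\alpha$ to accumulate on itself, contradicting Proposition~\ref{Prop2.1.3Lellouch} since $\wt\alpha$ projects to a recurrent transverse path in $\wt S/T$.

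Finally, the resulting $\wh\F$-transverse self-intersection of an admissible sub-path of $\wh\alpha$ produces, via Theorem~\ref{thmMlct2}, a periodic point $z^\ast$ of $f$ with a lift $\wt z^\ast$ satisfying $\wt f^{\,r}(\wt z^\ast)=T_1\wt z^\ast$ for some $r\ge 1$ and some $T_1\in\G$. Because the deck automorphism implementing the transverse intersection is built out of the $T_k$ (which project to powers of $T$ in $\G$) and of $R$ (which projects to the identity in $\G$), $T_1$ is a nonzero power of $T$; consequently the axis of $T_1$ is $\wt\gamma$, the orbit of $\wt z^\ast$ stays at bounded distance from $\wt\gamma$, and its tracking geodesic is therefore $\gamma$. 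The main obstacle I expect is the second step: carefully building the approximating $R$-loop $\wh\beta$ from the two leaf meetings, analysing exhaustively the drawing configurations of the iterated translates $T_k\wh B$, and tracking lifts cleanly across the two distinct covers $\wt S$ and $\wh\dom(\F)$ of $S$.
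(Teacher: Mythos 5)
Your general strategy (pass to $\wh\dom(\F)$, build an $R$-loop from the double leaf-crossing, iterate the configuration by recurrence, invoke Remark~\ref{RemPossibBand} and Theorem~\ref{thmMlct2}) takes a genuinely different route from the paper: the paper applies Lemma~\ref{LemAdaptProp24Lct2} (the "three drawn simple loops" criterion from \cite{lct2}) to get an $\wt\F$-transverse self-intersection of $I^\Z_{\wt\F}(\wt z)$ directly in $\wt S$, avoiding any case analysis on visits/crossings. Your steps 1 and 2 (lifting the two leaf meetings to $\wh\phi_1$ and $R\wh\phi_1$, and producing translated copies $T_k\wh\phi_1$, $T_k R\wh\phi_1$ via Lemmas~\ref{LemRecur} and \ref{Lem17Lct1}) are sound, but the way you close the argument has a genuine gap.

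The problem is the last sentence of the argument: \emph{``Because the deck automorphism implementing the transverse intersection is built out of the $T_k$ $\ldots$ and of $R$ $\ldots$, $T_1$ is a nonzero power of $T$.''} In the \emph{visit} alternative of your dichotomy, Proposition~\ref{Prop2.1.15Lellouch} produces a transverse intersection between $\wh\alpha$ and $R\wh\alpha$ (or $T_kRT_k^{-1}\wh\alpha$), and $R$ (like every conjugate $T_k R T_k^{-1}$) projects to the \emph{identity} of $\G$, not to a nonzero power of $T$. Theorem~\ref{thmMlct2} then yields a point $\wt z'$ with $\wt f^{r}(\wt z')=\wt z'$, i.e.\ a contractible periodic orbit, whose tracking geodesic is certainly not $\gamma$. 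So in that case your proof does not produce the conclusion of the lemma. The crossing alternative is also not closed: $\wh\alpha$ crosses the two \emph{distinct} bands $\wh B$ and $T_k\wh B$, and since $T_k$ is orientation-preserving these two crossings are in the \emph{same} sense; Proposition~\ref{Prop2.1.16Lellouch} asks for two paths crossing a \emph{single} band in opposite senses, so it does not apply as stated. (The paper faces an analogous issue in the proof of Proposition~\ref{LemRealizPeriod}, but there the band is a $T$-band in $\wt S$ and the trajectory is recurrent in $\wt S/T$, which forces a return and hence a crossing in the opposite sense; nothing analogous is available for your $R$-band in $\wh\dom(\F)$.)

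What the paper does, and what your proposal is missing, is a \emph{second} recurrence step \emph{after} obtaining the abstract transverse self-intersection: Lemma~\ref{Lem17Lct1} makes the self-intersection of $I^{[t_1,t_2]}_{\wt\F}(\wt z)$ robust under perturbation of the starting point within $\wt U$, and Lemma~\ref{LemRecur} supplies $m,i>0$ with $\wt f^m(\wt z)\in T^i\wt U$, so that $I^\Z_{\wt\F}(\wt z)$ and $T^i I^\Z_{\wt\F}(\wt z)$ intersect $\wt\F$-transversally \emph{with the deck transformation explicitly a nonzero power of $T$}. Only then is Theorem~\ref{thmMlct2} invoked, and the periodic orbit it produces satisfies $\wt f^r(\wt z')=T^i\wt z'$, hence has $\gamma$ as tracking geodesic. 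To repair your argument you would need to insert this transfer step after obtaining whatever transverse self-intersection comes out of the band dichotomy; without it the deck transformation you feed into Theorem~\ref{thmMlct2} is uncontrolled.
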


\begin{proof}
If the transverse trajectory $I^\Z_{\wt \F}(\wt z)$ crosses one leaf of $\wt\F$ twice, then it draws a simple transverse loop $\wt\Gamma$. By recurrence of $z$, the fact that the trajectory $I^\Z_{\wt \F}(\wt z)$  is proper in $\wt S$ and Lemma~\ref{Lem17Lct1}, there exist two deck transformations $T', T''$ such that $I^\Z_{\wt \F}(\wt z)$ also draws $T'\wt\Gamma$ and $T''\wt\Gamma$; moreover we can choose $T', T''$ such that $\wt\Gamma$, $T'\wt\Gamma$ and $T''\wt\Gamma$ do not intersect, and that neither of them is included in the bounded connected component of the complement of one other. 

By Lemma~\ref{LemAdaptProp24Lct2}, this implies that the transverse trajectory $I^\Z_{\wt \F}(\wt z)$ has an $\wt\F$-transverse intersection: there exist $t_1<t_2$ such that $I^{[t_1,t_2]}_{\wt \F}(\wt z)$ has an $\wt\F$-transverse intersection. Choose a small trivialising (for $\F$) neighbourhood $U$ of $z$, that is a topological disc. Denote $\wt U$ the lift of $U$ that contains $\wt z$. By Lemma~\ref{Lem17Lct1}, if $U$ is small enough, then any point $\wt x\in \wt U$ is such that $I^{[t_1-1,t_2+1]}_{\wt \F}(\wt x)$ has an $\wt\F$-transverse intersection.

By Lemma~\ref{LemRecur}, there exist $m>0$ and $i>0$ such that $\wt f^m(\wt z)\in T^i \wt U$. Hence, $I^\Z_{\wt \F}(T^i \wt z)$ and $I^\Z_{\wt \F}(\wt z)$ intersect $\wt\F$-transversally. By Theorem~\ref{thmMlct2}, this implies that there is an $f$-periodic point whose tracking geodesic is $\gamma$. 
\end{proof}

\begin{lemma}\label{LemPerPtSimple}
If there is a periodic point having $\gamma$ as a tracking geodesic, then there exists a periodic point having $\gamma$ as a tracking geodesic and whose transverse trajectory in $\wt S$ is simple.
\end{lemma}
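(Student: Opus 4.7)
The plan is to mimic the proof of Lemma~\ref{LemAlphaSimple2}, but applied directly to the periodic orbit, exploiting the fact that a periodic point is ``exactly'' recurrent (we have $\wt f^q(\wt p)=T^k\wt p$ on the nose, not just approximately). If the trajectory of the given periodic point is already simple, we are done; otherwise I would produce another periodic point with tracking geodesic $\gamma$ but strictly ``simpler'' transverse trajectory, and iterate.

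Let $p$ be a periodic point with tracking geodesic $\gamma$ and $\wt p$ a lift with $\wt f^q(\wt p)=T^k\wt p$ for some integers $q\ge 1$, $k\ne 0$. Assume $I^\Z_{\wt\F}(\wt p)$ is not simple. Exactly as in the proof of Lemma~\ref{LemAlphaSimple2}, the transverse trajectory meets some leaf of $\wt\F$ twice and therefore draws a simple transverse loop $\wt\Gamma\subset\wt S$. The exact equality $\wt f^{jq}(\wt p)=T^{jk}\wt p$ for every $j\in\Z$ implies that $I^\Z_{\wt\F}(\wt p)$ also draws $T^{jk}\wt\Gamma$ for every $j$. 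Since $T$ acts on $\wt S$ as a hyperbolic isometry with positive translation length along $\wt\gamma$ and $\wt\Gamma$ is compact, for $|j|$ large enough the three loops $\wt\Gamma$, $T^{jk}\wt\Gamma$, $T^{2jk}\wt\Gamma$ are pairwise disjoint, pairwise non-equivalent, and none of them is included in the bounded component of the complement of another. Lemma~\ref{LemAdaptProp24Lct2} then provides an $\wt\F$-transverse self-intersection of $I^\Z_{\wt\F}(\wt p)$, and by $T^k$-periodicity this intersection may be arranged to take place between $I^{[t_1,t_2]}_{\wt\F}(\wt p)$ and $T^{jk}I^{[s_1,s_2]}_{\wt\F}(\wt p)$ for some $j\ne 0$ with $t_i,s_i$ bounded in terms of $q$. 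Theorem~\ref{thmMlct2} then yields a periodic point $\wt p'$ satisfying $\wt f^r(\wt p')=T^{jk}\wt p'$ for some $r$; as $T^{jk}$ is a power of $T$, the tracking geodesic of $p'$ is still $\gamma$.

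To conclude, I would measure complexity by the geometric self-intersection number of the transverse trajectory of one period, viewed in the annulus $\wt S/\langle T^k\rangle$, and take $p$ to minimize this complexity among periodic points with tracking geodesic $\gamma$. If the minimum equals zero, the trajectory is simple and we are done. Otherwise, applied to such a minimizer, the above construction combined with Proposition~\ref{propFondalct1} (used to shortcut the admissible path at the transverse self-intersection) produces an admissible transverse path with strictly fewer self-intersections than the original, and one argues that this shortcut is itself $\wt\F$-equivalent to the transverse trajectory of a periodic orbit with tracking geodesic $\gamma$, contradicting minimality.

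The main obstacle is this very last step: rigorously ensuring that shortcutting the admissible path decreases the self-intersection count of an \emph{actual} periodic trajectory, not just of an admissible concatenation. One has to check that the periodic point supplied by Theorem~\ref{thmMlct2} at the shortcut genuinely inherits the lower intersection count (for instance by controlling, via Lemma~\ref{Lem17Lct1} and an appropriate neighbourhood argument, the $\wt\F$-equivalence class of $I^{[0,r]}_{\wt\F}(\wt p')$), and this is where the technical core of the proof will lie.
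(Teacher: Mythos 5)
Your proposal follows the same route as the paper: assume non-simplicity, draw a simple loop $\wt\Gamma$ and two translates with the disjointness/non-nesting properties needed for Lemma~\ref{LemAdaptProp24Lct2}, obtain an $\wt\F$-transverse self-intersection, apply Theorem~\ref{thmMlct2}, and contradict minimality of the self-intersection count. You correctly flag the delicate step, which is the one the paper handles by setting up the configuration before invoking the forcing theorem rather than after: the paper first fixes $p$ to minimize the self-intersection count, then chooses $t_0<t_1$ with $\wt\alpha_1(t_0)=\wt\alpha_1(t_1)$ and $\wt\alpha_1|_{(t_0,t_1)}$ \emph{simple} (an innermost loop), and produces the transverse self-intersection \emph{at that exact point}. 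This is what makes the shortcut explicit: the forcing statement (in the form the paper uses it, i.e.\ the structured output of Theorem M of~\cite{lct2} rather than the bare statement of Theorem~\ref{thmMlct2}) yields a periodic orbit whose transverse trajectory is $\F$-equivalent, on a fundamental domain, to $\alpha_1|_{[t_1,t_0+1]}$ --- the outer arc with the innermost loop removed --- which by construction has strictly fewer self-intersections. In your version, where you draw $\wt\Gamma$ from an arbitrary self-intersection and then take $T^{jk}$-translates for $|j|$ large, the resulting transverse intersection is not pinned at a specific crossing point, so it is not clear that the periodic point produced by the forcing theorem inherits a trajectory with strictly fewer self-crossings; you would need to restore the innermost-loop choice and the ``at the crossing point'' localisation to close the argument. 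Also note the paper's proof does not use Proposition~\ref{propFondalct1} here; it uses the stronger conclusion about the new periodic trajectory's $\F$-equivalence class directly.
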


\begin{proof}
Suppose that $\wt z$ is the lift of an $f$-periodic point having $\gamma$ as a tracking geodesic. If the path $\wt\alpha_1 := I^\Z_{\wt\F}(\wt z)$ is simple, the lemma is proved. If not, up to reparametrizing $\wt\alpha_1$ if necessary, one can suppose that there exists $i>0$ such that for any $t\in \R$ and any $k\in\Z$, we have $\wt\alpha_1(t+k) = T^{ik} \wt\alpha_1(t)$. 
Suppose that along all transverse trajectories of periodic points having $\gamma$ as a tracking geodesic, the number of intersections of $\wt\alpha_1|_{[0,1]}$ with $\wt\alpha_1$ is minimal (this number is finite because $\wt\alpha_1$ is proper and the intersections can be supposed to be locally discrete). 

Consider $t_0<t_1$ such that $\wt\alpha_1(t_0) = \wt\alpha_1(t_1)$ and such that $\wt\alpha_1|_{(t_0,t_1)}$ is simple.

We now repeat the arguments of the beginning of the proof of Lemma~\ref{LemAlphaSimple2} to get that $\wt\alpha_1$ has a transverse self-intersection at $\wt\alpha_1(t_0) = \wt\alpha_1(t_1)$. Indeed, let $\wt\Gamma$ be the 1-periodic $\wt\F$-transverse trajectory defined by $\wt\alpha_1|_{[t_0,t_1]}$.
By the fact that the trajectory of $\wt z$ is proper in $\wt S$, we deduce that there exists two deck transformations $T_1,T_2$ such that $I^\Z_{\wt \F}(\wt z)$ also draws $T_1\wt\Gamma$ and $T_2\wt\Gamma$, and we can choose $T_1,T_2$ such that $\wt\Gamma$, $T_1\wt\Gamma$ and $T_2\wt\Gamma$ do not intersect, and that neither of them is included in the bounded connected component of the complement of one other. By Lemma~\ref{LemAdaptProp24Lct2}, this implies that the transverse trajectory $\wt\alpha_1$ has a transverse self-intersection at $\wt\alpha_1(t_0) = \wt\alpha_1(t_1)$, equivalently the paths $\wt\alpha_1$ and ${T^i}\wt\alpha_1$ intersect $\wt\F$-transversally. 

By Theorem~\ref{thmMlct2}, we deduce that there is an $f$-periodic orbit whose transverse path is $T^j$-invariant (for some $j>0$) and $\F$-equivalent to $\alpha_1|_{[t_1,t_0+1]}$ on one of its fundamental domains. This path has less self-intersections than $\alpha_1$, this is a contradiction with the hypothesis that $\alpha_1$ is minimal.
\end{proof}

\begin{lemma}\label{LemPerPtSimple2}
If there is a periodic point $z$ having $\gamma$ as a tracking geodesic, then there exists a periodic point having $\gamma$ as a tracking geodesic, whose transverse trajectory in $\wt S$ is simple and that is $\wt\F$-equivalent to a $T$-invariant simple path $\wt\alpha_0\subset\wt S$.
\end{lemma}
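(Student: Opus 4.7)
The plan is to strengthen Lemma~\ref{LemPerPtSimple} by a minimization argument. Starting from a periodic point with simple transverse trajectory $\wt\alpha_1 := I^\Z_{\wt\F}(\wt w)$ provided by Lemma~\ref{LemPerPtSimple}, I would choose $\wt w$ (and its lift) so as to minimize the positive integer $i$ such that $\wt\alpha_1$ is $T^i$-invariant (equivalently, so that $\wt f^q(\wt w) = T^i \wt w$ for the period $q$ of $w$). The goal is to show that this minimal $i$ equals $1$, in which case one may simply take $\wt\alpha_0 := \wt\alpha_1$, which is itself a $T$-invariant simple transverse path (hence trivially $\wt\F$-equivalent to such).

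I would prove $i = 1$ by contradiction: suppose $i \geq 2$. Since $\wt\alpha_1$ is $T^i$-invariant and the orbit of $\wt w$ tracks $\wt\gamma$, the two ends of the topological line $\wt\alpha_1$ tend to the two fixed points of $T$ on $\partial\wt S$. The projection of $\wt\alpha_1$ to the annulus $\wt S / T$ is then a loop winding $i\geq 2$ times around the core, and a simple loop on an annulus has winding number of absolute value at most one (a topological fact essentially contained in Lemma~\ref{LemBrown}). This projected loop must therefore have a self-intersection, which lifts to some $0 < k < i$ with $\wt\alpha_1 \cap T^k\wt\alpha_1 \neq \emptyset$; by minimality of $i$, one also has $T^k\wt\alpha_1 \neq \wt\alpha_1$.

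Next, I would upgrade this set-theoretic intersection into an $\wt\F$-transverse intersection. The two distinct simple topological lines $\wt\alpha_1$ and $T^k\wt\alpha_1$ share the same unordered pair of endpoints on $\partial\wt S$ (the fixed points of $T$, hence of $T^k$), so in $\wt S\cong\R^2$ they must cross topologically; a local check in a foliation chart of $\wt\F$ shows that any such crossing of two paths positively transverse to $\wt\F$ is an $\wt\F$-transverse intersection in the sense of Definition~\ref{DefInterTrans}. Theorem~\ref{thmMlct2} then produces a periodic point $\wt w'$ with $\wt f^r(\wt w') = T^k \wt w'$, whose transverse trajectory is $T^k$-invariant and hence still has $\gamma$ as tracking geodesic. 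Feeding $w' = \pi(\wt w')$ back into Lemma~\ref{LemPerPtSimple} yields a periodic point with simple transverse trajectory; inspection of that proof shows the resulting trajectory to be $T^{k'}$-invariant for some $k'$ dividing $k$ (the deck transformation that appears in the application of Theorem~\ref{thmMlct2} inside the proof of Lemma~\ref{LemPerPtSimple} is precisely the one encoding the invariance level of the input). Since $k' \le k < i$, this contradicts the minimality of $i$ and concludes the proof.

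The main obstacle in this plan is the step converting the topological crossing of $\wt\alpha_1$ and $T^k\wt\alpha_1$ into an $\wt\F$-transverse intersection: while the topological crossing itself is forced by the matching of endpoints at infinity and distinctness, certifying at least one crossing as a genuine exchange in the (non-Hausdorff) leaf space of $\wt\F$ requires a careful local analysis in a foliation chart. A secondary subtlety is the bookkeeping of invariance levels under the simplification procedure of Lemma~\ref{LemPerPtSimple}, which must be traced explicitly through the deck transformation used in its invocation of Theorem~\ref{thmMlct2}.
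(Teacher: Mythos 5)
Your minimization scheme is a reasonable idea, but it runs into a genuine gap at exactly the step you flagged, and that step is not a technicality to be patched: it is false as stated. A set-theoretic crossing of two paths positively transverse to $\wt\F$ is \emph{not} in general an $\wt\F$-transverse intersection in the sense of Definition~\ref{DefInterTrans}. The definition is a statement about the order of leaves in the (non-Hausdorff) leaf space, not about how the arcs sit in the plane: two transverse paths may cross topologically, yet meet exactly the same family of leaves (be $\wt\F$-equivalent on a neighbourhood of the crossing), in which case no choice of $a_1,b_1,a_2,b_2$ can witness the required exchange of ``above''/``below''. This is precisely the configuration that the accumulation phenomenon of Proposition~\ref{Prop3.3GLCP} allows, and it is why the paper develops criteria such as Propositions~\ref{Prop2.1.15Lellouch}, \ref{Prop2.1.16Lellouch} and Lemma~\ref{LemPasAccImplTrans} rather than arguing from topological crossings. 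So you cannot conclude that $\wt\alpha_1$ and $T^k\wt\alpha_1$ intersect $\wt\F$-transversally from the fact that they share endpoints at infinity and are distinct; indeed they might be $\wt\F$-equivalent and carry no transverse intersection at all, and then Theorem~\ref{thmMlct2} gives you nothing.

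There is a secondary gap in the minimality step: you assert that re-running Lemma~\ref{LemPerPtSimple} on $w'$ produces a simple transverse trajectory that is $T^{k'}$-invariant for some $k'$ dividing $k$. The proof of Lemma~\ref{LemPerPtSimple} only promises \emph{some} $T^j$-invariance, with no control over $j$ in terms of the deck transformation fed into Theorem~\ref{thmMlct2}; it is obtained from an auxiliary minimization on self-intersection counts, not on the power of $T$, so the divisibility claim would need a separate argument you have not supplied.

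The paper avoids both obstacles by a different route. It passes to the annulus $\wc S=\wt S/T$, closes up a simple sub-arc of the trajectory of $\wc z$ into a simple transverse loop $\wc\Gamma_0$, shows this loop is essential (simplicity of the lift plus Lemma~\ref{LemBrown} forces its lift to be $T$-invariant, not merely $T^i$-invariant), and then analyses how $I^\Z_{\wc\F}(\wc z)$ interacts with the band $\wc B$ of $\wc\Gamma_0$. Either the trajectory stays in $\wc B$ and is already $\wt\F$-equivalent to the $T$-invariant path, or it draws and visits/crosses $\wc B$, in which case the band machinery (Proposition~\ref{Prop2.1.15Lellouch} after an iterative reduction) produces a genuine $\wc\F$-transverse intersection with the $T$-translate, and Theorem~\ref{thmMlct2} yields a periodic point with $\wt f^p(\wt z')=T\wt z'$ whose trajectory stays in $\wc B$. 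The band formalism is what converts a geometric configuration into a certified $\F$-transverse intersection; that is the ingredient your proposal is missing.
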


\begin{proof}
Consider a periodic point $z$, given by Lemma~\ref{LemPerPtSimple}, having $\gamma$ as a tracking geodesic and whose transverse trajectory $I^\Z_{\wt\F}(\wt z)$ in $\wt S$ is simple. Hence, $I^\Z_{\wt\F}(\wt z)$ is $T^i$-invariant for some $i>0$; let us show that one can suppose that it is $\wt\F$-equivalent to a $T$-invariant transverse trajectory.

Let us denote $\wc S = \wt S/T$. This is an open annulus in which $\gamma$ projects into a simple closed geodesic $\wc \gamma$.
Let $\wc z$ be a lift of $z$ to $\wc S$, and $\wc\F$ the lift of the foliation $\F$ to $\wc S$. 


Consider $t_0<t_1$ such that $I^{t_0}_{\wc\F}(\wc z) = I^{t_1}_{\wc\F}(\wc z)$, and such that the path $I^{(t_0,t_1)}_{\wc\F}(\wc z)$ is simple. Define $\wc\Gamma_0$ as the periodic $\wc\F$-transverse path defined by $I^{[t_0,t_1]}_{\wc\F}(\wc z)$. This path is essential (because the transverse trajectory of $\wt z$ in $\wt S$ is simple) and simple. 
Moreover, Lemma~\ref{LemBrown} asserts that the lift $\wt\alpha_0$ of $\wc \Gamma_0$ is $T$-invariant (we know it is $T^i$-invariant for some $i>0$, and if $i\ge 2$ then it forces $\wc\Gamma_0$ not to be simple).
Denote $\wc B$ the set of leaves of $\wc \F$ met by $\wc\Gamma_0$. This is an open essential annulus of $\wc S$.

If $I^\Z_{\wc\F}(\wc z)$ stays is $\wc B$, the lemma is proved. So we suppose that $I^\Z_{\wc\F}(\wc z)$ does not stay is $\wc B$, and hence (because it is periodic) it has to get in and out of $\wc B$ an infinite number of times.

There are two possibilities, given by Remark~\ref{RemPossibBand}.

Either the path $I^{\Z}_{\wc \F}(\wc z)$ draws and visits $\wc B$, which implies that $I^{\Z}_{\wc \F}(\wc z)$ intersects $\wc\F$-transversally $TI^{\Z}_{\wc \F}(\wc z)$ (by Proposition~\ref{Prop2.1.15Lellouch}). By Theorem~\ref{thmMlct2}, this implies that there is an $f$-periodic point $\wt z'$ such that $\wt f^p(\wt z') = T\wt z'$ and whose transverse trajectory stays in $\wc B$. This proves the lemma.

Or the path $\wc\beta_0:=I^{\Z}_{\wc \F}(\wc z)$ draws and crosses $\wc B$ ($\wc\beta_0$ is periodic, and by renormalising it if necessary we suppose it is $1$-periodic). Say it crosses it from left to right. As it is periodic, it also crosses it from right to left, and this crossing component has to meet the drawing and crossing from left to right component. In particular, there exists $t_0<t_1<t_2<t_0+1$ such that $\wc\beta_0(t_0) = \wc\beta_0(t_2)\in \wc B$, that $\wc\beta_0|_{[t_0,t_2]} \subset \wc B\cup L(\wc B)$ and that $\wc\beta_0(t_1)\in L(\wc B)$.
We can then replace $\wc\beta_0$ by $\wc\beta_0|_{[t_0,t_2]}$ and repeat the above process. 
Ultimately, this process stops (because the homotopy types of the self-intersections of $\wt\beta_0$ in $\wh\dom(\F)$ are locally discrete, as $\wt\beta_0$ is simple), and we get $t'_0<t'_2$ such that $\wc\beta_0|_{[t'_0,t'_2]}$ is simple, and that $\wc\beta_0$ draws and visits the band defined by $\wc\beta_0|_{[t'_0,t'_2]}$. 
We are reduced to the previous case:  $I^{\Z}_{\wc \F}(\wc z)$ intersects $\wc\F$-transversally $TI^{\Z}_{\wc \F}(\wc z)$ (by Proposition~\ref{Prop2.1.15Lellouch}), which proves, by Theorem~\ref{thmMlct2}, that there is an $f$-periodic point $\wt z'$ such that $\wt f^p(\wt z') = T\wt z'$ and whose transverse trajectory stays in $\wc B$, proving the lemma.
\end{proof}

\begin{proof}[Proof of Proposition~\ref{LemRealizPeriod}]
By Lemma~\ref{LemPerPtSimple2}, if $\gamma$ is the tracking geodesic of a periodic point, then the proposition is proved. Hence, we suppose that it is not the tracking geodesic of a periodic point.

Take $U$ a sufficiently small neighbourhood of $z$ such that, if $\wt U$ is a lift of $U$ containing $\wt z$, then for all $\wt x$ in $\wt U$, $I^{\Z}_{\wt\F}(\wt{f}^{-1}(\wt x))$ meets $\wt\phi_{\wt z}$. The return $f^{m_1}(z)$ of the orbit in $U$ given by Lemma~\ref{LemRecur} allows us to build an approximation $\alpha_1$ of $I^{\Z}_{ \F}(z)$ associated to $T^{i_1}$: there is a transverse path $\wt\alpha_1$ in $\wt S$ that is $T^{i_1}$-periodic in the sense that for any $j\in \Z$ and $t\in\R$, we have $\wt\alpha_1(t+j) = T^{ji_1}\wt\alpha_1(t)$, and such that $\wt\alpha_1|_{[0,1]}$ is $\F$-equivalent to a subpath of $I^{[-1,m_1+1]}_{\wt \F}(\wt z)$. 
By Lemma~\ref{LemAlphaSimple2}, this path is simple in $\wt S$. Moreover, Lemma~\ref{LemBrown} implies that $\wt\alpha_1|_{[0,1]} \cap T\wt\alpha_1|_{[0,1]} \neq\emptyset$. Hence, there is a path $\wt\alpha_0 = \bigcup_{i\in\Z}T^i \wt\beta$, where $\wt\beta$ is a piece of $\wt\alpha_1$, that is simple and $T$-invariant (and not only, as $\wt\alpha_1$, $T^{i_1}$-invariant).

Let us denote by $\wt B \subset \wt S$ the set of leaves of $\wt\F$ met by $\wt\alpha_0$; it is a $T$-invariant plane of leaves. 

If $I^\Z_{\wt \F}(\wt z)$ stays in $\wt B$, then it is $\wt\F$-equivalent to a subpath of $\wt\alpha_0$. If it is not equivalent to $\wt\alpha_0$, then it has to accumulate in $\wt\alpha_0$, hence  $I^\Z_{\F}(z)$ accumulates in itself, which is impossible by Proposition~\ref{Prop2.1.3Lellouch}. 
So $I^\Z_{\wt \F}(\wt z)$ is $\wt\F$-equivalent to $\wt\alpha_0$

If $I^\Z_{\wt \F}(\wt z)$ does not stay in $\wt B$, let us first prove that it goes in and out of $\wt B$ both in positive and negative times.
There exist $t_0, t_1\in\R$ such that $I^{t_0}_{\wt \F}(\wt z) \in \wt B$ and $I^{t_1}_{\wt \F}(\wt z) \notin \wt B$. By Lemma~\ref{Lem17Lct1}, if $\wt U$ is a small enough neighbourhood of $\wt z$, then any point $\wt x\in \wt U$ is such that $I^{t_0}_{\wt \F}(\wt x)\in\wt B$, that $I^{t_1}_{\wt \F}(\wt x)\notin\wt B$, and that $\wt\alpha_0|_{[0,1]}$ is $\F$-equivalent to a subpath of $I^{[-1,m_1+1]}_{\wt \F}(\wt x)$.
So, by Lemma~\ref{LemRecur}, for any $k$, we have that $I^{t_0+m_k}_{\wt \F}(\wt z)\in T^{i_k}\wt B = \wt B$ and $I^{t_1+m_k}_{\wt \F}(\wt z)\notin T^{i_k}\wt B = \wt B$.
Hence, if $m_k$ is large enough, the path $I^{\Z}_{\wt \F}(\wt f^{m_k}(\wt z))$ goes out of $\wt B$ both in positive and negative times. 

There are two possibilities, given by Remark~\ref{RemPossibBand}. 

Either the path $I^{\Z}_{\wt \F}(\wt f^{m_k}(\wt z))$ draws and visits $\wt B$, which implies that $I^{\Z}_{\wt \F}(\wt f^{m_k}(\wt z))$ intersects $\F$-transversally $TI^{\Z}_{\wt \F}(\wt f^{m_k}(\wt z))$ (by Proposition~\ref{Prop2.1.15Lellouch}). By Theorem~\ref{thmMlct2}, this implies that there is an $f$-periodic point whose tracking geodesic is $\gamma$, which contradicts our initial hypothesis.

Or the path $I^{\Z}_{\wt \F}(\wt f^{m_k}(\wt z))$ draws and crosses $\wt B$. Because $I^{\Z}_{\wt \F}(\wt z)$ crosses $\wt B$ an infinite number of times, and because $\wt B$ is a topological plane of $\wt S$, the path $I^{\Z}_{\wt \F}(\wt z)$ also has to cross $\wt B$ in the other direction. By Proposition~\ref{Prop2.1.16Lellouch}, this implies that there exists $k\in\Z$ such that $I^{\Z}_{\wt \F}(\wt z)$ and $T^{k}I^{\Z}_{\wt \F}(\wt z)$ intersect $\wt \F$-transversally. 
By Lemma~\ref{LemAlphaSimple2}, one has $k\neq 0$ (recall that we are in the case where there is no periodic point having $\wt \gamma$ as a geodesic). Hence, one can suppose that $k\neq 0$, and apply again Theorem~\ref{thmMlct2} to get an $f$-periodic point whose tracking geodesic is $\gamma$, contradicting again our initial hypothesis.
\end{proof}

\section{Transverse intersections}\label{sec:transverseintersections}

We apply Proposition~\ref{LemRealizPeriod} to get a transverse path $\wt\alpha_0\subset \wt S$ that is simple and $T$-invariant, and $\wt\F$-equivalent to a transverse path $I^\Z_{\wt\F}(\wt z)$, for $\wt z \in\wt S$ a lift of a $\mu$-typical point $z\in S$.
Up to reparametrization, one can suppose that for any $t\in\R$ and $k\in\Z$, one has $\wt\alpha_0(t+k) = T^k\wt\alpha_0(t)$.
We denote $\wt B$ the set of leaves met by $\wt \alpha_0$; in this band the left and the right of a leaf of $\wt \F$ are well defined.

\subsection{Setting some constants}\label{SubSecConst}

Fix a leaf $\wt\phi\subset \wt B$. Let $m_0\in \Z$ such that $\wt f^{m_0}(\wt z)\in L(T^{-3}\wt\phi)$, and $m_1\in\N$ such that $\wt f^{m_0+m_1}(\wt z)\in R(T^{6}\wt\phi)$. Write $m'_0 = m_0-m_1$.

Note that the path $\wt f^{m_1}I^{[m'_0,m_0]}_{\wt\F}(\wt z)$ meets both $L(T^{-3}\wt\phi)$ and $R(T^{6}\wt\phi)$, and is included in $L(T^{-3}\wt f^{m_1}(\wt\phi))$ (see Figure~\ref{FigSetConst}, left).
Let $\wt\varphi_-$ be the piece of $T^{-3}\wt\phi$ linking the paths $\wt f^{m_1}I^{[m'_0,m_0]}_{\wt\F}(\wt z)$ and $\wt\alpha_0$, and $\wt\varphi_+$ be the piece of $T^{6}\wt\phi$ linking the paths $\wt f^{m_1}I^{[m'_0,m_0]}_{\wt\F}(\wt z)$ and $\wt\alpha_0$.
Denote $\wt\gamma = \wt\gamma_{\wt z}$ the tracking geodesic of $\wt z$, and
\begin{equation}\label{EqDefD}
D = \sup\left\{d(\wt a, \wt\gamma)\mid \wt a \in \wt f^{m_1}I^{[m'_0,m_0]}_{\wt\F}(\wt z)\cup \wt\varphi_-\cup \wt\varphi_+\cup\wt\alpha_0\right\}.
\end{equation}
Note that $D<+\infty$ because $\wt\alpha_0$ is at finite distance to $\wt\gamma$ (as it is $T$-invariant).

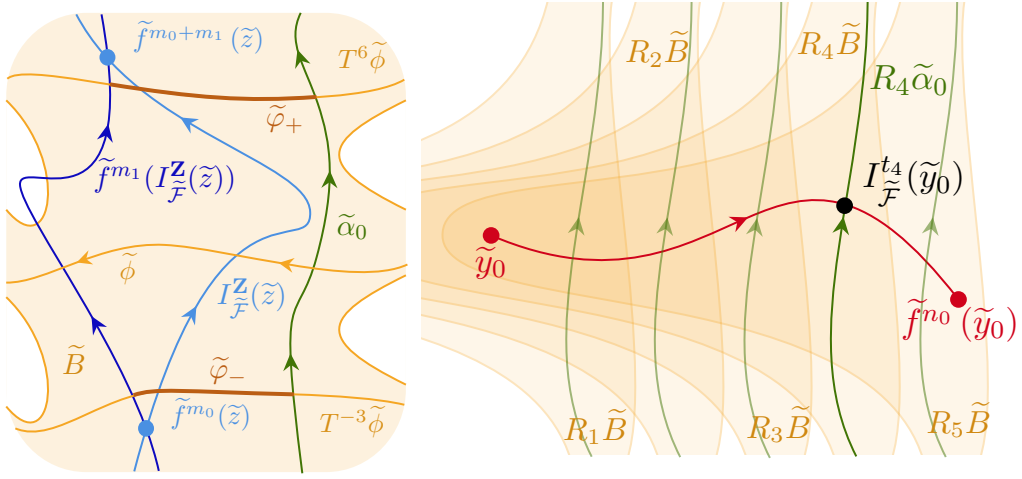
\begin{figure}
\begin{minipage}{0.38\linewidth}

\tikzset{every picture/.style={line width=0.75pt}} 

\begin{tikzpicture}[x=0.75pt,y=0.75pt,yscale=-1.1,xscale=1.05]

\draw  [draw opacity=0][fill={rgb, 255:red, 245; green, 166; blue, 35 }  ,fill opacity=0.15 ] (70,88) .. controls (70,67.01) and (87.01,50) .. (108,50) -- (222,50) .. controls (242.99,50) and (260,67.01) .. (260,88) -- (260,222) .. controls (260,242.99) and (242.99,260) .. (222,260) -- (108,260) .. controls (87.01,260) and (70,242.99) .. (70,222) -- cycle ;
\draw [color={rgb, 255:red, 245; green, 166; blue, 35 }  ,draw opacity=1 ][fill={rgb, 255:red, 255; green, 255; blue, 255 }  ,fill opacity=1 ]   (70,140) .. controls (91.83,165.46) and (100.75,118.54) .. (70,90) ;
\draw [color={rgb, 255:red, 245; green, 166; blue, 35 }  ,draw opacity=1 ][fill={rgb, 255:red, 255; green, 255; blue, 255 }  ,fill opacity=1 ]   (70,230) .. controls (91.83,255.46) and (100.75,208.54) .. (70,180) ;
\draw [color={rgb, 255:red, 245; green, 166; blue, 35 }  ,draw opacity=1 ][fill={rgb, 255:red, 255; green, 255; blue, 255 }  ,fill opacity=1 ]   (260,140) .. controls (215.75,119.04) and (219.75,109.04) .. (260,90) ;
\draw [color={rgb, 255:red, 245; green, 166; blue, 35 }  ,draw opacity=1 ][fill={rgb, 255:red, 255; green, 255; blue, 255 }  ,fill opacity=1 ]   (260,220) .. controls (215.75,199.04) and (219.75,189.04) .. (260,170) ;
\draw [color={rgb, 255:red, 65; green, 117; blue, 5 }  ,draw opacity=1 ]   (210,50) .. controls (203.12,63.5) and (211.87,73.65) .. (218.15,91.6) .. controls (224.43,109.55) and (227.09,128) .. (217.55,161.74) .. controls (208.01,195.48) and (198.75,170.04) .. (210.75,260.54) ;
\draw [shift={(208.45,66.83)}, rotate = 71.5] [fill={rgb, 255:red, 65; green, 117; blue, 5 }  ,fill opacity=1 ][line width=0.08]  [draw opacity=0] (8.04,-3.86) -- (0,0) -- (8.04,3.86) -- (5.34,0) -- cycle    ;
\draw [shift={(223.81,122.09)}, rotate = 91.27] [fill={rgb, 255:red, 65; green, 117; blue, 5 }  ,fill opacity=1 ][line width=0.08]  [draw opacity=0] (8.04,-3.86) -- (0,0) -- (8.04,3.86) -- (5.34,0) -- cycle    ;
\draw [shift={(205.12,205.67)}, rotate = 87.29] [fill={rgb, 255:red, 65; green, 117; blue, 5 }  ,fill opacity=1 ][line width=0.08]  [draw opacity=0] (8.04,-3.86) -- (0,0) -- (8.04,3.86) -- (5.34,0) -- cycle    ;
\draw [color={rgb, 255:red, 74; green, 144; blue, 226 }  ,draw opacity=1 ]   (103.29,50.69) .. controls (132.89,104.69) and (213.69,117.89) .. (214.49,141.09) .. controls (215.29,164.28) and (169.69,116.68) .. (130.85,259.65) ;
\draw [shift={(152.75,97.81)}, rotate = 30.68] [fill={rgb, 255:red, 74; green, 144; blue, 226 }  ,fill opacity=1 ][line width=0.08]  [draw opacity=0] (8.04,-3.86) -- (0,0) -- (8.04,3.86) -- (5.34,0) -- cycle    ;
\draw [shift={(160.41,184.07)}, rotate = 120.16] [fill={rgb, 255:red, 74; green, 144; blue, 226 }  ,fill opacity=1 ][line width=0.08]  [draw opacity=0] (8.04,-3.86) -- (0,0) -- (8.04,3.86) -- (5.34,0) -- cycle    ;
\draw [color={rgb, 255:red, 14; green, 10; blue, 190 }  ,draw opacity=1 ]   (115.29,50.69) .. controls (135.29,157.49) and (75.35,113.96) .. (76.55,128.75) .. controls (77.75,143.55) and (130.73,208.25) .. (142.53,259.45) ;
\draw [shift={(118.5,100.8)}, rotate = 101.57] [fill={rgb, 255:red, 14; green, 10; blue, 190 }  ,fill opacity=1 ][line width=0.08]  [draw opacity=0] (8.04,-3.86) -- (0,0) -- (8.04,3.86) -- (5.34,0) -- cycle    ;
\draw [shift={(110.51,188.66)}, rotate = 59.6] [fill={rgb, 255:red, 14; green, 10; blue, 190 }  ,fill opacity=1 ][line width=0.08]  [draw opacity=0] (8.04,-3.86) -- (0,0) -- (8.04,3.86) -- (5.34,0) -- cycle    ;
\draw [color={rgb, 255:red, 245; green, 166; blue, 35 }  ,draw opacity=1 ]   (74.32,240.01) .. controls (94.09,247.88) and (118.62,221.24) .. (149.7,222.77) .. controls (180.78,224.31) and (234.02,223.46) .. (259.25,229.2) ;
\draw [color={rgb, 255:red, 245; green, 166; blue, 35 }  ,draw opacity=1 ]   (70.5,80.13) .. controls (78.75,73.04) and (131.71,87.08) .. (168.89,88.68) .. controls (206.06,90.28) and (245.25,88.54) .. (259.25,80.54) ;
\draw  [draw opacity=0][fill={rgb, 255:red, 74; green, 144; blue, 226 }  ,fill opacity=1 ] (132.67,240) .. controls (132.67,237.91) and (134.36,236.21) .. (136.46,236.21) .. controls (138.55,236.21) and (140.25,237.91) .. (140.25,240) .. controls (140.25,242.09) and (138.55,243.79) .. (136.46,243.79) .. controls (134.36,243.79) and (132.67,242.09) .. (132.67,240) -- cycle ;
\draw [color={rgb, 255:red, 245; green, 166; blue, 35 }  ,draw opacity=1 ]   (69.98,173.34) .. controls (88.91,173.65) and (114.29,154.57) .. (145.37,156.11) .. controls (176.44,157.65) and (236.06,176.06) .. (260.72,165.06) ;
\draw [shift={(103.5,164.13)}, rotate = 340.84] [fill={rgb, 255:red, 245; green, 166; blue, 35 }  ,fill opacity=1 ][line width=0.08]  [draw opacity=0] (7.14,-3.43) -- (0,0) -- (7.14,3.43) -- (4.74,0) -- cycle    ;
\draw [shift={(199.53,164.33)}, rotate = 9.19] [fill={rgb, 255:red, 245; green, 166; blue, 35 }  ,fill opacity=1 ][line width=0.08]  [draw opacity=0] (7.14,-3.43) -- (0,0) -- (7.14,3.43) -- (4.74,0) -- cycle    ;
\draw  [draw opacity=0][fill={rgb, 255:red, 74; green, 144; blue, 226 }  ,fill opacity=1 ] (114.32,70.34) .. controls (114.32,68.25) and (116.02,66.55) .. (118.12,66.55) .. controls (120.21,66.55) and (121.91,68.25) .. (121.91,70.34) .. controls (121.91,72.43) and (120.21,74.13) .. (118.12,74.13) .. controls (116.02,74.13) and (114.32,72.43) .. (114.32,70.34) -- cycle ;
\draw [color={rgb, 255:red, 187; green, 94; blue, 20 }  ,draw opacity=1 ][line width=1.5]    (119.29,82.68) .. controls (159.61,88.79) and (183.49,90.79) .. (217.49,88.44) ;
\draw [color={rgb, 255:red, 187; green, 94; blue, 20 }  ,draw opacity=1 ][line width=1.5]    (130.54,224.83) .. controls (140.79,221.33) and (153.19,223.18) .. (206.77,224.44) ;

\draw (225.7,147.79) node [anchor=west] [inner sep=0.75pt]  [color={rgb, 255:red, 65; green, 117; blue, 5 }  ,opacity=1 ]  {$\wt{\alpha }_{0}$};
\draw (168.7,180.59) node [anchor=west] [inner sep=0.75pt]  [color={rgb, 255:red, 74; green, 144; blue, 226 }  ,opacity=1 ]  {$I_{\wt\F}^{\Z}(\wt z)$};
\draw (110,115.1) node [anchor=north west][inner sep=0.75pt]  [color={rgb, 255:red, 14; green, 10; blue, 190 }  ,opacity=1 ]  {$\wt{f}^{m_{1}}( I_{\wt\F}^{\Z}(\wt z))$};
\draw (250.92,228.29) node [anchor=north east] [inner sep=0.75pt]  [font=\small,color={rgb, 255:red, 216; green, 144; blue, 23 }  ,opacity=1 ]  {$T^{-3}\wt{\phi }$};
\draw (251.65,78.97) node [anchor=south east] [inner sep=0.75pt]  [font=\small,color={rgb, 255:red, 216; green, 144; blue, 23 }  ,opacity=1 ]  {$T^{6}\wt{\phi }$};
\draw (95.17,200.66) node [anchor=north west][inner sep=0.75pt]  [color={rgb, 255:red, 208; green, 136; blue, 16 }  ,opacity=1 ]  {$\wt{B}$};
\draw (146.6,224.56) node [anchor=north west][inner sep=0.75pt]  [font=\small,color={rgb, 255:red, 74; green, 144; blue, 226 }  ,opacity=1 ]  {$\wt{f}^{m_{0}}(\wt{z})$};
\draw (127.69,158.4) node [anchor=north] [inner sep=0.75pt]  [font=\small,color={rgb, 255:red, 216; green, 144; blue, 23 }  ,opacity=1 ]  {$\wt{\phi }$};
\draw (128.83,61.22) node [anchor=west] [inner sep=0.75pt]  [font=\small,color={rgb, 255:red, 74; green, 144; blue, 226 }  ,opacity=1 ]  {$\wt{f}^{m_{0} +m_{1}}\left(\wt{z}\right)$};
\draw (175.47,221.4) node [anchor=south] [inner sep=0.75pt]  [color={rgb, 255:red, 187; green, 94; blue, 20 }  ,opacity=1 ]  {$\wt{\varphi }_{-}$};
\draw (202.52,90.88) node [anchor=north] [inner sep=0.75pt]  [color={rgb, 255:red, 187; green, 94; blue, 20 }  ,opacity=1 ]  {$\wt{\varphi }_{+}$};

\end{tikzpicture}

\end{minipage}\hfill
\begin{minipage}{0.57\linewidth}

\tikzset{every picture/.style={line width=0.75pt}} 

\begin{tikzpicture}[x=0.75pt,y=0.75pt,yscale=-1.1,xscale=1.1]

\clip (250,30) rectangle (530,250);

\draw  [color={rgb, 255:red, 245; green, 166; blue, 35 }  ,draw opacity=0.3 ][fill={rgb, 255:red, 245; green, 166; blue, 35 }  ,fill opacity=0.1 ] (500.83,247) .. controls (523.99,214.17) and (484.21,51.73) .. (500.83,17) .. controls (488.25,17.36) and (481.58,16.69) .. (470.83,17) .. controls (454.75,142.52) and (258.78,91.87) .. (260.83,137) .. controls (262.63,176.5) and (465.5,153.52) .. (480.83,247) .. controls (495.58,246.69) and (489.25,247.36) .. (500.83,247) -- cycle ;
\draw  [color={rgb, 255:red, 245; green, 166; blue, 35 }  ,draw opacity=0.3 ][fill={rgb, 255:red, 245; green, 166; blue, 35 }  ,fill opacity=0.1 ] (460.83,247) .. controls (483.99,214.17) and (444.21,51.73) .. (460.83,17) .. controls (448.25,17.36) and (441.58,16.69) .. (430.83,17) .. controls (414.75,142.52) and (218.78,91.87) .. (220.83,137) .. controls (222.63,176.5) and (425.5,153.52) .. (440.83,247) .. controls (455.58,246.69) and (449.25,247.36) .. (460.83,247) -- cycle ;
\draw  [color={rgb, 255:red, 245; green, 166; blue, 35 }  ,draw opacity=0.3 ][fill={rgb, 255:red, 245; green, 166; blue, 35 }  ,fill opacity=0.1 ] (420.83,247) .. controls (443.99,214.17) and (404.21,51.73) .. (420.83,17) .. controls (408.25,17.36) and (401.58,16.69) .. (390.83,17) .. controls (374.75,142.52) and (178.78,91.87) .. (180.83,137) .. controls (182.63,176.5) and (385.5,153.52) .. (400.83,247) .. controls (415.58,246.69) and (409.25,247.36) .. (420.83,247) -- cycle ;
\draw  [color={rgb, 255:red, 245; green, 166; blue, 35 }  ,draw opacity=0.3 ][fill={rgb, 255:red, 245; green, 166; blue, 35 }  ,fill opacity=0.1 ] (380.83,247) .. controls (403.99,214.17) and (364.21,51.73) .. (380.83,17) .. controls (368.25,17.36) and (361.58,16.69) .. (350.83,17) .. controls (334.75,142.52) and (138.78,91.87) .. (140.83,137) .. controls (142.63,176.5) and (345.5,153.52) .. (360.83,247) .. controls (375.58,246.69) and (369.25,247.36) .. (380.83,247) -- cycle ;
\draw  [color={rgb, 255:red, 245; green, 166; blue, 35 }  ,draw opacity=0.3 ][fill={rgb, 255:red, 245; green, 166; blue, 35 }  ,fill opacity=0.1 ] (340.83,247) .. controls (363.99,214.17) and (324.21,51.73) .. (340.83,17) .. controls (328.25,17.36) and (321.58,16.69) .. (310.83,17) .. controls (294.75,142.52) and (98.78,91.87) .. (100.83,137) .. controls (102.63,176.5) and (305.5,153.52) .. (320.83,247) .. controls (335.58,246.69) and (329.25,247.36) .. (340.83,247) -- cycle ;
\draw  [draw opacity=0][fill={rgb, 255:red, 255; green, 255; blue, 255 }  ,fill opacity=1 ] (510.83,7) -- (260.83,7) -- (260.83,27) -- (510.83,27) -- cycle ;
\draw  [draw opacity=0][fill={rgb, 255:red, 255; green, 255; blue, 255 }  ,fill opacity=1 ] (510.83,237) -- (260.83,237) -- (260.83,257) -- (510.83,257) -- cycle ;
\draw  [draw opacity=0][fill={rgb, 255:red, 208; green, 2; blue, 27 }  ,fill opacity=1 ] (498.74,165.79) .. controls (498.74,163.69) and (497.04,161.99) .. (494.95,161.99) .. controls (492.85,161.99) and (491.15,163.69) .. (491.15,165.79) .. controls (491.15,167.88) and (492.85,169.58) .. (494.95,169.58) .. controls (497.04,169.58) and (498.74,167.88) .. (498.74,165.79) -- cycle ;
\draw  [draw opacity=0][fill={rgb, 255:red, 208; green, 2; blue, 27 }  ,fill opacity=1 ] (286.67,136.5) .. controls (286.67,134.41) and (284.97,132.71) .. (282.87,132.71) .. controls (280.78,132.71) and (279.08,134.41) .. (279.08,136.5) .. controls (279.08,138.59) and (280.78,140.29) .. (282.87,140.29) .. controls (284.97,140.29) and (286.67,138.59) .. (286.67,136.5) -- cycle ;
\draw [color={rgb, 255:red, 208; green, 2; blue, 27 }  ,draw opacity=1 ]   (494.95,165.79) .. controls (410.95,56.75) and (395,184.5) .. (282.87,136.5) ;
\draw [shift={(399.46,128.25)}, rotate = 156.15] [fill={rgb, 255:red, 208; green, 2; blue, 27 }  ,fill opacity=1 ][line width=0.08]  [draw opacity=0] (8.04,-3.86) -- (0,0) -- (8.04,3.86) -- (5.34,0) -- cycle    ;
\draw [color={rgb, 255:red, 65; green, 117; blue, 5 }  ,draw opacity=1 ]   (453.71,26.04) .. controls (454.21,112.54) and (419.71,180.54) .. (448.58,237.54) ;
\draw [shift={(442.76,127.96)}, rotate = 98.87] [fill={rgb, 255:red, 65; green, 117; blue, 5 }  ,fill opacity=1 ][line width=0.08]  [draw opacity=0] (8.04,-3.86) -- (0,0) -- (8.04,3.86) -- (5.34,0) -- cycle    ;
\draw  [draw opacity=0][fill={rgb, 255:red, 0; green, 0; blue, 0 }  ,fill opacity=1 ] (447.07,122.9) .. controls (447.07,120.81) and (445.37,119.11) .. (443.27,119.11) .. controls (441.18,119.11) and (439.48,120.81) .. (439.48,122.9) .. controls (439.48,124.99) and (441.18,126.69) .. (443.27,126.69) .. controls (445.37,126.69) and (447.07,124.99) .. (447.07,122.9) -- cycle ;
\draw [color={rgb, 255:red, 65; green, 117; blue, 5 }  ,draw opacity=0.5 ]   (494.33,26.33) .. controls (494.83,112.83) and (460.33,180.83) .. (489.2,237.83) ;
\draw [shift={(483.38,128.25)}, rotate = 98.87] [fill={rgb, 255:red, 65; green, 117; blue, 5 }  ,fill opacity=0.5 ][line width=0.08]  [draw opacity=0] (8.04,-3.86) -- (0,0) -- (8.04,3.86) -- (5.34,0) -- cycle    ;
\draw [color={rgb, 255:red, 65; green, 117; blue, 5 }  ,draw opacity=0.5 ]   (414.51,26.73) .. controls (415.01,113.23) and (380.51,181.23) .. (409.38,238.23) ;
\draw [shift={(403.56,128.65)}, rotate = 98.87] [fill={rgb, 255:red, 65; green, 117; blue, 5 }  ,fill opacity=0.5 ][line width=0.08]  [draw opacity=0] (8.04,-3.86) -- (0,0) -- (8.04,3.86) -- (5.34,0) -- cycle    ;
\draw [color={rgb, 255:red, 65; green, 117; blue, 5 }  ,draw opacity=0.5 ]   (373.82,26.36) .. controls (374.32,112.86) and (339.82,180.86) .. (368.69,237.86) ;
\draw [shift={(362.87,128.28)}, rotate = 98.87] [fill={rgb, 255:red, 65; green, 117; blue, 5 }  ,fill opacity=0.5 ][line width=0.08]  [draw opacity=0] (8.04,-3.86) -- (0,0) -- (8.04,3.86) -- (5.34,0) -- cycle    ;
\draw [color={rgb, 255:red, 65; green, 117; blue, 5 }  ,draw opacity=0.5 ]   (333.64,26.18) .. controls (334.14,112.68) and (299.64,180.68) .. (328.51,237.68) ;
\draw [shift={(322.69,128.1)}, rotate = 98.87] [fill={rgb, 255:red, 65; green, 117; blue, 5 }  ,fill opacity=0.5 ][line width=0.08]  [draw opacity=0] (8.04,-3.86) -- (0,0) -- (8.04,3.86) -- (5.34,0) -- cycle    ;
\draw  [draw opacity=0][fill={rgb, 255:red, 255; green, 255; blue, 255 }  ,fill opacity=1 ] (250.83,37) -- (80.83,37) -- (80.83,247) -- (250.83,247) -- cycle ;

\draw (313.93,214.1) node [anchor=north west][inner sep=0.75pt]  [font=\small,color={rgb, 255:red, 208; green, 136; blue, 16 }  ,opacity=1 ,xscale=1.2,yscale=1.2]  {$R_{1}\wt{B}$};
\draw (342.03,41.3) node [anchor=north west][inner sep=0.75pt]  [font=\small,color={rgb, 255:red, 208; green, 136; blue, 16 }  ,opacity=1 ,xscale=1.2,yscale=1.2]  {$R_{2}\wt{B}$};
\draw (397.03,213.1) node [anchor=north west][inner sep=0.75pt]  [font=\small,color={rgb, 255:red, 208; green, 136; blue, 16 }  ,opacity=1 ,xscale=1.2,yscale=1.2]  {$R_{3}\wt{B}$};
\draw (420.03,37.6) node [anchor=north west][inner sep=0.75pt]  [font=\small,color={rgb, 255:red, 208; green, 136; blue, 16 }  ,opacity=1 ,xscale=1.2,yscale=1.2]  {$R_{4}\wt{B}$};
\draw (478.83,210.9) node [anchor=north west][inner sep=0.75pt]  [font=\small,color={rgb, 255:red, 208; green, 136; blue, 16 }  ,opacity=1 ,xscale=1.2,yscale=1.2]  {$R_{5}\wt{B}$};
\draw (282.87,139.9) node [anchor=north] [inner sep=0.75pt]  [color={rgb, 255:red, 208; green, 2; blue, 27 }  ,opacity=1 ,xscale=1.2,yscale=1.2]  {$\wt{y}_{0}$};
\draw (495.77,163.87) node [anchor=north] [inner sep=0.75pt]  [color={rgb, 255:red, 208; green, 2; blue, 27 }  ,opacity=1 ,xscale=1.2,yscale=1.2]  {$\wt{f}^{n_{0}}\left(\wt{y}_{0}\right)$};
\draw (452.85,65.19) node [anchor=west] [inner sep=0.75pt]  [color={rgb, 255:red, 65; green, 117; blue, 5 }  ,opacity=1 ,xscale=1.2,yscale=1.2]  {$R_{4}\wt{\alpha }_{0}$};
\draw (449.77,124.6) node [anchor=south west] [inner sep=0.75pt]  [xscale=1.2,yscale=1.2]  {$I_{\wt\F}^{t_{4}}(\wt{y}_{0})$};

\end{tikzpicture}
\end{minipage}
\caption{Left: the objects used in Paragraph~\ref{SubSecConst}.
Right: the configuration of Proposition~\ref{PropBndedDevRatCase2}.}\label{FigSetConst}
\end{figure}

\subsection{First case: the trajectory stays in different copies of $\wt B$}

The configuration of the following proposition is depicted in Figure~\ref{FigSetConst}, right.

\begin{prop}\label{PropBndedDevRatCase2}
Suppose that there exist 5 different copies of $\wt B$, denoted by $(R_i \wt B)_{1\le i\le 5}$ (with $R_i\in\G$), such that the following is true. First, we suppose that the sets $R_i V_{D}(\wt\gamma)$ are pairwise disjoint and have the same orientation.
Second, we suppose that there exist $n_0\ge m_1$ and, for all $i$, some time $t_i\in[0,n_0]$ such that $I^{t_i}_{\wt\F}(\wt y_0)\in R_i\wt\alpha_0$, and that either for all $1\le i\le 5$ we have $I^{[0,t_i]}_{\wt\F}(\wt y_0)\subset R_i\wt B$, or for all $i$ we have $I^{[t_i, n_0]}_{\wt\F}(\wt y_0)\subset R_i\wt B$.


Then there exists an $f$-periodic orbit having a lift whose tracking geodesic crosses both $R_2\wt\gamma_{\wt z}$ and $R_3\wt\gamma_{\wt z}$.

More precisely, there exists an $f$-periodic point $p$ of period $n_0+m_1$ (where $m_1$ is the constant independent of $y_0$ and $n_0$ defined in Subsection~\ref{SubSecConst}) having a lift $\wt p$ satisfying $\wt f^{n_0+m_1}(\wt p) = R_3T^3R_2^{-1} \wt p$.

Finally, there exists a constant $d_0>0$ depending only on $z$ (and neither on $y_0$ nor on $n_0$) such that the tracking geodesic $\gamma_p$ of $p$ is freely homotopic to the concatenation $I^{[t_2,t_3]}_{\F}( y_0)\delta$, where $\diam(\wt\delta)\le d_0$ (with $\wt\delta$ a lift of $\delta$ to $\wt S$). 
\end{prop}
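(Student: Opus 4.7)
I plan to invoke Theorem~\ref{thmMlct2} on a suitable $f$-admissible transverse path $\wt\beta$ of order $n_0+m_1$ having an $\wt\F$-transverse intersection with its $T_1$-translate, where $T_1:=R_3T^3R_2^{-1}$. By a time-reversal argument one may assume that $I^{[0,t_i]}_{\wt\F}(\wt y_0)\subset R_i\wt B$ for every $i\in\{1,\ldots,5\}$.

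\textbf{Construction of $\wt\beta$.} Since $I^{\Z}_{\wt\F}(\wt z)$ is $\wt\F$-equivalent to $\wt\alpha_0$, the segment $\wt\sigma:=R_2 I^{[m_0,m_0+m_1]}_{\wt\F}(\wt z)$ is admissible of order $m_1$ and by the choices in Subsection~\ref{SubSecConst} runs along $R_2\wt\alpha_0$ from the left of $R_2T^{-3}\wt\phi$ to the right of $R_2T^6\wt\phi$. Meanwhile, the hypothesis $I^{[0,t_2]}_{\wt\F}(\wt y_0)\subset R_2\wt B$ ensures that the $\wt y_0$-orbit enters $R_2\wt B$ across its boundary (from the $R_1\wt B$ side) and crosses the transverse loop $R_2\wt\alpha_0$ at time $t_2$. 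Applying Proposition~\ref{Prop2.1.16Lellouch} inside the band $R_2\wt B$ yields a $\wt\F$-transverse intersection between $\wt\sigma$ and a suitable $\G$-translate of $I^{[0,n_0]}_{\wt\F}(\wt y_0)$, and the forcing lemma (Proposition~\ref{propFondalct1}) then splices these two paths into a single admissible path $\wt\beta$ of order $m_1+n_0$, which, up to $\wt\F$-equivalence, consists of an initial piece of $\wt\sigma$ followed by a terminal segment of the $\wt y_0$-orbit ending at $\wt f^{n_0}(\wt y_0)$.

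\textbf{Self-intersection of $\wt\beta$ and $T_1\wt\beta$.} Because $T_1R_2\wt\alpha_0=R_3T^3\wt\alpha_0=R_3\wt\alpha_0$, the $\wt\sigma$-piece of $T_1\wt\beta$ runs along $R_3\wt\alpha_0$ inside $R_3\wt B$, while the tail of $\wt\beta$ crosses $R_3\wt\alpha_0$ at time $t_3$ and continues inside $R_3\wt B$ by our hypothesis. A second application of Proposition~\ref{Prop2.1.16Lellouch} inside $R_3\wt B$ then produces the required $\wt\F$-transverse intersection between $\wt\beta$ and $T_1\wt\beta$. The buffer bands $R_1\wt B$, $R_4\wt B$ and $R_5\wt B$, whose central tubes are pairwise disjoint from those of $R_2$ and $R_3$ by the disjointness of the sets $R_iV_D(\wt\gamma)$, serve to rule out accidental coincidences or spurious accumulations outside $R_2\wt B$ and $R_3\wt B$; these accumulation arguments rely on Proposition~\ref{Prop2.1.3Lellouch} and Lemmas~\ref{LemAccumul}--\ref{LemAccumul2}. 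Theorem~\ref{thmMlct2} then delivers an $f$-periodic point $p$ of period $n_0+m_1$ with a lift $\wt p$ satisfying $\wt f^{n_0+m_1}(\wt p)=T_1\wt p$.

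\textbf{Tracking geodesic and main obstacle.} The tracking geodesic $\gamma_p$ is the closed geodesic freely homotopic to the projection of the arc from $\wt p$ to $T_1\wt p$. By construction this arc is homotopic rel endpoints to the concatenation of $I^{[t_2,t_3]}_{\wt\F}(\wt y_0)$ with a compact path $\wt\delta$ built from pieces of $\wt\sigma$, leaf segments used in the splicings, and short arcs in $R_2\wt B\cup R_3\wt B$; each such piece lies in a neighbourhood of $R_2\wt\gamma\cup R_3\wt\gamma$ with diameter controlled only by $D$ and the diameter of $I^{[m_0,m_0+m_1]}_{\wt\F}(\wt z)$, hence only by $z$. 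The main technical obstacle is to ensure simultaneously that $\wt\beta$ has order exactly $n_0+m_1$ (not larger) and that the associated deck transformation is precisely $R_3T^3R_2^{-1}$, not $R_3T^kR_2^{-1}$ for some other $k$: this is what forces the exponents $-3$ and $+6$ in Subsection~\ref{SubSecConst} and the use of five rather than three translates.
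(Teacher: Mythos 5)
Your high-level structure is right (build an admissible path $\wt\beta$ of order $n_0+m_1$, show that $\wt\beta$ and $R_3T^3R_2^{-1}\wt\beta$ meet $\wt\F$-transversally, invoke Theorem~\ref{thmMlct2}), but the mechanism you propose for producing both $\wt\beta$ and the transverse intersection does not work, and the failure is not incidental — it misses the defining feature of this case. Proposition~\ref{Prop2.1.16Lellouch} requires two transverse paths that each \emph{cross} the band $R_2\wt B$, one from left to right and the other from right to left. Neither of the paths you invoke crosses $R_2\wt B$: the segment $\wt\sigma=R_2I^{[m_0,m_0+m_1]}_{\wt\F}(\wt z)$ is $\wt\F$-equivalent to a subpath of $R_2\wt\alpha_0$, so it lives in the interior of $R_2\wt B$ and never exits it; and the hypothesis $I^{[0,t_2]}_{\wt\F}(\wt y_0)\subset R_2\wt B$ says that the $\wt y_0$-segment \emph{starts inside} $R_2\wt B$ — it does not ``enter across its boundary from the $R_1\wt B$ side'' — so there is no crossing component for it in $R_2\wt B$ either (and nothing guarantees it ever exits). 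The same objection defeats your second application of Proposition~\ref{Prop2.1.16Lellouch} inside $R_3\wt B$. This is precisely the split the paper draws at the end of the introduction: when the trajectory of $\wt y$ crosses the copies of $\wt B$, one argues via Propositions~\ref{PropBndedDevRatCase1a} and \ref{PropBndedDevRatCase1}; Proposition~\ref{PropBndedDevRatCase2} is the complementary regime where the trajectory stays in the bands, which is exactly when crossing-based criteria such as Propositions~\ref{Prop2.1.15Lellouch} and~\ref{Prop2.1.16Lellouch} are unavailable.

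The paper's actual proof handles this by a different route. First, Lemma~\ref{LemCantCrossLotLeaves} (via Lemma~\ref{LemEquivSubpath}) bounds how much of a fundamental domain of $R_i\wt\alpha_0$ the trajectory can meet, which pins down the integers $k_i$ in \eqref{EqLiesBetween}. Then Lemma~\ref{LemConsLemYLeftRight} locates $I^{t_1}_{\wh\F}(\wh y_0)$ and $I^{t_5}_{\wh\F}(\wh y_0)$ on opposite sides of the translates $R_2T^{k_2}\wh C_0$ and $R_4T^{k_4}\wh C_0$; Lemma~\ref{LemExistTrans26} turns that separation into an admissible path $\wh\beta$ of order $n_0+m_1$ by a careful image-chasing argument with the half-planes $\wh C^{\pm}_k$, not by a crossing criterion; and Lemma~\ref{LemInterBetaa} establishes the $\wh\F$-transverse intersection with $S_0\wh\beta=R_3T^3R_2^{-1}\wh\beta$ directly, using the essential-intersection criterion of \cite[Lemma~10.7]{pa}, again with no appeal to crossing the bands. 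Without this replacement machinery, your construction of $\wt\beta$ and its transverse self-intersection does not go through, so as written the argument has a genuine gap at its central step.
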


Similarly, one can build an $f$-periodic orbit having a lift whose tracking geodesic crosses both $R_3\wt\gamma_{\wt z}$ and $R_4\wt\gamma_{\wt z}$.

This proposition will be generalised in Proposition~\ref{PropBndedDevRatCase2b}, to adapt it in the case where the orbit segment $I^{[0,n_0]}_{\wt\F}(\wt y_0)$ crosses lifts of two different typical points $z_1$ and $z_2$.
As the proof of Proposition~\ref{PropBndedDevRatCase2} is already quite involved, we start with its proof, and then explain how to adapt it to get Proposition~\ref{PropBndedDevRatCase2b}.
\bigskip

This subsection is devoted to the proof of Proposition~\ref{PropBndedDevRatCase2}. We restrict ourselves to the case where for all $i$ we have $I^{[0,t_i]}_{\wt\F}(\wt y_0)\subset R_i\wt B$, the other case being symmetric (replacing $f$ by $f^{-1}$, and swapping the order on the $R_i\wt B$).

Start by fixing a representative of $I^{[0,n_0]}_{\wt\F}(\wt y_0)$. First, by replacing the time $t_i$ by a smaller one if necessary, one can suppose that for $t\in(0,t_i)$, we have $I^{t}_{\wt\F}(\wt y_0)\notin R_i\wt\alpha_0$.
Without loss of generality, by permuting the $R_i$ if necessary, we can suppose that the times $(t_i)$ are increasing in $i$.
Let us denote by $\wt U$ the set of leaves met by $I^{[t_{1},t_5]}_{\wt\F}(\wt y_0)$; by hypothesis it is included in $R_5\wt B$. Because $R_5\wt B$ is a trivially foliated topological plane (by Proposition~\ref{LemRealizPeriod}), on which the space of leaves is naturally identified with $\wt\alpha_0$ (and hence, identified with $\R$), the set of leaves of $R_5\wt B$ met by $I^{[t_{1},t_5]}_{\wt\F}(\wt y_0)$ is an interval. In particular, the trajectory $I^{[t_{1},t_5]}_{\wt\F}(\wt y_0)$ is simple in $\wt S$. 


\begin{lemma}\label{LemCantCrossLotLeaves}
For $1\le i\le 4$, the set of leaves of $R_i\wt B$ met by $I^{[0,n_0]}_{\wt\F}(\wt y_0)$ cannot cover a whole fundamental domain (for $R_iTR_i^{-1}$) of $R_i\wt\alpha_0$ (where, again, the set of leaves of $R_i\wt B$ is naturally identified with $R_i\wt\alpha_0$).

Similarly, for $1\le i,j\le 5$, $i\neq j$, the set of leaves of $R_i\wt B$ intersecting $R_j\wt B$ cannot cover a whole fundamental domain (for $R_iTR_i^{-1}$) of $R_i\wt\alpha_0$.
\end{lemma}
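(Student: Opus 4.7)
Both statements will be proved by contradiction, exploiting the drawing/visiting/crossing dichotomy for transverse paths in bands. Parametrize the leaf space of $R_i\wt B$ by $R_i\wt\alpha_0\cong\R$ so that $R_iTR_i^{-1}$ acts by integer translation. If (1) fails, then the set of leaves of $R_i\wt B$ met by the orbit $I^{[0,n_0]}_{\wt\F}(\wt y_0)$ is a subset of $\R$ of length at least $1$; by examining the intersections of the orbit with $R_i\wt B$ I would find times $s_1<s_2$ in $[0,n_0]$ such that $I^{s_j}_{\wt\F}(\wt y_0)\in R_i\wt B$, the orbit stays in $R_i\wt B$ throughout $[s_1,s_2]$, and $\wt\phi_{I^{s_2}_{\wt\F}(\wt y_0)}=R_iTR_i^{-1}\,\wt\phi_{I^{s_1}_{\wt\F}(\wt y_0)}$. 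This is exactly the condition that $I^{[s_1,s_2]}_{\wt\F}(\wt y_0)$ draws $R_i\wt B$, with the approximation furnished by the $R_iTR_i^{-1}$-loop $R_i\wt\alpha_0$.

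Because $i\le 4$, the orbit later reaches $R_{i+1}\wt\alpha_0\subset R_{i+1}V_D(\wt\gamma)$, which is disjoint from $R_iV_D(\wt\gamma)\supset R_i\wt\alpha_0$; this forces it to leave a neighbourhood of $R_i\wt\alpha_0$, and in particular the drawing component is not everything. By Remark~\ref{RemPossibBand} applied to this finite segment, the drawing component is then either a visiting or a crossing component of $R_i\wt B$. Proposition~\ref{Prop2.1.15Lellouch} (in the visiting case) or Proposition~\ref{Prop2.1.16Lellouch} (in the crossing case, combined with a second crossing component coming from the orbit re-entering the band on its way to the other $R_k\wt B$) then yields an $\wt\F$-transverse intersection between $I^{[0,n_0]}_{\wt\F}(\wt y_0)$ and its translate under $R_iTR_i^{-1}$. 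Theorem~\ref{thmMlct2} converts this into an $f$-periodic point $\wt p$ with $\wt f^{s_2-s_1}(\wt p)=R_iTR_i^{-1}\wt p$, whose tracking geodesic is $R_i\wt\gamma$.

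I close the contradiction by combining this forced periodic orbit with the uniqueness statements of Proposition~\ref{LemRealizPeriod} and Lemma~\ref{LemPerPtSimple2}: the $T$-invariant simple transverse trajectory associated to the measure carried by $\wt p$ must (after conjugating by $R_i$) be $\wt\F$-equivalent to $R_i\wt\alpha_0$, so its band is $R_i\wt B$. But simultaneously $I^{[s_2,n_0]}_{\wt\F}(\wt y_0)$ crosses $R_{i+1}\wt\alpha_0$, which sits outside $R_iV_D(\wt\gamma)\supset R_i\wt B$, contradicting the fact that the drawing/visiting configuration keeps the orbit trapped in the band after the forced self-intersection. Statement (2) is dealt with by an analogous but purely geometric argument: if the leaves of $R_i\wt B$ meeting $R_j\wt B$ covered a fundamental domain for $R_iTR_i^{-1}$, iterating by this deck transformation would produce infinitely many translates of $R_j\wt B$ all meeting a bounded region of $R_i\wt B$, incompatible with the properness of the bands and with the disjointness $R_iV_D(\wt\gamma)\cap R_jV_D(\wt\gamma)=\emptyset$.

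\textbf{Main obstacle.} The delicate point is passing from the abstract drawing condition to a genuine $\wt\F$-transverse intersection in part (1): the orbit may enter and exit $R_i\wt B$ several times, and the two leaves realising the fundamental domain need not be met during the same excursion into the band, so one must carefully locate a sub-interval on which the drawing condition is realised cleanly before invoking Propositions~\ref{Prop2.1.15Lellouch} and~\ref{Prop2.1.16Lellouch}; and then one must use the strong structural information provided by Proposition~\ref{LemRealizPeriod} about the simple $T$-invariant path $\wt\alpha_0$ to convert the forcing-theoretic output into an actual contradiction with the hypothesis that the tubes $R_iV_D(\wt\gamma)$ are pairwise disjoint.
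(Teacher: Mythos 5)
Your overall strategy diverges fundamentally from the paper's, and the route you take does not close. The paper's proof is a two-line application of Lemma~\ref{LemEquivSubpath}, and its engine is a fact established just before the lemma is stated and which your proposal never invokes: in the setting of Proposition~\ref{PropBndedDevRatCase2}, the whole segment $I^{[t_1,t_5]}_{\wt\F}(\wt y_0)$ lies inside the single plane of leaves $R_5\wt B$, so it is $\wt\F$-equivalent to a subpath of $R_5\wt\alpha_0$. If a full fundamental domain of $R_i\wt\alpha_0$ were covered, then $R_i\wt\alpha_0|_{[t,t+1]}$ would be $\wt\F$-equivalent to a subpath of $R_5\wt\alpha_0$; Lemma~\ref{LemEquivSubpath} then forces $R_i\wt\alpha_0\cap R_5\wt\alpha_0\neq\emptyset$, contradicting the pairwise disjointness of the tubes $R_iV_D(\wt\gamma)$. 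No forcing theorem, no periodic orbits, no discussion of visiting versus crossing components.

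Your version routes through Remark~\ref{RemPossibBand}, Propositions~\ref{Prop2.1.15Lellouch}--\ref{Prop2.1.16Lellouch} and Theorem~\ref{thmMlct2} to manufacture a periodic orbit in $R_i\wt B$, and then tries to derive a contradiction from it. That last step is where the argument breaks irreparably. First, the assertion that ``the orbit stays in $R_i\wt B$ throughout $[s_1,s_2]$'' is not justified, and you yourself flag this as the delicate point without resolving it. Second, and more seriously, the claim that the $T$-invariant path associated to the new periodic point $\wt p$ ``must'' be $\wt\F$-equivalent to $R_i\wt\alpha_0$ does not follow from Proposition~\ref{LemRealizPeriod} or Lemma~\ref{LemPerPtSimple2}; those statements produce \emph{some} such path, they do not say every periodic orbit with tracking geodesic $R_i\gamma$ has a prescribed transverse trajectory. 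Third, even granting all that, the existence of a periodic orbit inside $R_i\wt B$ places no constraint whatsoever on the trajectory of $\wt y_0$: there is no ``trapping'' phenomenon, so $I^{[s_2,n_0]}_{\wt\F}(\wt y_0)$ reaching $R_{i+1}\wt\alpha_0$ does not contradict anything you have established. Finally, your sketch for part~(2) --- ``infinitely many translates of $R_j\wt B$ meeting a bounded region'' --- is not correct as stated: applying $R_iT^kR_i^{-1}$ translates both the fundamental domain and $R_j\wt B$, so the images do not accumulate on a compact set and no contradiction with properness arises. The paper handles part~(2) by the same Lemma~\ref{LemEquivSubpath} argument, since covering a fundamental domain of $R_i\wt\alpha_0$ by leaves shared with $R_j\wt B$ again forces $R_i\wt\alpha_0|_{[t,t+1]}$ to be equivalent to a subpath of $R_j\wt\alpha_0$. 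I suggest abandoning the forcing-theory detour entirely and building the argument around Lemma~\ref{LemEquivSubpath} and the containment in $R_5\wt B$.
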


This lemma is a direct consequence of :

\begin{lemma}[{\cite[Lemma 3.4]{guiheneuf2023area}}]\label{LemEquivSubpath}
Let $ \Gamma : \Sp^1\to S$ be a simple transverse loop and $\wh\alpha : \R\to\wh\dom (\F)$ a lift of $\Gamma$.  Let $T\in\G$ be the deck transformation associated to $\wh \alpha$. Suppose that there exist a deck transformation $T'\in\G$ and $a\in\R$ such that $\wh\alpha|_{[a,a+1]}$ is equivalent to a subpath of $T'\wh\alpha$. Then $\wh\alpha|_{[a,a+1)} \cap T'\wh\alpha \neq \emptyset$. 
\end{lemma}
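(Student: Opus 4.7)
The plan is to argue by contradiction. Suppose $\wh\alpha|_{[a,a+1)}\cap T'\wh\alpha=\emptyset$.

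The equivalence of $\wh\alpha|_{[a,a+1]}$ with $T'\wh\alpha|_{[b,c]}$ says that both transverse subpaths meet exactly the leaves of the band $\wh B$ parametrized by the interval $[a,a+1]$ of $\wh\alpha$. In particular, $T'\wh\alpha(b)\in\wh\phi_{\wh\alpha(a)}$ and $T'\wh\alpha(c)\in\wh\phi_{\wh\alpha(a+1)}$. Since $\wh\alpha$ is a simple transverse lift of $\Gamma$, each leaf of $\wh\F$ meets $\wh\alpha$ in at most one point; the same holds for $T'\wh\alpha$. The disjointness assumption forces $T'\wh\alpha(b)\neq\wh\alpha(a)$, and the leaf-matching continuous map $h\colon [b,c]\to[a,a+1]$ (sending $t$ to the unique $s$ with $T'\wh\alpha(t)\in\wh\phi_{\wh\alpha(s)}$), together with continuity of the relative position on each common leaf, shows that $T'\wh\alpha|_{[b,c]}$ stays strictly to one side (say the right) of $\wh\alpha|_{[a,a+1]}$ throughout.

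Next I would construct a simple closed curve $\wh C$ in $\wh\dom(\F)$ as the concatenation of $\wh\alpha|_{[a,a+1]}$, a leaf segment of $\wh\phi_{\wh\alpha(a+1)}$ from $\wh\alpha(a+1)$ to $T'\wh\alpha(c)$, the reverse of $T'\wh\alpha|_{[b,c]}$, and a leaf segment of $\wh\phi_{\wh\alpha(a)}$ from $T'\wh\alpha(b)$ back to $\wh\alpha(a)$. By the strict one-sidedness and the fact that each of $\wh\alpha$ and $T'\wh\alpha$ meets each boundary leaf only once, $\wh C$ is simple. Since $\wh\dom(\F)$ is simply connected, $\wh C$ bounds a closed topological disc $D$, which is contained in the sub-band of leaves indexed by $[a,a+1]$ and is trivially foliated by leaf segments crossing from one transverse side to the other.

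To derive the contradiction, I would extend $T'\wh\alpha$ beyond $[b,c]$ and exploit the $T$-invariance of $\wh\alpha$ together with the simplicity of $\Gamma$ on $S$ (so that distinct lifts of $\Gamma$ in $\wh\dom(\F)$ are either equal or disjoint). Iteratively translating the disc $D$ by $T$ gives a ladder-like tiling of a half-band adjacent to $\wh\alpha$, and the continuation of $T'\wh\alpha$ must either enter one of the translated pieces and meet $\wh\alpha|_{[a+k,a+k+1]}$ for some $k\in\Z$, or stay parallel to $\wh\alpha$ forever to one side. In the first alternative, one translates the intersection back to $\wh\alpha|_{[a,a+1)}$ by using $T^{-k}$ and the equivalence hypothesis applied to the $k$-fold iterate; in the second alternative, the fact that $\Gamma$ is a simple loop on $S$ whose lifts $\{T^k T'\wh\alpha : k\in\Z\}$ are all disjoint from $\wh\alpha$ produces an infinite family of mutually disjoint transverse lifts packed into a single side of $\wh\alpha$, which is incompatible with the proper embedding of these lifts in $\wh\dom(\F)$. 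This last step---ruling out the parallel configuration via the structure of the deck group acting on the universal cover---is where I expect the main difficulty: the delicate point is reducing an intersection occurring in some $T^k$-translate of the original configuration back to one occurring in the specific fundamental piece $\wh\alpha|_{[a,a+1)}$ asserted by the lemma.
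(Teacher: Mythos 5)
This lemma is cited by the paper from \cite[Lemma~3.4]{guiheneuf2023area} without proof, so there is no in-paper argument to compare against; I can only assess the proposal on its own terms. The proposal has a genuine gap, and you acknowledge it yourself. The setup (leaf-matching map $h$, one-sidedness by continuity of relative position, and the disc $D$ bounded by the two arcs and two leaf segments) is fine as far as it goes. But the heart of the lemma is the last step, where you must rule out the configuration in which $T'\wh\alpha$ stays forever on one side of $\wh\alpha$ without meeting it; you only gesture at this via ``incompatibility with the proper embedding'' or ``structure of the deck group,'' without an actual argument, and you say explicitly that this is where you ``expect the main difficulty.'' Nothing in what you wrote rules out the parallel configuration. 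Indeed, without more structure it is \emph{not} ruled out: for the product foliation on $\T^2$ with no singularities and $T'$ a deck transformation orthogonal to $T$, the disjoint-and-parallel configuration occurs and satisfies the hypothesis, so whatever finishes the proof must genuinely use that $\mathrm{Sing}(\F)\neq\emptyset$ (so the deck group of $\wh\dom(\F)$ is a non-abelian free group and the stabiliser of $\wh\alpha$ is exactly $\langle T\rangle$). That algebraic input is nowhere in your argument.

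Separately, the handling of the ``first alternative'' is both confused and unnecessary. Because $\Gamma$ is \emph{simple}, any two lifts of $\Gamma$ to the plane $\wh\dom(\F)$ are either equal or disjoint; hence if the continuation of $T'\wh\alpha$ ever meets $\wh\alpha$ anywhere, then $T'\wh\alpha=\wh\alpha$ and the asserted intersection on $[a,a+1)$ is immediate. There is nothing to ``translate back by $T^{-k}$''; that step as written would in any case only produce an intersection of $T^{-k}T'\wh\alpha$ with $\wh\alpha|_{[a,a+1)}$, which is not what the lemma asserts about $T'\wh\alpha$. So the entire content of the lemma is the contradiction in the disjoint case, and that is exactly the part that is missing. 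A complete argument would have to exploit, for instance, that $TT'\wh\alpha$, $T'\wh\alpha$ and $\wh\alpha$ all cross the leaf $\wh\phi_{\wh\alpha(a+1)}$, that $T'\wh\alpha=TT'\wh\alpha$ is impossible when $T$ is primitive in a free group and $T'\notin\langle T\rangle$, and then run a nesting/separation argument with the $T$-translates of $D$ to contradict the transversality or properness of $T'\wh\alpha$. As submitted, the proof is a plausible sketch of the easy part with the hard part left unresolved.
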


\begin{proof}[Proof of Lemma~\ref{LemCantCrossLotLeaves}]
We prove the first part of the lemma, the second being similar.
Suppose that the set of leaves of $R_i\wt B$ met by $I^{[0,n_0]}_{\wt\F}(\wt y_0)$ covers a whole fundamental domain of $R_i\wt\alpha_0$. This means that there exists $t\in\R$ such that $R_i\wt\alpha_0|_{[t,t+1]}$ is $\wt\F$-equivalent to a subpath of $I^{[0, n_0]}_{\wt\F}(\wt y_0)$, which itself is $\wt\F$-equivalent to a subpath of $R_5\wt\alpha_0$. 
Proposition~\ref{LemRealizPeriod} allows us to apply Lemma~\ref{LemEquivSubpath} (what happens in the lifts $\wt S$ and $\wh\dom (\F)$ is identical because everything lies inside $R_5\wt B$, which is a plane of leaves), which implies that $R_i\wt\alpha_0 \cap R_5\wt\alpha_0\neq\emptyset$, a contradiction with the hypothesis that the sets $(R_i V_D(\wt\gamma))_i$ are pairwise disjoint.
\end{proof}

Write $\wt\phi_1 = \wt\phi_{\wt y_0}$. By hypothesis, this leaf meets all the paths $R_i\wt\alpha_0$ for $1\le i\le 5$. For all $i$, let $s_i\in\R$ be such that $R_i\wt\alpha_0(s_i) \in\wt\phi_1$.

Lemma~\ref{LemCantCrossLotLeaves} implies that for all $1\le i\le 4$ and all $1\le j\le 5$ with $i\neq j$, the path $I^{[t_1,t_i]}(\wt y_0)$ is $\wt\F$-equivalent to a subpath of $R_i\wt\alpha_0|_{[s_i-1,s_i+1]}$, and that for $j\neq i$, neither $R_i\wt\alpha_0|_{[s_i-1,s_i]}$ nor $R_i\wt\alpha_0|_{[s_i,s_i+1]}$ are $\wt \F$-equivalent to a subpath of $R_j\wt\alpha_j$ or to a subpath of $I^{[t_1,t_5]}_{\wt \F}(\wt y_0)$.
For any $i$, let $k_i\in\Z$ be such that, in $R_i\wt B$, we have that (recall that $\wt\phi$ is a leaf  of $\wt B$ that was fixed in Subsection~\ref{SubSecConst})
\begin{equation}\label{EqLiesBetween}
R_i\wt\alpha_0|_{[s_i-1,s_i+1]}\textrm{ lies between }R_i T^{ k_i}\wt\phi\textrm{ and }R_i T^{ k_i+3}\wt\phi.
\end{equation}
%
%

Denote $\wt\alpha_0^-$ the piece of $\wt\alpha_0$ linking $\alpha(\wt\alpha_0)$ to the leaf $\wt\phi$, and $\wt\alpha_0^+$ the piece of $\wt\alpha_0$ linking the leaf $\wt\phi$ to $\omega(\wt\alpha_0)$. Let $\wt\delta$ the piece of $\wt f^{m_1}I^{[m'_0,m_0]}_{\wt\F}(\wt z)$ linking $T^{-3}\wt\phi$ to $T^6\wt\phi$, and denote (the objects are defined in Subsection~\ref{SubSecConst}, see also Figure~\ref{FigSetConst}, left)
\[\wt\sigma = T^{-3}\wt\alpha_0^- \cup \wt\varphi_- \cup \wt\delta \cup \wt\varphi_+ \cup T^{6}\wt\alpha_0^+.\]
This is a path included in $V_D(\wt\gamma)$ which separates $\wt S$ into two connected components denoted $L(\wt\sigma)$ and $R(\wt\sigma)$ according to the orientation of $\wt\sigma$. Moreover the paths $R_iT^{k_i}\wt\sigma$ are well ordered: for $i\le j$, one has $L(R_iT^{k_i}\wt\sigma)\subset L(R_jT^{k_j}\wt\sigma)$ and $R(R_jT^{k_j}\wt\sigma)\subset R(R_iT^{k_i}\wt\sigma)$.
Finally, because the $V_D(R_i\wt\gamma)$ are pairwise disjoint and have the same orientation,
\begin{align}\label{EqYLeftRight}
\begin{split}
I^{t_{1}}_{\wt \F}(\wt y_0) & \in L\Big(R_2T^{k_2}\wt\sigma\Big)\\
I^{t_{5}}_{\wt \F}(\wt y_0) & \in R\Big(R_4T^{k_4}\wt\sigma\Big).
\end{split}
\end{align}

Let us continue in the universal cover $\wh\dom(\F)$ of $\dom(\F)$. 
Let $\wh y_0$, $\wh f$ and $\wh \F$ be lifts of respectively $\wt y_0$, $\wt f$ and $\wt\F$ to $\wh\dom(\F)$. 
Recall that $t_i$ was chosen so that $I^{t_i}_{\wt\F}(\wt y_0)\in R_i\wt\alpha_0$ and that for $t\in(0,t_i)$, we have $I^{t}_{\wt\F}(\wt y_0)\notin R_i\wt\alpha_0$. 
Let $\wh\alpha_0$ be a lift of $\wt\alpha_0$ to $\wh\dom(\F)$, and denote $\wh B$ the set of leaves of $\wh\F$ met by $\wh\alpha_0$. Let $\wh\phi$ be a lift of $\wt\phi$ belonging to $\wh B$, and $T$ a deck transformation of $\wh\dom(\F)$ lifting the deck transformation $T$ of $S$ and stabilizing $\wh \alpha_0$. Denote $\wh z$ a lift of $\wt z$ to $\wh\dom (\F)$ such that $I^\Z_{\wh\F}(\wh z)$ is $\wh\F$-equivalent to $\wh\alpha_0$. All these lifts naturally define a lift $\wh\sigma$ of $\wt\sigma$.
For any $i$, let $R_i$ be a deck transformation of $\wh\dom(\F)$ such that $I^{t_i}_{\wh\F}(\wh y_0)\in R_i\wh\alpha_0$. 

Define
\begin{equation}\label{EqDefC0}
\wh C_0 = T^{-3}\big(L(\wh\phi) \cup \wh\phi\big) \cup \wh f^{m_1}I^{[m'_0,m_0]}_{\wh\F}(\wh z) \cup T^6\big(R(\wh\phi) \cup \wh\phi\big).
\end{equation}

The configuration of the next lemma is depicted in Figure~\ref{FigLemConsLemYLeftRight}.

\begin{lemma}\label{LemConsLemYLeftRight}
\begin{align*}
\begin{split}
I^{t_{1}}_{\wh \F}(\wh y_0) & \in L\big(R_2T^{k_2}\wh C_0\big) \cap L\big(R_4T^{k_4}\wh C_0\big),\\
I^{t_{5}}_{\wh \F}(\wh y_0) & \in R\big(R_2T^{k_2}\wh C_0\big) \cap R\big(R_4T^{k_4}\wh C_0\big).
\end{split}
\end{align*}
\end{lemma}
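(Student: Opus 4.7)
The plan is to transfer the separation information of \eqref{EqYLeftRight} from $\wt S$ to the cover $\wh\dom(\F)$, using the topological structure of $\wh C_0$.

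First, I would upgrade \eqref{EqYLeftRight} by using the linear ordering of the $R_iT^{k_i}\wt\sigma$. Since each $R_iT^{k_i}\wt\sigma$ is contained in the tube $V_D(R_i\wt\gamma)$ by the definition of $D$ in \eqref{EqDefD}, and by hypothesis these tubes are pairwise disjoint and share a common orientation, the five paths $R_iT^{k_i}\wt\sigma$ are linearly ordered in $\wt S$: for $i<j$, $L(R_iT^{k_i}\wt\sigma)\subsetneq L(R_jT^{k_j}\wt\sigma)$. Combining with \eqref{EqYLeftRight} yields
\[I^{t_1}_{\wt\F}(\wt y_0)\in L(R_iT^{k_i}\wt\sigma)\quad\text{and}\quad I^{t_5}_{\wt\F}(\wt y_0)\in R(R_iT^{k_i}\wt\sigma)\]
for every $i\in\{2,3,4\}$, and in particular for $i\in\{2,4\}$.

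Next, I would analyse the topology of $\wh C_0$ and its image in $\wt S$. As $\wh\dom(\F)$ is simply connected and $T^{-3}\wh\phi, T^6\wh\phi$ are disjoint proper topological lines, the complement of the two closed half-planes $T^{-3}(L(\wh\phi)\cup\wh\phi)$ and $T^6(R(\wh\phi)\cup\wh\phi)$ is an open topological strip, and the arc $\wh f^{m_1}I^{[m_0',m_0]}_{\wh\F}(\wh z)$ joining $T^{-3}\wh\phi$ to $T^6\wh\phi$ cuts this strip into exactly two open components $L(\wh C_0)$ and $R(\wh C_0)$. The equivariant covering projection $p\colon\wh\dom(\F)\to\wt{\dom(\F)}\subset\wt S$ carries $\wh C_0$ to the analogous set $\wt C_0$ in $\wt S$ with the same structure, and sends $L(\wh C_0), R(\wh C_0)$ into $L(\wt C_0), R(\wt C_0)$ respectively.

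Third, I would pass from $\wt\sigma$ to $\wh C_0$. The inclusion $\wt\sigma\subset\wt C_0$ gives $L(\wt C_0)\subset L(\wt\sigma)$ and $R(\wt C_0)\subset R(\wt\sigma)$; to transfer the endpoint information, I would check that the two ``fattening'' half-planes $R_iT^{k_i-3}L(\wt\phi)$ and $R_iT^{k_i+6}R(\wt\phi)$ (for $i\in\{2,4\}$) do not contain the endpoints. Indeed, \eqref{EqLiesBetween} places $s_i$ strictly between $k_i$ and $k_i+3$, so the leaf through $I^{t_1}_{\wt\F}(\wt y_0)$ meets $R_i\wt\alpha_0$ at a parameter strictly between the boundary leaves $R_iT^{k_i-3}\wt\phi$ and $R_iT^{k_i+6}\wt\phi$; by disjointness of leaves of $\wt\F$, this entire leaf lies strictly inside the open strip between these two boundary leaves, and the same holds for the leaf through $I^{t_5}_{\wt\F}(\wt y_0)$. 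Combined with the first step, both endpoints lie on the correct sides of the middle arc of $R_iT^{k_i}\wt C_0$, whence $I^{t_1}_{\wt\F}(\wt y_0)\in L(R_iT^{k_i}\wt C_0)$ and $I^{t_5}_{\wt\F}(\wt y_0)\in R(R_iT^{k_i}\wt C_0)$ for $i\in\{2,4\}$. Lifting via $p$ to the preferred lifts then gives the claimed inclusions in $\wh\dom(\F)$.

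The main obstacle is the geometric verification in the third step: checking that the endpoints really lie in the strip of $R_iT^{k_i}\wt C_0$ (using the margin built into \eqref{EqLiesBetween} and the disjointness of leaves of $\wt\F$), and ensuring that the preferred lifts in $\wh\dom(\F)$ match up with the projections. This requires consistent choices of $\wh y_0, \wh z, \wh\phi$ and of the deck transformations $R_i, T$ lifting their $\wt S$-counterparts, so that the natural lift of the trajectory and the natural lift $\wh C_0$ of $\wt C_0$ project onto the same picture in $\wt S$; this is routine given the setup immediately preceding the lemma.
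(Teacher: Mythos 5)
Your proposal correctly identifies the starting point (the linear ordering of the $R_iT^{k_i}\wt\sigma$ upgrades \eqref{EqYLeftRight} to all indices $i\in\{2,3,4\}$), and your description of the topology of $\wh C_0$ is fine. But there are two genuine gaps in the third and fourth steps.

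First, the claim that ``the same holds for the leaf through $I^{t_5}_{\wt\F}(\wt y_0)$'' is false as stated. In the case being treated, $I^{[0,t_i]}_{\wt\F}(\wt y_0)\subset R_i\wt B$ for each $i$, which does place the leaf through $I^{t_1}_{\wt\F}(\wt y_0)$ in every $R_i\wt B$ and hence lets you invoke Lemma~\ref{LemCantCrossLotLeaves} and \eqref{EqLiesBetween}. But the trajectory may exit $R_2\wt B$ after time $t_2$, so the point $I^{t_5}_{\wt\F}(\wt y_0)$ need not lie in $R_2\wt B$ at all, and the leaf through it may fail to meet $R_2\wt\alpha_0$. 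The ``strip'' argument therefore does not apply to the endpoint $t_5$, which is the harder half of the statement.

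Second, the final ``Lifting via $p$ to the preferred lifts then gives the claimed inclusions'' dismisses as routine exactly the point where the real work lives. The set $\wh C_0$ separates the simply connected space $\wh\dom(\F)$, but its image in $\wt S$ need not separate $\wt S$, and a point of $\wt S$ lying on the ``correct'' side of the projected picture has infinitely many lifts distributed on both sides of $R_2T^{k_2}\wh C_0$. Determining which side the lift $I^{t_5}_{\wh\F}(\wh y_0)$ falls on requires tracking the lifted path $I^{[t_1,t_5]}_{\wh\F}(\wh y_0)$; this is precisely what the paper's case analysis does (whether $I^{[t_1,t_5)}_{\wt\F}(\wt y_0)$ stays in $R_2\wt B$, meets $L(R_2\wt B)$, or meets $R(R_2\wt B)$), with the last case handled via the first exit time $t''_2$ and the set $\wh U$ of leaves crossed by the lifted trajectory. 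That argument cannot be replaced by a consistency-of-lifts remark; it is the substance of the lemma.
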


\begin{figure}
\begin{center}

\tikzset{every picture/.style={line width=0.75pt}} 

\begin{tikzpicture}[x=0.75pt,y=0.75pt,yscale=-1,xscale=1]

\draw [color={rgb, 255:red, 245; green, 166; blue, 35 }  ,draw opacity=1 ][fill={rgb, 255:red, 245; green, 166; blue, 35 }  ,fill opacity=0.15 ]   (110.4,50) .. controls (135.32,95.21) and (177.32,107.21) .. (190.4,50) ;
\draw [color={rgb, 255:red, 245; green, 166; blue, 35 }  ,draw opacity=1 ][fill={rgb, 255:red, 245; green, 166; blue, 35 }  ,fill opacity=0.15 ]   (110.4,250) .. controls (112.32,168.71) and (208.32,228.21) .. (190.4,250) ;
\draw [color={rgb, 255:red, 245; green, 166; blue, 35 }  ,draw opacity=1 ][fill={rgb, 255:red, 245; green, 166; blue, 35 }  ,fill opacity=0.15 ]   (249.4,50) .. controls (274.32,95.21) and (316.32,107.21) .. (329.4,50) ;
\draw [color={rgb, 255:red, 245; green, 166; blue, 35 }  ,draw opacity=1 ][fill={rgb, 255:red, 245; green, 166; blue, 35 }  ,fill opacity=0.15 ]   (249.4,250) .. controls (251.32,168.71) and (347.32,228.21) .. (329.4,250) ;
\draw [color={rgb, 255:red, 14; green, 10; blue, 190 }  ,draw opacity=1 ]   (150,60) .. controls (127.75,158.21) and (172.75,155.71) .. (150,240) ;
\draw [shift={(150,240)}, rotate = 105.1] [color={rgb, 255:red, 14; green, 10; blue, 190 }  ,draw opacity=1 ][fill={rgb, 255:red, 14; green, 10; blue, 190 }  ,fill opacity=1 ][line width=0.75]      (0, 0) circle [x radius= 2.34, y radius= 2.34]   ;
\draw [shift={(147.87,145.73)}, rotate = 76.47] [fill={rgb, 255:red, 14; green, 10; blue, 190 }  ,fill opacity=1 ][line width=0.08]  [draw opacity=0] (8.04,-3.86) -- (0,0) -- (8.04,3.86) -- (5.34,0) -- cycle    ;
\draw [shift={(150,60)}, rotate = 102.77] [color={rgb, 255:red, 14; green, 10; blue, 190 }  ,draw opacity=1 ][fill={rgb, 255:red, 14; green, 10; blue, 190 }  ,fill opacity=1 ][line width=0.75]      (0, 0) circle [x radius= 2.34, y radius= 2.34]   ;
\draw [color={rgb, 255:red, 14; green, 10; blue, 190 }  ,draw opacity=1 ]   (289,60) .. controls (266.75,158.21) and (311.75,155.71) .. (289,240) ;
\draw [shift={(289,240)}, rotate = 105.1] [color={rgb, 255:red, 14; green, 10; blue, 190 }  ,draw opacity=1 ][fill={rgb, 255:red, 14; green, 10; blue, 190 }  ,fill opacity=1 ][line width=0.75]      (0, 0) circle [x radius= 2.34, y radius= 2.34]   ;
\draw [shift={(286.87,145.73)}, rotate = 76.47] [fill={rgb, 255:red, 14; green, 10; blue, 190 }  ,fill opacity=1 ][line width=0.08]  [draw opacity=0] (8.04,-3.86) -- (0,0) -- (8.04,3.86) -- (5.34,0) -- cycle    ;
\draw [shift={(289,60)}, rotate = 102.77] [color={rgb, 255:red, 14; green, 10; blue, 190 }  ,draw opacity=1 ][fill={rgb, 255:red, 14; green, 10; blue, 190 }  ,fill opacity=1 ][line width=0.75]      (0, 0) circle [x radius= 2.34, y radius= 2.34]   ;
\draw [color={rgb, 255:red, 208; green, 2; blue, 27 }  ,draw opacity=1 ]   (416.42,144.71) .. controls (354.01,150.18) and (155.5,108) .. (63.92,159.71) ;
\draw [shift={(63.92,159.71)}, rotate = 150.55] [color={rgb, 255:red, 208; green, 2; blue, 27 }  ,draw opacity=1 ][fill={rgb, 255:red, 208; green, 2; blue, 27 }  ,fill opacity=1 ][line width=0.75]      (0, 0) circle [x radius= 2.34, y radius= 2.34]   ;
\draw [shift={(242.83,134.49)}, rotate = 182.24] [fill={rgb, 255:red, 208; green, 2; blue, 27 }  ,fill opacity=1 ][line width=0.08]  [draw opacity=0] (8.04,-3.86) -- (0,0) -- (8.04,3.86) -- (5.34,0) -- cycle    ;
\draw [shift={(416.42,144.71)}, rotate = 174.99] [color={rgb, 255:red, 208; green, 2; blue, 27 }  ,draw opacity=1 ][fill={rgb, 255:red, 208; green, 2; blue, 27 }  ,fill opacity=1 ][line width=0.75]      (0, 0) circle [x radius= 2.34, y radius= 2.34]   ;

\draw (145,109.7) node [anchor=east] [inner sep=0.75pt]  [font=\small,color={rgb, 255:red, 14; green, 10; blue, 190 }  ,opacity=1 ,xscale=1.2,yscale=1.2]  {$R_{2} T^{k_{2}}\wh{f}^{m_{1}}\big( I_{\wh\F}^{[ m'_{0} ,m_{0}]}(\wh{z})\big)$};
\draw (116.4,59.33) node [anchor=north east] [inner sep=0.75pt]  [font=\small,color={rgb, 255:red, 245; green, 166; blue, 35 }  ,opacity=1 ,xscale=1.2,yscale=1.2]  {$R_{2} T^{k_{2} +6}\wh{\phi }$};
\draw (111.4,232.27) node [anchor=south east] [inner sep=0.75pt]  [font=\small,color={rgb, 255:red, 245; green, 166; blue, 35 }  ,opacity=1 ,xscale=1.2,yscale=1.2]  {$R_{2} T^{k_{2} -3}\wh{\phi }$};
\draw (283.48,109.27) node [anchor=west] [inner sep=0.75pt]  [font=\small,color={rgb, 255:red, 14; green, 10; blue, 190 }  ,opacity=1 ,xscale=1.2,yscale=1.2]  {$R_{4} T^{k_{4}}\wh{f}^{m_{1}}\big( I_{\wh\F}^{[ m'_{0} ,m_{0}]}(\wh{z})\big)$};
\draw (324.52,56.33) node [anchor=north west][inner sep=0.75pt]  [font=\small,color={rgb, 255:red, 245; green, 166; blue, 35 }  ,opacity=1 ,xscale=1.2,yscale=1.2]  {$R_{4} T^{k_{4} +6}\wh{\phi }$};
\draw (326.2,227.67) node [anchor=south west] [inner sep=0.75pt]  [font=\small,color={rgb, 255:red, 245; green, 166; blue, 35 }  ,opacity=1 ,xscale=1.2,yscale=1.2]  {$R_{4} T^{k_{4} -3}\wh{\phi }$};
\draw (203.09,142.77) node [anchor=north] [inner sep=0.75pt]  [font=\small,color={rgb, 255:red, 208; green, 2; blue, 27 }  ,opacity=1 ,xscale=1.2,yscale=1.2]  {$I_{\wh\F}^{[ t_{1} ,t_{5}]}(\wh{y_0})$};
\draw (63.92,163.11) node [anchor=north] [inner sep=0.75pt]  [font=\small,color={rgb, 255:red, 208; green, 2; blue, 27 }  ,opacity=1 ,xscale=1.2,yscale=1.2]  {$I_{\wh\F}^{t_{1}}(\wh{y_0})$};
\draw (420,150) node [anchor=north] [inner sep=0.75pt]  [font=\small,color={rgb, 255:red, 208; green, 2; blue, 27 }  ,opacity=1 ,xscale=1.2,yscale=1.2]  {$I_{\wh\F}^{t_{5}}(\wh{y_0})$};
\end{tikzpicture}
\caption{The configuration of Lemma~\ref{LemConsLemYLeftRight}.}\label{FigLemConsLemYLeftRight}
\end{center}
\end{figure}
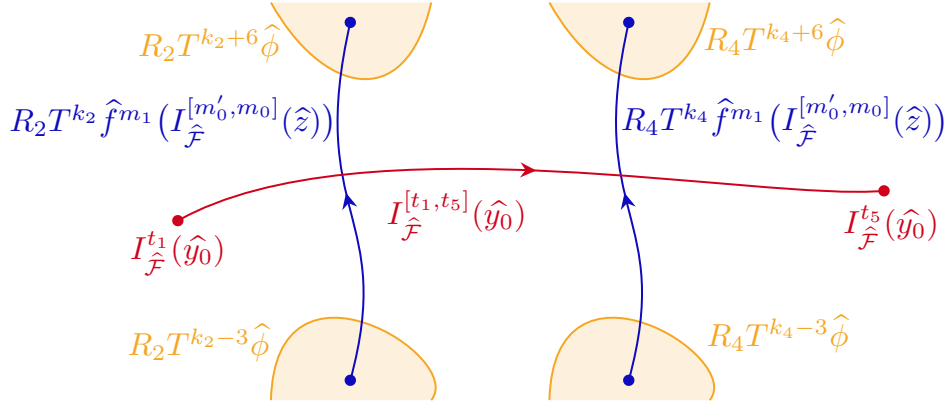

\begin{proof}
We make the proof of the lemma for the translates by $R_2T^{k_2}$, the one for the translates by $R_4T^{k_4}$ is identical.

By \eqref{EqYLeftRight} and the definition of $t_2$, we have $I^{[t_{1}, t_2)}_{\wt \F}(\wt y_0) \subset L\big(R_2T^{k_2}\wt\sigma\big)$. By the very definition of the lift of $R_2$ to $\wh\dom(\F)$, this implies that $I^{[t_{1}, t_2)}_{\wh \F}(\wh y_0) \subset L(R_2T^{k_2}\wh\sigma)$. By definition of $k_2$, that follows from the definition of $s_2$, allowed by Lemma~\ref{LemCantCrossLotLeaves}, and by the fact that all objects involved are included in the plane $R_2\wt B$, this implies that
\[I^{[t_{1}, t_2)}_{\wh \F}(\wh y_0) \cap R_2\Big( T^{k_2-3}\big(L(\wh\phi) \cup \wh\phi\big) \cup T^{k_2+6}\big(R(\wh\phi)\cup \wh\phi\big)\Big) = \emptyset.\]
As $ T^{-3}\wt\alpha_0^- \cup \wt\varphi_- \subset T^{-3}\big(L(\wh\phi) \cup \wh\phi\big)$ and $T^{6}\wt\alpha_0^+ \cup \wt\varphi_+  \subset  T^6\big(R(\wh\phi)\cup \wh\phi\big)$, this proves that $I^{t_{1}}_{\wh \F}(\wh y_0) \in L\big(R_2T^{k_2}\wh C_0\big)$.
\bigskip

By \eqref{EqYLeftRight}, we know that there exists $t'_2\in [t_{1},t_5)$ such that and $I^{t'_2}_{\wt \F}(\wt y_0) \in R_2T^{k_2}\wt\sigma$ and $I^{(t'_2, t_5]}_{\wt \F}(\wt y_0) \subset R(R_2T^{k_2}\wt\sigma)$. We have three cases.

Either $I^{[t_{1}, t_5)}_{\wt \F}(\wt y_0)\subset R_2\wt B$. Notice that $R_2T^{k_2}\wt\sigma\subset R_2\wt B$. But as $R_2\wt B$ is a topological plane of leaves, the projection $\wh B \to \wt B$ is injective, in particular the notion of left/right passes to the lift. 
This shows that $I^{(t'_2, t_5]}_{\wh \F}(\wh y_0)\subset R(R_2T^{k_2}\wh\sigma)$.
But the same argument of injectivity of the projection, combined with the definition of $k_2$, also proves that 
\[I^{(t_{1}, t_5]}_{\wh \F}(\wh y_0) \cap R_2\Big( T^{k_2-3}\big(L(\wh\phi) \cup \wh\phi\big) \cup T^{k_2+6}\big(R(\wh\phi)\cup \wh\phi\big)\Big) = \emptyset.\]
As $ T^{-3}\wh\alpha_0^- \cup \wh\varphi_- \subset T^{-3}\big(L(\wh\phi) \cup \wh\phi\big)$ and $\wh\varphi_+ \cup T^{6}\wh\alpha_0^+ \subset  T^6\big(R(\wh\phi)\cup \wh\phi\big)$, this proves that $I^{t_{5}}_{\wh \F}(\wh y_0) \in R\big(R_2T^{k_2}\wh C_0\big)$.

Or $I^{[t_{1}, t_5)}_{\wt \F}(\wt y_0)\cap L( R_2\wt B)\neq\emptyset$. Note that the sets $R_i\wt B$ are topological planes that satisfy: for $i\le j$, we have $L(R_i\wt B)\subset L(R_j\wt B)$ and $R(R_j\wt B)\subset R(R_i\wt B)$. Because $I^{[t_1,t_5]}_{\wt\F}(\wt y_0)\subset R_5\wt B$, this implies that for all $1\le i\le 5$ we have $I^{[t_1,t_5]}_{\wt\F}(\wt y_0)\cap L(R_i\wt B) = \emptyset$. Hence this second case is impossible.

Or $I^{[t_{1}, t_5)}_{\wt \F}(\wt y_0)\cap R( R_2\wt B)\neq\emptyset$. Let $t''_2$ be the smallest real bigger than $t_0$ such that $I^{t''_2}_{\wt \F}(\wt y_0)\in R( R_2\wt B)$. By having chosen $t_2$ as the first intersection time of the trajectory $I^{[0,n_0]}_{\wt\F}(\wt y_0)$ with $R_2\wt\alpha_0$, one can suppose that $t_2\le t''_2$. This implies that $I^{t''_2}_{\wh \F}(\wh y_0)\in R( R_2\wh B)$. Denoting $\wh U$ the set of leaves of $\wh \F$ crossing $I^{[0,n_0]}_{\wh\F}(\wh y_0)$, we have $I^{t''_2}_{\wh \F}(\wh y_0)\in \wh U$ that is disjoint from $R_2\big(T^{k_2-3}\wh\phi \cup T^{k_2+6}\wh\phi\big)$ (by definition of $k_2$). This implies that $I^{t''_2}_{\wh \F}(\wh y_0)\in R(R_2T^{k_2}\wh C_0)$, proving the lemma.
\end{proof}

\begin{lemma}\label{LemExistTrans26}
There exists an $\wh f$-admissible transverse path $\wh\beta$ of order $n_0+m_1$ linking $R_2T^{ k_2-3}\wh \phi$ to $R_4T^{ k_4+6}\wh \phi$.
\end{lemma}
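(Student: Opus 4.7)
The plan is to produce $\wh\beta$ by finding an $\wh\F$-transverse intersection between the trajectory $I^{[0,n_0]}_{\wh\F}(\wh y_0)$ and a suitably translated copy of the ``bridge'', and then invoking the forcing Proposition~\ref{propFondalct1} to splice them; the correct order ($n_0+m_1$) comes for free from the statement of the forcing proposition, while the linking has to be read off from the endpoints and from a parallel analysis around $R_4$.

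\textbf{Step 1 (Transverse intersection).} By Lemma~\ref{LemConsLemYLeftRight}, $I^{t_1}_{\wh\F}(\wh y_0)\in L(R_2T^{k_2}\wh C_0)$ and $I^{t_5}_{\wh\F}(\wh y_0)\in R(R_2T^{k_2}\wh C_0)$. Unpacking the definition \eqref{EqDefC0} of $\wh C_0$, both components lie inside the open strip between the leaves $R_2T^{k_2-3}\wh\phi$ and $R_2T^{k_2+6}\wh\phi$, and they are separated by the bridge $R_2T^{k_2}\wh f^{m_1}I^{[m'_0,m_0]}_{\wh\F}(\wh z)$ (which, having the same endpoints and lying in the same plane of leaves $R_2\wh B$, is $\wh\F$-equivalent to $R_2T^{k_2}I^{[m_0,m_0+m_1]}_{\wh\F}(\wh z)$). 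Since $I^{[0,n_0]}_{\wh\F}(\wh y_0)$ is $\wh\F$-transverse it meets each leaf at most once; consequently it cannot exit the strip through $R_2T^{k_2-3}\wh\phi$ (this would be a right-to-left crossing) nor leave and re-enter through $R_2T^{k_2+6}\wh\phi$. Therefore $I^{[t_1,t_5]}_{\wh\F}(\wh y_0)$ stays in the open strip and must meet the bridge at some interior point: there exist $t^*\in(t_1,t_5)$ and $s^*\in(m_0,m_0+m_1)$ with $I^{t^*}_{\wh\F}(\wh y_0)=R_2T^{k_2}I^{s^*}_{\wh\F}(\wh z)$. The two transverse paths arrive on opposite sides of this common point in the leaf space of $R_2\wh B$, so by Definition~\ref{DefInterTrans} this crossing is an $\wh\F$-transverse intersection.

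\textbf{Step 2 (Splicing).} Proposition~\ref{propFondalct1} applied at this transverse intersection yields the $\wh f$-admissible transverse path
\[\wh\beta = R_2T^{k_2}I^{[m_0,s^*]}_{\wh\F}(\wh z)\cdot I^{[t^*,n_0]}_{\wh\F}(\wh y_0),\]
of order $\lceil s^*-m_0\rceil+\lceil n_0-0\rceil\le m_1+n_0$. (Choosing the splice in the order prefix--bridge then suffix--trajectory is what guarantees the path begins inside $L(R_2T^{k_2-3}\wh\phi)$.)

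\textbf{Step 3 (Linking).} The prefix $R_2T^{k_2}I^{[m_0,s^*]}_{\wh\F}(\wh z)$ starts at $R_2T^{k_2}\wh f^{m_0}(\wh z)\in L(R_2T^{k_2-3}\wh\phi)$ and ends on the bridge in the open strip, so by transversality it must cross $R_2T^{k_2-3}\wh\phi$ exactly once. Running the argument of Step 1 symmetrically at the index $4$ produces an $\wh\F$-transverse intersection $(t^{**},s^{**})$ of $I^{[0,n_0]}_{\wh\F}(\wh y_0)$ with the $R_4$-bridge satisfying $t^*<t^{**}<t_5$ (because the $R_iV_D(\wt\gamma)$ are ordered along $\wt\gamma$ in the same orientation). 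Following the trajectory suffix $I^{[t^*,n_0]}_{\wh\F}(\wh y_0)$ from the $R_2$-bridge through $R_2T^{k_2+6}\wh\phi$, across the gap, through $R_4T^{k_4-3}\wh\phi$, onto the $R_4$-bridge at time $t^{**}$, and on to the position $I^{t_5}_{\wh\F}(\wh y_0)\in R(R_4T^{k_4}\wh C_0)$ and beyond, one verifies that the suffix crosses the leaf $R_4T^{k_4+6}\wh\phi$, completing the linking.

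\textbf{Main obstacle.} The verification of the second crossing in Step 3 is the real work: one must rule out that after time $t^{**}$ the trajectory merely oscillates inside the $R_4$-strip without ever reaching $R_4T^{k_4+6}\wh\phi$. This forces a careful exploitation of the hypothesis $I^{[0,t_5]}_{\wt\F}(\wt y_0)\subset R_5\wt B$ (which places the trajectory beyond $R_4$ already at time $t_5$) together with the disjointness and ordering of the $R_iV_D(\wt\gamma)$, and of transversality of $\wh\F$. The rest of the argument is essentially the standard forcing splice.
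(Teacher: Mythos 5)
There is a genuine gap in Step~3. The spliced path you build is $\wh\beta = R_2T^{k_2}I^{[m_0,s^*]}_{\wh\F}(\wh z)\cdot I^{[t^*,n_0]}_{\wh\F}(\wh y_0)$. Its terminal point is $\wh f^{n_0}(\wh y_0)$, and Lemma~\ref{LemConsLemYLeftRight} only tells you $\wh f^{n_0}(\wh y_0)\in R\big(R_4T^{k_4}\wh C_0\big)$. But from the definition \eqref{EqDefC0}, the complement of $\wh C_0$ is contained in $T^{-3}R(\wh\phi)\cap T^6L(\wh\phi)$, so $R(R_4T^{k_4}\wh C_0)\subset L(R_4T^{k_4+6}\wh\phi)$. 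The suffix therefore ends \emph{strictly to the left} of the leaf $R_4T^{k_4+6}\wh\phi$; since a transverse path cannot cross a leaf twice, $I^{[t^*,n_0]}_{\wh\F}(\wh y_0)$ never reaches $R_4T^{k_4+6}\wh\phi$, and the claim that ``one verifies that the suffix crosses the leaf $R_4T^{k_4+6}\wh\phi$'' is false. The only way to push past $R_4T^{k_4+6}\wh\phi$ is to append an $R_4$-bridge segment after the second crossing at $t^{**}$, which means a second application of Proposition~\ref{propFondalct1}; but iterating the forcing lemma costs the \emph{full} interval lengths each time, so you would end up with order $n_0+2m_1$, not $n_0+m_1$. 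The extra $m_1$ iterations are the whole point of the statement, and in your construction they are spent on the $R_2$-side, leaving nothing to escape the $R_4$-strip on the other end.

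The paper avoids this entirely. Its proof never constructs $\wh\beta$ by splicing named transverse paths. Instead it runs a purely set-theoretic chain: starting from the non-empty intersection $L\big(\wh f^{n_0}(R_2T^{k_2}\wh C_0)\big)\cap R\big(R_4T^{k_4}\wh C_0\big)$, it repeatedly uses the inclusion \eqref{EqInclusionAnyN} (the bridge together with the left half-plane sits inside $\wh f^{m_1}$ of the left half-plane), the positive $\wh f$-invariance of $R(\wh\phi)$ for a leaf $\wh\phi$, and the disjointness coming from Lemma~\ref{LemCantCrossLotLeaves}, to whittle the expression down to $\wh f^{n_0+m_1}\big(R_2T^{k_2-3}\wh\phi\big)\cap R_4T^{k_4+6}\wh\phi\neq\emptyset$. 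At that point an admissible path of order exactly $n_0+m_1$ between the two leaves is obtained by \cite[Proposition~19]{lct1}, not by the splicing Proposition~\ref{propFondalct1}. In this argument the $m_1$ surplus is applied once, globally, to the set $\wh C_{k_2-3}^-$, which is precisely what pushes the image past $R_4T^{k_4+6}\wh\phi$; your splicing plan cannot reproduce this accounting.

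A secondary, less central concern: in Step~1 you pass from a topological crossing of the trajectory with the bridge inside the band $R_2\wh B$ to an $\wh\F$-transverse intersection ``by Definition~\ref{DefInterTrans}''. This is not automatic; a topological crossing of two transverse paths in a band can fail to be $\F$-transverse (the two paths could be $\F$-equivalent on one side of the common point). The paper treats the analogous step later, in Lemma~\ref{LemInterBetaa}, where establishing the $\wh\F$-transverse intersection requires \cite[Lemma~10.7]{pa}. Your one-sentence justification glosses over this, although, unlike Step~3, this gap is plausibly repairable.
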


\begin{proof}
The idea of the proof is depicted in Figure~\ref{FigLemExistTrans26}.
Let us define 
\[\wh C_{k}^- = T^k L(\wh\phi) \qquad\textrm{and}\qquad
\wh C_{k}^+ = T^k R(\wh\phi).\]
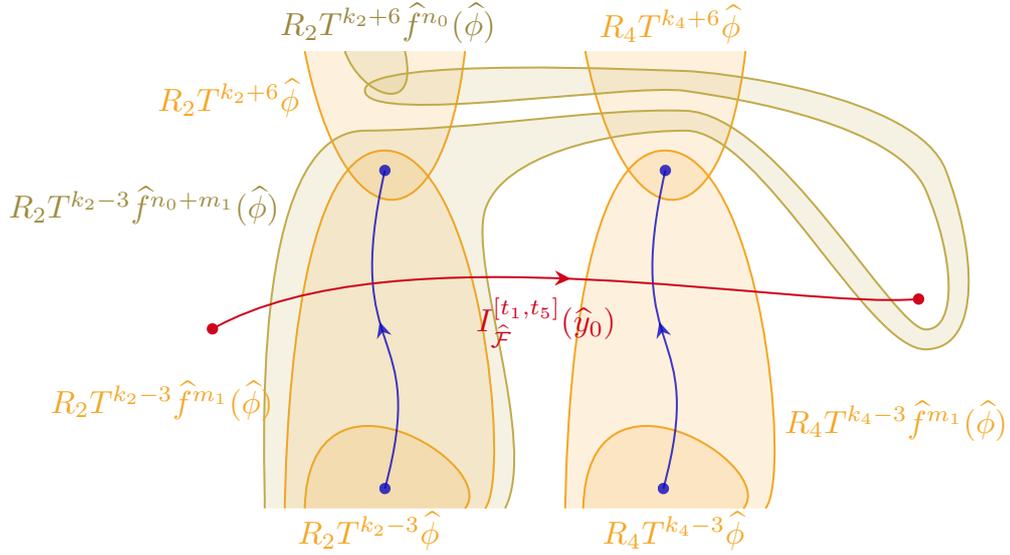
\begin{figure}
\begin{center}

\tikzset{every picture/.style={line width=0.75pt}} 

\begin{tikzpicture}[x=0.75pt,y=0.75pt,yscale=-1,xscale=1]

\draw [color={rgb, 255:red, 245; green, 166; blue, 35 }  ,draw opacity=1 ][fill={rgb, 255:red, 245; green, 166; blue, 35 }  ,fill opacity=0.15 ]   (130,270) .. controls (131.92,188.71) and (227.92,248.21) .. (210,270) ;
\draw [color={rgb, 255:red, 245; green, 166; blue, 35 }  ,draw opacity=1 ][fill={rgb, 255:red, 245; green, 166; blue, 35 }  ,fill opacity=0.15 ]   (269,270) .. controls (270.92,188.71) and (366.92,248.21) .. (349,270) ;
\draw [color={rgb, 255:red, 245; green, 166; blue, 35 }  ,draw opacity=1 ][fill={rgb, 255:red, 245; green, 166; blue, 35 }  ,fill opacity=0.15 ]   (270,40) .. controls (289.21,138.35) and (338.81,141.15) .. (350,40) ;
\draw [color={rgb, 255:red, 245; green, 166; blue, 35 }  ,draw opacity=1 ][fill={rgb, 255:red, 245; green, 166; blue, 35 }  ,fill opacity=0.15 ]   (130,40) .. controls (149.21,138.35) and (198.81,141.15) .. (210,40) ;
\draw [color={rgb, 255:red, 245; green, 166; blue, 35 }  ,draw opacity=1 ][fill={rgb, 255:red, 245; green, 166; blue, 35 }  ,fill opacity=0.15 ]   (260,270) .. controls (260.48,249.51) and (262.65,89.65) .. (310,90) .. controls (357.35,90.35) and (373.4,253.7) .. (360,270) ;
\draw [color={rgb, 255:red, 245; green, 166; blue, 35 }  ,draw opacity=1 ][fill={rgb, 255:red, 245; green, 166; blue, 35 }  ,fill opacity=0.15 ]   (120,270) .. controls (120.48,249.51) and (122.65,89.65) .. (170,90) .. controls (217.35,90.35) and (233.4,253.7) .. (220,270) ;
\draw [color={rgb, 255:red, 193; green, 171; blue, 75 }  ,draw opacity=1 ][fill={rgb, 255:red, 193; green, 171; blue, 75 }  ,fill opacity=0.15 ]   (110,270) .. controls (110.48,249.51) and (100.79,80.45) .. (160,80) .. controls (219.21,79.55) and (273.99,68.45) .. (320,70) .. controls (366.01,71.55) and (423.72,180.05) .. (440,180) .. controls (456.28,179.95) and (454.12,142.85) .. (440,110) .. controls (425.88,77.15) and (348.12,64.05) .. (320,60) .. controls (291.88,55.95) and (161.05,77.65) .. (160,60) .. controls (158.95,42.35) and (306.41,49.76) .. (320,50) .. controls (333.59,50.24) and (435.48,61.55) .. (450,100) .. controls (464.52,138.45) and (469.08,189.55) .. (440,190) .. controls (410.92,190.45) and (358.28,79.95) .. (320,80) .. controls (281.72,80.05) and (228.28,92.35) .. (220,120) .. controls (211.72,147.65) and (246.68,237.15) .. (230,270) ;
\draw [color={rgb, 255:red, 193; green, 171; blue, 75 }  ,draw opacity=1 ][fill={rgb, 255:red, 193; green, 171; blue, 75 }  ,fill opacity=0.15 ]   (150,40) .. controls (157.75,62.35) and (187.75,74.35) .. (180,40) ;
\draw [color={rgb, 255:red, 14; green, 10; blue, 190 }  ,draw opacity=0.8 ]   (170,100) .. controls (147.75,198.21) and (192.75,175.71) .. (170,260) ;
\draw [shift={(170,260)}, rotate = 105.1] [color={rgb, 255:red, 14; green, 10; blue, 190 }  ,draw opacity=0.8 ][fill={rgb, 255:red, 14; green, 10; blue, 190 }  ,fill opacity=0.8 ][line width=0.75]      (0, 0) circle [x radius= 2.34, y radius= 2.34]   ;
\draw [shift={(167.4,176.16)}, rotate = 73.38] [fill={rgb, 255:red, 14; green, 10; blue, 190 }  ,fill opacity=0.8 ][line width=0.08]  [draw opacity=0] (8.04,-3.86) -- (0,0) -- (8.04,3.86) -- (5.34,0) -- cycle    ;
\draw [shift={(170,100)}, rotate = 102.77] [color={rgb, 255:red, 14; green, 10; blue, 190 }  ,draw opacity=0.8 ][fill={rgb, 255:red, 14; green, 10; blue, 190 }  ,fill opacity=0.8 ][line width=0.75]      (0, 0) circle [x radius= 2.34, y radius= 2.34]   ;
\draw [color={rgb, 255:red, 14; green, 10; blue, 190 }  ,draw opacity=0.8 ]   (310,100) .. controls (287.75,198.21) and (331.75,175.71) .. (309,260) ;
\draw [shift={(309,260)}, rotate = 105.1] [color={rgb, 255:red, 14; green, 10; blue, 190 }  ,draw opacity=0.8 ][fill={rgb, 255:red, 14; green, 10; blue, 190 }  ,fill opacity=0.8 ][line width=0.75]      (0, 0) circle [x radius= 2.34, y radius= 2.34]   ;
\draw [shift={(307.02,176.16)}, rotate = 74.1] [fill={rgb, 255:red, 14; green, 10; blue, 190 }  ,fill opacity=0.8 ][line width=0.08]  [draw opacity=0] (8.04,-3.86) -- (0,0) -- (8.04,3.86) -- (5.34,0) -- cycle    ;
\draw [shift={(310,100)}, rotate = 102.77] [color={rgb, 255:red, 14; green, 10; blue, 190 }  ,draw opacity=0.8 ][fill={rgb, 255:red, 14; green, 10; blue, 190 }  ,fill opacity=0.8 ][line width=0.75]      (0, 0) circle [x radius= 2.34, y radius= 2.34]   ;
\draw [color={rgb, 255:red, 208; green, 2; blue, 27 }  ,draw opacity=1 ]   (436.42,164.71) .. controls (374.01,170.18) and (175.5,128) .. (83.92,179.71) ;
\draw [shift={(83.92,179.71)}, rotate = 150.55] [color={rgb, 255:red, 208; green, 2; blue, 27 }  ,draw opacity=1 ][fill={rgb, 255:red, 208; green, 2; blue, 27 }  ,fill opacity=1 ][line width=0.75]      (0, 0) circle [x radius= 2.34, y radius= 2.34]   ;
\draw [shift={(262.83,154.49)}, rotate = 182.24] [fill={rgb, 255:red, 208; green, 2; blue, 27 }  ,fill opacity=1 ][line width=0.08]  [draw opacity=0] (8.04,-3.86) -- (0,0) -- (8.04,3.86) -- (5.34,0) -- cycle    ;
\draw [shift={(436.42,164.71)}, rotate = 174.99] [color={rgb, 255:red, 208; green, 2; blue, 27 }  ,draw opacity=1 ][fill={rgb, 255:red, 208; green, 2; blue, 27 }  ,fill opacity=1 ][line width=0.75]      (0, 0) circle [x radius= 2.34, y radius= 2.34]   ;

\draw (129,52.4) node [anchor=north east] [inner sep=0.75pt]  [font=\small,color={rgb, 255:red, 245; green, 166; blue, 35 }  ,opacity=1 ,xscale=1.2,yscale=1.2]  {$R_{2} T^{k_{2} +6}\wh{\phi }$};
\draw (199,292.6) node [anchor=south east] [inner sep=0.75pt]  [font=\small,color={rgb, 255:red, 245; green, 166; blue, 35 }  ,opacity=1 ,xscale=1.2,yscale=1.2]  {$R_{2} T^{k_{2} -3}\wh{\phi }$};
\draw (312.24,37.73) node [anchor=south] [inner sep=0.75pt]  [font=\small,color={rgb, 255:red, 245; green, 166; blue, 35 }  ,opacity=1 ,xscale=1.2,yscale=1.2]  {$R_{4} T^{k_{4} +6}\wh{\phi }$};
\draw (277,292.6) node [anchor=south west] [inner sep=0.75pt]  [font=\small,color={rgb, 255:red, 245; green, 166; blue, 35 }  ,opacity=1 ,xscale=1.2,yscale=1.2]  {$R_{4} T^{k_{4} -3}\wh{\phi }$};
\draw (250.5,162.4) node [anchor=north] [inner sep=0.75pt]  [font=\small,color={rgb, 255:red, 208; green, 2; blue, 27 }  ,opacity=1 ,xscale=1.2,yscale=1.2]  {$I_{\wh\F}^{[ t_{1} ,t_{5}]}(\wh{y}_{0})$};
\draw (116,227.6) node [anchor=south east] [inner sep=0.75pt]  [font=\small,color={rgb, 255:red, 245; green, 166; blue, 35 }  ,opacity=1 ,xscale=1.2,yscale=1.2]  {$R_{2} T^{k_{2} -3}\wh{f}^{m_{1}}(\wh{\phi })$};
\draw (368,226) node [anchor=west] [inner sep=0.75pt]  [font=\small,color={rgb, 255:red, 245; green, 166; blue, 35 }  ,opacity=1 ,xscale=1.2,yscale=1.2]  {$R_{4} T^{k_{4} -3}\wh{f}^{m_{1}}(\wh{\phi })$};
\draw (119,129.6) node [anchor=south east] [inner sep=0.75pt]  [font=\small,color={rgb, 255:red, 157; green, 139; blue, 60 }  ,opacity=1 ,xscale=1.2,yscale=1.2]  {$R_{2} T^{k_{2} -3}\wh{f}^{n_{0} +m_{1}}(\wh{\phi })$};
\draw (171.5,37.6) node [anchor=south] [inner sep=0.75pt]  [font=\small,color={rgb, 255:red, 152; green, 134; blue, 59 }  ,opacity=1 ,xscale=1.2,yscale=1.2]  {$R_{2} T^{k_{2} +6}\wh{f}^{n_{0}}(\wh{\phi })$};

\end{tikzpicture}

\caption{Proof of Lemma~\ref{LemExistTrans26} (this is the continuation of Figure~\ref{FigLemConsLemYLeftRight}).}\label{FigLemExistTrans26}
\end{center}
\end{figure}

Note that for $i=2,4$,
\begin{equation}\label{EqInclusionAnyN}
\wh C_{k_i-3}^- \cup T^{ k_i}\wh f^{m_1}I^{[m'_0,m_0]}_{\wh\F}(\wh z)
\subset  \wh f^{m_1}(\wh C_{k_i-3}^-).
\end{equation}
By Lemma~\ref{LemConsLemYLeftRight}
\begin{multline*}
L\Big(\wh f^{n_0}\Big(R_2\big(\wh C_{k_2-3}^- \cup T^{ k_2-3}\wh f^{m_1}I^{[m'_0,m_0]}_{\wh\F}(\wh z)\cup \wh C_{k_2+6}^+\big)\Big)\Big) \\
\cap R\Big(R_4\big(\wh C_{k_4-3}^- \cup T^{ k_4-3}\wh f^{m_1}I^{[m'_0,m_0]}_{\wh\F}(\wh z)\cup \wh C_{k_4+6}^+\big)\Big)\neq\emptyset,
\end{multline*}
So by \eqref{EqInclusionAnyN},
\begin{multline}\label{Mult1}
L\Big(\wh f^{n_0}\Big(R_2\big( \wh f^{m_1}(\wh C_{k_i-3}^-) \cup \wh C_{k_2+6}^+\big)\Big)\Big) \\
\cap R\Big(R_4\big(\wh C_{k_4-3}^- \cup T^{ k_4-3}\wh f^{m_1}I^{[m'_0,m_0]}_{\wh\F}(\wh z)\cup \wh C_{k_4+6}^+\big)\Big)\neq\emptyset.
\end{multline}

But 
\begin{multline*}
R_2T^{ k_2-3}\wh f^{m_0}(\wh z) \in R_0T^{ k_2-3}\wh f^{m_1}I^{[m'_0,m_0]}_{\wh\F}(\wh z)\\
\cap L\Big(R_4\big(\wh C_{k_4-3}^- \cup T^{ k_4-3}\wh f^{m_1}I^{[m'_0,m_0]}_{\wh\F}(\wh z)\cup \wh C_{k_4+6}^+\big)\Big)
\end{multline*}
so by \eqref{EqInclusionAnyN} and the fact that $\wh C_{k_2+3}^+$ is positively $\wh f$-invariant (it is the closure of the right of a leaf),
\[\wh f^{n_0+m_1}(R_2\wh C_{k_2-3}^-)
\cap L\Big(R_4\big(\wh C_{k_4-3}^- \cup T^{ k_4-3}\wh f^{m_1}I^{[m'_0,m_0]}_{\wh\F}(\wh z)\cup \wh C_{k_4+6}^+\big)\Big)
\neq\emptyset.
\]

Hence, \eqref{Mult1} becomes
\[
R_2\big(\wh f^{n_0+m_1}(\wh C_{k_2-3}^-)\cup \wh f^{n_0}(\wh C_{k_2+6}^+)\big)
\cap R_4\big(\wh C_{k_4-3}^- \cup T^{ k_4-3}\wh f^{m_1}I^{[m'_0,m_0]}_{\wh\F}(\wh z)\cup \wh C_{k_4+6}^+\big)\neq\emptyset.
\]
and as a consequence, using again \eqref{EqInclusionAnyN},
\[
R_2\big(\wh f^{n_0+m_1}(\wh C_{k_2-3}^-)\cup \wh f^{n_0}(\wh C_{k_2+6}^+)\big)
\cap R_4\big(\wh f^{m_1}(\wh C_{k_4-3}^-)\cup \wh C_{k_4+6}^+\big)\neq\emptyset.
\]
Using the fact that the sets $\big(R_i\wh f^{m_1}(\wh C_{k_i}^-)\big)_i$ are pairwise disjoint and that the sets $\big(R_i\wh C_{k_i+6}^+\big)_i$ are pairwise disjoint (by \eqref{EqLiesBetween} and Lemma~\ref{LemCantCrossLotLeaves}), this implies that
\[
\big(R_2\wh f^{n_0+m_1}(\wh C_{k_2-3}^-)\cap R_4\wh C_{k_4+6}^+\big)
\cup \big(R_2 \wh f^{n_0}(\wh C_{k_2+6}^+) \cap R_4\wh f^{m_1}(\wh C_{k_4-3}^-)\big)\neq\emptyset.
\]
However (because we have supposed $n_0\ge m_1$), $\wh f^{n_0-m_1}(\wh C_{k_2+6}^+)\subset \wh C_{k_2+6}^+$, and $R_2 \wh C_{k_2+6}^+ \cap R_4 \wh C_{k_4}^- = \emptyset$, so the second intersection $\wh f^{m_1}\big(R_2 \wh f^{n_0-m_1}(\wh C_{k_2+6}^+) \cap R_4\wh C_{k_4-3}^-\big)$ of the last equation is empty, and subsequently
\[R_2\wh f^{n_0+m_1}(\wh C_{k_2+6}^-)\cap R_4\wh C_{k_4+6}^+\neq\emptyset,\]
which implies that\footnote{Using the fact that the transverse path linking $R_2T^{ k_2-3}\wh \phi$ to $R_4T^{ k_4+6}\wh \phi$ has the leaf $R_2 T^{k_2+6}\wh\phi$ on its left, and \cite[Proposition 19]{lct1}.}
\[\wh f^{n_0+m_1}\big(R_2T^{ k_2-3}\wh \phi\big)\cap R_4T^{ k_4+6}\wh \phi\neq\emptyset.\]
Hence, there exists an $f$-admissible transverse path $\wh\beta$ of order $n_0+m_1$ linking $R_2T^{ k_2-3}\wh \phi$ to $R_4T^{ k_4+6}\wh \phi$.
\end{proof}

\begin{figure}
\begin{center}

\tikzset{every picture/.style={line width=0.75pt}} 

\begin{tikzpicture}[x=0.75pt,y=0.75pt,yscale=-1.1,xscale=1.1]

\draw [color={rgb, 255:red, 245; green, 166; blue, 35 }  ,draw opacity=1 ]   (230,240) .. controls (250.71,196.42) and (288.21,196.92) .. (290,240) ;
\draw [color={rgb, 255:red, 245; green, 166; blue, 35 }  ,draw opacity=1 ]   (320,240) .. controls (340.71,196.42) and (378.21,196.92) .. (380,240) ;
\draw [color={rgb, 255:red, 245; green, 166; blue, 35 }  ,draw opacity=1 ]   (410,240) .. controls (430.71,196.42) and (468.21,196.92) .. (470,240) ;
\draw [color={rgb, 255:red, 245; green, 166; blue, 35 }  ,draw opacity=1 ]   (240,100) .. controls (249.29,129.42) and (281.29,141.42) .. (300,100) ;
\draw [color={rgb, 255:red, 245; green, 166; blue, 35 }  ,draw opacity=1 ]   (330,100) .. controls (339.29,129.42) and (371.29,141.42) .. (390,100) ;
\draw [color={rgb, 255:red, 245; green, 166; blue, 35 }  ,draw opacity=1 ]   (420,100) .. controls (429.29,129.42) and (461.29,141.42) .. (480,100) ;
\draw [color={rgb, 255:red, 208; green, 2; blue, 114 }  ,draw opacity=1 ]   (260,240) .. controls (255.29,196.92) and (279.79,142.42) .. (270,100) ;
\draw [shift={(267.42,166.83)}, rotate = 100.05] [fill={rgb, 255:red, 208; green, 2; blue, 114 }  ,fill opacity=1 ][line width=0.08]  [draw opacity=0] (8.04,-3.86) -- (0,0) -- (8.04,3.86) -- (5.34,0) -- cycle    ;
\draw [color={rgb, 255:red, 208; green, 2; blue, 114 }  ,draw opacity=1 ][line width=0.75]    (350,240) .. controls (345.29,196.92) and (369.79,142.42) .. (360,100) ;
\draw [shift={(357.42,166.83)}, rotate = 100.05] [fill={rgb, 255:red, 208; green, 2; blue, 114 }  ,fill opacity=1 ][line width=0.08]  [draw opacity=0] (8.04,-3.86) -- (0,0) -- (8.04,3.86) -- (5.34,0) -- cycle    ;
\draw [color={rgb, 255:red, 208; green, 2; blue, 114 }  ,draw opacity=1 ]   (440,240) .. controls (435.29,196.92) and (459.79,142.42) .. (450,100) ;
\draw [shift={(447.42,166.83)}, rotate = 100.05] [fill={rgb, 255:red, 208; green, 2; blue, 114 }  ,fill opacity=1 ][line width=0.08]  [draw opacity=0] (8.04,-3.86) -- (0,0) -- (8.04,3.86) -- (5.34,0) -- cycle    ;
\draw [color={rgb, 255:red, 23; green, 101; blue, 193 }  ,draw opacity=1 ]   (240,180) .. controls (288.79,180.42) and (432.29,203.42) .. (480,180) ;
\draw [shift={(363.28,189.11)}, rotate = 183.43] [fill={rgb, 255:red, 23; green, 101; blue, 193 }  ,fill opacity=1 ][line width=0.08]  [draw opacity=0] (7.14,-3.43) -- (0,0) -- (7.14,3.43) -- (4.74,0) -- cycle    ;
\draw [color={rgb, 255:red, 65; green, 117; blue, 5 }  ,draw opacity=1 ][line width=1.5]    (258.13,239.17) .. controls (257.06,226.23) and (257.5,209.35) .. (263,183) .. controls (290.78,184.83) and (396.51,197.07) .. (440.88,190.98) .. controls (448.09,154.37) and (453.2,123.91) .. (447.88,100.42) ;
\draw [shift={(259.04,206.5)}, rotate = 96.13] [fill={rgb, 255:red, 65; green, 117; blue, 5 }  ,fill opacity=1 ][line width=0.08]  [draw opacity=0] (8.75,-4.2) -- (0,0) -- (8.75,4.2) -- (5.81,0) -- cycle    ;
\draw [shift={(356.4,190.68)}, rotate = 183.78] [fill={rgb, 255:red, 65; green, 117; blue, 5 }  ,fill opacity=1 ][line width=0.08]  [draw opacity=0] (8.75,-4.2) -- (0,0) -- (8.75,4.2) -- (5.81,0) -- cycle    ;
\draw [shift={(449.03,141.23)}, rotate = 96.46] [fill={rgb, 255:red, 65; green, 117; blue, 5 }  ,fill opacity=1 ][line width=0.08]  [draw opacity=0] (8.75,-4.2) -- (0,0) -- (8.75,4.2) -- (5.81,0) -- cycle    ;
\draw [color={rgb, 255:red, 65; green, 117; blue, 5 }  ,draw opacity=1 ][line width=1.5]    (347.33,240) .. controls (342.62,196.92) and (367.12,142.42) .. (357.33,100) ;
\draw [shift={(355.13,164.65)}, rotate = 99.95] [fill={rgb, 255:red, 65; green, 117; blue, 5 }  ,fill opacity=1 ][line width=0.08]  [draw opacity=0] (9.91,-4.76) -- (0,0) -- (9.91,4.76) -- (6.58,0) -- cycle    ;

\draw (267.63,146.01) node [anchor=east] [inner sep=0.75pt]  [color={rgb, 255:red, 182; green, 0; blue, 99 }  ,opacity=1 ,xscale=1.2,yscale=1.2]  {$R_{2}\hat{\alpha }_{0}$};
\draw (362.69,149.35) node [anchor=west] [inner sep=0.75pt]  [color={rgb, 255:red, 182; green, 0; blue, 99 }  ,opacity=1 ,xscale=1.2,yscale=1.2]  {$R_{3}\hat{\alpha }_{0}$};
\draw (453.69,148.01) node [anchor=west] [inner sep=0.75pt]  [color={rgb, 255:red, 182; green, 0; blue, 99 }  ,opacity=1 ,xscale=1.2,yscale=1.2]  {$R_{4}\hat{\alpha }_{0}$};
\draw (475.03,181.96) node [anchor=north west][inner sep=0.75pt]  [color={rgb, 255:red, 23; green, 101; blue, 193 }  ,opacity=1 ,xscale=1.2,yscale=1.2]  {$I^{[ 0,n_{0}]}_{\wh\F}( \wh y_{0})$};
\draw (290.75,187.68) node [anchor=north west][inner sep=0.75pt]  [color={rgb, 255:red, 65; green, 117; blue, 5 }  ,opacity=1 ,xscale=1.2,yscale=1.2]  {$\hat{\beta }$};
\draw (356.02,143.8) node [anchor=east] [inner sep=0.75pt]  [color={rgb, 255:red, 65; green, 117; blue, 5 }  ,opacity=1 ,xscale=1.2,yscale=1.2]  {$S_{0}\hat{\beta }$};
\draw (250.3,98.15) node [anchor=south] [inner sep=0.75pt]  [font=\small,color={rgb, 255:red, 213; green, 135; blue, 4 }  ,opacity=1 ,xscale=1.2,yscale=1.2]  {$R_{2} T^{k_{2} +3}\hat{\phi }$};
\draw (359.3,96.65) node [anchor=south] [inner sep=0.75pt]  [font=\small,color={rgb, 255:red, 213; green, 135; blue, 4 }  ,opacity=1 ,xscale=1.2,yscale=1.2]  {$R_{3} T^{k_{3} +3}\hat{\phi }$};
\draw (469.3,97.15) node [anchor=south] [inner sep=0.75pt]  [font=\small,color={rgb, 255:red, 213; green, 135; blue, 4 }  ,opacity=1 ,xscale=1.2,yscale=1.2]  {$R_{4} T^{k_{4} +3}\hat{\phi }$};
\draw (241.35,237.4) node [anchor=north] [inner sep=0.75pt]  [font=\small,color={rgb, 255:red, 213; green, 135; blue, 4 }  ,opacity=1 ,xscale=1.2,yscale=1.2]  {$R_{2} T^{k_{2}}\hat{\phi }$};
\draw (348.8,238.15) node [anchor=north] [inner sep=0.75pt]  [font=\small,color={rgb, 255:red, 213; green, 135; blue, 4 }  ,opacity=1 ,xscale=1.2,yscale=1.2]  {$R_{3} T^{k_{3}}\hat{\phi }$};
\draw (466.8,236.65) node [anchor=north] [inner sep=0.75pt]  [font=\small,color={rgb, 255:red, 213; green, 135; blue, 4 }  ,opacity=1 ,xscale=1.2,yscale=1.2]  {$R_{4} T^{k_{4}}\hat{\phi }$};

\end{tikzpicture}

\caption{Proof of Lemma~\ref{LemInterBetaa}. \label{FigBananaKi}}
\end{center}
\end{figure}

Let us write $S_0 = R_3T^3R_2^{-1}$, and consider $\wh\beta$ the path given by Lemma~\ref{LemExistTrans26}
(see Figure~\ref{FigBananaKi}).

\begin{lemma}\label{LemInterBetaa}
The paths $\wh\beta$ and $S_0\wh\beta$ intersect $\wh\F$-transversally.
Similarly, the paths $\wh\beta$ and $R_2T^{-3}R_3^{-1}\wh\beta$ intersect $\wh\F$-transversally.
\end{lemma}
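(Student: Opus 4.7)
The plan is to exploit the ordering structure on the lifts $R_i$ to show that $\wh\beta$ and $S_0\wh\beta$ both interact with the central band $R_3\wh B$ but in essentially ``orthogonal'' ways --- $\wh\beta$ cuts across this band from one side to the other, while $S_0\wh\beta$ traces through it along the direction of $R_3\wh\alpha_0$ --- so that a transverse intersection is forced between them.

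First I would show that $\wh\beta$ crosses the band $R_3\wh B$ with a definite orientation. By the hypothesis of Proposition~\ref{PropBndedDevRatCase2}, the sets $R_iV_D(\wt\gamma)$ are pairwise disjoint and equi-oriented, so $R_3\wt\gamma$ strictly separates $R_2\wt\gamma$ from $R_4\wt\gamma$. Since $\wh\beta$ begins on $R_2T^{k_2-3}\wh\phi\subset R_2\wh B$ and ends on $R_4T^{k_4+6}\wh\phi\subset R_4\wh B$, lifting this separation to $\wh\dom(\F)$ forces $\wh\beta$ to enter $R_3\wh B$ through $\partial R_3\wh B^L$ and exit through $\partial R_3\wh B^R$ (up to orientation convention). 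Next, I would analyze $S_0\wh\beta$ inside $R_3\wh B$. Since $T\wh B=\wh B$, the deck transformation $S_0=R_3T^3R_2^{-1}$ sends the plane of leaves $R_2\wh B$ onto $R_3\wh B$, so the initial portion of $S_0\wh\beta$ (the $S_0$-image of the portion of $\wh\beta$ lying in $R_2\wh B$) is contained in $R_3\wh B$, starts on $R_3T^{k_2}\wh\phi$, and traces a path $\wh\F$-equivalent to a sub-arc of $R_3\wh\alpha_0$. Using equation~\eqref{EqLiesBetween} together with Lemma~\ref{LemCantCrossLotLeaves}, this sub-arc covers at least a full fundamental domain of the band $R_3\wh B$ relative to $R_3TR_3^{-1}$, so $S_0\wh\beta$ approximates $R_3\wh\alpha_0$ in the sense of Section~\ref{SecBand}.

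With these two pieces in place, the transverse intersection follows from Proposition~\ref{Prop2.1.15Lellouch} or Proposition~\ref{Prop2.1.16Lellouch} --- depending on whether we view the configuration as ``$\wh\beta$ visiting/crossing the band whose approximation is $S_0\wh\beta$'' or as two paths hitting $R_3\wh B$ in opposite directions --- yielding a transverse intersection between $\wh\beta$ and $(R_3TR_3^{-1})^n S_0\wh\beta$ for some $n\in\Z$. The main obstacle is to pin down $n=0$: this is done by tracking which specific leaves $R_3T^{k_2+j}\wh\phi$ are hit by $\wh\beta$ and by $S_0\wh\beta$ at the entry and exit of $R_3\wh B$, the interval control~\eqref{EqLiesBetween} and the injectivity on the band $R_3\wh B$ (which is a plane of leaves by Proposition~\ref{LemRealizPeriod}) locking the translation parameter at the identity. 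The companion statement, that $\wh\beta$ and $R_2T^{-3}R_3^{-1}\wh\beta=S_0^{-1}\wh\beta$ intersect $\wh\F$-transversally, then follows by applying the deck transformation $S_0^{-1}$ equivariantly to the intersection just produced.
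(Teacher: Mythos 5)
Your geometric picture --- $\wh\beta$ cutting across $R_3\wh B$ while $S_0\wh\beta$ traces along it --- is the right intuition, but the reduction to Propositions~\ref{Prop2.1.15Lellouch}/\ref{Prop2.1.16Lellouch} does not go through. Those propositions produce a transverse intersection with a $(R_3TR_3^{-1})^n$-translate, and crucially their hypotheses are not satisfied here: Proposition~\ref{Prop2.1.15Lellouch} only applies when a single path both draws and \emph{visits} (same side in, same side out) the band defined by its own approximating $T$-loop, and its conclusion is an intersection with the $R_3TR_3^{-1}$-translate of that same path, not with $S_0\wh\beta$; Proposition~\ref{Prop2.1.16Lellouch} requires the two paths to cross the band in \emph{opposite} directions, but $\wh\beta$ and $S_0\wh\beta$ both traverse the ordered sequence of copies of $\wh B$ in the same sense (and moreover $S_0\wh\beta$ \emph{starts} on a leaf inside $R_3\wh B$ rather than on $\partial R_3\wh B$, so it does not technically ``cross'' the band at all). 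So neither proposition forces the configuration you describe into a transverse intersection, and even if it did, $S_0=R_3T^3R_2^{-1}$ is not a power of $R_3TR_3^{-1}$, so ``pinning $n=0$'' is not a small normalization but would require a genuinely different argument that you do not give.

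What actually makes the lemma work, and what the paper does, is more elementary and sidesteps both issues: one writes down explicit paths $\wh\F$-equivalent to subpaths of $\wh\beta$ and of $S_0\wh\beta$ (a concatenation $R_2\wh\alpha_0\cdot I^{[t_2,t_4]}_{\wh\F}(\wh y_0)\cdot R_4\wh\alpha_0$ for the first, and $R_3\wh\alpha_0|_{[s'+k_3,s'+k_3+3]}$ for the second), observes that they genuinely intersect at the concrete point $I^{t_3}_{\wh\F}(\wh y_0)=R_3\wh\alpha_0(s_3)$, checks via \eqref{EqLiesBetween} and Lemma~\ref{LemCantCrossLotLeaves} that the endpoints straddle $R_3\wh\alpha_0$ appropriately, and then upgrades the topological crossing to an $\wh\F$-transverse one using the essential-intersection criterion \cite[Lemma~10.7]{pa}. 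That last upgrade is the missing ingredient your proof would need; no band-crossing proposition in Section~\ref{SecBand} supplies it in this configuration.
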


\begin{proof}
We prove the first part of the lemma, the second one being identical. The idea of the proof is depicted in Figure~\ref{FigBananaKi}.

Recall that the trajectory $I^{[0,n_0]}_{\wt\F}(\wt y_0)$ is simple in $\wt S$. Moreover, by hypothesis, the paths $R_2\wt\alpha_0$, $R_3\wt\alpha_0$ and $R_4\wt\alpha_0$ are pairwise disjoint and cross $I^{[0,n_0]}_{\wt\F}(\wt y_0)$ in an increasing order: for any $i=2,3,4$, there exist $t_i, s_i$ such that $I^{t_i}_{\wt\F}(\wt y_0) = R_i\wt\alpha_0(s_i)$, with $t_2<t_3<t_4$. We also denote $s'\in\R$ such that $\wt\alpha_0(s')\in \wt\phi$.

These facts imply that the two paths $R_2\wt\alpha_0|_{[s'+k_2-3, s_2]}I^{[t_2, t_4]}_{\wt\F}(\wt y_0)R_4\wt\alpha_0|_{[s_4, s'+k_4+6]}$ and $R_3\wt\alpha_0|_{[s'+k_3-3, s'+k_3+6]}$ intersect at the point $I^{t_3}_{\wt\F}(\wt y_0) = R_3\wt\alpha_0(s_3)$. 
Moreover, recall that for $i\neq j$, the leaves $R_iT^{k_i}(\wh\phi)$ and $R_iT^{k_i+3}(\wh\phi)$ do not meet $R_j\wh\alpha_0$. Finally, we have that $R_2\wt\alpha_0(s'+k_2)\in L(R_3\wt\alpha_0)$ and $R_4\wt\alpha_0(s'+k_4+3)\in R(R_3\wt\alpha_0)$. 

This allows us to apply \cite[Lemma~10.7]{pa} to the paths $R_2\wt\alpha_0|_{[-\infty, s_2]}I^{[t_2, t_4]}_{\wt\F}(\wt y_0)R_4\wt\alpha_0|_{[s_4, +\infty]}$ and $R_3\wh\alpha_0$ (point (2) of this lemma implies that there exists an ``essential intersection point'' as defined by \cite[Definition~10.6]{pa} and point (3) implies our conclusion\footnote{\cite[Lemma~10.7]{pa} is stated in terms of transverse trajectories of points but the proof is in fact valid for general transverse paths.}): one gets that the paths $R_2\wh\alpha_0|_{[s'+k_2, s_2]}I^{[t_2, t_4]}_{\wh\F}(\wh y_0)R_2\wh\alpha_0|_{[s_4, s'+k_4+3]}$ and $R_3\wh\alpha_0|_{[s'+k_3, s'+k_3+3]}$ intersect $\wh\F$-transversally at $I^{t_3}_{\wh\F}(\wh y_0) = R_3\wh\alpha_0(s_3)$. The first of these paths is $\wh\F$-equivalent to a subpath of $\wh\beta$, and the second one is $\wh\F$-equivalent to a subpath of  $S_0\wh\beta$.
\end{proof}

\begin{proof}[Proof of Proposition~\ref{PropBndedDevRatCase2}]
By Lemma~\ref{LemInterBetaa}, the paths $\wh\beta$ and $S_0\wh\beta$ intersect $\F$-transversally.
It allows us to apply Theorem~\ref{thmMlct2} which asserts that there exists $\wt z_p\in \wt S$ such that $\wt f^{n_0+m_1}(\wt z_p) = S_0\wt z_p$: the point $z_p$ is $n_0+m_1$-periodic and turns around $S$ by the deck transformation $S_0$. 

Note that $S_0(R_2\wt\gamma_{\wt z}) = R_3T^3R_2^{-1}(R_2\wt\gamma_{\wt z}) = R_3\wt\gamma_{\wt z}$. Recall that the sets $R_i\wt\gamma_{\wt z}$ are pairwise disjoint. Moreover, the orientations of the $R_i\wt\gamma_{\wt z}$ are supposed to be identical. 
This forces the geodesic axis of $S_0$ to cross both $R_2\wt\gamma_{\wt z}$ and $R_3\wt\gamma_{\wt z}$. Note that $\wt\gamma_{R_2^{-1}\wt z_p} = R_2^{-1} \wt\gamma_{\wt z_p}$, hence by what we just said the tracking geodesic of $R_2^{-1}\wt z_p$ crosses the one of $\wt z$.
\end{proof}

With the same ideas, one can prove the following proposition which improves Proposition~\ref{PropBndedDevRatCase2}. It is included here not just for the sake of generalization, but also as it will be useful in future works, as for instance \cite{PABig1, PABig2}). Let us first define some objects.

Let $\gamma_1, \gamma_2$ be two closed geodesics that are tracking geodesics for some $f$-ergodic measures. Let $T_1, T_2\in \G$ be primitive deck transformations associated to these closed geodesics.

We apply Proposition~\ref{LemRealizPeriod} twice to get transverse paths $\wt\alpha_i\subset \wt S$, $i=1,2$, that are simple and $T_i$-invariant, and $\wt\F$-equivalent to a transverse path $I^\Z_{\wt\F}(\wt z_i)$, for some $\mu_i\in\Merg$ and $\wt z_i \in\wt S$ a lift of a $\mu_i$-typical point $z_i\in S$ having $\wt \gamma_i$ as a tracking geodesic and staying at finite distance to it. 
Up to reparametrization, one can suppose that for any $t\in\R$ and $k\in\Z$, one has $\wt\alpha_i(t+k) = T_i^k\wt\alpha_i(t)$.
We denote $\wt B_i$ the set of leaves met by $\wt \alpha_i$; in this band the left and the right of a leaf of $\wt \F$ are well defined.

\begin{prop}\label{PropBndedDevRatCase2b}
There exist $D'>0$ and $m_1'\ge 0$ such that the following is true.
For $i=1,2$, suppose that there exist 4 different copies of $\wt B_i$, denoted by $(R_j^i \wt B_i)_{1\le j\le 4}$, such that the following properties hold:
\begin{itemize}
\item the sets $(R_j^i V_{D'}(\wt\gamma_i))_{1\le j\le 4,\, i=1,2}$ are pairwise disjoint and for $i=1,2$, the sets $(R_j^i V_{D'}(\wt\gamma_i))_{1\le j\le 4}$ have the same orientation;
\item there exists $n_0\ge m_1$ and, for all $1\le j\le 4$, some times $t_j^i\in[0,n_0]$ such that $I^{t_j^i}_{\wt\F}(\wt y_0)\in R_i^j\wt\alpha_0$, and that either for all $j$ we have $I^{[t_1^i,t_j^i]}_{\wt\F}(\wt y_0)\subset R_j^i\wt B_i$, or for all $j$ we have $I^{[t_j^i, t_4^i]}_{\wt\F}(\wt y_0)\subset R_j^i\wt B_i$;
\item we have $t_4^1\le t_1^2$.
\end{itemize}


Then there exists an $\wt f$-admissible transverse path $\wt\beta$ of order $n_0+2m_1$ and parametrised by $[t_0, t_2]$, and some $t_1\in (t_0, t_2)$ such that $\wt\beta|_{[t_0, t_1]}$ and $R_3^1 T_1^3(R_2^1)^{-1}\wt\beta|_{[t_0, t_1]}$ intersect $\F$-transversally, and that $\wt\beta|_{[t_1, t_2]}$ and $R_2^2 T_2^{-3}(R_3^2)^{-1}\wt\beta|_{[t_1, t_2]}$ intersect $\F$-transversally.
\end{prop}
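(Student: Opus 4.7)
The plan is to adapt the proof of Proposition~\ref{PropBndedDevRatCase2} twice in parallel, once near each of the two tracking geodesics $\gamma_1,\gamma_2$, and then concatenate the two resulting admissible paths along the middle piece of the orbit of $\wt y_0$ (which is available by the hypothesis $t_4^1 \le t_1^2$). First, I apply the setup of Subsection~\ref{SubSecConst} separately to each $(\gamma_i,\wt\alpha_i,\wt z_i)$ to obtain parameters $D_i,m_1^{(i)}$ controlling respectively how far the auxiliary paths $\wt\alpha_i$ and $\wt f^{m_1^{(i)}} I^{[m'_0,m_0]}_{\wt\F}(\wt z_i)$ stay from $\wt\gamma_i$. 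I then take $D' = \max(D_1,D_2)$ and $m_1' = \max(m_1^{(1)},m_1^{(2)})$, fix leaves $\wh\phi_i\subset\wh B_i$ and shifts $k_j^i\in\Z$ by the analog of~\eqref{EqLiesBetween}, and define the barriers $\wh C_0^{(i)}$ as in~\eqref{EqDefC0}.

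Next, on each side I run the key construction of Lemmas~\ref{LemExistTrans26} and~\ref{LemInterBetaa}, but with only three "active" copies instead of five. For the $\gamma_1$ side, the fact that $I^{[0,t_4^1]}_{\wt\F}(\wt y_0)$ visits the four copies $R_j^1 \wt B_1$ in order, together with the alternative $I^{[0,t_j^1]}_{\wt\F}(\wt y_0)\subset R_j^1\wt B_1$ (or its $t_4^1$-ended version), furnishes the analog of Lemma~\ref{LemConsLemYLeftRight}: $I^{0}_{\wh\F}(\wh y_0)$ lies to the left of both $R_2^1 T_1^{k_2^1}\wh C_0^{(1)}$ and $R_3^1 T_1^{k_3^1}\wh C_0^{(1)}$, while $I^{t_4^1}_{\wh\F}(\wh y_0)$ lies to the right of both. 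The forcing estimate of Lemma~\ref{LemExistTrans26} then produces an admissible transverse path $\wh\beta^{(1)}$ of order $t_4^1+m_1'$ linking $R_2^1 T_1^{k_2^1-3}\wh\phi_1$ to $R_3^1 T_1^{k_3^1+6}\wh\phi_1$ and crossing through the middle copy, and Lemma~\ref{LemInterBetaa} (applied to the three copies $R_1^1, R_2^1, R_3^1$ playing the role of $R_2,R_3,R_4$) gives an $\wh\F$-transverse intersection of $\wh\beta^{(1)}$ with $R_3^1 T_1^3 (R_2^1)^{-1}\wh\beta^{(1)}$. Symmetrically, working in reversed time (or with the orientation flipped, as reflected in the sign change $T_2^{-3}$ versus $T_1^{+3}$) on the $\gamma_2$ side produces an admissible transverse path $\wh\beta^{(2)}$ of order $(n_0-t_1^2)+m_1'$ having an $\wh\F$-transverse intersection with $R_2^2 T_2^{-3}(R_3^2)^{-1}\wh\beta^{(2)}$.

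Finally, I concatenate $\wh\beta^{(1)}$, the middle orbit piece $I^{[t_4^1,t_1^2]}_{\wh\F}(\wh y_0)$, and $\wh\beta^{(2)}$, noting that by construction $\wh\beta^{(1)}$ ends on the same leaf on which the middle piece continues past $t_4^1$ (up to the $m_1'$-bridging segment), and symmetrically at the second junction. A repeated application of Proposition~\ref{propFondalct1} at each junction yields a single admissible transverse path $\wt\beta$ of total order $(t_4^1+m_1')+(t_1^2-t_4^1)+((n_0-t_1^2)+m_1')=n_0+2m_1'$, which can be parametrized by some $[t_0,t_2]$ so that the first transverse self-intersection lies in $\wt\beta|_{[t_0,t_1]}$ and the second in $\wt\beta|_{[t_1,t_2]}$, where $t_1$ corresponds to the junction point.

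The main technical difficulty I expect is not the duplication itself but verifying that the two self-intersections survive the concatenation and remain localized in the correct half of $\wt\beta$. Concretely, one must check that the orbit segment between times $t_4^1$ and $t_1^2$ does not accidentally cross back into any of the $R_j^1\wh B_1$ copies in a way that would destroy the first transverse intersection, nor enter the $R_j^2\wh B_2$ copies prematurely; this is where the disjointness hypothesis on the sets $R_j^i V_{D'}(\wt\gamma_i)$ and the careful choice of the shifts $k_j^i$ via \eqref{EqLiesBetween} become essential. Once this bookkeeping is set up, the rest of the argument is a straightforward duplication of the proof of Proposition~\ref{PropBndedDevRatCase2}.
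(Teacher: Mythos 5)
Your plan is close in spirit to the paper's proof (same barriers $\wh C_0^{(i)}$, same shifts $k_j^i$, same bookkeeping about which copies of $R^i_j\wt B_i$ are ``active''), but the overall architecture is genuinely different, and the step where the two approaches diverge is precisely where your proposal has a gap.

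The paper does \emph{not} build two short admissible paths and then glue them. It runs the analogue of Lemma~\ref{LemExistTrans26} \emph{once}, on the full orbit segment of length $n_0$, with the left barrier living in $R^1_2\wh B_1$ and the right barrier living in $R^2_3\wh B_2$. The invariance of the ``left of a Brouwer leaf'' half-planes under $\wh f$, together with the two inclusions furnished by the bi-class version of Lemma~\ref{LemConsLemYLeftRight}, pushes the whole chain of set-intersections through in one go, giving directly a single admissible transverse path of order $n_0+m'_1$ that links $R^1_2 T_1^{k^1_2-3}\wh\phi_1$ to $R^2_3 T_2^{k^2_3+6}\wh\phi_2$. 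Since this path passes through all four bands $R^1_2\wh B_1$, $R^1_3\wh B_1$, $R^2_2\wh B_2$, $R^2_3\wh B_2$, the analogue of Lemma~\ref{LemInterBetaa} applies to it twice (once near each $\gamma_i$) and yields both transverse intersections on a \emph{single} orbit trajectory.

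Your proposal instead builds $\wh\beta^{(1)}$ and $\wh\beta^{(2)}$ separately and then tries to concatenate them with the middle orbit segment $I^{[t_4^1,t_1^2]}_{\wh\F}(\wh y_0)$ via Proposition~\ref{propFondalct1}. This is the gap: Proposition~\ref{propFondalct1} produces an admissible concatenation only when the two orbit segments have an $\F$-\emph{transverse intersection}. Lemma~\ref{LemExistTrans26} produces $\wh\beta^{(1)}$ as (equivalent to) the orbit trajectory of some auxiliary point $\wh w_0\in R^1_2 T_1^{k^1_2-3}\wh\phi_1$, with no a priori relation to $\wh y_0$. That $\wh\beta^{(1)}$ terminates on a leaf that the middle orbit piece also meets does not give a transverse intersection in the leaf space --- sharing a leaf, or being $\F$-equivalent on a subinterval, is not a crossing of the associated paths in the space of leaves --- and admissibility of transverse paths is not preserved by naive concatenation at a common leaf. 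There is no obvious way, within the hypotheses of the proposition, to guarantee that the trajectory of $\wh w_0$ crosses the trajectory of $\wh y_0$ transversally (or that $\wh\beta^{(1)}$ crosses $\wh\beta^{(2)}$). The paper sidesteps this entirely by never splitting the orbit: the single Lemma~\ref{LemExistTrans26}-type argument carries the full $[0,n_0]$ segment between the two barrier systems, so no gluing is needed. To make your approach rigorous you would have to either exhibit a genuine $\wh\F$-transverse intersection at each junction (which appears to require additional structure) or simply adopt the paper's unified construction.

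Two smaller remarks. First, the paper uses $R^1_2,R^1_3$ (and $R^2_2,R^2_3$) as the ``active'' copies, keeping $R^i_1,R^i_4$ as buffers; your choice of $R^1_1,R^1_2,R^1_3$ would shift the resulting deck transformations and would not match the stated conclusion $R^1_3 T_1^3 (R^1_2)^{-1}$, $R^2_2 T_2^{-3}(R^2_3)^{-1}$. Second, the bi-class version of the ``$\wh y_0$ lies to the left / $\wh f^{n_0}(\wh y_0)$ lies to the right'' lemma must involve different barrier families: the paper's analogue of Lemma~\ref{LemConsLemYLeftRight} uses $R^1_2 T^{k^1_2}\wh C_1$, $R^1_3 T^{k^1_3}\wh C_1$ on the $\gamma_1$-side and $R^2_2 T^{k^2_2}\wh C_2$, $R^2_3 T^{k^2_3}\wh C_2$ on the $\gamma_2$-side, using $t_4^1\le t_1^2$ to see that all four positional constraints hold for the endpoints of the \emph{same} trajectory $I^{[0,n_0]}_{\wh\F}(\wh y_0)$. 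Your version has the correct flavour but is phrased for the two truncated segments $I^{[0,t_4^1]}$ and $I^{[t_1^2,n_0]}$ separately, which again is what forces the problematic gluing.
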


Note that the geodesic axis of $R_3^1 T_1^3(R_2^1)^{-1}$ crosses both $R_2^1\wt\gamma_1$ and $R_3^1\wt\gamma_1$ and that the geodesic axis of $R_2^2 T_2^{-3}(R_3^2)^{-1}$ crosses both $R_2^2\wt\gamma_1$ and $R_3^2\wt\gamma_1$.

In particular, the conclusion of the proposition implies that the projection $\beta$ of $\wt\beta$ on $S$ has two $\F$-transverse self-intersections.

\begin{rem}\label{RemBigSubpaths}
The path $\wt\beta$ is made of the concatenation of some paths $I^{[s_1,t_1]}_\F(z_1)$, $I^{[u_1, u_2]}_\F(y_0)$ and  $I^{[s_2,t_2]}_\F(z_2)$; by modifying the proof one can suppose that $t_1-s_1$ and $t_2-s_2$ are large, while $u_1$ and $u_2$ remain bounded.
\end{rem}

\begin{figure}
\begin{center}
\tikzset{every picture/.style={line width=0.75pt}} 

\begin{tikzpicture}[x=0.75pt,y=0.75pt,yscale=-1.1,xscale=1.1]

\draw [color={rgb, 255:red, 245; green, 166; blue, 35 }  ,draw opacity=1 ]   (250,211) .. controls (270.71,167.42) and (308.21,167.92) .. (310,211) ;
\draw [color={rgb, 255:red, 245; green, 166; blue, 35 }  ,draw opacity=1 ]   (340,211) .. controls (360.71,167.42) and (398.21,167.92) .. (400,211) ;
\draw [color={rgb, 255:red, 245; green, 166; blue, 35 }  ,draw opacity=1 ]   (430,211) .. controls (450.71,167.42) and (488.21,167.92) .. (490,211) ;
\draw [color={rgb, 255:red, 245; green, 166; blue, 35 }  ,draw opacity=1 ]   (260,71) .. controls (269.29,100.42) and (301.29,112.42) .. (320,71) ;
\draw [color={rgb, 255:red, 245; green, 166; blue, 35 }  ,draw opacity=1 ]   (350,71) .. controls (359.29,100.42) and (391.29,112.42) .. (410,71) ;
\draw [color={rgb, 255:red, 245; green, 166; blue, 35 }  ,draw opacity=1 ]   (440,71) .. controls (449.29,100.42) and (481.29,112.42) .. (500,71) ;
\draw [color={rgb, 255:red, 23; green, 101; blue, 193 }  ,draw opacity=1 ]   (260,150.67) .. controls (308.79,151.08) and (549.03,119.33) .. (608.69,149.67) ;
\draw [shift={(439.11,138.48)}, rotate = 176.97] [fill={rgb, 255:red, 23; green, 101; blue, 193 }  ,fill opacity=1 ][line width=0.08]  [draw opacity=0] (7.14,-3.43) -- (0,0) -- (7.14,3.43) -- (4.74,0) -- cycle    ;
\draw [color={rgb, 255:red, 65; green, 117; blue, 5 }  ,draw opacity=1 ][line width=1.5]    (278.55,230.15) .. controls (277.48,217.21) and (278.06,179.35) .. (283.56,153) .. controls (332.56,148.67) and (501.03,134.67) .. (564.36,141.33) .. controls (571.58,104.72) and (563.08,71.25) .. (557.75,47.75) ;
\draw [shift={(279.17,186.92)}, rotate = 93.73] [fill={rgb, 255:red, 65; green, 117; blue, 5 }  ,fill opacity=1 ][line width=0.08]  [draw opacity=0] (8.75,-4.2) -- (0,0) -- (8.75,4.2) -- (5.81,0) -- cycle    ;
\draw [shift={(428.71,142.48)}, rotate = 176.87] [fill={rgb, 255:red, 65; green, 117; blue, 5 }  ,fill opacity=1 ][line width=0.08]  [draw opacity=0] (8.75,-4.2) -- (0,0) -- (8.75,4.2) -- (5.81,0) -- cycle    ;
\draw [shift={(565.76,89.63)}, rotate = 84] [fill={rgb, 255:red, 65; green, 117; blue, 5 }  ,fill opacity=1 ][line width=0.08]  [draw opacity=0] (8.75,-4.2) -- (0,0) -- (8.75,4.2) -- (5.81,0) -- cycle    ;
\draw [color={rgb, 255:red, 65; green, 117; blue, 5 }  ,draw opacity=1 ][line width=1.5]    (441.82,234.32) .. controls (442.53,224.09) and (440.96,216.29) .. (442.65,206.49) .. controls (447.3,206.42) and (457.82,206.99) .. (469.49,208.15) .. controls (471.65,196.65) and (480.46,109.52) .. (471.33,70) ;
\draw [shift={(441.85,215.29)}, rotate = 89.62] [fill={rgb, 255:red, 65; green, 117; blue, 5 }  ,fill opacity=1 ][line width=0.08]  [draw opacity=0] (9.91,-4.76) -- (0,0) -- (9.91,4.76) -- (6.58,0) -- cycle    ;
\draw [shift={(461.19,207.4)}, rotate = 184.16] [fill={rgb, 255:red, 65; green, 117; blue, 5 }  ,fill opacity=1 ][line width=0.08]  [draw opacity=0] (9.91,-4.76) -- (0,0) -- (9.91,4.76) -- (6.58,0) -- cycle    ;
\draw [shift={(475.23,133.26)}, rotate = 91.62] [fill={rgb, 255:red, 65; green, 117; blue, 5 }  ,fill opacity=1 ][line width=0.08]  [draw opacity=0] (9.91,-4.76) -- (0,0) -- (9.91,4.76) -- (6.58,0) -- cycle    ;
\draw [color={rgb, 255:red, 245; green, 166; blue, 35 }  ,draw opacity=1 ]   (520.4,209.8) .. controls (541.11,166.22) and (578.61,166.72) .. (580.4,209.8) ;
\draw [color={rgb, 255:red, 245; green, 166; blue, 35 }  ,draw opacity=1 ]   (530.4,69.8) .. controls (539.69,99.22) and (571.69,111.22) .. (590.4,69.8) ;
\draw [color={rgb, 255:red, 65; green, 117; blue, 5 }  ,draw opacity=1 ][line width=1.5]    (370.49,210.99) .. controls (371.19,200.75) and (377.83,124.67) .. (368.17,74.67) .. controls (373.83,75) and (388.5,75.67) .. (398.5,76.33) .. controls (399.17,68.33) and (399.17,55.67) .. (400,46.67) ;
\draw [shift={(373.33,137.07)}, rotate = 89.7] [fill={rgb, 255:red, 65; green, 117; blue, 5 }  ,fill opacity=1 ][line width=0.08]  [draw opacity=0] (9.91,-4.76) -- (0,0) -- (9.91,4.76) -- (6.58,0) -- cycle    ;
\draw [shift={(388.44,75.73)}, rotate = 183.05] [fill={rgb, 255:red, 65; green, 117; blue, 5 }  ,fill opacity=1 ][line width=0.08]  [draw opacity=0] (9.91,-4.76) -- (0,0) -- (9.91,4.76) -- (6.58,0) -- cycle    ;
\draw [shift={(399.39,56.48)}, rotate = 92.17] [fill={rgb, 255:red, 65; green, 117; blue, 5 }  ,fill opacity=1 ][line width=0.08]  [draw opacity=0] (9.91,-4.76) -- (0,0) -- (9.91,4.76) -- (6.58,0) -- cycle    ;

\draw (581.63,144) node [anchor=south west] [inner sep=0.75pt]  [color={rgb, 255:red, 23; green, 101; blue, 193 }  ,opacity=1 ,xscale=1.2,yscale=1.2]  {$I^{[ 0,n_{0}]}_{\wh\F}( \wh y_{0})$};
\draw (314.08,153) node [anchor=north west][inner sep=0.75pt]  [color={rgb, 255:red, 65; green, 117; blue, 5 }  ,opacity=1 ,xscale=1.2,yscale=1.2]  {$\wh{\beta }$};
\draw (370.02,109) node [anchor=east] [inner sep=0.75pt]  [font=\small,color={rgb, 255:red, 65; green, 117; blue, 5 }  ,opacity=1 ,xscale=1.2,yscale=1.2]  {$R_{3}^{1} T_{1}^{3}( R_{2}^{1})^{-1}\wh{\beta }$};
\draw (270.63,69.15) node [anchor=south] [inner sep=0.75pt]  [font=\small,color={rgb, 255:red, 213; green, 135; blue, 4 }  ,opacity=1 ,xscale=1.2,yscale=1.2]  {$R_{2}^{1} T_{1}^{k_{2}^{1} +3}\wh{\phi }_{1}$};
\draw (358.3,66.65) node [anchor=south] [inner sep=0.75pt]  [font=\small,color={rgb, 255:red, 213; green, 135; blue, 4 }  ,opacity=1 ,xscale=1.2,yscale=1.2]  {$R_{3}^{1} T_{1}^{k_{3}^{1} +3}\wh{\phi }_{1}$};
\draw (471.3,68.15) node [anchor=south] [inner sep=0.75pt]  [font=\small,color={rgb, 255:red, 213; green, 135; blue, 4 }  ,opacity=1 ,xscale=1.2,yscale=1.2]  {$R_{2}^{2} T^{k_{2}^{2} +3}\wh{\phi }_2$};
\draw (238.89,210.73) node [anchor=north] [inner sep=0.75pt]  [font=\small,color={rgb, 255:red, 213; green, 135; blue, 4 }  ,opacity=1 ,xscale=1.2,yscale=1.2]  {$R_{2}^{1} T_{1}^{k_{2}^{1}}\wh{\phi }_{1}$};
\draw (394.06,209.45) node [anchor=north] [inner sep=0.75pt]  [font=\small,color={rgb, 255:red, 213; green, 135; blue, 4 }  ,opacity=1 ,xscale=1.2,yscale=1.2]  {$R_{3}^{1} T_{1}^{k_{3}^{1}}\wh{\phi }_{1}$};
\draw (481.73,209.52) node [anchor=north] [inner sep=0.75pt]  [font=\small,color={rgb, 255:red, 213; green, 135; blue, 4 }  ,opacity=1 ,xscale=1.2,yscale=1.2]  {$R_{2}^{2} T_{2}^{k_{2}^{2}}\wh{\phi }_2$};
\draw (474,168) node [anchor=west] [inner sep=0.75pt]  [font=\small,color={rgb, 255:red, 65; green, 117; blue, 5 }  ,opacity=1 ,xscale=1.2,yscale=1.2]  {$R_{2}^{2} T_{2}^{-3}( R_{3}^{2})^{-1}\wh{\beta }$};
\draw (580,204) node [anchor=north] [inner sep=0.75pt]  [font=\small,color={rgb, 255:red, 213; green, 135; blue, 4 }  ,opacity=1 ,xscale=1.2,yscale=1.2]  {$R_{3}^{2} T_{2}^{k_{3}^{2}}\wh{\phi }_2$};
\draw (598.63,65.48) node [anchor=south] [inner sep=0.75pt]  [font=\small,color={rgb, 255:red, 213; green, 135; blue, 4 }  ,opacity=1 ,xscale=1.2,yscale=1.2]  {$R_{3}^{2} T^{k_{3}^{2} +3}\wh{\phi }_2$};

\end{tikzpicture}

\caption{Proof of Proposition~\ref{PropBndedDevRatCase2b}. \label{FigPropBndedDevRatCase2b}}
\end{center}
\end{figure}

\begin{proof}
We adapt the proof of Proposition~\ref{PropBndedDevRatCase2} (see also Figure~\ref{FigPropBndedDevRatCase2b}). First, the constant $D$ defined before Proposition~\ref{PropBndedDevRatCase2} does depend on the trajectory of $z$. 
For Proposition~\ref{PropBndedDevRatCase2b} we have two different points $z_1$, $z_2$, each of them associated with a band, $B_1$ and $B_2$. We choose $\phi_1\subset B_1$ and $\phi_2 \subset B_2$ two leaves. This allows us to get two constants $D_1$ and $D_2$, the first one associated to $z_1$ and the second one associated to $z_2$ (as in \eqref{EqDefD}). We then replace the $D$ of Proposition~\ref{PropBndedDevRatCase2} with $D' = \max(D_1,D_2)$. Similarly, one can define $m'_1$ as the maximum of the constants $m_1$ (defined in Subsection~\ref{SubSecConst}) associated to resp. $z_1$ and $z_2$.

The adaptation of Lemma~\ref{LemCantCrossLotLeaves} is straightforward, fixing $i=1,2$ and replacing the $R_i\wt B$ by $R_j^i \wt B_i$.

This lemma allows us to define integers $k_j^i$ as in \eqref{EqLiesBetween}.
One then has to replace the set $\wh C_0$ of \eqref{EqDefC0} by two sets $\wh C_1$ and $\wh C_2$, the first one adapted to the trajectory of $z_1$ and the second one adapted to the trajectory of $z_2$.

Lemma~\ref{LemConsLemYLeftRight} can then be adapted in the following way, with the same proof:
\begin{align*}
\begin{split}
\wh y_0 & \in L\big(R_2^1T^{k_2^1}\wh C_1\big) \cap L\big(R_2^2T^{k_3^2}\wh C_2\big),\\
\wh f^{n_0}(\wh y_0) & \in R\big(R_3^1T^{k_2^1}\wh C_1\big) \cap R\big(R_3^2T^{k_3^2}\wh C_2\big).
\end{split}
\end{align*}

Lemma~\ref{LemExistTrans26} then can be adapted as follows, with the same proof: 
\emph{There exists an $\wh f$-admissible transverse path $\wh\beta$ of order $n_0+m'_1$ linking $R_2^1T_1^{ k^1_2-3}\wh \phi_1$ to $R_3^2T_2^{ k_3^2+6}\wh \phi_2$.}

To get Remark~\ref{RemBigSubpaths}, it suffices to consider $M$ large and change this property by:
\emph{There exists an $\wh f$-admissible transverse path $\wh\beta$ of order $n_0+m'_1$ linking $R_2^1T_1^{ k^1_2-M}\wh \phi_1$ to $R_3^2T_2^{ k_3^2+M}\wh \phi_2$.} The rest of the proof is identical, adapting the constants to this change. 

Lemma~\ref{LemInterBetaa} then becomes: \emph{The path $\wh\beta$ intersects transversally both $R_3^1 T_1^3(R_2^1)^{-1}\wt\beta$ and $R_2^2 T_2^{-3}(R_3^2)^{-1}\wt\beta$.} This proves the proposition.
\end{proof}

\subsection{Second case: the trajectory crosses different copies of $\wt B$}

Recall that $\wt\alpha_0\subset \wt S$ is a transverse path  that is given by Proposition~\ref{LemRealizPeriod}; it is simple, $T$-invariant and $\wt\F$-equivalent to a transverse path $I^\Z_{\wt\F}(\wt z)$, for $\wt z \in\wt S$ a lift of a $\mu$-typical point $z\in S$.
Moreover, for any $t\in\R$ and $k\in\Z$, one has $\wt\alpha_0(t+k) = T^k\wt\alpha_0(t)$.
The set of leaves met by $\wt \alpha_0$ is $\wt B$.

We divide the proof depending on whether some $R_i\wt\alpha_0$ accumulates in $I^{[0,n_0]}_{\wt \F}(\wt y_0)$ or not. Let us begin with the non-accumulating case, which is the easiest one.

\begin{prop}\label{PropBndedDevRatCase1a}
Suppose that there exist $\wt y_0\in \wt S$, $ R_0,R_1\in\G$ and $t_0<t'_0\le t_1<t'_1$ such that for $i=0,1$, the trajectory $I^{[t_i,t'_i]}_{\wt \F}(\wt y_0)$ crosses the band $R_i\wt B$. 
Suppose that $R_0\wt\gamma_{\wt z}$ and $R_1\wt\gamma_{\wt z}$ do not cross and have the same orientation.
Suppose also that none of the $R_i\wt\alpha_0$ (recall that this path is given by Proposition~\ref{LemRealizPeriod}) accumulates in $I^{[t_i,t'_i]}_{\wt \F}(\wt y_0)$. 

Then there exists an $f$-periodic orbit having a lift whose tracking geodesic crosses both $R_0\wt\gamma_{\wt z}$ and $R_1\wt\gamma_{\wt z}$.
\end{prop}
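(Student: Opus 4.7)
The plan is to follow the forcing-theoretic template of Proposition~\ref{PropBndedDevRatCase2}, but in this simpler two-band setting where the non-accumulation hypothesis allows a direct extraction of transverse intersections instead of a five-copies argument.

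First, I would lift everything to $\wh\dom(\F)$, fixing compatible lifts $\wh y_0,\wh z,\wh\alpha_0$ and deck transformations $R_0,R_1$. Because the trajectory $I^{[t_i,t'_i]}_{\wt\F}(\wt y_0)$ crosses the band $R_i\wt B$ and $R_i\wt\alpha_0$ does not accumulate in it, Lemma~\ref{LemPasAccImplTrans}, applied in $\wh\dom(\F)$, yields an $\wh\F$-transverse intersection between the $R_iTR_i^{-1}$-loop $R_i\wh\alpha_0$ and $I^{[t_i,t'_i]}_{\wh\F}(\wh y_0)$ at a point $\wh a_i$, for each $i=0,1$.

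Next, I would use Proposition~\ref{propFondalct1} twice, at the intersections $\wh a_0$ and $\wh a_1$, exploiting that long subpaths of $R_i\wh\alpha_0$ are $\wh\F$-equivalent to admissible trajectory pieces of $R_i\wh z$ (via Proposition~\ref{LemRealizPeriod}), to build an admissible transverse path $\wh\beta$ of some order $r$ made of a long piece of $R_0\wh\alpha_0$ ending at $\wh a_0$, the trajectory segment $I^{[s_0,s_1]}_{\wh\F}(\wh y_0)$ from $\wh a_0$ to $\wh a_1$, and a long piece of $R_1\wh\alpha_0$ starting at $\wh a_1$. The crux is then to identify a deck transformation $S\in\G$ such that (a) the axis of $S$ crosses both $R_0\wt\gamma_{\wt z}$ and $R_1\wt\gamma_{\wt z}$, and (b) $\wh\beta$ and $S\wh\beta$ have an $\wh\F$-transverse intersection. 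Natural candidates come from products of rotations about the two axes, for instance $S=R_1T^{k_1}R_1^{-1}\cdot R_0T^{k_0}R_0^{-1}$ for suitable integers $k_0,k_1$, so that $S\wh\beta$ traverses the configuration in a way that forces a topological crossing with $\wh\beta$, analogously to Lemma~\ref{LemInterBetaa}. Once such $S$ is produced, Theorem~\ref{thmMlct2} delivers a point $\wh z_p$ with $\wh f^r(\wh z_p)=S\wh z_p$, whose projection is an $f$-periodic orbit whose tracking geodesic (a translate of the axis of $S$) crosses both $R_0\wt\gamma_{\wt z}$ and $R_1\wt\gamma_{\wt z}$.

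The hard part is precisely this third step: ensuring (a) and (b) simultaneously. Naive choices such as $S=R_1T^kR_0^{-1}$ produce a transformation whose axis is asymptotic to $R_0\wt\gamma$ and $R_1\wt\gamma$ (it shares boundary endpoints on $\partial\wt S$ with them), hence fails (a). One must therefore pick a more refined product whose axis lies genuinely transverse to both geodesics, while still arranging that $S\wh\beta$ meets $\wh\beta$ in a configuration satisfying the leaf ``above/below'' condition of Definition~\ref{DefInterTrans}. I expect this to be a two-copy analog of the careful leaf-analysis carried out in the proof of Lemma~\ref{LemInterBetaa}, now with the transverse intersections with $R_0\wh\alpha_0$ and $R_1\wh\alpha_0$ themselves playing the role previously played by the middle copy $R_3\wh\alpha_0$.
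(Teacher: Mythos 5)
Your first two steps match the paper's strategy: Lemma~\ref{LemPasAccImplTrans} does produce the $\wh\F$-transverse intersections with $R_i\wh\alpha_0$, and Proposition~\ref{propFondalct1} is then used to build an admissible path. But your third step contains a concrete misconception and leaves the actual intersection argument unfinished.

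The misconception is your claim that the ``naive'' $S=R_1T^kR_0^{-1}$ has an axis sharing endpoints on $\partial\wt S$ with $R_0\wt\gamma_{\wt z}$ and $R_1\wt\gamma_{\wt z}$. This is false, and the paper uses exactly this type of transformation, namely $S=R_0T^{-q}R_1^{-1}$. Here is why its axis \emph{must} cross both geodesics: $S$ maps $R_1\wt\gamma_{\wt z}$ onto $R_0\wt\gamma_{\wt z}$ and preserves each of the two arcs of $\partial\wt S$ cut out by the endpoints of its own axis, so the axis crosses both geodesics or neither. If it crossed neither, both would lie in one arc, on which $S$ acts monotonically (north-south dynamics); but then $S$ would move both endpoints of $R_1\wt\gamma_{\wt z}$ in the same direction along that arc while sending them to the endpoints of $R_0\wt\gamma_{\wt z}$, which the ``disjoint with the same orientation'' hypothesis makes impossible. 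So the simple candidate works and your more elaborate $S=R_1T^{k_1}R_1^{-1}\cdot R_0T^{k_0}R_0^{-1}$ is unnecessary (and does not obviously satisfy (a) for all $k_0,k_1$ either).

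The gap: you never actually establish (b), and the paper closes it by a different, shorter route than the Lemma~\ref{LemInterBetaa}-style leaf analysis you anticipate. The paper's $\wt\beta_s$ is just $I^{[t_0,t''_1]}_{\wt\F}(\wt y_0)\,I^{[s''_1,s]}_{\wt\F}(R_1\wt z)$, with no initial piece of $R_0\wt\alpha_0$ prepended. The transverse intersection of $\wt\beta_s$ with $R_0T^{-q}R_1^{-1}\wt\beta_s$ is then extracted from the recurrence of $z$ (Lemma~\ref{LemRecur} together with Lemma~\ref{Lem17Lct1}): for suitable $\ell,q$, the translate $R_0T^{-q}I^{[s_0-1+\ell,\,s'_0+1+\ell]}_{\wt\F}(\wt z)$ is a subpath of $R_0T^{-q}R_1^{-1}\wt\beta_{s'_0+1+\ell}$ and meets the subpath $I^{[t_0,t'_0]}_{\wt\F}(\wt y_0)$ of $\wt\beta_{s'_0+1+\ell}$ transversally, because it covers the fundamental domain of $\wt\alpha_0$ carrying the original transverse intersection at $R_0\wt\alpha_0$. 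That single observation finishes the proof, once the deck transformation $R_0T^{-q}R_1^{-1}$ has been correctly identified.
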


In fact under the hypotheses of Proposition~\ref{PropBndedDevRatCase1a} one can get the conclusions of Propositions~\ref{PropBndedDevRatCase2} and \ref{PropBndedDevRatCase2b}. This shows that under the hypothesis that the tracking geodesic of $z$ is not simple, the conclusions of Proposition~\ref{PropBndedDevRatCase2} holds (as by Proposition~\ref{Prop3.3GLCP} this prevents from having an accumulation phenomenon). Similarly, under the hypothesis that the tracking geodesics of $z_1$ and $z_2$ are not simple, the conclusion of Proposition~\ref{PropBndedDevRatCase2b} holds. This corollary will be used in further works, it gives a simple criterion of creation of periodic orbits.

\begin{coro}\label{CoroKiSert}
Let $f\in\Homeo_0(S)$ and $\gamma_1, \gamma_2$ two closed geodesics that are tracking geodesics for some $f$-ergodic measures and that are not simple geodesics. 
Let $T_1, T_2\in \G$ be primitive deck transformations associated to these closed geodesics.

Then there exist periodic points $z_1$ and $z_2$ such that $\gamma_{z_1} = \gamma_1$ and $\gamma_{z_2} = \gamma_2$.

Moreover, for any $M>0$ there exists $D'>0$ and $m_1\ge 0$ such that the following is true.
For $i=1,2$, suppose that there exist $4$ deck transformations $(R_i^j)_{1\le j\le 4}\in\G$ such that the following properties hold:
\begin{itemize}
\item the sets $R_i^j V_{D'}(\wt\gamma_i)$ are pairwise disjoint and have the same orientation;
\item there exists $0\le n'_0\le n_0$, with $n'_0\ge m_1$ and $n_0-n'_0\ge m_1$ such that for any $1\le j\le 4$, the points $\wt y_0$ and $\wt f^{n'_0}(\wt y_0)$ lie in different sides of the complement of $R_1^j V_{D'}(\wt\gamma_1)$, and the points $\wt f^{n'_0}(\wt y_0)$ and $\wt f^{n_0}(\wt y_0)$ lie in different sides of the complement of $R_2^j V_{D'}(\wt\gamma_2)$.
\end{itemize}

Then there exists an $\wt f$-admissible transverse path $\wt\beta$ of order $n_0+2m_1$ and parametrised by $[t_0, t_2]$, and some $t_1\in (t_0, t_2)$ such that $\wt\beta|_{[t_0, t_1]}$ and $R_3^1 T_1^3(R_2^1)^{-1}\wt\beta|_{[t_0, t_1]}$ intersect $\F$-transversally, and that $\wt\beta|_{[t_1, t_2]}$ and $R_2^2 T_2^{-3}(R_3^2)^{-1}\wt\beta|_{[t_1, t_2]}$ intersect $\F$-transversally.

The path $\wt\beta$ is made of the concatenation of some paths $I^{[s_1,t_1]}_\F(z_1)$, $I^{[u_1, u_2]}_\F(y_0)$ and $I^{[s_2,t_2]}_\F(z_2)$, with $t_1-s_1\ge M$ and $t_2-s_2\ge M$.

Finally, if $\gamma_1 = \gamma_2$, then there exists a constant $d_0>0$ depending only on $z$ (and neither on $y_0$ nor on $n_0$) such that the tracking geodesic $\gamma_p$ of $p$ is freely homotopic to the concatenation $I^{[t_2,t_3]}_{\F}( y_0)\delta$, where $\diam(\wt\delta)\le d_0$ (with $\wt\delta$ a lift of $\delta$ to $\wt S$). 
\end{coro}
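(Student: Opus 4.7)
My strategy is twofold: first exploit the non-simplicity of each $\gamma_i$ to produce periodic points $z_i$ with $\gamma_{z_i}=\gamma_i$; second, use these periodic points together with the non-simplicity again (via Proposition~\ref{Prop3.3GLCP}) to reduce the transverse-intersection conclusion to the two-geodesic patching argument of Proposition~\ref{PropBndedDevRatCase2b}. The key observation driving the second reduction is that non-simplicity of $\gamma_i$ rules out the accumulation configuration of Proposition~\ref{Prop3.3GLCP}, which is precisely what permits the weaker $V_{D'}(\wt\gamma_i)$-crossing hypothesis of the corollary to replace the stricter band-containment hypothesis of Proposition~\ref{PropBndedDevRatCase2b}.

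For the existence of $z_1, z_2$ I argue by contradiction. If $\gamma_i$ is not the tracking geodesic of any periodic point, the proof of Proposition~\ref{LemRealizPeriod} provides a $\mu_i$-typical $\wt z$ whose orbit is $\wt\F$-equivalent to a simple $T_i$-invariant $\wt\alpha_0^i\subset\wt S$, and so in particular stays in the band $\wt B$. Its projection $\alpha_0^i$ lies in the free homotopy class of the non-simple $\gamma_i$; since the geodesic representative minimises the self-intersection number in its free homotopy class, $\alpha_0^i$ must have a geometric self-intersection, which lifts to $\wt\alpha_0^i\cap R\wt\alpha_0^i\ne\emptyset$ for some $R\in\G\setminus\langle T_i\rangle$. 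Combining this crossing with the recurrence of $\wt z$ (Lemma~\ref{LemRecur}) and the drawing/visiting/crossing-band analysis of Propositions~\ref{Prop2.1.15Lellouch} and \ref{Prop2.1.16Lellouch}, carefully choosing translates so that the resulting deck transformation lies in the $T_i$-conjugacy class, one obtains an $\wt\F$-transverse intersection of $I^\Z_{\wt\F}(\wt z)$ with a $T_i$-translate of itself. Theorem~\ref{thmMlct2} then furnishes a periodic point with tracking geodesic $\gamma_i$, contradicting the assumption; Lemma~\ref{LemPerPtSimple2} then allows us to take its transverse path to be simple and $T_i$-invariant.

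For the main conclusion, with $z_1, z_2$ periodic the $\wt\alpha_0^i$ are simple, $T_i$-invariant, and contained in the $D_i$-neighbourhood of $\wt\gamma_i$ for some $D_i=D_i(z_i)$. Set $D'>\max(D_1,D_2)$ large enough that the two sides of $R_i^j V_{D'}(\wt\gamma_i)$ separate the two sides of the band $R_i^j\wt B_i$, and take $m_1$ to be the maximum of the Subsection~\ref{SubSecConst}-constants associated to $z_1$ and $z_2$, enlarged so that the pieces of $\wt\beta$ coming from the periodic orbits attain length $\ge M$, in the spirit of Remark~\ref{RemBigSubpaths}. Non-simplicity of $\gamma_i$, combined with Proposition~\ref{Prop3.3GLCP}, rules out any accumulation of $R_i^j\wt\alpha_0^i$ in the orbit of $\wt y_0$: such accumulation would yield a simple transverse loop freely homotopic to $\gamma_i$, which cannot exist. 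Hence the hypotheses of Proposition~\ref{PropBndedDevRatCase1a} hold for each $i$, and the two-geodesic patching construction of the proof of Proposition~\ref{PropBndedDevRatCase2b} applies verbatim to produce $\wt\beta$ with the two prescribed $\wt\F$-transverse self-intersections and the concatenation structure $I^{[s_1,t_1]}_\F(z_1)\,I^{[u_1,u_2]}_\F(y_0)\,I^{[s_2,t_2]}_\F(z_2)$ with $t_j-s_j\ge M$.

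The final statement, when $\gamma_1=\gamma_2$, is obtained as in Proposition~\ref{PropBndedDevRatCase2} by applying Theorem~\ref{thmMlct2} to one of the transverse intersections of $\wt\beta$: the resulting periodic point has tracking geodesic $\gamma_p$ determined by the associated deck transformation, and a direct bookkeeping on the three pieces of $\wt\beta$ — using that the $z_i$-pieces have bounded displacement (controlled by $D'$ and the $z_i$-dependent constants) and that the $y_0$-piece is controlled through $V_{D'}$-tubes — identifies $\gamma_p$ as freely homotopic to $I^{[t_2,t_3]}_\F(y_0)\,\delta$ with $\diam(\wt\delta)\le d_0(z)$. The main obstacle is the identification of the tracking geodesic in the first part: a generic $\wt\F$-transverse intersection extracted from $\wt\alpha_0^i\cap R\wt\alpha_0^i\ne\emptyset$ would yield, via Theorem~\ref{thmMlct2}, a periodic point with tracking geodesic the axis of $R$ rather than $\gamma_i$, so producing an intersection whose deck transformation lies in the $T_i$-conjugacy class requires careful choice of translates and band bookkeeping between $\wt B$ and its $R$-translates.
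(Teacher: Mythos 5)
Your proposal for the latter two assertions follows the paper's own strategy: rule out the accumulation configuration using Proposition~\ref{Prop3.3GLCP}.1 together with the non-simplicity of $\gamma_i$ (any simple transverse loop equivalent to $\alpha_0^i$ would be freely homotopic to a simple curve in the class of $\gamma_i$, impossible), then invoke Remark~\ref{RemPossibBand} to split into the crossing and visiting cases, and combine Propositions~\ref{PropBndedDevRatCase2b} and~\ref{PropBndedDevRatCase1a}, with $M$-sized pieces extracted as in Remark~\ref{RemBigSubpaths}. That part is essentially the paper's argument, stated at roughly the same level of detail.

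The genuine gap is in your argument for the existence of the periodic points $z_1, z_2$, and you have in fact flagged it yourself. You deduce from $\gamma_i$ non-simple that the projected loop $\alpha_0^i$ has a self-intersection, hence $\wt\alpha_0^i \cap R\wt\alpha_0^i\neq\emptyset$ for some $R\in\G\setminus\langle T_i\rangle$. But even granting you can upgrade that geometric intersection to an $\wt\F$-transverse one, applying Theorem~\ref{thmMlct2} to this intersection produces a periodic point $\wt z'$ with $\wt f^n(\wt z') = R\wt z'$, whose tracking geodesic is the axis of $R$ — which by construction is \emph{not} $\gamma_i$. Turning this into a periodic point whose tracking geodesic is $\gamma_i$ (i.e.\ forcing the relevant deck transformation into the conjugacy class of a power of $T_i$) is exactly the ``careful choice of translates and band bookkeeping'' that your last paragraph postpones; it is not a cosmetic issue, since the whole content of the first assertion of the corollary is that the tracking geodesic is $\gamma_i$ and not some other closed geodesic in the chaotic class. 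The paper sidesteps this entirely: the existence of $z_1, z_2$ is obtained by a direct citation to a prior result, \cite[Proposition 4.1.(iii)]{pa}, and the proof of the corollary does not re-derive it. So your sketch does not recover the first assertion; you either need to cite that external statement as the paper does, or supply the missing band argument showing that a transverse intersection with $T_i$ (rather than with $R$) can be extracted from the non-simplicity of $\alpha_0^i$.

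Two smaller points worth noting. First, you write that ``the hypotheses of Proposition~\ref{PropBndedDevRatCase1a} hold for each $i$''; that is only the crossing/non-accumulating subcase, and in the visiting subcase you need Proposition~\ref{PropBndedDevRatCase2b} directly — the paper phrases this as a dichotomy via Remark~\ref{RemPossibBand}, which you should make explicit. Second, the hypothesis of the corollary is stated in terms of crossing $V_{D'}(\wt\gamma_i)$, not the band $R_i^j\wt B_i$; the translation between the two uses that $\wt B_i\subset V_{D_i}(\wt\gamma_i)$ and is a topological plane separating the two sides, so that a trajectory whose endpoints lie on opposite sides of $V_{D'}(\wt\gamma_i)$ must enter and exit $\wt B_i$ — you gesture at this (``separate the two sides of the band'') but should record it.
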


\begin{proof}
%
Thanks to Remark~\ref{RemPossibBand}, the corollary is obtained as a combination of Propositions~\ref{PropBndedDevRatCase2b} and \ref{PropBndedDevRatCase1a}, apart from the existence of the points $z_1$ and $z_2$ that is a consequence of \cite[Proposition 4.1.(iii)]{pa}. 
\end{proof}

\begin{proof}[Proof of Proposition~\ref{PropBndedDevRatCase1a}]
The hypotheses of the proposition allow to apply Lemma~\ref{LemPasAccImplTrans} which implies that for $i=0,1$ the transverse paths $I^{[t_i,t'_i]}_{\wt \F}(\wt y_0)$ and $R_i\wt\alpha_0$ intersect $\wt\F$-transversally. 
As $\wt\alpha_0$ is $\F$-equivalent to $I^\Z_{\wt\F}(\wt z)$, we deduce that there exist $s_i<s''_i< s'_i$ and $t''_i\in(t_i, t'_i)$ such that the transverse trajectories $I^{[t_i,t'_i]}_{\wt \F}(\wt y_0)$ and $R_i I^{[s_i,s'_i]}_{\wt \F}(\wt z)$ intersect $\wt\F$-transversally at $I^{t''_i}_{\wt \F}(\wt y_0) = R_i I^{s''_i}_{\wt \F}(\wt z)$. 

By Proposition~\ref{propFondalct1}, for any $s\ge s'_1$, there exists an admissible path $\wt\beta_s$ that is $\F$-equivalent to $I^{[t_0, t''_1]}(\wt y_0) \, I^{[s''_1,s]}_{\wt \F}(R_1\wt z)$. 

Consider a neighbourhood $\wt U$ of $\wt z$ given by Lemma~\ref{Lem17Lct1} (applied in $\wt S$), such that for any $\wt z'\in \wt U$ the path $I^{[s_0,s'_0]}_{\wt\F}(\wt z)$ is $\wt \F$-equivalent to a subpath of $I^{[s_0-1, s'_0+1]}_{\wt \F}(\wt z')$.
By Lemma~\ref{LemRecur}, there exist a neighbourhood $\wt W\subset \wt U$, $\ell\ge s''_1-s_1+1$ and $q>0$ such that $\wt f^{\ell}(\wt z) \in T^{q}(\wt W) \subset T^{q}(\wt U)$. As a result, the path $I^{[s_0,s'_0]}_{\wt\F}(\wt z)$ is $\wt \F$-equivalent to a subpath of $I^{[s_0-1, s'_0+1]}_{\wt \F}\big(T^{-q}\wt f^\ell(\wt z)\big) = T^{-q}I^{[s_0-1+\ell, s'_0+1+\ell]}_{\wt \F}(\wt z)$. Hence, the paths $I^{[t_0, t'_0]}(\wt y_0)$ and $R_0 T^{-q}I^{[s_0-1+\ell, s'_0+1+\ell]}_{\wt \F}(\wt z)$ intersect $\wt\F$-transversally. The first one is a subpath of $\wt\beta_{s'_0+1+\ell}$ and the second one is a subpath of $R_0 T^{-q}R_1^{-1}\wt\beta_{s'_0+1+\ell}$, therefore the paths $\wt\beta_{s'_0+1+\ell}$ and $R_0 T^{-q}R_1^{-1}\wt\beta_{s'_0+1+\ell}$ intersect $\wt\F$-transversally.

This allows us to apply Theorem~\ref{thmMlct2}, which implies that there exists $r>0$ and $\wt z'\in\wt S$ such that $\wt f^r(\wt z') = R_0 T^{-q}R_1^{-1} \wt z'$.

Because, by hypothesis, $R_0\wt\gamma_{\wt z}$ and $R_1\wt\gamma_{\wt z}$ do not cross and have the same orientation, the axis of the deck transformation $R_0 T^{-q}R_1^{-1}$ --- that sends the second one on the first one --- has to cross both $R_0\wt\gamma_{\wt z}$ and $R_1\wt\gamma_{\wt z}$. Hence, the tracking geodesic of $\wt z'$ has to cross both $R_0\wt\gamma_{\wt z}$ and $R_1\wt\gamma_{\wt z}$; this proves the proposition. 
\end{proof}

The difficult case is handeled in the following proposition.

\begin{prop}\label{PropBndedDevRatCase1}
Let $\wt y_0\in \wt S$, $n_0\in\N$ and 4 deck transformations $(R_i)_{}$ in $\G$ such that $(R_{i} \wt B)_{-1\le i\le 2}$ are different copies of $\wt B$ such that  $I^{[a_i,b_i]}(\wt y_0)$ crosses $R_i(\wt B)$ for $-1\le i\le 2$.
Suppose also that for some $i\in\{-1,0,1,2\}$, the path $R_i\wt\alpha_0$ (recall that this path is given by Proposition~\ref{LemRealizPeriod}) accumulates in $I^{[0,n_0]}_{\wt \F}(\wt y_0)$. 

Then there exists an $f$-periodic orbit having a lift whose tracking geodesic crosses both $R_0\wt\gamma_{\wt z}$ and $R_1\wt\gamma_{\wt z}$.
\end{prop}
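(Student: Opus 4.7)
The plan is to exploit the rigid structure imposed by the accumulation hypothesis via Proposition~\ref{Prop3.3GLCP} to reduce the problem to the non-accumulation case already handled by Proposition~\ref{PropBndedDevRatCase1a}. By possibly replacing $f$ with $f^{-1}$ (and reversing the indexing of the $R_i$), I may assume that some $R_j\wt\alpha_0$ accumulates positively in $I^{[0,n_0]}_{\wt\F}(\wt y_0)$. Applying Proposition~\ref{Prop3.3GLCP} to the projections on $S$, the recurrent transverse loop $\alpha_0$ is $\F$-equivalent to a simple transverse loop $\Gamma_*$ whose band $B$ is an open annulus of $S$; point~3 of that proposition then forces the accumulated boundary leaf of $R_j\wt B$ to lie on one specific side (say $\partial R_j\wt B^L$), and $R_j\wt B\subset R(\wt\phi)$ for every leaf $\wt\phi\subset\partial R_j\wt B^L$, which already strongly restricts how the orbit can enter and exit $R_j\wt B$.

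The first step would be to show that at most one index $j\in\{-1,0,1,2\}$ can satisfy the accumulation property. If both $R_{j_1}\wt\alpha_0$ and $R_{j_2}\wt\alpha_0$ accumulated in the same orbit segment, then by Lemma~\ref{LemAccumul} the accumulation would propagate to every transverse path crossing each of the bands; combining the boundary rigidity of Proposition~\ref{Prop3.3GLCP}.3 on each side with Lemma~\ref{LemAccumul2} would then contradict the fact that $R_{j_1}\wt B$ and $R_{j_2}\wt B$ are genuinely distinct lifts of the same annular band $B$ (in spirit a variant of Proposition~\ref{Prop2.1.3Lellouch}, which forbids a recurrent path from accumulating on itself, transposed to two distinct translates). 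Hence at least three of the four copies contribute no accumulation.

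The second step is a case analysis on $j$. If $j\in\{-1,2\}$, then neither $R_0\wt\alpha_0$ nor $R_1\wt\alpha_0$ accumulates in the corresponding crossing subpath, and Proposition~\ref{PropBndedDevRatCase1a} applies directly to the pair $(R_0,R_1)$ to produce the desired periodic orbit. If $j=0$, I would instead apply Proposition~\ref{PropBndedDevRatCase1a} to the surrogate pair $(R_{-1},R_1)$, producing an $f$-periodic orbit whose tracking geodesic axis $A$ crosses both $R_{-1}\wt\gamma_{\wt z}$ and $R_1\wt\gamma_{\wt z}$; since the four $R_i\wt\gamma_{\wt z}$ are pairwise disjoint, ordered along the orbit of $\wt y_0$ and share the same orientation (the natural setting in which Proposition~\ref{PropBndedDevRatCase1} will be applied, obtained through Lemma~\ref{LemUseResidFinite}), the axis $A$ is forced to cross the intermediate geodesic $R_0\wt\gamma_{\wt z}$ as well, giving the conclusion. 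The case $j=1$ is symmetric, using the surrogate pair $(R_0,R_2)$.

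The main obstacle I expect is the rigidity step that rules out simultaneous accumulation into two distinct copies: this requires a careful combination of Proposition~\ref{Prop3.3GLCP}.3 and Lemma~\ref{LemAccumul2}, together with the distinctness of the translates $R_i\wt B$, and it is the genuinely new ingredient compared with Proposition~\ref{PropBndedDevRatCase1a}. A secondary delicate point is the surrogate-pair argument: one must verify that the resulting periodic axis $A$ really does cross the middle geodesic $R_0\wt\gamma_{\wt z}$ (which follows from the planar ordering of the geodesics in $\wt S$ once the shared orientation is established), and also that the crossing subpaths of $I^{[0,n_0]}(\wt y_0)$ with the surrogate bands are themselves free of accumulation, which is precisely what the first step ensures.
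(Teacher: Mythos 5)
There is a genuine gap, and it is at the heart of your step 1. You claim that at most one index $j\in\{-1,0,1,2\}$ can satisfy the accumulation hypothesis, and then build the rest of the argument on falling back to Proposition~\ref{PropBndedDevRatCase1a} for a surrogate pair. This is exactly backwards. Accumulation of $R_j\wt\alpha_0$ on $I^{[0,n_0]}_{\wt\F}(\wt y_0)$ projects to accumulation of $\alpha_0$ on the trajectory of $y_0$ in $S$, and that trajectory crosses the annular band $B$ once for each index; Lemma~\ref{LemAccumul} then yields that $\alpha_0$ accumulates on \emph{every} transverse path crossing $B$, and lifting back, \emph{every} $R_i\wt\alpha_0$ accumulates on the trajectory. (This propagation is used explicitly at the end of the proof of Theorem~\ref{ThBndedDevRat}.) Your appeal to a ``rigidity'' variant of Proposition~\ref{Prop2.1.3Lellouch} to forbid simultaneous accumulation does not apply: that proposition forbids a recurrent path from accumulating on \emph{itself}, whereas here $R_{j_1}\wt\alpha_0$ and $R_{j_2}\wt\alpha_0$ each accumulate on a finite trajectory segment; because the accumulation is one-sided along each band (Proposition~\ref{Prop3.3GLCP}.3), nothing prevents the orbit segment from approaching and leaving several translates of $\wt B$ in succession. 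Consequently there is no surrogate pair satisfying the non-accumulation hypothesis of Proposition~\ref{PropBndedDevRatCase1a}, and the reduction collapses.

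The paper's argument is fundamentally different and genuinely has to work inside the accumulation regime. It chooses a leaf $\wt\phi_0$ deep in the band via Lemmas~\ref{LemAccumul} and~\ref{LemAccumul2} (Lemma~\ref{LemLeavesDisjointTraj}), then reproduces Lellouch's forcing argument (Lemma~\ref{Lem345Lellouch}, Proposition~\ref{Prop2.2.13Lellouch}) to show that suitable iterates of a connected set $\wh O$ meet their $T^3$-translate (Lemma~\ref{LemInterO}); this manufactures an admissible transverse path $\wh\beta$ of controlled order (Lemma~\ref{LemExistPathGamma}). Finally a case analysis in Lemma~\ref{ExistTransInterAccCase} shows that either the original trajectory already has an $\wh\F$-transverse self-intersection by a deck translate, or $\wh\beta$ does, and Theorem~\ref{thmMlct2} closes the argument. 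None of these ingredients appears in your proposal, and their role cannot be bypassed by reducing to Proposition~\ref{PropBndedDevRatCase1a}.
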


The end of this subsection is devoted to the proof of this proposition.

Let us give an idea of the proof (see Figure~\ref{FigIdeaPropBndedDevRatCase1}). The two transverse trajectories $R_0^{-1}I^{[0,n_0]}_{\wt\F}(\wt y_0)$ and $R_1^{-1} I^{[0,n_0]}_{\wt\F}(\wt y_0)$ cross the band $\wt B$. If these two trajectories have an $\wt\F$-transverse intersection inside the band $\wt B$ (Figure~\ref{FigIdeaPropBndedDevRatCase1}, left), then Theorem~\ref{thmMlct2} allows us to create a new periodic point whose tracking geodesic crosses $\gamma$. If not (Figure~\ref{FigIdeaPropBndedDevRatCase1}, right), then there exists $k\in \Z$ such that $R_1^{-1} I^{[0,n_0]}_{\wt\F}(\wt y_0)$ crosses $\wt B$ between $T^k R_0^{-1}I^{[0,n_0]}_{\wt\F}(\wt y_0)$ and $T^{k+1} R_0^{-1}I^{[0,n_0]}_{\wt\F}(\wt y_0)$. The idea is to use the trajectory of $\wt z$ to apply Lellouch's forcing argument \cite{lellouch} that creates an admissible path $\wt\beta$ such that $R_0^{-1}\wt\beta$ and $R_1^{-1}\wt \beta$ intersect $\wt \F$-transversally; again Theorem~\ref{thmMlct2} allows us to create a new periodic point whose tracking geodesic crosses $\gamma$.

\begin{figure}
\begin{center}

\tikzset{every picture/.style={line width=0.75pt}} 

\begin{tikzpicture}[x=0.75pt,y=0.75pt,yscale=-.9,xscale=.92]

\draw  [draw opacity=0][fill={rgb, 255:red, 245; green, 166; blue, 35 }  ,fill opacity=0.15 ] (100,102) .. controls (100,89.85) and (109.85,80) .. (122,80) -- (298,80) .. controls (310.15,80) and (320,89.85) .. (320,102) -- (320,168) .. controls (320,180.15) and (310.15,190) .. (298,190) -- (122,190) .. controls (109.85,190) and (100,180.15) .. (100,168) -- cycle ;
\draw [color={rgb, 255:red, 245; green, 166; blue, 35 }  ,draw opacity=1 ][fill={rgb, 255:red, 255; green, 255; blue, 255 }  ,fill opacity=1 ]   (210,80) .. controls (231.83,105.46) and (256.33,113.96) .. (260,80) ;
\draw [color={rgb, 255:red, 245; green, 166; blue, 35 }  ,draw opacity=1 ][fill={rgb, 255:red, 255; green, 255; blue, 255 }  ,fill opacity=1 ]   (270,80) .. controls (268.83,103.46) and (287.83,113.96) .. (300,80) ;
\draw [color={rgb, 255:red, 245; green, 166; blue, 35 }  ,draw opacity=1 ][fill={rgb, 255:red, 255; green, 255; blue, 255 }  ,fill opacity=1 ]   (170,80) .. controls (168.83,103.46) and (187.83,113.96) .. (200,80) ;
\draw [color={rgb, 255:red, 245; green, 166; blue, 35 }  ,draw opacity=1 ][fill={rgb, 255:red, 255; green, 255; blue, 255 }  ,fill opacity=1 ]   (110,80) .. controls (131.83,105.46) and (156.33,113.96) .. (160,80) ;
\draw  [draw opacity=0][fill={rgb, 255:red, 255; green, 255; blue, 255 }  ,fill opacity=1 ] (97.14,75.57) .. controls (97.14,70.6) and (101.17,66.57) .. (106.14,66.57) .. controls (111.11,66.57) and (115.14,70.6) .. (115.14,75.57) .. controls (115.14,80.54) and (111.11,84.57) .. (106.14,84.57) .. controls (101.17,84.57) and (97.14,80.54) .. (97.14,75.57) -- cycle ;
\draw [color={rgb, 255:red, 245; green, 166; blue, 35 }  ,draw opacity=1 ][fill={rgb, 255:red, 255; green, 255; blue, 255 }  ,fill opacity=1 ]   (250,190) .. controls (240.14,152.67) and (280.81,164.33) .. (300,190) ;
\draw [color={rgb, 255:red, 245; green, 166; blue, 35 }  ,draw opacity=1 ][fill={rgb, 255:red, 255; green, 255; blue, 255 }  ,fill opacity=1 ]   (120,190) .. controls (110.14,152.67) and (150.81,164.33) .. (170,190) ;
\draw [color={rgb, 255:red, 0; green, 80; blue, 235 }  ,draw opacity=1 ]   (131.42,188.88) .. controls (156.66,139.72) and (275.65,144.45) .. (285.65,85.65) ;
\draw [shift={(216.3,140.14)}, rotate = 160.22] [fill={rgb, 255:red, 0; green, 80; blue, 235 }  ,fill opacity=1 ][line width=0.08]  [draw opacity=0] (8.04,-3.86) -- (0,0) -- (8.04,3.86) -- (5.34,0) -- cycle    ;
\draw [color={rgb, 255:red, 245; green, 166; blue, 35 }  ,draw opacity=1 ][fill={rgb, 255:red, 255; green, 255; blue, 255 }  ,fill opacity=1 ]   (190.57,190.57) .. controls (193.37,164.37) and (208.51,151.8) .. (220.57,190.57) ;
\draw [color={rgb, 255:red, 74; green, 144; blue, 226 }  ,draw opacity=1 ]   (257.16,186.51) .. controls (282.41,137.36) and (220.52,142.2) .. (230.52,83.4) ;
\draw [shift={(244.52,132.91)}, rotate = 48.39] [fill={rgb, 255:red, 74; green, 144; blue, 226 }  ,fill opacity=1 ][line width=0.08]  [draw opacity=0] (8.04,-3.86) -- (0,0) -- (8.04,3.86) -- (5.34,0) -- cycle    ;
\draw  [draw opacity=0][fill={rgb, 255:red, 245; green, 166; blue, 35 }  ,fill opacity=0.15 ] (357.5,103.17) .. controls (357.5,91.02) and (367.35,81.17) .. (379.5,81.17) -- (555.5,81.17) .. controls (567.65,81.17) and (577.5,91.02) .. (577.5,103.17) -- (577.5,169.17) .. controls (577.5,181.32) and (567.65,191.17) .. (555.5,191.17) -- (379.5,191.17) .. controls (367.35,191.17) and (357.5,181.32) .. (357.5,169.17) -- cycle ;
\draw [color={rgb, 255:red, 245; green, 166; blue, 35 }  ,draw opacity=1 ][fill={rgb, 255:red, 255; green, 255; blue, 255 }  ,fill opacity=1 ]   (467.5,81.17) .. controls (489.33,106.63) and (513.83,115.13) .. (517.5,81.17) ;
\draw [color={rgb, 255:red, 245; green, 166; blue, 35 }  ,draw opacity=1 ][fill={rgb, 255:red, 255; green, 255; blue, 255 }  ,fill opacity=1 ]   (527.5,81.17) .. controls (526.33,104.63) and (545.33,115.13) .. (557.5,81.17) ;
\draw [color={rgb, 255:red, 245; green, 166; blue, 35 }  ,draw opacity=1 ][fill={rgb, 255:red, 255; green, 255; blue, 255 }  ,fill opacity=1 ]   (427.5,81.17) .. controls (426.33,104.63) and (445.33,115.13) .. (457.5,81.17) ;
\draw [color={rgb, 255:red, 245; green, 166; blue, 35 }  ,draw opacity=1 ][fill={rgb, 255:red, 255; green, 255; blue, 255 }  ,fill opacity=1 ]   (367.5,81.17) .. controls (389.33,106.63) and (413.83,115.13) .. (417.5,81.17) ;
\draw  [draw opacity=0][fill={rgb, 255:red, 255; green, 255; blue, 255 }  ,fill opacity=1 ] (354.64,76.74) .. controls (354.64,71.77) and (358.67,67.74) .. (363.64,67.74) .. controls (368.61,67.74) and (372.64,71.77) .. (372.64,76.74) .. controls (372.64,81.71) and (368.61,85.74) .. (363.64,85.74) .. controls (358.67,85.74) and (354.64,81.71) .. (354.64,76.74) -- cycle ;
\draw [color={rgb, 255:red, 245; green, 166; blue, 35 }  ,draw opacity=1 ][fill={rgb, 255:red, 255; green, 255; blue, 255 }  ,fill opacity=1 ]   (488.5,191.5) .. controls (478.64,154.17) and (519.31,165.83) .. (538.5,191.5) ;
\draw [color={rgb, 255:red, 245; green, 166; blue, 35 }  ,draw opacity=1 ][fill={rgb, 255:red, 255; green, 255; blue, 255 }  ,fill opacity=1 ]   (388.17,191.5) .. controls (378.31,154.17) and (418.97,165.83) .. (438.17,191.5) ;
\draw [color={rgb, 255:red, 245; green, 166; blue, 35 }  ,draw opacity=1 ][fill={rgb, 255:red, 255; green, 255; blue, 255 }  ,fill opacity=1 ]   (448.07,191.74) .. controls (450.87,165.54) and (466.01,152.96) .. (478.07,191.74) ;
\draw [color={rgb, 255:red, 74; green, 144; blue, 226 }  ,draw opacity=1 ]   (514.29,189.88) .. controls (539.54,140.72) and (478.02,143.36) .. (488.02,84.56) ;
\draw [shift={(502.21,135.34)}, rotate = 49.76] [fill={rgb, 255:red, 74; green, 144; blue, 226 }  ,fill opacity=1 ][line width=0.08]  [draw opacity=0] (8.04,-3.86) -- (0,0) -- (8.04,3.86) -- (5.34,0) -- cycle    ;
\draw [color={rgb, 255:red, 0; green, 80; blue, 235 }  ,draw opacity=1 ]   (459.69,187.14) .. controls (484.94,137.99) and (433.03,146.27) .. (443.03,87.47) ;
\draw [shift={(452.89,135.28)}, rotate = 53.46] [fill={rgb, 255:red, 0; green, 80; blue, 235 }  ,fill opacity=1 ][line width=0.08]  [draw opacity=0] (8.04,-3.86) -- (0,0) -- (8.04,3.86) -- (5.34,0) -- cycle    ;
\draw [color={rgb, 255:red, 74; green, 144; blue, 226 }  ,draw opacity=1 ]   (412.79,189.88) .. controls (438.04,140.72) and (380.02,145.36) .. (390.02,86.56) ;
\draw [shift={(402.51,136.16)}, rotate = 51.02] [fill={rgb, 255:red, 74; green, 144; blue, 226 }  ,fill opacity=1 ][line width=0.08]  [draw opacity=0] (8.04,-3.86) -- (0,0) -- (8.04,3.86) -- (5.34,0) -- cycle    ;
\draw [color={rgb, 255:red, 144; green, 19; blue, 254 }  ,draw opacity=1 ]   (357.36,139.14) .. controls (494.67,117.88) and (555.61,146.81) .. (578.17,130.47) ;
\draw [shift={(472.42,131.11)}, rotate = 180.86] [fill={rgb, 255:red, 144; green, 19; blue, 254 }  ,fill opacity=1 ][line width=0.08]  [draw opacity=0] (8.04,-3.86) -- (0,0) -- (8.04,3.86) -- (5.34,0) -- cycle    ;
\draw [color={rgb, 255:red, 65; green, 117; blue, 5 }  ,draw opacity=1 ]   (465.17,186.88) .. controls (490.41,137.72) and (540.67,155.88) .. (546.67,87.47) ;
\draw [shift={(517.52,141.58)}, rotate = 147.4] [fill={rgb, 255:red, 65; green, 117; blue, 5 }  ,fill opacity=1 ][line width=0.08]  [draw opacity=0] (8.04,-3.86) -- (0,0) -- (8.04,3.86) -- (5.34,0) -- cycle    ;

\draw (114.33,119.15) node [anchor=north west][inner sep=0.75pt]  [color={rgb, 255:red, 199; green, 135; blue, 28 }  ,opacity=1 ,xscale=1.2,yscale=1.2]  {$\wt{B}$};
\draw (136.55,191.71) node [anchor=north] [inner sep=0.75pt]  [font=\small,color={rgb, 255:red, 0; green, 80; blue, 235 }  ,opacity=1 ,xscale=1.2,yscale=1.2]  {$R_{1}^{-1} I_{\wt{\F}}^{[ 0,n_{0}]}(\wt{y}_{0})$};
\draw (220,82.06) node [anchor=south] [inner sep=0.75pt]  [font=\small,color={rgb, 255:red, 74; green, 144; blue, 226 }  ,opacity=1 ,xscale=1.2,yscale=1.2]  {$R_{0}^{-1} I_{\wt{\F}}^{[ 0,n_{0}]}(\wt{y}_{0})$};
\draw (365.17,99.65) node [anchor=north west][inner sep=0.75pt]  [color={rgb, 255:red, 199; green, 135; blue, 28 }  ,opacity=1 ,xscale=1.2,yscale=1.2]  {$\wt{B}$};
\draw (446.21,83.65) node [anchor=south] [inner sep=0.75pt]  [font=\small,color={rgb, 255:red, 0; green, 80; blue, 235 }  ,opacity=1 ,xscale=1.2,yscale=1.2]  {$R_{1}^{-1} I_{\wt{\F}}^{[ 0,n_{0}]}(\wt{y}_{0})$};
\draw (555,191.96) node [anchor=north] [inner sep=0.75pt]  [font=\small,color={rgb, 255:red, 74; green, 144; blue, 226 }  ,opacity=1 ,xscale=1.2,yscale=1.2]  {$TR_{0}^{-1} I_{\wt{\F}}^{[ 0,n_{0}]}(\wt{y}_{0})$};
\draw (390,193.96) node [anchor=north] [inner sep=0.75pt]  [font=\small,color={rgb, 255:red, 74; green, 144; blue, 226 }  ,opacity=1 ,xscale=1.2,yscale=1.2]  {$R_{0}^{-1} I_{\wt{\F}}^{[ 0,n_{0}]}(\wt{y}_{0})$};
\draw (579.33,125.26) node [anchor=west] [inner sep=0.75pt]  [font=\small,color={rgb, 255:red, 144; green, 19; blue, 254 }  ,opacity=1 ,xscale=1.2,yscale=1.2]  {$I_{\wt{F}}^{\Z}(\wt{z})$};
\draw (545.06,79.83) node [anchor=south] [inner sep=0.75pt]  [color={rgb, 255:red, 65; green, 117; blue, 5 }  ,opacity=1 ,xscale=1.2,yscale=1.2]  {$\wt{\beta }$};

\end{tikzpicture}
\caption{Idea of the proof of Proposition~\ref{PropBndedDevRatCase1}.}\label{FigIdeaPropBndedDevRatCase1}

\end{center}
\end{figure}
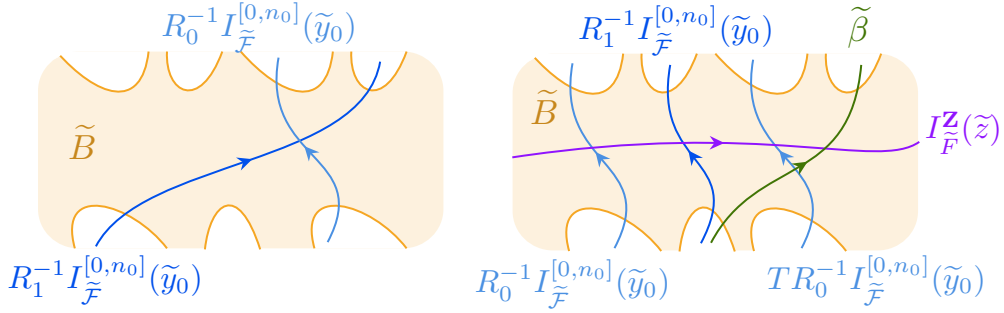

\bigskip

By replacing $\wt B$ by another lift of it, one can suppose $R_0 = \Id_{\wt S}$

We treat the case where $\wt\alpha_0$ accumulates positively in $I^{[0,n_0]}_{\wt \F}(\wt y_0)$, the case where $\wt\alpha_0$ accumulates negatively in $I^{[0,n_0]}_{\wt \F}(\wt y_0)$ being identical.

By Proposition~\ref{Prop3.3GLCP}.1, the band $\wt B$ projects to a simple annulus of $S$; in particular the geodesic $\gamma$ has to be simple.

Without loss of generality, we can suppose that $I^{[0,n_0]}_{\wt \F}(\wt y_0)$ crosses $\wt B$ from right to left (the other case being identical). By hypothesis, for $j={-1,1,2}$, the transverse path $I^{[0,n_0]}_{\wt \F}(\wt y_0)$ crosses $R_{j}\wt B$; by Proposition~\ref{Prop3.3GLCP}.3 it crosses it from right to left, and it can cross it at most once.
For $j={-1,0,1,2}$, denote $a_j<b_j\in [0,n_0]$ some numbers such that $I^{a_j}_{\wt \F}(\wt y_0) \in R_j\partial \wt B^L$ and $I^{b_j}_{\wt \F}(\wt y_0) \in R_j\partial \wt B^R$. Note that, because $\wt b$ projects to an annulus of $S$ (Proposition~\ref{Prop3.3GLCP}), for $j=-1,0,1$ one has $b_j<a_{j+1}$.

The objects of the next lemma are depicted in Figure~\ref{FigLemLeavesDisjointTraj}.

\begin{lemma}\label{LemLeavesDisjointTraj}
There exists a leaf $\wt\phi_0\subset \wt B$ such that:
\begin{itemize}
\item $R_{-1}\wt\phi_0\subset R(\wt\alpha_0)$ and $R_{1}\wt\phi_0\subset L(\wt\alpha_0)$;
\item $I^{[0,n_0]}_{\wt \F}(\wt y_0)$ meets $R_{-1}\wt\phi_0$, $R_{1}\wt\phi_0$ and $R_{2}\wt\phi_0$;
\item For any $i\in\{-3,-2,-1,1,2,3\}$, the leaves $R_{-1}\wt\phi_0$ and $R_{1}\wt\phi_0$ are disjoint from the trajectory $T^iI^{[0,n_0]}_{\wt \F}(\wt y_0)$, and the  leaves $R_{1}^{-1}\wt\phi_0$ and $R_{1}^{-1} R_2\wt\phi_0$ are disjoint from the trajectory $T^iR_1^{-1}I^{[0,n_0]}_{\wt \F}(\wt y_0)$.
\end{itemize}
\end{lemma}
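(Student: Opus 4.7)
My plan is to parameterize the leaves of $\wt B$ by a real number via the $T$-invariant simple section $\wt\alpha_0$, and then to identify the conditions of the lemma as conditions on this parameter. The accumulation hypothesis together with Proposition~\ref{Prop3.3GLCP} implies that $\wt B$ projects to a simple annulus of $S$, hence $\wt\alpha_0$ meets each leaf of $\wt B$ exactly once; writing $\wt\phi_s$ for the leaf of $\wt B$ at parameter $s$, one has $T\wt\phi_s = \wt\phi_{s+1}$.

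For the meeting conditions, I define $J_j := \{s \in \R : R_j\wt\phi_s \cap I^{[0,n_0]}_{\wt\F}(\wt y_0) \neq \emptyset\}$ for $j \in \{-1,1,2\}$. Each $J_j$ is a bounded interval, since by Proposition~\ref{Prop3.3GLCP}.3 the trajectory crosses $R_j\wt B$ exactly once from right to left. Using the accumulation of $R_i\wt\alpha_0$ in the trajectory (given by hypothesis for some $i$), Lemma~\ref{LemAccumul} applied after pulling back by $R_j^{-1}$ yields that $\wt\alpha_0$ accumulates in each pulled-back segment $R_j^{-1} I^{[a_j, b_j]}_{\wt\F}(\wt y_0)$ crossing $\wt B$. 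All these accumulations point to the same boundary leaf of $\wt B$ (by Proposition~\ref{Prop3.3GLCP}.3, since all the crossings have the same orientation), so the three intervals $J_j$ share a long common sub-interval $J^*$ accumulating to this boundary leaf.

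For the side conditions $R_{-1}\wt\phi_s \subset R(\wt\alpha_0)$ and $R_1\wt\phi_s \subset L(\wt\alpha_0)$, the geometric arrangement of the bands (the trajectory visits $R_{-1}\wt B, \wt B, R_1\wt B, R_2\wt B$ in order from right to left) places $R_{-1}\wt B$ partially in $R(\wt\alpha_0)$ and $R_1\wt B$ partially in $L(\wt\alpha_0)$, so these conditions cut out a non-empty sub-interval of $J^*$. Each of the disjointness conditions rules out a bounded sub-interval of parameters $s$ where the chosen leaf meets a $T^i$-translated trajectory (for $i \in \{-3,\ldots,3\}\setminus\{0\}$); by Lemma~\ref{LemAccumul2} each such bad interval is properly contained in $J^*$, so by taking $J^*$ long enough the residual good set is non-empty. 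Any $s_0$ in this residual set then defines $\wt\phi_0 := \wt\phi_{s_0}$.

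The main obstacle is establishing that the three intervals $J_{-1}, J_1, J_2$ share a long common sub-interval that is aligned with the half-plane conditions. The essential ingredient is Lemma~\ref{LemAccumul}, which propagates the single accumulation hypothesis (for one value of $i$) into accumulations in all three pulled-back trajectory pieces, with the accumulations pointing to the same side of $\wt B$ and thus forcing the intervals to overlap; Lemma~\ref{LemAccumul2} is then what keeps the disjointness conditions manageable as a finite exclusion.
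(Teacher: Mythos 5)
You take essentially the same route as the paper — set $\wt\phi_0 = \wt\phi_{\wt\alpha_0(t)}$ for $t$ large, with Lemma~\ref{LemAccumul} giving the meeting conditions and Lemma~\ref{LemAccumul2} giving the disjointness conditions — but you skip the only substantive step, which is to verify the hypothesis of Lemma~\ref{LemAccumul2} for the translated trajectory pieces. Lemma~\ref{LemAccumul2} requires that the path $\wh\alpha_2$ enter $\wt B$ \emph{without meeting $\partial\wt B^L$}; applied (after pulling back) to $R_{-1}^{-1}T^i I^{[0,n_0]}_{\wt\F}(\wt y_0)$ in order to rule out $R_{-1}\wt\phi_0 \cap T^i I^{[0,n_0]}_{\wt\F}(\wt y_0)$, you must therefore know that $T^i I^{[0,n_0]}_{\wt\F}(\wt y_0)$ does not cross $R_{-1}\wt B$, and a priori it very well could, since the original trajectory does cross $R_{-1}\wt B$. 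The paper's proof devotes a full paragraph to exactly this: it first shows that $\wt B$, $R_{-1}\wt B$ and $TR_{-1}\wt B$ are pairwise disjoint and unlinked (none of them separates the other two — this uses that $\wt\alpha_0$ is disjoint from its $\G/\langle T\rangle$-translates together with the north-south dynamics of $T$ on $\partial\wt S$), and then combines this with the fact that a transverse path can only cross $\wt B$ from right to left (Proposition~\ref{Prop3.3GLCP}) to conclude that $I^{[0,n_0]}_{\wt\F}(\wt y_0)$ does not cross $T^iR_{\pm 1}\wt B$ for $i\neq 0$. Your sentence ``by Lemma~\ref{LemAccumul2} each such bad interval is properly contained in $J^*$'' simply asserts the conclusion of this missing step.

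A secondary issue: your claim that each $J_j$ is a bounded interval is false and in fact contradicts the accumulation hypothesis you invoke two sentences later. The statement that $\wt\alpha_0$ accumulates positively in the crossing piece of $R_j^{-1}I^{[0,n_0]}_{\wt\F}(\wt y_0)$ says precisely that $J_j$ contains a half-line $[t_j,+\infty)$; this unboundedness is what makes the three $J_j$ overlap and makes the ``take $t$ large enough'' strategy work at all. (Also, Lemma~\ref{LemAccumul2} does not rule out a bounded bad sub-interval: it produces a threshold $t^*$ beyond which the leaf is disjoint from the given path, i.e.\ the bad set is bounded above, not bounded.)
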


\begin{proof}
The idea is to choose $\wt\phi_0 = \wt\phi_{\wt\alpha_0(t)}$ for $t$ large enough.

\begin{figure}
\begin{center}
\tikzset{every picture/.style={line width=0.75pt}} 

\begin{tikzpicture}[x=0.75pt,y=0.75pt,yscale=-1.4,xscale=1.45]

\draw  [draw opacity=0][fill={rgb, 255:red, 245; green, 166; blue, 35 }  ,fill opacity=0.2 ] (137,104) .. controls (147.37,103.95) and (147.06,104.06) .. (157,104) .. controls (166.48,133.73) and (183.98,141.98) .. (197,104) .. controls (207.29,104.18) and (207.73,103.95) .. (217,104) .. controls (209.23,168.23) and (147.48,177.73) .. (137,104) -- cycle ;
\draw  [draw opacity=0][fill={rgb, 255:red, 245; green, 166; blue, 35 }  ,fill opacity=0.2 ] (237,104) .. controls (247.37,103.95) and (247.06,104.06) .. (257,104) .. controls (266.48,133.73) and (283.98,141.98) .. (297,104) .. controls (307.29,104.18) and (307.73,103.95) .. (317,104) .. controls (309.23,168.23) and (247.48,177.73) .. (237,104) -- cycle ;
\draw  [draw opacity=0][fill={rgb, 255:red, 245; green, 166; blue, 35 }  ,fill opacity=0.2 ] (337,104) .. controls (347.37,103.95) and (347.06,104.06) .. (357,104) .. controls (366.48,133.73) and (383.98,141.98) .. (397,104) .. controls (407.29,104.18) and (407.73,103.95) .. (417,104) .. controls (409.23,168.23) and (347.48,177.73) .. (337,104) -- cycle ;
\draw  [draw opacity=0][fill={rgb, 255:red, 245; green, 166; blue, 35 }  ,fill opacity=0.2 ] (137,284) .. controls (147.37,283.95) and (147.06,284.06) .. (157,284) .. controls (167.51,255.65) and (184.08,239.94) .. (197,284) .. controls (207.29,284.18) and (207.73,283.95) .. (217,284) .. controls (197.51,209.65) and (147.51,210.8) .. (137,284) -- cycle ;
\draw  [draw opacity=0][fill={rgb, 255:red, 245; green, 166; blue, 35 }  ,fill opacity=0.2 ] (127,174) .. controls (126.94,183.94) and (127.51,203.37) .. (127,214) .. controls (157.03,205.37) and (197.6,224.51) .. (227,214) .. controls (257.6,204.8) and (297.03,223.66) .. (327,214) .. controls (356.75,205.09) and (396.75,223.66) .. (427,214) .. controls (427.13,204.23) and (427.13,183.94) .. (427,174) .. controls (407.89,164.8) and (347.89,184.8) .. (327,174) .. controls (307.6,165.09) and (247.87,183.44) .. (227,174) .. controls (206.18,163.66) and (148.18,182.8) .. (127,174) -- cycle ;
\draw  [draw opacity=0][fill={rgb, 255:red, 245; green, 166; blue, 35 }  ,fill opacity=0.2 ] (237,284) .. controls (247.37,283.95) and (247.06,284.06) .. (257,284) .. controls (267.51,255.65) and (284.08,239.94) .. (297,284) .. controls (307.29,284.18) and (307.73,283.95) .. (317,284) .. controls (297.51,209.65) and (247.51,210.8) .. (237,284) -- cycle ;
\draw  [draw opacity=0][fill={rgb, 255:red, 245; green, 166; blue, 35 }  ,fill opacity=0.2 ] (337,284) .. controls (347.37,283.95) and (347.06,284.06) .. (357,284) .. controls (367.51,255.65) and (384.08,239.94) .. (397,284) .. controls (407.29,284.18) and (407.73,283.95) .. (417,284) .. controls (397.51,209.65) and (347.51,210.8) .. (337,284) -- cycle ;
\draw [color={rgb, 255:red, 65; green, 117; blue, 5 }  ,draw opacity=1 ]   (127,194) .. controls (159.08,184.55) and (199.48,203.75) .. (227,194) ;
\draw [color={rgb, 255:red, 65; green, 117; blue, 5 }  ,draw opacity=1 ]   (227,194) .. controls (259.08,184.55) and (299.48,203.75) .. (327,194) ;
\draw [shift={(279.76,194.26)}, rotate = 187.73] [fill={rgb, 255:red, 65; green, 117; blue, 5 }  ,fill opacity=1 ][line width=0.08]  [draw opacity=0] (7.14,-3.43) -- (0,0) -- (7.14,3.43) -- (4.74,0) -- cycle    ;
\draw [color={rgb, 255:red, 65; green, 117; blue, 5 }  ,draw opacity=1 ]   (327,194) .. controls (359.08,184.55) and (399.48,203.75) .. (427,194) ;
\draw [color={rgb, 255:red, 0; green, 120; blue, 255 }  ,draw opacity=1 ]   (177,104) .. controls (184.28,225.75) and (229.88,103.35) .. (250.68,134.15) .. controls (271.48,164.95) and (135.88,201.75) .. (177,284) ;
\draw [shift={(194.46,159.05)}, rotate = 352.82] [fill={rgb, 255:red, 0; green, 120; blue, 255 }  ,fill opacity=1 ][line width=0.08]  [draw opacity=0] (7.14,-3.43) -- (0,0) -- (7.14,3.43) -- (4.74,0) -- cycle    ;
\draw [shift={(196.98,198.47)}, rotate = 134.12] [fill={rgb, 255:red, 0; green, 120; blue, 255 }  ,fill opacity=1 ][line width=0.08]  [draw opacity=0] (7.14,-3.43) -- (0,0) -- (7.14,3.43) -- (4.74,0) -- cycle    ;
\draw [color={rgb, 255:red, 0; green, 120; blue, 255 }  ,draw opacity=1 ]   (277.4,104) .. controls (284.68,225.75) and (330.28,103.35) .. (351.08,134.15) .. controls (371.88,164.95) and (236.28,201.75) .. (277.4,284) ;
\draw [shift={(294.86,159.05)}, rotate = 352.82] [fill={rgb, 255:red, 0; green, 120; blue, 255 }  ,fill opacity=1 ][line width=0.08]  [draw opacity=0] (7.14,-3.43) -- (0,0) -- (7.14,3.43) -- (4.74,0) -- cycle    ;
\draw [shift={(297.38,198.47)}, rotate = 134.12] [fill={rgb, 255:red, 0; green, 120; blue, 255 }  ,fill opacity=1 ][line width=0.08]  [draw opacity=0] (7.14,-3.43) -- (0,0) -- (7.14,3.43) -- (4.74,0) -- cycle    ;
\draw [color={rgb, 255:red, 0; green, 120; blue, 255 }  ,draw opacity=1 ]   (377,104) .. controls (384.28,225.75) and (429.88,103.35) .. (450.68,134.15) .. controls (471.48,164.95) and (335.88,201.75) .. (377,284) ;
\draw [shift={(394.46,159.05)}, rotate = 352.82] [fill={rgb, 255:red, 0; green, 120; blue, 255 }  ,fill opacity=1 ][line width=0.08]  [draw opacity=0] (7.14,-3.43) -- (0,0) -- (7.14,3.43) -- (4.74,0) -- cycle    ;
\draw [shift={(396.98,198.47)}, rotate = 134.12] [fill={rgb, 255:red, 0; green, 120; blue, 255 }  ,fill opacity=1 ][line width=0.08]  [draw opacity=0] (7.14,-3.43) -- (0,0) -- (7.14,3.43) -- (4.74,0) -- cycle    ;
\draw [color={rgb, 255:red, 255; green, 158; blue, 0 }  ,draw opacity=1 ]   (142.17,104.5) .. controls (164.97,151.3) and (187.33,141.17) .. (199.33,104.5) ;
\draw [color={rgb, 255:red, 255; green, 158; blue, 0 }  ,draw opacity=1 ]   (241.5,104.33) .. controls (264.3,151.13) and (286.67,141) .. (298.67,104.33) ;
\draw [color={rgb, 255:red, 255; green, 158; blue, 0 }  ,draw opacity=1 ]   (341.5,104.33) .. controls (364.3,151.13) and (386.67,141) .. (398.67,104.33) ;
\draw [color={rgb, 255:red, 255; green, 158; blue, 0 }  ,draw opacity=1 ]   (151.04,283.82) .. controls (162.15,231.59) and (186.9,205.79) .. (214.17,284) ;
\draw [color={rgb, 255:red, 65; green, 117; blue, 5 }  ,draw opacity=1 ]   (147,104) .. controls (174.3,164.23) and (196.3,148.01) .. (207,104) ;
\draw [shift={(181.67,143.3)}, rotate = 189.23] [fill={rgb, 255:red, 65; green, 117; blue, 5 }  ,fill opacity=1 ][line width=0.08]  [draw opacity=0] (7.14,-3.43) -- (0,0) -- (7.14,3.43) -- (4.74,0) -- cycle    ;
\draw [color={rgb, 255:red, 65; green, 117; blue, 5 }  ,draw opacity=1 ]   (247,104) .. controls (274.3,164.23) and (296.3,148.01) .. (307,104) ;
\draw [shift={(281.67,143.3)}, rotate = 189.23] [fill={rgb, 255:red, 65; green, 117; blue, 5 }  ,fill opacity=1 ][line width=0.08]  [draw opacity=0] (7.14,-3.43) -- (0,0) -- (7.14,3.43) -- (4.74,0) -- cycle    ;
\draw [color={rgb, 255:red, 65; green, 117; blue, 5 }  ,draw opacity=1 ]   (347,104) .. controls (374.3,164.23) and (396.3,148.01) .. (407,104) ;
\draw [shift={(381.67,143.3)}, rotate = 189.23] [fill={rgb, 255:red, 65; green, 117; blue, 5 }  ,fill opacity=1 ][line width=0.08]  [draw opacity=0] (7.14,-3.43) -- (0,0) -- (7.14,3.43) -- (4.74,0) -- cycle    ;
\draw [color={rgb, 255:red, 65; green, 117; blue, 5 }  ,draw opacity=1 ]   (147,284) .. controls (159.79,217.34) and (194.9,236.01) .. (207,284) ;
\draw [shift={(178.33,241.25)}, rotate = 184.18] [fill={rgb, 255:red, 65; green, 117; blue, 5 }  ,fill opacity=1 ][line width=0.08]  [draw opacity=0] (7.14,-3.43) -- (0,0) -- (7.14,3.43) -- (4.74,0) -- cycle    ;
\draw [color={rgb, 255:red, 65; green, 117; blue, 5 }  ,draw opacity=1 ]   (247,284) .. controls (259.79,217.34) and (294.9,236.01) .. (307,284) ;
\draw [shift={(278.33,241.25)}, rotate = 184.18] [fill={rgb, 255:red, 65; green, 117; blue, 5 }  ,fill opacity=1 ][line width=0.08]  [draw opacity=0] (7.14,-3.43) -- (0,0) -- (7.14,3.43) -- (4.74,0) -- cycle    ;
\draw [color={rgb, 255:red, 65; green, 117; blue, 5 }  ,draw opacity=1 ]   (347,284) .. controls (359.79,217.34) and (394.9,236.01) .. (407,284) ;
\draw [shift={(378.33,241.25)}, rotate = 184.18] [fill={rgb, 255:red, 65; green, 117; blue, 5 }  ,fill opacity=1 ][line width=0.08]  [draw opacity=0] (7.14,-3.43) -- (0,0) -- (7.14,3.43) -- (4.74,0) -- cycle    ;
\draw [color={rgb, 255:red, 255; green, 158; blue, 0 }  ,draw opacity=1 ]   (250.77,283.72) .. controls (261.88,231.49) and (286.63,205.69) .. (313.9,283.9) ;
\draw [color={rgb, 255:red, 255; green, 158; blue, 0 }  ,draw opacity=1 ]   (351.44,284.25) .. controls (362.55,232.03) and (387.3,206.22) .. (414.57,284.43) ;
\draw [color={rgb, 255:red, 245; green, 166; blue, 35 }  ,draw opacity=1 ]   (127.54,181) .. controls (197.92,177.92) and (227.03,183.66) .. (255.25,199.25) .. controls (283.47,214.84) and (386.58,204.92) .. (427.25,208.92) ;
\draw [shift={(188.7,181.1)}, rotate = 4.73] [fill={rgb, 255:red, 245; green, 166; blue, 35 }  ,fill opacity=1 ][line width=0.08]  [draw opacity=0] (8.04,-3.86) -- (0,0) -- (8.04,3.86) -- (5.34,0) -- cycle    ;
\draw [shift={(336.58,208.43)}, rotate = 359.81] [fill={rgb, 255:red, 245; green, 166; blue, 35 }  ,fill opacity=1 ][line width=0.08]  [draw opacity=0] (8.04,-3.86) -- (0,0) -- (8.04,3.86) -- (5.34,0) -- cycle    ;
\draw [color={rgb, 255:red, 0; green, 120; blue, 255 }  ,draw opacity=1 ]   (276.33,50.06) .. controls (279.83,83.06) and (274.86,94.45) .. (279.09,123.62) ;
\draw [shift={(277.58,82.47)}, rotate = 91.43] [fill={rgb, 255:red, 0; green, 120; blue, 255 }  ,fill opacity=1 ][line width=0.08]  [draw opacity=0] (8.04,-3.86) -- (0,0) -- (8.04,3.86) -- (5.34,0) -- cycle    ;
\draw  [draw opacity=0][fill={rgb, 255:red, 245; green, 166; blue, 35 }  ,fill opacity=0.2 ] (259.13,50.2) .. controls (263.58,50.25) and (263.51,50.25) .. (267.66,50.2) .. controls (270.18,76.38) and (282.59,84.33) .. (285.74,50.34) .. controls (289.66,50.46) and (290.11,50.31) .. (293.93,50.46) .. controls (292.19,95.13) and (264.59,93.53) .. (259.13,50.2) -- cycle ;
\draw [color={rgb, 255:red, 255; green, 158; blue, 0 }  ,draw opacity=1 ]   (260.86,50.46) .. controls (267.29,74.65) and (281.66,93) .. (287.66,50.2) ;
\draw [color={rgb, 255:red, 65; green, 117; blue, 5 }  ,draw opacity=1 ]   (263.22,49.67) .. controls (277.09,86.8) and (281.57,94.46) .. (292.27,50.45) ;
\draw [shift={(281.05,80.13)}, rotate = 199.79] [fill={rgb, 255:red, 65; green, 117; blue, 5 }  ,fill opacity=1 ][line width=0.08]  [draw opacity=0] (7.14,-3.43) -- (0,0) -- (7.14,3.43) -- (4.74,0) -- cycle    ;

\draw (122.33,212.4) node [anchor=north west][inner sep=0.75pt]  [color={rgb, 255:red, 176; green, 109; blue, 0 }  ,opacity=1 ,xscale=1.2,yscale=1.2]  {$\wt{B}$};
\draw (312.77,100.87) node [anchor=south] [inner sep=0.75pt]  [font=\small,color={rgb, 255:red, 176; green, 109; blue, 0 }  ,opacity=1 ,xscale=1.2,yscale=1.2]  {$R_{1}\wt{B}$};
\draw (417,100.6) node [anchor=south] [inner sep=0.75pt]  [font=\small,color={rgb, 255:red, 176; green, 109; blue, 0 }  ,opacity=1 ,xscale=1.2,yscale=1.2]  {$TR_{1}\wt{B}$};
\draw (166,102.6) node [anchor=south east] [inner sep=0.75pt]  [font=\small,color={rgb, 255:red, 176; green, 109; blue, 0 }  ,opacity=1 ,xscale=1.2,yscale=1.2]  {$T^{-1} R_{1}\wt{B}$};
\draw (159.19,283.6) node [anchor=north east] [inner sep=0.75pt]  [font=\small,color={rgb, 255:red, 176; green, 109; blue, 0 }  ,opacity=1 ,xscale=1.2,yscale=1.2]  {$T^{-1} R_{-1}\wt{B}$};
\draw (309.77,281.8) node [anchor=north] [inner sep=0.75pt]  [font=\small,color={rgb, 255:red, 176; green, 109; blue, 0 }  ,opacity=1 ,xscale=1.2,yscale=1.2]  {$R_{-1}\wt{B}$};
\draw (405.07,281.13) node [anchor=north west][inner sep=0.75pt]  [font=\small,color={rgb, 255:red, 176; green, 109; blue, 0 }  ,opacity=1 ,xscale=1.2,yscale=1.2]  {$TR_{-1}\wt{B}$};
\draw (283.6,192.6) node [anchor=south] [inner sep=0.75pt]  [color={rgb, 255:red, 56; green, 104; blue, 0 }  ,opacity=1 ,xscale=1.2,yscale=1.2]  {$\wt{\alpha }_{0}$};
\draw (266.69,284.45) node [anchor=north] [inner sep=0.75pt]  [font=\small,color={rgb, 255:red, 4; green, 90; blue, 191 }  ,opacity=1 ,xscale=1.2,yscale=1.2]  {$I_{\wt \F}^{\Z}(\wt y_{0})$};
\draw (368.81,283.2) node [anchor=north] [inner sep=0.75pt]  [font=\small,color={rgb, 255:red, 4; green, 90; blue, 191 }  ,opacity=1 ,xscale=1.2,yscale=1.2]  {$TI_{\wt \F}^{\Z}(\wt y_{0})$};
\draw (166,283.2) node [anchor=north west][inner sep=0.75pt]  [font=\small,color={rgb, 255:red, 4; green, 90; blue, 191 }  ,opacity=1 ,xscale=1.2,yscale=1.2]  {$T^{-1} I_{\wt \F}^{\Z}(\wt y_{0})$};
\draw (241.5,100.93) node [anchor=south] [inner sep=0.75pt]  [font=\small,color={rgb, 255:red, 223; green, 138; blue, 0 }  ,opacity=1 ,xscale=1.2,yscale=1.2]  {$\wt{\phi } '=R_{1}\wt{\phi }_{0}$};
\draw (292.42,241) node [anchor=south west] [inner sep=0.75pt]  [font=\small,color={rgb, 255:red, 223; green, 138; blue, 0 }  ,opacity=1 ,xscale=1.2,yscale=1.2]  {$\wt{\phi } =R_{-1}\wt{\phi }_{0}$};
\draw (138.2,174.5) node [anchor=south] [inner sep=0.75pt]  [color={rgb, 255:red, 223; green, 138; blue, 0 }  ,opacity=1 ,xscale=1.2,yscale=1.2]  {$\wt{\phi }_{0}$};
\draw (308.23,72.61) node [anchor=south] [inner sep=0.75pt]  [font=\small,color={rgb, 255:red, 176; green, 109; blue, 0 }  ,opacity=1 ,xscale=1.2,yscale=1.2]  {$R_{2}\wt{B}$};

\end{tikzpicture}\\
\vspace{10pt}
\hrule
\vspace{10pt}
\begin{tikzpicture}[x=0.75pt,y=0.75pt,yscale=-1.3,xscale=1.35]

\draw  [draw opacity=0][fill={rgb, 255:red, 245; green, 166; blue, 35 }  ,fill opacity=0.2 ] (157,67.99) .. controls (167.37,67.94) and (167.06,68.05) .. (177,67.99) .. controls (186.48,97.72) and (203.98,105.97) .. (217,67.99) .. controls (227.29,68.17) and (227.73,67.94) .. (237,67.99) .. controls (229.23,132.22) and (167.48,141.72) .. (157,67.99) -- cycle ;
\draw  [draw opacity=0][fill={rgb, 255:red, 245; green, 166; blue, 35 }  ,fill opacity=0.2 ] (257,67.99) .. controls (267.37,67.94) and (267.06,68.05) .. (277,67.99) .. controls (286.48,97.72) and (303.98,105.97) .. (317,67.99) .. controls (327.29,68.17) and (327.73,67.94) .. (337,67.99) .. controls (329.23,132.22) and (267.48,141.72) .. (257,67.99) -- cycle ;
\draw  [draw opacity=0][fill={rgb, 255:red, 245; green, 166; blue, 35 }  ,fill opacity=0.2 ] (357,67.99) .. controls (367.37,67.94) and (367.06,68.05) .. (377,67.99) .. controls (386.48,97.72) and (403.98,105.97) .. (417,67.99) .. controls (427.29,68.17) and (427.73,67.94) .. (437,67.99) .. controls (429.23,132.22) and (367.48,141.72) .. (357,67.99) -- cycle ;
\draw  [draw opacity=0][fill={rgb, 255:red, 245; green, 166; blue, 35 }  ,fill opacity=0.2 ] (157,247.99) .. controls (167.37,247.94) and (167.06,248.05) .. (177,247.99) .. controls (187.51,219.64) and (204.08,203.93) .. (217,247.99) .. controls (227.29,248.17) and (227.73,247.94) .. (237,247.99) .. controls (217.51,173.64) and (167.51,174.78) .. (157,247.99) -- cycle ;
\draw  [draw opacity=0][fill={rgb, 255:red, 245; green, 166; blue, 35 }  ,fill opacity=0.2 ] (147,137.99) .. controls (146.94,147.93) and (147.51,167.36) .. (147,177.99) .. controls (177.03,169.36) and (217.6,188.5) .. (247,177.99) .. controls (277.6,168.79) and (317.03,187.65) .. (347,177.99) .. controls (376.75,169.07) and (416.75,187.65) .. (447,177.99) .. controls (447.13,168.22) and (447.13,147.93) .. (447,137.99) .. controls (427.89,128.79) and (367.89,148.79) .. (347,137.99) .. controls (327.6,129.07) and (267.87,147.43) .. (247,137.99) .. controls (226.18,127.65) and (168.18,146.79) .. (147,137.99) -- cycle ;
\draw  [draw opacity=0][fill={rgb, 255:red, 245; green, 166; blue, 35 }  ,fill opacity=0.2 ] (257,247.99) .. controls (267.37,247.94) and (267.06,248.05) .. (277,247.99) .. controls (287.51,219.64) and (304.08,203.93) .. (317,247.99) .. controls (327.29,248.17) and (327.73,247.94) .. (337,247.99) .. controls (317.51,173.64) and (267.51,174.78) .. (257,247.99) -- cycle ;
\draw  [draw opacity=0][fill={rgb, 255:red, 245; green, 166; blue, 35 }  ,fill opacity=0.2 ] (357,247.99) .. controls (367.37,247.94) and (367.06,248.05) .. (377,247.99) .. controls (387.51,219.64) and (404.08,203.93) .. (417,247.99) .. controls (427.29,248.17) and (427.73,247.94) .. (437,247.99) .. controls (417.51,173.64) and (367.51,174.78) .. (357,247.99) -- cycle ;
\draw [color={rgb, 255:red, 65; green, 117; blue, 5 }  ,draw opacity=1 ]   (147,157.99) .. controls (179.08,148.54) and (219.48,167.74) .. (247,157.99) ;
\draw [color={rgb, 255:red, 65; green, 117; blue, 5 }  ,draw opacity=1 ]   (247,157.99) .. controls (279.08,148.54) and (319.48,167.74) .. (347,157.99) ;
\draw [shift={(299.76,158.25)}, rotate = 187.73] [fill={rgb, 255:red, 65; green, 117; blue, 5 }  ,fill opacity=1 ][line width=0.08]  [draw opacity=0] (7.14,-3.43) -- (0,0) -- (7.14,3.43) -- (4.74,0) -- cycle    ;
\draw [color={rgb, 255:red, 65; green, 117; blue, 5 }  ,draw opacity=1 ]   (347,157.99) .. controls (379.08,148.54) and (419.48,167.74) .. (447,157.99) ;
\draw [color={rgb, 255:red, 0; green, 120; blue, 255 }  ,draw opacity=1 ]   (197,67.99) .. controls (204.28,189.74) and (249.88,67.34) .. (270.68,98.14) .. controls (291.48,128.94) and (155.88,165.74) .. (197,247.99) ;
\draw [shift={(214.46,123.04)}, rotate = 352.82] [fill={rgb, 255:red, 0; green, 120; blue, 255 }  ,fill opacity=1 ][line width=0.08]  [draw opacity=0] (7.14,-3.43) -- (0,0) -- (7.14,3.43) -- (4.74,0) -- cycle    ;
\draw [shift={(216.98,162.46)}, rotate = 134.12] [fill={rgb, 255:red, 0; green, 120; blue, 255 }  ,fill opacity=1 ][line width=0.08]  [draw opacity=0] (7.14,-3.43) -- (0,0) -- (7.14,3.43) -- (4.74,0) -- cycle    ;
\draw [color={rgb, 255:red, 0; green, 120; blue, 255 }  ,draw opacity=1 ]   (297.4,67.99) .. controls (304.68,189.74) and (350.28,67.34) .. (371.08,98.14) .. controls (391.88,128.94) and (256.28,165.74) .. (297.4,247.99) ;
\draw [shift={(314.86,123.04)}, rotate = 352.82] [fill={rgb, 255:red, 0; green, 120; blue, 255 }  ,fill opacity=1 ][line width=0.08]  [draw opacity=0] (7.14,-3.43) -- (0,0) -- (7.14,3.43) -- (4.74,0) -- cycle    ;
\draw [shift={(317.38,162.46)}, rotate = 134.12] [fill={rgb, 255:red, 0; green, 120; blue, 255 }  ,fill opacity=1 ][line width=0.08]  [draw opacity=0] (7.14,-3.43) -- (0,0) -- (7.14,3.43) -- (4.74,0) -- cycle    ;
\draw [color={rgb, 255:red, 0; green, 120; blue, 255 }  ,draw opacity=1 ]   (397,67.99) .. controls (404.28,189.74) and (449.88,67.34) .. (470.68,98.14) .. controls (491.48,128.94) and (355.88,165.74) .. (397,247.99) ;
\draw [shift={(414.46,123.04)}, rotate = 352.82] [fill={rgb, 255:red, 0; green, 120; blue, 255 }  ,fill opacity=1 ][line width=0.08]  [draw opacity=0] (7.14,-3.43) -- (0,0) -- (7.14,3.43) -- (4.74,0) -- cycle    ;
\draw [shift={(416.98,162.46)}, rotate = 134.12] [fill={rgb, 255:red, 0; green, 120; blue, 255 }  ,fill opacity=1 ][line width=0.08]  [draw opacity=0] (7.14,-3.43) -- (0,0) -- (7.14,3.43) -- (4.74,0) -- cycle    ;
\draw [color={rgb, 255:red, 255; green, 158; blue, 0 }  ,draw opacity=1 ]   (162.17,68.49) .. controls (184.97,115.29) and (207.33,105.16) .. (219.33,68.49) ;
\draw [color={rgb, 255:red, 255; green, 158; blue, 0 }  ,draw opacity=1 ]   (261.5,68.32) .. controls (284.3,115.12) and (306.67,104.99) .. (318.67,68.32) ;
\draw [color={rgb, 255:red, 255; green, 158; blue, 0 }  ,draw opacity=1 ]   (361.5,68.32) .. controls (384.3,115.12) and (406.67,104.99) .. (418.67,68.32) ;
\draw [color={rgb, 255:red, 255; green, 158; blue, 0 }  ,draw opacity=1 ]   (171.04,247.8) .. controls (182.15,195.58) and (206.9,169.77) .. (234.17,247.99) ;
\draw [color={rgb, 255:red, 65; green, 117; blue, 5 }  ,draw opacity=1 ]   (167,67.99) .. controls (194.3,128.22) and (216.3,112) .. (227,67.99) ;
\draw [shift={(201.67,107.29)}, rotate = 189.23] [fill={rgb, 255:red, 65; green, 117; blue, 5 }  ,fill opacity=1 ][line width=0.08]  [draw opacity=0] (7.14,-3.43) -- (0,0) -- (7.14,3.43) -- (4.74,0) -- cycle    ;
\draw [color={rgb, 255:red, 65; green, 117; blue, 5 }  ,draw opacity=1 ]   (267,67.99) .. controls (294.3,128.22) and (316.3,112) .. (327,67.99) ;
\draw [shift={(301.67,107.29)}, rotate = 189.23] [fill={rgb, 255:red, 65; green, 117; blue, 5 }  ,fill opacity=1 ][line width=0.08]  [draw opacity=0] (7.14,-3.43) -- (0,0) -- (7.14,3.43) -- (4.74,0) -- cycle    ;
\draw [color={rgb, 255:red, 65; green, 117; blue, 5 }  ,draw opacity=1 ]   (367,67.99) .. controls (394.3,128.22) and (416.3,112) .. (427,67.99) ;
\draw [shift={(401.67,107.29)}, rotate = 189.23] [fill={rgb, 255:red, 65; green, 117; blue, 5 }  ,fill opacity=1 ][line width=0.08]  [draw opacity=0] (7.14,-3.43) -- (0,0) -- (7.14,3.43) -- (4.74,0) -- cycle    ;
\draw [color={rgb, 255:red, 65; green, 117; blue, 5 }  ,draw opacity=1 ]   (167,247.99) .. controls (179.79,181.33) and (214.9,200) .. (227,247.99) ;
\draw [shift={(198.33,205.24)}, rotate = 184.18] [fill={rgb, 255:red, 65; green, 117; blue, 5 }  ,fill opacity=1 ][line width=0.08]  [draw opacity=0] (7.14,-3.43) -- (0,0) -- (7.14,3.43) -- (4.74,0) -- cycle    ;
\draw [color={rgb, 255:red, 65; green, 117; blue, 5 }  ,draw opacity=1 ]   (267,247.99) .. controls (279.79,181.33) and (314.9,200) .. (327,247.99) ;
\draw [shift={(298.33,205.24)}, rotate = 184.18] [fill={rgb, 255:red, 65; green, 117; blue, 5 }  ,fill opacity=1 ][line width=0.08]  [draw opacity=0] (7.14,-3.43) -- (0,0) -- (7.14,3.43) -- (4.74,0) -- cycle    ;
\draw [color={rgb, 255:red, 65; green, 117; blue, 5 }  ,draw opacity=1 ]   (367,247.99) .. controls (379.79,181.33) and (414.9,200) .. (427,247.99) ;
\draw [shift={(398.33,205.24)}, rotate = 184.18] [fill={rgb, 255:red, 65; green, 117; blue, 5 }  ,fill opacity=1 ][line width=0.08]  [draw opacity=0] (7.14,-3.43) -- (0,0) -- (7.14,3.43) -- (4.74,0) -- cycle    ;
\draw [color={rgb, 255:red, 255; green, 158; blue, 0 }  ,draw opacity=1 ]   (270.77,247.7) .. controls (281.88,195.48) and (306.63,169.67) .. (333.9,247.89) ;
\draw [color={rgb, 255:red, 255; green, 158; blue, 0 }  ,draw opacity=1 ]   (371.44,248.24) .. controls (382.55,196.02) and (407.3,170.21) .. (434.57,248.42) ;
\draw [color={rgb, 255:red, 245; green, 166; blue, 35 }  ,draw opacity=1 ]   (147.54,144.99) .. controls (217.92,141.91) and (247.03,147.65) .. (275.25,163.24) .. controls (303.47,178.83) and (406.58,168.91) .. (447.25,172.91) ;
\draw [shift={(208.7,145.09)}, rotate = 4.73] [fill={rgb, 255:red, 245; green, 166; blue, 35 }  ,fill opacity=1 ][line width=0.08]  [draw opacity=0] (8.04,-3.86) -- (0,0) -- (8.04,3.86) -- (5.34,0) -- cycle    ;
\draw [shift={(356.58,172.42)}, rotate = 359.81] [fill={rgb, 255:red, 245; green, 166; blue, 35 }  ,fill opacity=1 ][line width=0.08]  [draw opacity=0] (8.04,-3.86) -- (0,0) -- (8.04,3.86) -- (5.34,0) -- cycle    ;

\draw (172.17,174.39) node [anchor=north east] [inner sep=0.75pt]  [color={rgb, 255:red, 176; green, 109; blue, 0 }  ,opacity=1 ,xscale=1.2,yscale=1.2]  {$\wt{B}$};
\draw (332.77,64.86) node [anchor=south] [inner sep=0.75pt]  [font=\small,color={rgb, 255:red, 176; green, 109; blue, 0 }  ,opacity=1 ,xscale=1.2,yscale=1.2]  {$R_{1}^{-1} R_{2}\wt{B}$};
\draw (161.19,247.09) node [anchor=north east] [inner sep=0.75pt]  [font=\small,color={rgb, 255:red, 176; green, 109; blue, 0 }  ,opacity=1 ,xscale=1.2,yscale=1.2]  {$T^{-1} R_{1}^{-1}\wt{B}$};
\draw (337.28,243.79) node [anchor=north] [inner sep=0.75pt]  [font=\small,color={rgb, 255:red, 176; green, 109; blue, 0 }  ,opacity=1 ,xscale=1.2,yscale=1.2]  {$R_{1}^{-1}\wt{B}$};
\draw (436.57,242.62) node [anchor=north west][inner sep=0.75pt]  [font=\small,color={rgb, 255:red, 176; green, 109; blue, 0 }  ,opacity=1 ,xscale=1.2,yscale=1.2]  {$TR_{1}^{-1}\wt{B}$};
\draw (303.1,156.59) node [anchor=south] [inner sep=0.75pt]  [color={rgb, 255:red, 56; green, 104; blue, 0 }  ,opacity=1 ,xscale=1.2,yscale=1.2]  {$\wt{\alpha }_{0}$};
\draw (286.69,248.44) node [anchor=north] [inner sep=0.75pt]  [font=\small,color={rgb, 255:red, 4; green, 90; blue, 191 }  ,opacity=1 ,xscale=1.2,yscale=1.2]  {$R_{1}^{-1} I_{\wt \F}^{\Z}(\wt y_{0})$};
\draw (261.5,64.92) node [anchor=south] [inner sep=0.75pt]  [font=\small,color={rgb, 255:red, 223; green, 138; blue, 0 }  ,opacity=1 ,xscale=1.2,yscale=1.2]  {$R_{1}^{-1} R_{2}\wt{\phi }_{0}$};
\draw (319.92,207.99) node [anchor=south west] [inner sep=0.75pt]  [font=\small,color={rgb, 255:red, 223; green, 138; blue, 0 }  ,opacity=1 ,xscale=1.2,yscale=1.2]  {$R_{1}^{-1}\wt{\phi }_{0}$};
\draw (158.2,138.49) node [anchor=south] [inner sep=0.75pt]  [color={rgb, 255:red, 223; green, 138; blue, 0 }  ,opacity=1 ,xscale=1.2,yscale=1.2]  {$\wt{\phi }_{0}$};
\draw (397.5,248.39) node [anchor=north] [inner sep=0.75pt]  [font=\small,color={rgb, 255:red, 4; green, 90; blue, 191 }  ,opacity=1 ,xscale=1.2,yscale=1.2]  {$TR_{1}^{-1} I_{\wt \F}^{\Z}(\wt y_{0})$};
\draw (204.5,250.89) node [anchor=north] [inner sep=0.75pt]  [font=\small,color={rgb, 255:red, 4; green, 90; blue, 191 }  ,opacity=1 ,xscale=1.2,yscale=1.2]  {$T^{-1} R_{1}^{-1} I_{\wt \F}^{\Z}(\wt y_{0})$};
\draw (439.77,64.36) node [anchor=south] [inner sep=0.75pt]  [font=\small,color={rgb, 255:red, 176; green, 109; blue, 0 }  ,opacity=1 ,xscale=1.2,yscale=1.2]  {$TR_{1}^{-1} R_{2}\wt{B}$};
\draw (153.77,66.36) node [anchor=south] [inner sep=0.75pt]  [font=\small,color={rgb, 255:red, 176; green, 109; blue, 0 }  ,opacity=1 ,xscale=1.2,yscale=1.2]  {$T^{-1} R_{1}^{-1} R_{2}\wt{B}$};

\end{tikzpicture}

\caption{The objects of Lemma~\ref{LemLeavesDisjointTraj}. The bottom picture is obtained by taking the image of the first one by the deck transformation $R_1^{-1}$. More precisely, the top one is suited for the translates $T^i I^\Z_{\wt \F}(\wt y_0)$ that cross $\wt B$, and the bottom one is suited (up to taking the image by $R_1^{-1}$) for the translates $R_1 T^i R_1^{-1} I^\Z_{\wt \F}(\wt y_0)$ that cross $R_1 \wt B$.}\label{FigLemLeavesDisjointTraj}
\end{center}
\end{figure}
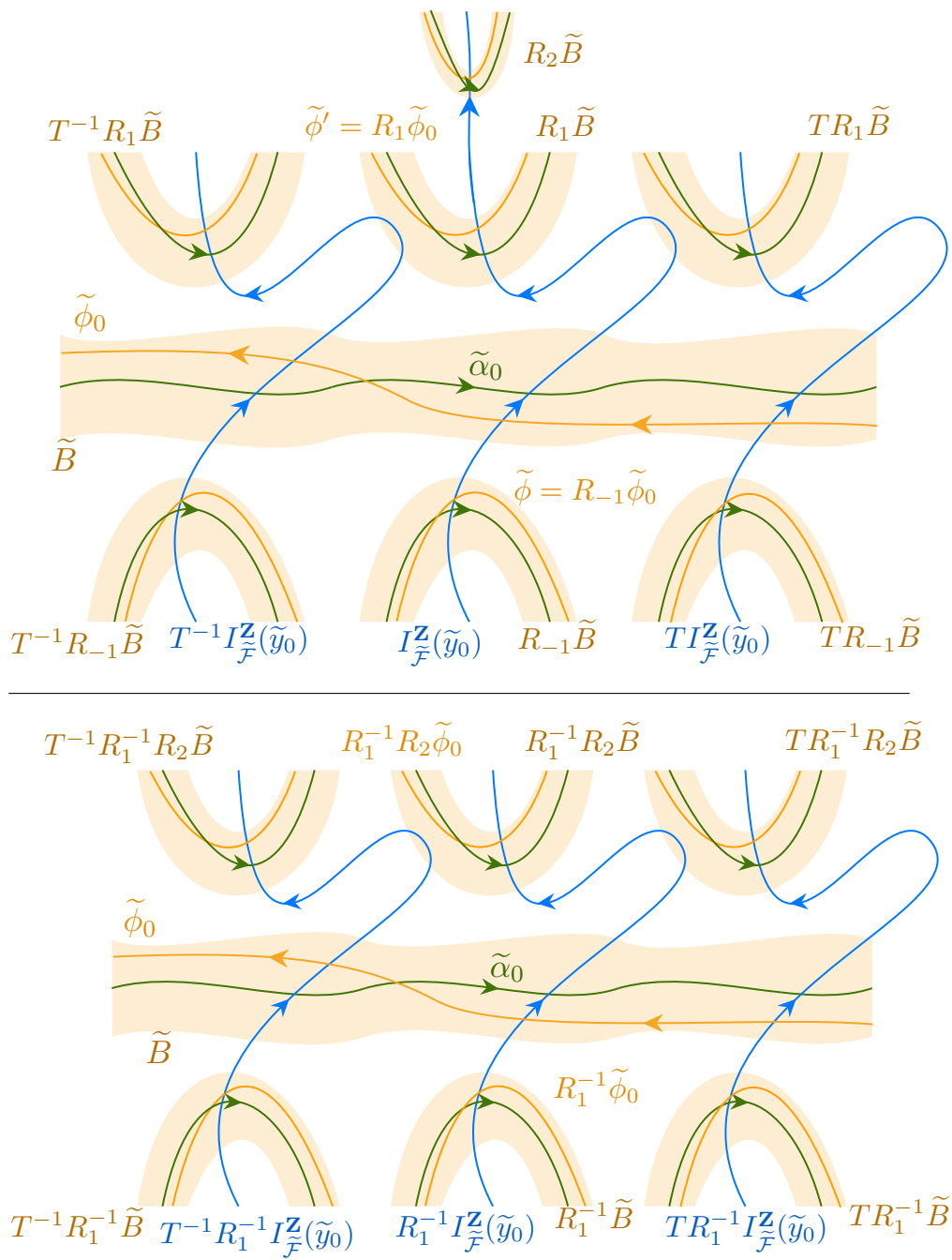

By Proposition~\ref{Prop3.3GLCP}.1, the bands $\wt B$, $R_{-1}\wt B$ and $T R_{-1}\wt B$ are pairwise disjoint. None of them separates the two other ones: this comes from the fact that $\wt\alpha_0$ is disjoint from its images by elements of $\G/\langle T\rangle$, because $\wt\alpha_0$ and $R_1\wt\alpha_0$ have the same orientation, and because of the north-south action of $T$ on $\partial\wt S$.

Recall that (still by Proposition~\ref{Prop3.3GLCP}) a transverse path cannot cross $\wt B$ from left to right.
This implies that the trajectory $I^{[0,n_0]}_{\wt \F}(\wt y_0)$ does not cross $TR_{-1}\wt B$: indeed, this trajectory crosses both $\wt B$ and $R_{-1}\wt B$, and we have seen that none of the bands $\wt B$, $R_{-1}\wt B$ and $T R_{-1}\wt B$ separates the two other ones.
Similarly, the trajectory $I^{[0,n_0]}_{\wt \F}(\wt y_0)$ does not cross $T^{-1}R_{-1}\wt B$, $TR_{1}\wt B$ nor $T^{-1}R_{1}\wt B$.
This allows us to apply Lemma~\ref{LemAccumul2} four times, to the paths $\wt \alpha_2^i = (R_{-1})^{-1} T^iI^{[0,n_0]}_{\wt \F}(\wt y_0)$ for $i=-3,-2,-1,1,2,3$. It implies that for $t$ large enough, and $i=-3,-2,-1,1,2,3$,
\begin{equation}\label{EqInterImagesYEmpty}
R_{-1} \wt\phi_{\wt\alpha_0(t)}\cap T^iI^{[0,n_0]}_{\wt \F}(\wt y_0) = \emptyset.
\end{equation}
Similarly, for $t$ large enough, and $i$ in $\{ -3, -2,-1,1,2, 3\}$,
\begin{align*}
R_{1} \wt\phi_{\wt\alpha_0(t)}\cap T^iI^{[0,n_0]}_{\wt \F}(\wt y_0)  & = \emptyset\\
R_{1}^{-1} \wt\phi_{\wt\alpha_0(t)}\cap T^iR_1^{-1}I^{[0,n_0]}_{\wt \F}(\wt y_0) & = \emptyset\\
R_{1}^{-1}R_2 \wt\phi_{\wt\alpha_0(t)}\cap T^iR_1^{-1}I^{[0,n_0]}_{\wt \F}(\wt y_0) & = \emptyset.
\end{align*}

By Lemma~\ref{LemAccumul}, for $t$ large enough, the leaf $\wt\phi_{\wt\alpha_0(t)}$ meets all the trajectories $R_j^{-1} I^{[0,n_0]}_{\wt \F}(\wt y_0)$ for $j={-1,0,1,2}$, as well as $T I^{[0,n_0]}_{\wt \F}(\wt y_0)$.
Fix $t\gg 1$ such a large enough time and set $\wt\phi_0 = \wt\phi_{\wt\alpha_0(t)}$. 

Note that by Lemma~\ref{LemAccumul}, the trajectory $I^{[0,n_0]}_{\wt \F}(\wt y)$ meets the leaves $R_{-1}\wt\phi_0$, $R_{1}\wt\phi_0$ and $R_2\wt\phi_0$. 
Equation \eqref{EqInterImagesYEmpty} then implies the third point of the lemma.
\end{proof}

Let us write
\[\wt\phi = R_{-1}\wt\phi_0,
\qquad \wt\phi' = R_{1}\wt\phi_0
\qquad \textrm{and}\qquad S_1 = R_2R_1^{-1}.\]

We now repeat the arguments of \cite[Section 3.4]{lellouch} (with the same notations).

Let us consider a lift $\wh \alpha_0$ of $\wt\alpha_0$ to the universal cover $\wh\dom(\F)$ of the domain of the isotopy. Denote $\wh f$ and $\wh \F$ the lift of the homeomorphism $\wt f$ and of the foliation $\wt \F$ to $\wh\dom(\F)$. Denote $\wh B$ the set of leaves of $\wh \F$ crossing $\wh\alpha_0$; it is $T$-invariant (where, by abuse of notation, $T$ is a deck transformation of $\wh\dom(\F)$ that projects down to the deck transformation $T$ of $\wt S$).
Consider the lift $\wh y_0$ of $\wt y_0$ to $\wh\dom(\F)$ such that $I^\Z_{\wh \F}(\wh y_0)$ crosses $\wh B$, it also crosses the bands $R_i \wh B$ for $i=-1, 1, 2$ (where the $R_i$ are here some appropriate lifts of the deck transformations $R_i$ of $\wt S$). Let $\wh \phi$ be the lift of $\wt\phi$ contained in $R_{-1}\wh B$, and $\wh \phi'$ the lift of $\wt\phi'$ contained in $R_{1}\wh B$.

The set\footnote{Recall that $S_1\wh\phi' = R_2\wh\phi_0$.} $\wh O = \wh f^{n_0}\big(L(\wh\phi)\big) \cup R(S_1\wh \phi')$ is a connected open set (see Figure~\ref{FigLemInterO}), hence there exists an oriented simple path $\wh c$, linking $\wh\phi$ to $S_1\wh\phi'$, included in $\wh O$ and whose interior is included in $R(\wh\phi) \cap L(S_1\wh\phi')$. Note that this path is not necessarily transverse and can meet various translates of $\wh B$. It separates the set $R(\wh\phi) \cap L(S_1\wh\phi')$ into two connected components, one on the left of $\wh c$ (relative to its orientation) denoted $l(\wh c)$, and one of the right of $\wh c$ denoted $r(\wh c)$. 

The following is \cite[Lemme 3.4.5]{lellouch}.

\begin{lemma}\label{Lem345Lellouch}
Let $i,n\in\N^*$. If $\wh f^n(\wh O)\cap T^i \wh O  =\emptyset$, then $\wh f^n(\wh O) \subset T^i l(\wh c)$ and $\wh f^{-n}(\wh O) \subset T^{-i} r(\wh c)$.
\end{lemma}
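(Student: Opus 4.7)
The proof is essentially topological, resting on the connectedness of $\wh f^n(\wh O)$ and a careful identification of the components of the complement of $T^i\wh O$.

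The plan is first to establish that $\wh O$ is open and connected. Each piece $\wh f^{n_0}(L(\wh\phi))$ and $R(S_1\wh\phi')$ is open, and their union is connected because they share the point $\wh f^{n_0}(\wh y_0)$: indeed $\wh y_0\in L(\wh\phi)$ (as the trajectory crosses $\wh\phi$ after time $0$) while $\wh f^{n_0}(\wh y_0)\in R(S_1\wh\phi')$ (as the trajectory has crossed $S_1\wh\phi'$ by time $n_0$). Moreover, since $\wh f$ pushes each leaf into its own right, $\wh f^{-n_0}(\wh\phi)\subset L(\wh\phi)$, hence $\wh\phi\subset \wh f^{n_0}(L(\wh\phi))\subset \wh O$. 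Combined with the hypothesis $\wh c\subset \wh O$, we obtain $\wh O\supset L(\wh\phi)\cup \wh\phi\cup \wh c\cup R(S_1\wh\phi')$.

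Next I would analyze $(T^i\wh O)^c$. By translation, $T^i\wh O$ contains $T^iL(\wh\phi)\cup T^i\wh\phi\cup T^i\wh c\cup T^iR(S_1\wh\phi')$, so its complement is contained in the ``half-closed strip''
\[H_i \;=\; \bigl(T^iR(\wh\phi)\cap T^iL(S_1\wh\phi')\bigr)\;\cup\; T^iS_1\wh\phi'\]
with the arc $T^i\wh c$ removed. The arc $T^i\wh c$ is a properly embedded simple path in $H_i$ joining $T^i\wh\phi$ to $T^iS_1\wh\phi'$, and therefore separates $H_i$ into exactly two connected components, one containing $T^il(\wh c)$ and the other containing $T^ir(\wh c)$. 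Since $\wh f^n$ is a homeomorphism, $\wh f^n(\wh O)$ is connected; under the hypothesis $\wh f^n(\wh O)\cap T^i\wh O=\emptyset$, it must therefore lie entirely in one of these two components.

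The key step, and the main difficulty, is to decide \emph{which} component. Here I would use that $\wh f^n(\wh O)\supset \wh f^{n+n_0}(L(\wh\phi))\supset L(\wh\phi)\cup \wh\phi$; consequently $\wh f^n(\wh O)$ accumulates on the left boundary leaf $T^i\wh\phi$ near the initial endpoint of $T^i\wh c$, approaching from the side of $T^i\wh\phi$ lying inside the strip. The orientation of $\wh c$ (from $\wh\phi$ to $S_1\wh\phi'$) determines, by definition, which side of $T^i\wh c$ is labeled $T^il(\wh c)$; tracing through the convention shows that this accumulation takes place on the $T^il(\wh c)$ side, forcing $\wh f^n(\wh O)\subset T^il(\wh c)$.

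Finally, for the backward statement, observe that the hypothesis $\wh f^n(\wh O)\cap T^i\wh O=\emptyset$ is equivalent to $\wh f^{-n}(\wh O)\cap T^{-i}\wh O=\emptyset$ (apply $T^{-i}\wh f^{-n}$ and use that $T$ and $\wh f$ commute). The same connectedness argument applied to $\wh f^{-n}(\wh O)$ shows it lies in one of the two components of $(T^{-i}\wh O)^c$, and since $\wh f^{-1}$ sends leaves in the opposite direction, the orientation-based identification gives the opposite side $T^{-i}r(\wh c)$. The main obstacle throughout is the accurate tracking of orientations (of leaves, of $\wh c$, of the labels $l/r$) needed to carry out this final identification, which is a bookkeeping task rather than a substantial topological issue.
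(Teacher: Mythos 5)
The paper itself does not give a proof of this lemma; it only points to \cite[Lemme 3.4.5]{lellouch}, so there is nothing in the text to compare your argument against directly. Your overall strategy is the natural one: show $\wh O$ is connected and contains $\overline{L(\wh\phi)}\cup\wh c\cup R(S_1\wh\phi')$, observe that the complement of $T^i\wh O$ is contained in a set cut into two pieces by $T^i\wh c$, and use connectedness of $\wh f^n(\wh O)$ to place it in a single piece. That much is sound (modulo minor care about whether $\wh y_0$ lies in $L(\wh\phi)$ or only on $\wh\phi$ when proving connectedness of $\wh O$; the cleaner route is to note $\overline{L(\wh\phi)}\subset\wh f^{n_0}(L(\wh\phi))$ by the Brouwer-line property, so $\wh\phi\subset\wh O$, and then $\wh c$ joins $\wh\phi$ to $S_1\wh\phi'$ inside $\wh O$).

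However, the crucial step --- deciding \emph{which} of the two components $\wh f^n(\wh O)$ lies in --- is not correctly argued. You write that since $\wh f^n(\wh O)\supset L(\wh\phi)\cup\wh\phi$, the set $\wh f^n(\wh O)$ ``accumulates on the left boundary leaf $T^i\wh\phi$ near the initial endpoint of $T^i\wh c$''. This does not hold: $\wh\phi=R_{-1}\wh\phi_0$ and $T^i\wh\phi=T^iR_{-1}\wh\phi_0$ are disjoint leaves lying in the disjoint bands $R_{-1}\wh B$ and $T^iR_{-1}\wh B$; the closed half-plane $\overline{L(\wh\phi)}$ has no reason to accumulate on $T^i\wh\phi$, and an open set contained in $(T^i\wh O)^c$ certainly avoids $T^i\wh\phi\subset T^i\wh O$. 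So the accumulation picture you invoke is not available.

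The correct way to close the argument is more direct and does not involve any accumulation: one has $\wh\phi\subset\wh f^n(\wh O)$, hence (under the hypothesis) $\wh\phi\subset(T^i\wh O)^c$. Since $T^i\wh O$ contains $\overline{L(T^i\wh\phi)}\cup R(T^iS_1\wh\phi')\cup T^i\wh c$, the leaf $\wh\phi$ must lie in the open strip $R(T^i\wh\phi)\cap L(T^iS_1\wh\phi')$ minus $T^i\wh c$, i.e.\ in $T^il(\wh c)$ or in $T^ir(\wh c)$. One then shows $\wh\phi\subset T^il(\wh c)$: this is where the orientation conventions genuinely enter (the sign of $i$, the fact that $T$ advances along $\wh\alpha_0$ in the direction of increasing parameter, the orientation of $\wh c$ from $\wh\phi\subset R(\wh\alpha_0)$ to $S_1\wh\phi'\subset L(\wh\alpha_0)$, and the resulting labelling of $l(\wh c)$ versus $r(\wh c)$). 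Calling it ``a bookkeeping task rather than a substantial topological issue'' is fair as a description, but the bookkeeping \emph{is} the content of the lemma --- both inclusions in the conclusion are asymmetric precisely because of these conventions --- so simply asserting that it works out is a genuine gap. The backward statement then needs the symmetric observation that $\overline{R(S_1\wh\phi')}\subset\wh f^{-n}(\wh O)$ and that $S_1\wh\phi'$ falls into $T^{-i}r(\wh c)$, which again requires carrying out the same orientation check rather than appealing to ``the opposite direction''.
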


The proof of this lemma is rather direct, we refer to \cite[Lemme 3.4.5]{lellouch} for a complete demonstration.

Note that the sets $(T^i \wh c)_{i\in \Z}$ are compact and separate the band $\wh B$ in fundamental domains for the action of $T$.
By Proposition~\ref{Prop2.2.13Lellouch}, this implies that for any $i\in\Z$ there exists $m_i, m'_i\in \Z$ such that $\wh f^{m_i}(\wh z)\in T^i l(\wh c)$ and $\wh f^{m'_i}(\wh z)\in T^i r(\wh c)$. We can moreover suppose that $m_i<m'_i$ and that $(m_i)$ and $(m'_i)$ are increasing in $i$. Let us prove it implies the following:

\begin{lemma}\label{LemInterO}
There exists $r\in\N$, with $r\ge n_0$, such that $\wh f^r(\wh O) \cap T^3 \wh O \neq\emptyset$.
\end{lemma}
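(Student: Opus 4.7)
My plan is to argue by contradiction. Suppose that $\wh f^r(\wh O) \cap T^3 \wh O = \emptyset$ for every integer $r\ge n_0$. Lemma~\ref{Lem345Lellouch} then applies for every such $r$ and gives the trapping
\[
\wh f^r(\wh O) \subset T^3\, l(\wh c) \qquad \text{for all } r\ge n_0,
\]
and, symmetrically, $\wh f^{-r}(\wh O) \subset T^{-3}\, r(\wh c)$.

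The next step is to identify a point whose forward orbit leaves the trapping region $T^3 l(\wh c)$ while still lying inside some $\wh f^r(\wh O)$, thereby contradicting the inclusion above. The natural candidate is the typical point $\wh z$: by construction, the lift $\wh z$ lies in the band $\wh B$ and, using Proposition~\ref{Prop2.2.13Lellouch} together with the construction of $\wh O$, one checks that some iterate $\wh f^{k_0}(\wh z)$ (or some $T$-translate thereof) belongs to $\wh O$. Indeed, the set $\wh O = \wh f^{n_0}(L(\wh\phi))\cup R(S_1\wh\phi')$ contains a full right half of $\wh B$ bounded by $S_1\wh\phi'=R_2\wh\phi_0$, because the band $R_2\wh B$ lies to the right of $\wh B$ along the direction in which $I^{[0,n_0]}_{\wh\F}(\wh y_0)$ crosses the translates of $\wh B$ provided by Lemma~\ref{LemLeavesDisjointTraj}; in particular it contains $T^k\wh z$ for all sufficiently large $k$.

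Fix such an integer $k$, and consider the sequence $m_i$ from the hypothesis $\wh f^{m_i}(\wh z)\in T^i l(\wh c)$. Choosing $i$ large so that $m_i - k\ge n_0$, we obtain
\[
\wh f^{\,m_i-k}\big(T^k\wh z\big) = \wh f^{m_i}(\wh z) \in T^i l(\wh c).
\]
Since $T^k\wh z \in \wh O$, the trapping assumption gives $\wh f^{m_i-k}(\wh O) \subset T^3 l(\wh c)$, and therefore $T^i l(\wh c) \cap T^3 l(\wh c)\neq\emptyset$. But the translates $(T^i \wh c)_{i\in\Z}$ partition $\wh B$ into disjoint fundamental domains for the $T$-action, so the open sets $T^i l(\wh c)$ are pairwise disjoint for different $i$, a contradiction as soon as $i\ge 4$.

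The main delicate step is the second paragraph: one must check that $\wh O$ really does absorb a full tail of the $T$-orbit of $\wh z$. This requires keeping careful track of the left/right positions of the leaves $\wh\phi\subset R_{-1}\wh B$ and $S_1\wh\phi'\subset R_2\wh B$ with respect to $\wh B$, using that the trajectory $I^{[0,n_0]}_{\wh\F}(\wh y_0)$ crosses these bands from right to left in the order $R_{-1}\wh B,\,\wh B,\,R_1\wh B,\,R_2\wh B$ and Proposition~\ref{Prop3.3GLCP}.3. Once this geometric placement is in hand, the combinatorial pigeonhole described above closes the argument.
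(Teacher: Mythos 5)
Your high-level strategy (contradiction via the trapping inclusions of Lemma~\ref{Lem345Lellouch}, combined with Proposition~\ref{Prop2.2.13Lellouch} to obtain iterates of $\wh z$ deep in the translates of $l(\wh c)$ and $r(\wh c)$) is the same as the paper's, but the execution contains two genuine errors that break the argument.

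First, the displayed equality $\wh f^{\,m_i-k}\bigl(T^k\wh z\bigr)=\wh f^{m_i}(\wh z)$ is false. Since $T$ is a deck transformation and $\wh f$ commutes with it, the correct identity is $\wh f^{\,m_i-k}\bigl(T^k\wh z\bigr)=T^k\,\wh f^{\,m_i-k}(\wh z)$; you appear to have silently replaced $T^k\wh z$ by $\wh f^k(\wh z)$. With the correct identity you only learn that $T^k\wh f^{\,m_i-k}(\wh z)\in T^3 l(\wh c)$, i.e.\ $\wh f^{\,m_i-k}(\wh z)\in T^{3-k}l(\wh c)$, and since $m_i-k$ is in general not one of the distinguished times $m_j$ of Proposition~\ref{Prop2.2.13Lellouch}, this gives no information. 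Second, the assertion that "the open sets $T^i l(\wh c)$ are pairwise disjoint for different $i$" is not correct. The translates $T^i\wh c$ partition $\wh B$ into fundamental domains, but $l(\wh c)$ is not a fundamental domain: it is the whole component of $R(\wh\phi)\cap L(S_1\wh\phi')\setminus\wh c$ on one side of $\wh c$, which contains infinitely many of the translates $T^j\wh c$. Thus the regions $T^i l(\wh c)$ are nested/overlapping, not disjoint, and the final pigeonhole has nothing to pigeonhole into. Finally, the claim that $T^k\wh z\in\wh O$ for all large $k$ (which you flag as delicate) is indeed unverified, and I do not see how to establish it cleanly; there is no guarantee that $\wh z$ or any of its $T$-translates lies in $\wh O$.

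The paper's proof sidesteps all three issues by not trying to place a point of the orbit of $\wh z$ inside $\wh O$. Instead it fixes a specific $r=m'_3-m_0$ (taking $m'_3$ large so that $r\ge n_0$), invokes \emph{both} trapping inclusions from Lemma~\ref{Lem345Lellouch} (the forward one $\wh f^r(\wh O)\subset T^3 l(\wh c)$ and the backward one $\wh f^{-r}(T^3\wh O)\subset r(\wh c)$), and introduces a simple connecting arc $\wh c'\subset l(\wh c)$ from $\wh f^{m_0}(\wh z)$ to $\wh c$. Since $\wh c'$ avoids $r(\wh c)\supset\wh f^{-r}(T^3\wh O)$, the iterate $\wh f^r(\wh c')$ avoids $T^3\wh O$; but it contains $\wh f^{m'_3}(\wh z)\in T^3 r(\wh c)$, so by connectedness $\wh f^r(\wh c')\subset T^3 r(\wh c)$, forcing one endpoint of $\wh f^r(\wh c')$ into $T^3 r(\wh c)\cap\wh f^r(\wh O)$ — contradicting $\wh f^r(\wh O)\subset T^3 l(\wh c)$. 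The arc $\wh c'$ plays the role you wanted the point $T^k\wh z$ to play, and the use of the backward trapping is essential; without it, I do not see how to close the argument.
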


\begin{figure}
\begin{center}
\tikzset{every picture/.style={line width=0.75pt}} 

\begin{tikzpicture}[x=0.75pt,y=0.75pt,yscale=-.75,xscale=.8]
\draw [color={rgb, 255:red, 245; green, 166; blue, 35 }  ,draw opacity=1 ][fill={rgb, 255:red, 245; green, 166; blue, 35 }  ,fill opacity=0.15 ]   (301.67,35) .. controls (320.88,133.35) and (370.48,136.15) .. (381.67,35) ;
\draw [shift={(339.31,108.8)}, rotate = 9.9] [fill={rgb, 255:red, 245; green, 166; blue, 35 }  ,fill opacity=1 ][line width=0.08]  [draw opacity=0] (8.04,-3.86) -- (0,0) -- (8.04,3.86) -- (5.34,0) -- cycle    ;
\draw [color={rgb, 255:red, 245; green, 166; blue, 35 }  ,draw opacity=1 ][fill={rgb, 255:red, 245; green, 166; blue, 35 }  ,fill opacity=0.15 ]   (149,35) .. controls (168.21,133.35) and (217.81,136.15) .. (229,35) ;
\draw [shift={(186.64,108.8)}, rotate = 9.9] [fill={rgb, 255:red, 245; green, 166; blue, 35 }  ,fill opacity=1 ][line width=0.08]  [draw opacity=0] (8.04,-3.86) -- (0,0) -- (8.04,3.86) -- (5.34,0) -- cycle    ;
\draw [color={rgb, 255:red, 245; green, 166; blue, 35 }  ,draw opacity=1 ][fill={rgb, 255:red, 245; green, 166; blue, 35 }  ,fill opacity=0.15 ]   (291.67,265) .. controls (292.15,244.51) and (294.32,84.65) .. (341.67,85) .. controls (389.01,85.35) and (405.07,248.7) .. (391.67,265) ;
\draw [color={rgb, 255:red, 245; green, 166; blue, 35 }  ,draw opacity=1 ][fill={rgb, 255:red, 245; green, 166; blue, 35 }  ,fill opacity=0.15 ]   (139.67,265) .. controls (140.15,244.51) and (142.32,84.65) .. (189.67,85) .. controls (237.01,85.35) and (253.07,248.7) .. (239.67,265) ;
\draw [color={rgb, 255:red, 245; green, 166; blue, 35 }  ,draw opacity=1 ][fill={rgb, 255:red, 245; green, 166; blue, 35 }  ,fill opacity=0.15 ]   (149,265) .. controls (150.92,183.71) and (246.92,243.21) .. (229,265) ;
\draw [shift={(183.15,223.61)}, rotate = 7.45] [fill={rgb, 255:red, 245; green, 166; blue, 35 }  ,fill opacity=1 ][line width=0.08]  [draw opacity=0] (8.04,-3.86) -- (0,0) -- (8.04,3.86) -- (5.34,0) -- cycle    ;
\draw [color={rgb, 255:red, 14; green, 10; blue, 190 }  ,draw opacity=1 ]   (431.08,141.74) .. controls (315.74,234.41) and (157.74,113.74) .. (121.74,167.08) ;
\draw [shift={(121.74,167.08)}, rotate = 124.02] [color={rgb, 255:red, 14; green, 10; blue, 190 }  ,draw opacity=1 ][fill={rgb, 255:red, 14; green, 10; blue, 190 }  ,fill opacity=1 ][line width=0.75]      (0, 0) circle [x radius= 2.34, y radius= 2.34]   ;
\draw [shift={(280.89,175.09)}, rotate = 188.09] [fill={rgb, 255:red, 14; green, 10; blue, 190 }  ,fill opacity=1 ][line width=0.08]  [draw opacity=0] (8.04,-3.86) -- (0,0) -- (8.04,3.86) -- (5.34,0) -- cycle    ;
\draw [shift={(431.08,141.74)}, rotate = 141.22] [color={rgb, 255:red, 14; green, 10; blue, 190 }  ,draw opacity=1 ][fill={rgb, 255:red, 14; green, 10; blue, 190 }  ,fill opacity=1 ][line width=0.75]      (0, 0) circle [x radius= 2.34, y radius= 2.34]   ;
\draw [color={rgb, 255:red, 245; green, 166; blue, 35 }  ,draw opacity=1 ][fill={rgb, 255:red, 245; green, 166; blue, 35 }  ,fill opacity=0.15 ]   (300.67,265) .. controls (302.58,183.71) and (398.58,243.21) .. (380.67,265) ;
\draw [shift={(334.81,223.61)}, rotate = 7.45] [fill={rgb, 255:red, 245; green, 166; blue, 35 }  ,fill opacity=1 ][line width=0.08]  [draw opacity=0] (8.04,-3.86) -- (0,0) -- (8.04,3.86) -- (5.34,0) -- cycle    ;
\draw [color={rgb, 255:red, 65; green, 117; blue, 5 }  ,draw opacity=1 ]   (196.42,108.83) .. controls (215.42,132.83) and (179.69,201.94) .. (195.69,225.69) ;
\draw [color={rgb, 255:red, 129; green, 173; blue, 0 }  ,draw opacity=1 ][line width=1.5]    (121.47,164.99) .. controls (135.47,147.39) and (159.77,149.1) .. (198.67,157.26) ;

\draw (148,46.73) node [anchor=north east] [inner sep=0.75pt]  [font=\small,color={rgb, 255:red, 215; green, 144; blue, 26 }  ,opacity=1 ,xscale=1.2,yscale=1.2]  {$S_{1}\wh{\phi }'$};
\draw (136.7,230.46) node [anchor=east] [inner sep=0.75pt]  [font=\small,color={rgb, 255:red, 215; green, 144; blue, 26 }  ,opacity=1 ,xscale=1.2,yscale=1.2]  {$\wh{f}^{n_{0}}(\wh{\phi })$};
\draw (401.75,207.13) node [anchor=west] [inner sep=0.75pt]  [font=\small,color={rgb, 255:red, 215; green, 144; blue, 26 }  ,opacity=1 ,xscale=1.2,yscale=1.2]  {$T^{3}\wh{f}^{n_{0}}(\wh{\phi })$};
\draw (198.48,231.84) node [anchor=north] [inner sep=0.75pt]  [font=\small,color={rgb, 255:red, 215; green, 144; blue, 26 }  ,opacity=1 ,xscale=1.2,yscale=1.2]  {$\wh{\phi }$};
\draw (340.48,230.73) node [anchor=north] [inner sep=0.75pt]  [font=\small,color={rgb, 255:red, 215; green, 144; blue, 26 }  ,opacity=1 ,xscale=1.2,yscale=1.2]  {$T^{3}\wh{\phi }$};
\draw (382.78,60.23) node [anchor=west] [inner sep=0.75pt]  [font=\small,color={rgb, 255:red, 215; green, 144; blue, 26 }  ,opacity=1 ,xscale=1.2,yscale=1.2]  {$T^{3} S_{1}\wh{\phi }'$};
\draw (119.74,167.08) node [anchor=east] [inner sep=0.75pt]  [color={rgb, 255:red, 14; green, 10; blue, 190 }  ,opacity=1 ,xscale=1.2,yscale=1.2]  {$\wh{f}^{m_{0}}(\wh{z})$};
\draw (433.08,141.74) node [anchor=west] [inner sep=0.75pt]  [color={rgb, 255:red, 14; green, 10; blue, 190 }  ,opacity=1 ,xscale=1.2,yscale=1.2]  {$\wh{f}^{m'_{3}}(\wh{z})$};
\draw (204.04,135.05) node [anchor=west] [inner sep=0.75pt]  [color={rgb, 255:red, 65; green, 117; blue, 5 }  ,opacity=1 ,xscale=1.2,yscale=1.2]  {$\wh{c}$};
\draw (149.63,148.55) node [anchor=south east] [inner sep=0.75pt]  [color={rgb, 255:red, 115; green, 191; blue, 32 }  ,opacity=1 ,xscale=1.2,yscale=1.2]  {$\wh{c} '$};

\end{tikzpicture}

\caption{Proof of Lemma~\ref{LemInterO}}\label{FigLemInterO}
\end{center}
\end{figure}

\begin{proof}
The proof is illustrated in Figure~\ref{FigLemInterO}.
By the above fact, we have that $\wh f^{m_{0}}(\wh z)\in l(\wh c)$ and $\wh f^{m'_{3}}(\wh z)\in T^3r(\wh c)$. Set $r = m'_3-m_0$ (because $m'_3$ can be chosen arbitrarily large, one can suppose that $r\ge n_0$ the length of the orbit of $y_0$) and suppose that the conclusion of the lemma is false for this $r$. By Lemma~\ref{Lem345Lellouch}, this implies that 
\begin{equation}\label{EqInclusionslr}
\wh f^r(\wh O)\subset T^3 l(\wh c)
\qquad \textrm{and}\qquad 
\wh f^{-r}(T^3\wh O)\subset  r(\wh c).
\end{equation}
Let $\wh c'$ be a simple path linking $\wh f^{m_{0}}(\wh z)$ to $\wh c$ and included in $l(\wh c)$. By \eqref{EqInclusionslr}, it is disjoint from $\wh f^{-r}(T^3\wh O)$. So $\wh f^r(\wh c')$ is disjoint from $T^3\wh O$, and contains $\wh f^{m'_{3}}(\wh z)\in T^3r(\wh c)$. Hence, it is included in $T^3 r(\wh c)$, and in particular one of its extremities belongs to $T^3 r(\wh c) \cap \wh f^r(\wh c) \subset T^3 r(\wh c) \cap \wh f^r(\wh O)$. This contradicts \eqref{EqInclusionslr}. 
\end{proof}

\begin{lemma}\label{LemExistPathGamma}
There exists a transverse path $\wh\beta : [0,1]\to \wh\dom(\F)$, admissible of order $r+n_0$, and such that $\wh\beta(0) \in \wh\phi$ and $\wh\beta(1)\in T^3S_1\wh \phi'$. 
\end{lemma}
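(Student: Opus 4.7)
The strategy is to apply the cornerstone of the forcing theory, \cite[Proposition~19]{lct1}, which was already used in the proof of Lemma~\ref{LemExistTrans26}: if $\wh f^n(L(\wh\phi_1)) \cap R(\wh\phi_2) \neq \emptyset$ for two leaves of $\wh\F$, then there exists an admissible transverse path of order $n$ linking $\wh\phi_1$ to $\wh\phi_2$. Applied with $\wh\phi_1 = \wh\phi$, $\wh\phi_2 = T^3 S_1\wh\phi'$, and $n = r+n_0$, this yields the desired path $\wh\beta$. The task therefore reduces to establishing $\wh f^{r+n_0}(L(\wh\phi)) \cap R(T^3 S_1\wh\phi') \neq \emptyset$.

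Starting from Lemma~\ref{LemInterO}, we have $\wh f^r(\wh O) \cap T^3 \wh O \neq \emptyset$. Expanding $\wh O = \wh f^{n_0}(L(\wh\phi)) \cup R(S_1\wh\phi')$ and using the commutation $T\wh f = \wh f T$, the intersection splits into four pieces:
\[
\bigl[\wh f^{r+n_0}(L(\wh\phi)) \cup \wh f^r(R(S_1\wh\phi'))\bigr] \cap \bigl[\wh f^{n_0}(L(T^3\wh\phi)) \cup R(T^3 S_1\wh\phi')\bigr].
\]
The decisive piece $\wh f^{r+n_0}(L(\wh\phi)) \cap R(T^3 S_1\wh\phi')$ gives the conclusion directly. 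For each of the three remaining pieces, I will reduce to the decisive one using the forward $\wh f$-invariance of $R(S_1\wh\phi')$ (and of $R(T^3 S_1\wh\phi')$) and the backward $\wh f$-invariance of $L(\wh\phi)$ (and of $L(T^3\wh\phi)$). Concretely, from a witness point in one of the undesired pieces, one extracts an admissible transverse subpath linking one of $\{\wh\phi, S_1\wh\phi'\}$ or their $T^3$-translates, and then concatenates it, via Proposition~\ref{propFondalct1}, with the admissible subpath of $I^{[0,n_0]}_{\wh\F}(\wh y_0)$ (which links $\wh\phi$ to $S_1\wh\phi'$ in exactly $n_0$ steps) or with its $T^3$-translate, at a suitable $\wh\F$-transverse intersection, to produce an admissible path of order exactly $r+n_0$ from $\wh\phi$ to $T^3 S_1\wh\phi'$.

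The main obstacle is the case $\wh f^r(R(S_1\wh\phi')) \cap \wh f^{n_0}(L(T^3\wh\phi)) \neq \emptyset$, which corresponds to the ``middle'' situation where the two halves $\wh f^{n_0}(L(\wh\phi))$ and $R(S_1\wh\phi')$ of $\wh O$ are each mapped to the ``wrong'' piece of $T^3 \wh O$. Here one must simultaneously use the trajectory of $\wh y_0$ (to go from $\wh\phi$ through $S_1\wh\phi'$) and its $T^3$-translate (to go from $T^3\wh\phi$ through $T^3 S_1\wh\phi'$), and find a transverse intersection between them which, via Proposition~\ref{propFondalct1}, produces the required admissible path. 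The existence of such a transverse intersection is forced by the specific topological configuration (the point $p$ of Lemma~\ref{LemInterO} lying in a specific region relative to the path $\wh c$ separating $\wh O$ from its complement within the strip $R(\wh\phi) \cap L(S_1\wh\phi')$), together with the arrangement of leaves $\wh\phi$, $S_1\wh\phi'$, $T^3\wh\phi$ and $T^3 S_1\wh\phi'$ established in Lemma~\ref{LemLeavesDisjointTraj}.
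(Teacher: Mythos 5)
Your outline correctly identifies the main mechanism: expand the conclusion of Lemma~\ref{LemInterO} into four intersection terms, single out the ``decisive'' term $\wh f^{r+n_0}(L\wh\phi)\cap T^3 S_1 R\wh\phi'$, and conclude via \cite[Proposition~19]{lct1}. However, your handling of the other three terms departs from the paper's proof and is where the argument has a genuine gap.

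The paper does \emph{not} need to extract an admissible path out of the other three terms: it shows that all three are \emph{empty}, so by Lemma~\ref{LemInterO} the decisive term alone must be nonempty. The emptiness proofs are short and rest only on the leaf arrangements delivered by Lemma~\ref{LemLeavesDisjointTraj} (namely $L(T^3\wh\phi)\subset R(\wh\phi)$, $L(\wh\phi)\subset R(T^3\wh\phi)$, $R(T^3S_1\wh\phi')\subset L(S_1\wh\phi')$, $R(S_1\wh\phi')\subset L(T^3 S_1\wh\phi')$, and $R(\wh\phi')\cap T^3 L(\wh\phi)\subset L(\wh\alpha_0)\cap R(\wh\alpha_0)=\emptyset$), the Brouwer-line invariance of $L(\cdot)$ and $R(\cdot)$, and the inequality $r\ge n_0$. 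In particular the case you call the ``main obstacle'' --- $\wh f^r(S_1 R\wh\phi')\cap T^3\wh f^{n_0}(L\wh\phi)\neq\emptyset$ --- is simply impossible: one reduces it to $R(\wh\phi')\cap T^3 L(\wh\phi)=\emptyset$ using $r\ge n_0$ and the forward invariance of $R(\wh\phi')$.

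Your proposed treatment of those three terms is therefore attacking a nonexistent problem, and the attack itself is not carried to completion: you assert that ``the existence of such a transverse intersection is forced by the specific topological configuration'' without producing it, and the reductions by concatenation via Proposition~\ref{propFondalct1} are not spelled out. Even if they could be made to work, you would still not have established that the decisive term is nonempty --- a conclusion that the paper obtains precisely because the other three are ruled out, not reduced. The missing idea is to notice that Lemma~\ref{LemLeavesDisjointTraj} was set up exactly to give the disjointness statements that kill the other three intersections, turning the four-term disjunction into a single forced conclusion.
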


Note that $\wh\beta$ meets the leaves $\wh \phi = R_{-1}\wh\phi_0$, $\wh\phi_0$, $T^3\wh\phi' = T^3 R_1\wh\phi_0$ and $T^3S_1\wh\phi' = T^3 R_2\wh\phi_0$ in this order.

\begin{proof}
The conclusion of Lemma~\ref{LemInterO} is equivalent to:
\begin{equation*}
 \wh f^r\Big(\wh f^{n_0} (L\wh \phi) \cup S_1 R\wh \phi'\Big) \cap T^3 \Big(\wh f^{n_0} (L\wh \phi) \cup S_1 R\wh \phi'\Big) \neq\emptyset,
\end{equation*}
in other words
\begin{multline}\label{EqFourPossibInter}
\Big(\wh f^{r+n_0} \big(L\wh \phi\big) \cap T^3 \wh f^{n_0} \big(L\wh \phi)\Big)
\cup
\Big(\wh f^{r+n_0} \big(L\wh \phi\big) \cap T^3 S_1 R\wh \phi'\Big)
\cup \\
\Big(f^r\big(S_1 R\wh \phi'\big) \cap T^3 \wh f^{n_0} \big(L\wh \phi\big)\Big)
\cup
\Big(f^r\big(S_1 R\wh \phi'\big) \cap T^3 S_1 R\wh \phi'\Big) \neq\emptyset.
\end{multline}
We will prove that all these intersections are empty but the second one.

By Lemma~\ref{LemLeavesDisjointTraj}, we have 
\[L(T^3\wh\phi) \subset R(\wh\phi)
\qquad\textrm{and}\qquad
L(\wh\phi)\subset R(T^3\wh\phi),\]
as well as 
\[R(T^3S_1\wh\phi') \subset L(S_1\wh\phi')
\qquad\textrm{and}\qquad
R(S_1\wh\phi')\subset L(T^3S_1\wh\phi')\]
(for the first inclusion, note that $R(T^3S_1\wh\phi') \subset R(T^3\wh\phi')\subset L(\wh\phi')\subset L(S_1\wh\phi')$, the second is obtained in a similar way) .
Combined with the fact that $\wh\phi$ and $\wh\phi'$ are Brouwer lines, this implies that
\[\wh f^{r+n_0} (L\wh \phi) \cap T^3 \wh f^{n_0} (L\wh \phi) = 
\wh f^r\big( S_1 R\wh \phi'\big) \cap T^3S_1 R\wh \phi' =
\emptyset.\]
Moreover, using the fact that $r\ge n_0$ and that $\wh\phi'$ is a Brouwer line,
\begin{align*}
\wh f^r\big( S_1 R\wh \phi'\big) \cap T^3 \wh f^{n_0} (L\wh \phi) 
& = \wh f^{n_0}\left( S_1 \wh f^{r-n_0}\big( R\wh \phi'\big) \cap T^3 L\wh \phi\right)\\
& \subset \wh f^{n_0}\left( S_1  R\wh \phi' \cap T^3 L\wh \phi\right)\\
& \subset \wh f^{n_0}\left(  R\wh \phi' \cap T^3 L\wh \phi\right)\\
& \subset \wh f^{n_0}\left( L\wh\alpha_0 \cap R\wh\alpha_0\right) = \emptyset
\end{align*}
by Lemma~\ref{LemLeavesDisjointTraj}. 

Therefore, Equation~\eqref{EqFourPossibInter} implies that 
\[\wh f^{r+n_0} (R\wh \phi) \cap T^3 S_1 L\wh \phi'\neq\emptyset.\]
This proves the lemma.
\end{proof}

Let $\wh\beta$ be the path given by Lemma~\ref{LemExistPathGamma}.

\begin{lemma}\label{ExistTransInterAccCase}
Either there exists $k\in\Z$ such that the paths $I^{[0,n_0]}_{\wh \F}(\wh y_0)$ and $T^k R_1^{-1} I^{[0,n_0]}_{\wh \F}(\wh y_0)$ intersect $\wh \F$-transversally, 
or there exists $k\in\Z$ such that the paths $\wh\beta$ and $T^k R_1^{-1} \wh\beta$ intersect $\wh \F$-transversally. 
\end{lemma}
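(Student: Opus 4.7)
The plan is to argue by dichotomy. First observe that $R_1^{-1} I^{[0,n_0]}_{\wh\F}(\wh y_0)$ crosses the band $\wh B$ from right to left (since by assumption $I^{[0,n_0]}_{\wh\F}(\wh y_0)$ crosses $R_1\wh B$ from right to left, and applying $R_1^{-1}$ sends $R_1\wh B$ to $\wh B$ while preserving the crossing direction); likewise every $T^k I^{[0,n_0]}_{\wh\F}(\wh y_0)$ crosses $\wh B$ from right to left. The first case of the lemma is that $R_1^{-1} I^{[0,n_0]}_{\wh\F}(\wh y_0)$ has a $\wh\F$-transverse intersection with some $T^k I^{[0,n_0]}_{\wh\F}(\wh y_0)$; applying $T^{-k}$ then gives an $\wh\F$-transverse intersection of $I^{[0,n_0]}_{\wh\F}(\wh y_0)$ and $T^{-k}R_1^{-1}I^{[0,n_0]}_{\wh\F}(\wh y_0)$, yielding the first alternative of the conclusion.

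Assume then that no such $k$ exists. Since $R_1^{-1}I^{[0,n_0]}_{\wh\F}(\wh y_0)$ and the $T^k I^{[0,n_0]}_{\wh\F}(\wh y_0)$ all cross $\wh B$ in the same direction and no pair among them meets $\wh\F$-transversally, Propositions~\ref{Prop2.1.15Lellouch} and \ref{Prop2.1.16Lellouch} (and the fact that each crosses $\wh B$ exactly once, by Proposition~\ref{Prop3.3GLCP}.3) imply that the crossing components of $R_1^{-1}I^{[0,n_0]}_{\wh\F}(\wh y_0)$ and of the $T^k I^{[0,n_0]}_{\wh\F}(\wh y_0)$ are pairwise disjoint, and hence linearly ordered in $\wh B$ by the action of $T$. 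Thus there exists $k_0\in\Z$ such that the crossing component of $R_1^{-1}I^{[0,n_0]}_{\wh\F}(\wh y_0)$ lies strictly in the lane between the crossing components of $T^{k_0} I^{[0,n_0]}_{\wh\F}(\wh y_0)$ and $T^{k_0+1} I^{[0,n_0]}_{\wh\F}(\wh y_0)$.

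Next, I will use this lane structure together with the construction of $\wh\beta$ in Lemma~\ref{LemExistPathGamma}. By construction $\wh\beta$ is admissible and passes through the ordered leaves $\wh\phi\subset R_{-1}\wh B$, $\wh\phi_0\subset\wh B$, $T^{3}R_{1}\wh\phi_{0}\subset T^{3}R_{1}\wh B$, and $T^{3}R_{2}\wh\phi_{0}\subset T^{3}R_{2}\wh B$; in particular $\wh\beta$ crosses $\wh B$ from right to left. Moreover, the forcing argument behind Lemma~\ref{LemExistPathGamma} makes $\wh\beta$ $\wh\F$-equivalent to the concatenation of a subpath of $I^{[0,n_0]}_{\wh\F}(\wh y_0)$ (going from $\wh\phi$ to $S_{1}\wh\phi' = R_{2}\wh\phi_{0}$) with an $r$-long forward iteration along the $\wh z$-tracking geodesic ending on $T^{3}S_{1}\wh\phi'$. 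The translate $T^{k_{0}+1}R_{1}^{-1}\wh\beta$ is therefore the analogous concatenation where the $I^{[0,n_0]}_{\wh\F}(\wh y_0)$-piece has been replaced by its $T^{k_{0}+1}R_{1}^{-1}$-translate — a curve that, by the lane position established above, enters $\wh B$ on the opposite side of $\wh\beta$'s crossing component and exits on the same side. Using the disjointness properties of Lemma~\ref{LemLeavesDisjointTraj} (which guarantee that the relevant nearby leaves are missed by the neighboring $T$-translates of these orbits), one concludes that $\wh\beta$ and $T^{k_{0}+1}R_{1}^{-1}\wh\beta$ cross the $T$-band defined by $\wh\beta$ in opposite directions; Proposition~\ref{Prop2.1.16Lellouch} then produces the desired $\wh\F$-transverse intersection.

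The main obstacle will be the final bookkeeping: translating the qualitative lane position of $R_1^{-1}I^{[0,n_0]}_{\wh\F}(\wh y_0)$ into the precise statement that $T^{k_{0}+1}R_1^{-1}\wh\beta$ crosses the appropriate band in the direction opposite to $\wh\beta$. This requires carefully tracking, leaf by leaf, which portion of $\wh\beta$ (the $\wh y_{0}$-piece or the $\wh z$-piece) is responsible for each crossing, and exploiting the six disjointness estimates of Lemma~\ref{LemLeavesDisjointTraj} so that the lane is not short-circuited by the $\wh z$-portion.
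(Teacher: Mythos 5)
Your high-level dichotomy is reasonable, but it is not the one the paper actually uses, and the second branch of your argument has several genuine gaps.

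The paper does \emph{not} split on ``some $T^kR_1^{-1}I^{[0,n_0]}_{\wh\F}(\wh y_0)$ intersects $I^{[0,n_0]}_{\wh\F}(\wh y_0)$ transversally or not.'' Instead, it pins down two integers $k$ and $k'$ recording, via the auxiliary transverse path $\wh\sigma$ and the lane set $\wh A_0$, where the two specific leaves $R_1^{-1}\wh\phi_0$ and $R_1^{-1}R_2\wh\phi_0$ (the entry and exit leaves of $R_1^{-1}I^{[0,n_0]}_{\wh\F}(\wh y_0)$ through the lanes of $\wh B$) sit relative to the $T$-translates of $\wh\phi$ and $\wh\phi'$. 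The dichotomy is then $|k-k'|\ge 2$ versus $|k-k'|\le 1$, and in \emph{both} cases the transverse intersection is read off directly from Definition~\ref{DefInterTrans} (the ``above/below relative to $\wh\phi_0$'' test), not from Proposition~\ref{Prop2.1.16Lellouch}. The whole point is that $\wh\beta$ was built in Lemma~\ref{LemExistPathGamma} to span an \emph{extra} $T^3$ margin — its endpoints lie on $\wh\phi$ and on $T^3S_1\wh\phi'$, not just $S_1\wh\phi'$ — and this margin is exactly what compensates for the slack $|k-k'|\le 1$.

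Your argument in the ``no transverse intersection'' case does not close. Concretely: (i) you invoke Propositions~\ref{Prop2.1.15Lellouch} and~\ref{Prop2.1.16Lellouch} to deduce a linear order on the crossing components, but Proposition~\ref{Prop2.1.15Lellouch} concerns a path visiting a band whose approximation is a $T$-loop, and Proposition~\ref{Prop2.1.16Lellouch} concerns two paths crossing a band in \emph{opposite} directions; here all the relevant paths cross $\wh B$ from right to left, so neither applies as stated. (ii) You describe $\wh\beta$ as ``$\wh\F$-equivalent to the concatenation of a subpath of $I^{[0,n_0]}_{\wh\F}(\wh y_0)$ with an $r$-long iteration along the $\wh z$-tracking geodesic''; Lemma~\ref{LemExistPathGamma} gives no such decomposition — it only asserts the existence of an admissible path of order $r+n_0$ with prescribed endpoints, obtained abstractly from a nonempty intersection of iterated half-planes. (iii) You speak of ``the $T$-band defined by $\wh\beta$'', but $\wh\beta$ is a finite admissible path, not a $T$-loop, so it defines no band, and the planned application of Proposition~\ref{Prop2.1.16Lellouch} therefore has no object to act on. (iv) Most importantly, you do not engage with the possibility that the lane index of $R_1^{-1}\wh\phi_0$ and that of $R_1^{-1}R_2\wh\phi_0$ differ by exactly one; this is precisely the residual slack that forces the paper to choose the translate $T^{k-1}R_1^{-1}\wh\beta$ (and not $T^{k_0+1}R_1^{-1}\wh\beta$ as you propose), and to exploit the $T^3$ margin built into $\wh\beta$. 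Without tracking both lane indices separately, the final bookkeeping you flag as a ``main obstacle'' cannot be completed.
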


\begin{figure}
\begin{center}

\tikzset{every picture/.style={line width=0.75pt}} 

\begin{tikzpicture}[x=0.75pt,y=0.75pt,yscale=-1,xscale=1]

\draw [color={rgb, 255:red, 65; green, 117; blue, 5 }  ,draw opacity=1 ]   (190,110) .. controls (230,80) and (250,140) .. (290,110) ;
\draw [shift={(243.15,111.57)}, rotate = 206.47] [fill={rgb, 255:red, 65; green, 117; blue, 5 }  ,fill opacity=1 ][line width=0.08]  [draw opacity=0] (8.04,-3.86) -- (0,0) -- (8.04,3.86) -- (5.34,0) -- cycle    ;
\draw [color={rgb, 255:red, 65; green, 117; blue, 5 }  ,draw opacity=1 ]   (390,110) .. controls (430,80) and (450,140) .. (490,110) ;
\draw [shift={(443.15,111.57)}, rotate = 206.47] [fill={rgb, 255:red, 65; green, 117; blue, 5 }  ,fill opacity=1 ][line width=0.08]  [draw opacity=0] (8.04,-3.86) -- (0,0) -- (8.04,3.86) -- (5.34,0) -- cycle    ;
\draw [color={rgb, 255:red, 65; green, 117; blue, 5 }  ,draw opacity=1 ]   (290,110) .. controls (330,80) and (350,140) .. (390,110) ;
\draw [shift={(343.15,111.57)}, rotate = 206.47] [fill={rgb, 255:red, 65; green, 117; blue, 5 }  ,fill opacity=1 ][line width=0.08]  [draw opacity=0] (8.04,-3.86) -- (0,0) -- (8.04,3.86) -- (5.34,0) -- cycle    ;
\draw [color={rgb, 255:red, 245; green, 166; blue, 35 }  ,draw opacity=1 ]   (200,180) .. controls (234.21,144.21) and (235.17,153.71) .. (249.46,182) ;
\draw [shift={(223.89,158.83)}, rotate = 340.16] [fill={rgb, 255:red, 245; green, 166; blue, 35 }  ,fill opacity=1 ][line width=0.08]  [draw opacity=0] (7.14,-3.43) -- (0,0) -- (7.14,3.43) -- (4.74,0) -- cycle    ;
\draw [color={rgb, 255:red, 245; green, 166; blue, 35 }  ,draw opacity=1 ]   (300,180) .. controls (334.21,144.21) and (335.17,153.71) .. (349.46,182) ;
\draw [shift={(323.89,158.83)}, rotate = 340.16] [fill={rgb, 255:red, 245; green, 166; blue, 35 }  ,fill opacity=1 ][line width=0.08]  [draw opacity=0] (7.14,-3.43) -- (0,0) -- (7.14,3.43) -- (4.74,0) -- cycle    ;
\draw [color={rgb, 255:red, 245; green, 166; blue, 35 }  ,draw opacity=1 ]   (400,180) .. controls (434.21,144.21) and (435.17,153.71) .. (449.46,182) ;
\draw [shift={(423.89,158.83)}, rotate = 340.16] [fill={rgb, 255:red, 245; green, 166; blue, 35 }  ,fill opacity=1 ][line width=0.08]  [draw opacity=0] (7.14,-3.43) -- (0,0) -- (7.14,3.43) -- (4.74,0) -- cycle    ;
\draw [color={rgb, 255:red, 227; green, 80; blue, 227 }  ,draw opacity=1 ]   (211.54,101.5) .. controls (211.38,130.83) and (240.13,140.33) .. (233.54,157.33) ;
\draw [color={rgb, 255:red, 227; green, 80; blue, 227 }  ,draw opacity=1 ]   (312.04,101.44) .. controls (311.88,130.77) and (340.63,140.27) .. (334.04,157.27) ;
\draw [color={rgb, 255:red, 227; green, 80; blue, 227 }  ,draw opacity=1 ]   (412.52,101.69) .. controls (412.35,131.02) and (441.1,140.52) .. (434.52,157.52) ;
\draw  [draw opacity=0][fill={rgb, 255:red, 144; green, 19; blue, 254 }  ,fill opacity=0.1 ] (211.54,101.5) .. controls (232.38,101.54) and (248.78,118.77) .. (265.73,119.08) .. controls (283.09,119.4) and (296.98,100.83) .. (312.04,101.44) .. controls (313.23,132.33) and (339.48,140.08) .. (334.04,157.27) .. controls (321.73,153.83) and (309.48,171.83) .. (300,180) .. controls (280.48,180.08) and (268.23,180.33) .. (248.48,181.08) .. controls (246.23,174.58) and (240.23,161.33) .. (233.54,157.33) .. controls (239.48,139.08) and (211.98,131.58) .. (211.54,101.5) -- cycle ;

\draw (274.7,146.06) node  [color={rgb, 255:red, 97; green, 7; blue, 201 }  ,opacity=1 ,xscale=1.2,yscale=1.2]  {$\wh{A}_{0}$};
\draw (327.58,128.2) node [anchor=west] [inner sep=0.75pt]  [color={rgb, 255:red, 165; green, 13; blue, 196 }  ,opacity=1 ,xscale=1.2,yscale=1.2]  {$\wh{\sigma }$};
\draw (212.48,122.95) node [anchor=east] [inner sep=0.75pt]  [color={rgb, 255:red, 165; green, 13; blue, 196 }  ,opacity=1 ,xscale=1.2,yscale=1.2]  {$T^{-1}\wh{\sigma }$};
\draw (429.83,139) node [anchor=south west] [inner sep=0.75pt]  [color={rgb, 255:red, 165; green, 13; blue, 196 }  ,opacity=1 ,xscale=1.2,yscale=1.2]  {$T\wh{\sigma }$};
\draw (377.12,118.65) node [anchor=north] [inner sep=0.75pt]  [color={rgb, 255:red, 65; green, 117; blue, 5 }  ,opacity=1 ,xscale=1.2,yscale=1.2]  {$\wh{\alpha }_{0}$};
\draw (347.08,170.63) node [anchor=south west] [inner sep=0.75pt]  [color={rgb, 255:red, 213; green, 144; blue, 31 }  ,opacity=1 ,xscale=1.2,yscale=1.2]  {$\wh{\phi }$};
\draw (444.83,168.87) node [anchor=south west] [inner sep=0.75pt]  [color={rgb, 255:red, 213; green, 144; blue, 31 }  ,opacity=1 ,xscale=1.2,yscale=1.2]  {$T\wh{\phi }$};
\draw (216.65,162.87) node [anchor=south east] [inner sep=0.75pt]  [color={rgb, 255:red, 213; green, 144; blue, 31 }  ,opacity=1 ,xscale=1.2,yscale=1.2]  {$T^{-1}\wh{\phi }$};

\end{tikzpicture}

\caption{Configuration of the proof of Lemma~\ref{ExistTransInterAccCase}.}\label{FigExistTransInterAccCase}
\end{center}
\end{figure}
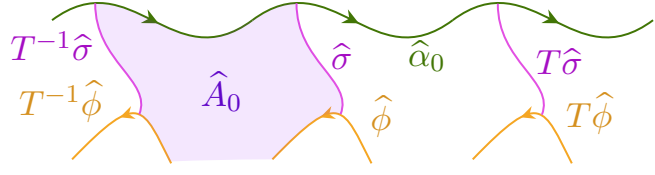

\begin{proof}
Recall that $\wh\phi = R_{-1}\wh\phi_0$ and $\wh\phi' = R_{1}\wh\phi_0$.  
Let $\wh\sigma$ be a transverse path included in $R(\wh\alpha_0)\cap R(\wh\phi)$ linking $\wh\phi$ to $\wh\alpha_0$ that is disjoint from its translates by iterates of $T$ (see Figure~\ref{FigExistTransInterAccCase}). Denote $\wh A_0$ the connected component of 
\[\left(R(\wh\alpha_0) \cap \bigcap_{\ell\in \Z} R(T^\ell\wh\phi)\right)
\setminus \bigcup_{\ell\in\Z} T^\ell\wh\sigma\]
whose closure intersects both $T^{-1}\wh\phi$ and $\wh\phi$.

As $R_1^{-1}\wh\phi_0\subset R(\wh B)$, there exists $k\in\Z$ such that:
\begin{itemize}
\item either $R_1^{-1}\wh\phi_0 \cap T^k\wh\sigma\neq\emptyset$;
\item or $R_1^{-1}\wh\phi_0 \subset L(T^k\wh\phi)$;
\item or $R_1^{-1}\wh\phi_0$ is included in $T^k \wh A_0$.
\end{itemize} 
In the last case, this implies that $R_1^{-1}\wh\phi_0$ is between $T^k\wh\phi$  and $T^{k+1}\wh\phi$ relative to $\wh\phi_0$ (see Definition~\ref{DefInterTrans}).

In the first case, let us show that such a $k$ is unique. Suppose by contradiction that there exists also $k'\in\Z$, $k'\neq k$, such that $R_1^{-1}\wh\phi_0 \cap T^{k'}\wh\sigma\neq\emptyset$. 
This implies that either $R_1^{-1}\wh\phi_0 \cap T^{k+1}\wh\sigma\neq\emptyset$, or $R_1^{-1}\wh\phi_0 \cap T^{k-1}\wh\sigma\neq\emptyset$. Suppose we are in the first case, the second being identical.
Then $T^k\wh\sigma$ meets both $R_1^{-1}\wh\phi_0$ and $T^{-1}R_1^{-1}\wh\phi_0$. Hence (because $\wh\sigma$ is transverse), this means that either $L(R_1^{-1}\wh\phi_0)\subset L(T^{-1}R_1^{-1}\wh\phi_0)$, or $R(R_1^{-1}\wh\phi_0)\subset R(T^{-1}R_1^{-1}\wh\phi_0)$. If the first inclusion held, as $T^{-1}R_1^{-1}\wh\phi_0\subset L(\wt\alpha_0)$ and $R_1^{-1}I^{[0,n_0]}_{\wh\F}(\wh y_0)$ crosses both $\wh\alpha_0$ and $R_1^{-1}\wh\phi_0$, this would imply that $R_1^{-1} I^{[0,n_0]}_{\wt\F}(\wt y_0)$ crosses $T^{-1}R_1^{-1}\wh\phi_0$, contradicting Lemma~\ref{LemLeavesDisjointTraj}. The second inclusion is also impossible for the same reasons. 

This allows us to formulate an equivalent of the above trichotomy: there exists $k\in\Z$ such that (Figure~\ref{FigIdeaPropBndedDevRatCase1} depicts the third case of this trichotomy):
\begin{itemize}
\item either $L(T^k\wh\phi) \subset L(R_1^{-1}\wh\phi_0)$, and, for any $i\neq k$, we have $L(T^i\wh\phi) \cap L(R_1^{-1}\wh\phi_0) = \emptyset$;
\item or $L(R_1^{-1}\wh\phi_0) \subset L(T^k\wh\phi)$ (recall that there exists a transverse path linking $R_1^{-1}\wh\phi_0$ to $\wh\alpha_0$: the path $R_1^{-1}I^{[0,n_0]}_{\wh\F}(\wh y_0)$);
\item or $R_1^{-1}\wh\phi_0$ is between $T^k\wh\phi$  and $T^{k+1}\wh\phi$ relative to $\wh\phi_0$.
\end{itemize}
Similarly, there exists $k'\in\Z$ such that:
\begin{itemize}
\item either $R(T^{k'}\wh\phi') \subset R(R_1^{-1}R_2\wh\phi_0)$, and, for any $i\neq k'$, we have $R(T^i\wh\phi') \cap R(R_1^{-1}R_2\wh\phi_0) = \emptyset$;
\item or $R(R_1^{-1}R_2\wh\phi_0) \subset R(T^{k'}\wh\phi')$;
\item or $R_1^{-1}R_2\wh\phi_0$ is between $T^{k'}\wh\phi'$  and $T^{k'+1}\wh\phi'$ relative to $\wh\phi_0$.
\end{itemize} 

Suppose $|k-k'|\ge 2$, for example $k'\ge k+2$ (see Figure~\ref{FigIdeaPropBndedDevRatCase1} page~\pageref{FigIdeaPropBndedDevRatCase1}, left). Then $R_1^{-1}\wh\phi_0$ is below $T^{k+1}\wh\phi$ relative to $\wh\phi_0$, but $R_1^{-1}R_2\wh\phi_0$ is above $T^{k'-1}\wh\phi'$ which itself is either equal to $T^{k+1}\wh\phi'$, or above $T^{k+1}\wh\phi'$ relative to $\wh\phi_0$. This implies that $I^{[0,n_0]}_{\wh \F}(\wh y_0)$ and $T^{k+1} R_1^{-1} I^{[0,n_0]}_{\wh \F}(\wh y_0)$ intersect $\wh \F$-transversally. 
Similarly, if $k'\le k-2$, then $I^{[0,n_0]}_{\wh \F}(\wh y_0)$ and $T^{k-1} R_1^{-1} I^{[0,n_0]}_{\wh \F}(\wh y_0)$ intersect $\wh \F$-transversally. 

Hence, we can suppose that $|k-k'|\le 1$. Then $TR_1^{-1}\wh\phi_0$ is above $T^{k}\wh\phi$ relative to $\wh\phi_0$, but $TR_1^{-1}R_2\wh\phi_0$ is below $T^{k'+2}\wh\phi'$ which itself is on the left of, or equal to  $T^{k+3}\wh\phi'$ relative to $\wh\phi_0$. 

This implies that $\wh\beta$ and $T^{k-1}R_1^{-1} \wh\beta$ intersect $\wh \F$-transversally.
\end{proof}


\begin{proof}[Proof of Proposition~\ref{PropBndedDevRatCase1}]
Note that as the bands $\wt B$ and $R_1\wt B$ have the same orientation (because the transverse trajectory of $\wt y$ crosses both of them, and because of Proposition~\ref{Prop3.3GLCP}.3), the axis of the deck transformation $R_1$ has to cross the one of $T$. This implies that the axis of $T^k R_1^{-1}$ has to cross the one of $T$.

By Lemma~\ref{ExistTransInterAccCase}, there exist $k\in\Z$ and an $\F$-transverse trajectory that intersects transversally the image of itself by the deck transformation $T^k R_1^{-1}$. By Theorem~\ref{thmMlct2}, this implies that there is a periodic orbit whose tracking geodesic is $T^k R_1^{-1} = R_0 T^k R_1^{-1}$-invariant. As already noted in the end of the proof of Proposition~\ref{PropBndedDevRatCase1a}, it forces this geodesic to cross both $R_0\wt\gamma_{\wt z}$ and $R_1\wt\gamma_{\wt z}$; this proves the proposition. 
\end{proof}

\subsection{Final proof of Theorem~\ref{ThBndedDevRat} and Corollary~\ref{CoroBndedDevRat}}

\begin{proof}[Proof of Theorem~\ref{ThBndedDevRat}]
By Lemma~\ref{LemUseResidFinite} one can suppose that the lifts of $\gamma$ crossed by the orbit segment $\wt y,\dots,\wt f^{n_0}(\wt y)$ have their $D+ m_1 d(\wt f, \Id_{\wt S})$-neighbourhood that are pairwise disjoint and have the same orientation

Suppose that there exist 40 different copies of $\wt B$, denoted by $(R_i \wt B)_{1\le i\le 40}$, such that the following is true. 
First, we suppose that the sets $R_i V_{D+ m_1 d(\wt f, \Id_{\wt S})}(\wt\gamma)$ are pairwise disjoint and well ordered: for $i,j,k$ pairwise different, one of $R_i\wt\gamma$, $R_j\wt\gamma$ and $R_k\wt\gamma$ separates the two other ones. 
Second, we suppose that for all $i$, either $\wt y_0\in L\big(R_i V_{D+ m_1 d(\wt f, \Id_{\wt S})}(\wt\gamma)\big)$ and $\wt f^{n_0}(\wt y_0)\in R\big(R_i V_{D+ m_1 d(\wt f, \Id_{\wt S})}(\wt\gamma)\big)$, or $\wt y_0\in R\big(R_i V_{D+ m_1 d(\wt f, \Id_{\wt S})}(\wt\gamma)\big)$ and $\wt f^{n_0}(\wt y_0)\in L\big(R_i V_{D+ m_1 d(\wt f, \Id_{\wt S})}(\wt\gamma)\big)$.

Without loss of generality, we can suppose that for all $i$, we have $\wt y_0\in L\big(R_i V_{D+ m_1 d(\wt f, \Id_{\wt S})}(\wt\gamma)\big)$ and $\wt f^{n_0}(\wt y_0)\in R\big(R_i V_{D+ m_1 d(\wt f, \Id_{\wt S})}(\wt\gamma)\big)$. We also suppose that the $R_i\wt\gamma$ are ordered: for any $i<j$ we have $R_i\wt\gamma\subset L(R_j\wt\gamma)$. 

Let us group the $R_i\wt B$ in 4 groups of 10: $1\le i\le 10$, $11\le i\le 20$, $21\le i \le 30$ and $31\le i\le 40$. Let us study the first group.

\textbf{First case}: For some $1\le i \le 5$, the trajectory $I^{[0,n_0]}_{\wt\F}(\wt y_0)$ intersects $L(R_i\wt B)$. Then (because the sets $R_i\wt B$ are well ordered: thanks to Proposition~\ref{LemRealizPeriod}, the path $\wt\alpha_0$ is simple) for any $6\le j\le 10$ the trajectory $I^{[0,n_0]}_{\wt\F}(\wt y_0)$ intersects $L(R_j\wt B)$.

\textit{First subcase}: Either for some $6\le j\le 10$, the trajectory $I^{[0,n_0]}_{\wt\F}(\wt y_0)$ intersects $R(R_j\wt B)$. In this case, the trajectory $I^{[0,n_0]}_{\wt\F}(\wt y_0)$ crosses the band $R_j\wt B$.

\textit{Second subcase}: Or for any $6\le j\le 10$, the trajectory $I^{[0,n_0]}_{\wt\F}(\wt y_0)$ stays in $L(R_j\wt B) \cup R_j\wt B$. In this case, it is possible to apply 
Proposition~\ref{PropBndedDevRatCase2}, which proves the theorem.

\textbf{Second case}: For any $1\le i \le 5$, the trajectory $I^{[0,n_0]}_{\wt\F}(\wt y_0)$ does not intersect $L(R_i\wt B)$. In this case, it is possible to apply 
Proposition~\ref{PropBndedDevRatCase2}, which proves the theorem.
\bigskip

We are reduced to the case where for each of the 4 groups of $R_i\wt B$, there exists some $R_j$ such that the trajectory $I^{[0,n_0]}_{\wt\F}(\wt y_0)$ crosses the band $R_j\wt B$: more precisely the trajectory $I^{[0,n_0]}_{\wt\F}(\wt y_0)$ crosses the bands $(R_{j_i}\wt B)_{1\le i\le 4}$ for some $j_1<j_2<j_3<j_4$. 
If for any $1\le i\le 4$ the trajectory $R_{i}\wt\alpha_0$ does not accumulate in $I^{[0,n_0]}_{\wt\F}(\wt y_0)$, then one can apply 
Proposition~\ref{PropBndedDevRatCase1a}, which proves the theorem.
If for some $1\le i_0\le 4$ the trajectory $R_{i_0}\wt\alpha_0$ accumulates in $I^{[0,n_0]}_{\wt\F}(\wt y_0)$, then by Lemma~\ref{LemAccumul}, for any $1\le i\le 4$ the trajectory $R_{i}\wt\alpha_0$ accumulates in $I^{[0,n_0]}_{\wt\F}(\wt y_0)$. This allows us to apply Proposition~\ref{PropBndedDevRatCase1}, which proves the theorem.
\end{proof}

\begin{coro}\label{CoroBndedDevRat2}
Let $S$ be a compact boundaryless hyperbolic surface and $f\in \Homeo_0(S)$. Let $\gamma$ be a closed geodesic that is:
\begin{itemize}
\item either a tracking geodesic for some ergodic $f$-invariant probability measure that does not belong to a chaotic class;
\item or the boundary component of the surface associated to a chaotic class.
\end{itemize}
Let $\wt f$ be the canonical lift of $f$ to the universal cover $\wt S$ of $S$. Then there exists $N>0$ such that an orbit of $\wt f$ cannot cross more than $N$ different lifts of $\gamma$.
\end{coro}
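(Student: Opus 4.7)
The plan is to derive both cases of Corollary~\ref{CoroBndedDevRat2} from Theorem~\ref{ThBndedDevRat}.

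For the first case, where $\gamma$ is the tracking geodesic of some $\mu\in\Merg(f)$ whose class is not chaotic, I would apply Theorem~\ref{ThBndedDevRat} directly. Since $\gamma$ is closed, the remark following Theorem~\ref{DefTrackGeod} forces $\dot\Lambda_\mu=\gamma$, so $\mu$ lies in a closed class; since this class is not chaotic, $\gamma$ must further be simple (otherwise its transverse self-intersections would already put the class in the chaotic category), and distinct lifts of $\gamma$ are therefore pairwise disjoint in $\wt S$. Let $N>0$ be the constant produced by Theorem~\ref{ThBndedDevRat}. Arguing by contradiction, if some orbit of $\wt f$ crossed more than $N$ distinct lifts of $\gamma$, a finite sub-segment $\wt y,\ldots,\wt f^{n_0}(\wt y)$ would already do so, hence by Theorem~\ref{ThBndedDevRat} there would exist an $f$-periodic point $p$ whose tracking geodesic $\gamma_p$ meets two distinct lifts $R_1\wt\gamma,R_2\wt\gamma$. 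Since these two lifts are disjoint, a single lift of $\gamma_p$ meeting both cannot itself be any lift of $\gamma$, so $\gamma_p\neq\gamma$ in $S$; two distinct geodesics on a hyperbolic surface that meet must meet transversally. Letting $\delta_p\in\Merg(f)$ denote the ergodic measure supported on the orbit of $p$, one has $\gamma_p\in\dot\Lambda_{\delta_p}$ and $\gamma\in\dot\Lambda_\mu$ intersecting transversally, forcing $\mu\sim\delta_p$ and their common class, now containing transversally intersecting geodesics, to be chaotic, contradicting the hypothesis on $\mu$.

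For the second case, where $\gamma$ is a boundary component of the surface $S_i$ associated to a chaotic class $\cl_i$, I would reduce to the first case by exhibiting $\gamma$ as the tracking geodesic of some $\nu\in\Merg(f)$. Since $\gamma$ is a simple closed geodesic (being a boundary component of an embedded subsurface), such a $\nu$ would satisfy $\dot\Lambda_\nu=\gamma$ and hence lie in a closed class, necessarily distinct from $\cl_i$ (as $\gamma\subset\partial S_i$ while every $\mu'\in\cl_i$ has $\dot\Lambda_{\mu'}\subset T^1 S_i$ and $S_i$ is open). The first case applied to $\nu$ then gives the desired bound. The existence of such $\nu$ should follow from the structural results of \cite{alepablo} on the surfaces $S_j$: the boundary geodesics of a chaotic class surface must themselves be tracking geodesics of ergodic measures, for otherwise $S_i$ could be shrunk across $\gamma$ while preserving the inclusion $\bigcup_{\mu\in\cl_i}\dot\Lambda_\mu\subset T^1 S_i$, contradicting the minimality of $S_i$ for inclusion.

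The main obstacle will be verifying this last structural claim in Case 2. If it is not made explicit in \cite{alepablo}, I would establish it directly by taking a sequence $(\mu_n)$ of measures in $\cl_i$ whose tracking geodesics $\eta_n\subset S_i$ accumulate on $\gamma$ (such a sequence exists by the minimality of $S_i$), extracting a weak-$*$ limit $\nu$ of the $\mu_n$, and verifying that some ergodic component of $\nu$ admits $\gamma$ as a tracking geodesic. Alternatively, the forcing-theory techniques developed in Section~\ref{SecSpecClosed} should be applicable to the chaotic configuration inside $S_i$ to produce a periodic point whose tracking geodesic is exactly $\gamma$.
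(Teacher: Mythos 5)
Your argument for the first case of the corollary is essentially the paper's, just spelled out in more detail (the paper compresses the contradiction to a single sentence, while you explain that the periodic point's tracking geodesic must intersect $\gamma$ transversally and hence merge the two classes into a chaotic one). That part is fine.

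The second case, however, has a genuine gap that you already sense but do not close. You want to reduce the boundary-component case to the first case by producing an ergodic measure $\nu$ whose tracking geodesic is the boundary geodesic $\gamma$ itself. This is a much stronger structural statement than anything established in the paper or in \cite{alepablo}, and I do not believe it is true in general. Note that $\gamma$ lies on $\partial S_i$ while $S_i$ is open, so $\gamma\notin\Lambda_i$ for the chaotic class; hence $\nu$ would have to live in a \emph{different} class, namely a closed class supported on $\gamma$. Nothing in the minimality of $S_i$ forces such a measure to exist: minimality only says that the geodesics of $\Lambda_i$ cannot all be pushed into a proper subsurface, which is compatible with the geodesics of $\Lambda_i$ merely accumulating on $\gamma$ without $\gamma$ being a tracking geodesic of any invariant measure. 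Your fallback via weak-$*$ limits also does not go through cleanly: the rotation speed $\vartheta_{\mu_n}$ may tend to $0$ along the sequence, and even if it stays bounded away from $0$, the tracking-geodesic assignment is not continuous in the weak-$*$ topology, so the ergodic components of the limit need not track $\gamma$.

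The paper avoids this issue entirely by working \emph{inside} $S_i$ rather than on its boundary. Using density of tracking geodesics of periodic orbits in $\Lambda_i$ (\cite[Theorem~5.8]{alepablo}), one chooses a finite family $\gamma_1,\dots,\gamma_k$ of closed tracking geodesics of periodic points in $\cl_i$ cutting $S_i$ into discs and annuli adjacent to $\partial S_i$. Any orbit segment crossing $2Nk+1$ lifts of $\gamma$ must cross $Nk$ lifts of $\wt S_i$, hence (by the cutting system) $N$ lifts of a single $\gamma_\ell$, contradicting Theorem~\ref{ThBndedDevRat} applied to $\gamma_\ell$. This pigeonhole step is the key idea you are missing: Theorem~\ref{ThBndedDevRat} can be applied to the interior closed geodesics $\gamma_\ell$ (which \emph{are} tracking geodesics by construction), whereas it cannot be applied directly to $\gamma$, and a crossing estimate for $\gamma$ is deduced indirectly from the crossing estimates for the $\gamma_\ell$.
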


\begin{proof}[Proof of Corollary~\ref{CoroBndedDevRat2}]
The first point of the corollary is a direct consequence of Theorem~\ref{ThBndedDevRat}: consider the constant $N$ given by Theorem~\ref{ThBndedDevRat}, associated to $\gamma$ the closed tracking geodesic of some ergodic $f$-invariant probability measure $\mu$, which does not belong to a chaotic class.
Suppose that there exists an orbit of $\wt f$ crossing at least $N$ different lifts of $\gamma$. Then by Theorem~\ref{ThBndedDevRat} there exists a periodic point whose tracking geodesic crosses $\gamma$. This is a contradiction with the fact that $\mu$ does not belong to a chaotic class.
\bigskip

Let us prove the second point of the corollary: $\gamma$ is the boundary component of the surface $S_i$ assocoated to a chaotic class. Let $\Lambda_i = \bigcup_{\mu\in\cl_i} \Lambda_{\mu}$ be the set of geodesics associated to the class $\cl_i$. Recall that $\wt S_i$ is the convex hull of a connected component $\wt\Lambda_i^0$ of the lift $\wt\Lambda_i$ to $\wt S$, and that $S_i$ is the projection of $\wt S_i$ to $S$.

We first prove the existence of a finite number of tracking geodesics $\wt\gamma_1,\dots,\wt\gamma_k$ of periodic points belonging to the class, cutting the surface $S_i$ into topological discs and possibly topological open annuli having a boundary component of $S_i$ as a boundary component. 
Let us consider the union $\Lambda_i^\mathrm{p}$ of all tracking geodesics of periodic points belonging to the class. Let $\alpha\subset \inte(S_i)$ be a closed geodesic and let us prove it meets $\Lambda_i^\mathrm{p}$. There is a lift $\wt\alpha$ of $\alpha$ to $\wt S$ separating $\wt S_i$. 
As $\wt S_i$ is the convex hull of the connected set $\wt\Lambda_i^0$, there exists a geodesic of $\wt\Lambda_i^0$ crossing $\wt\alpha$. By \cite[Theorem 5.8]{alepablo}, the set of tracking geodesics of periodic points is dense in $\Lambda_i$, hence there is also a tracking geodesic of a periodic orbit belonging to $\cl_i$ and crossing $\wt\alpha$. In other words, $\alpha$ intersects $\Lambda_i^\mathrm{p}$. 
This means that the complement of $\Lambda_i^\mathrm{p}$ in $S_i$ is made of essential sets plus possibly some topological open annuli that contain a boundary component of $S_i$ in their boundary. 
We can then build by hand the desired finite collection of tracking geodesics of periodic points step by step, reducing at each step the genus of the connected components of its complement, until -- after a finite number of steps -- reaching the fact that the complement of it is made of topological discs and possibly topological open annuli having a boundary component of $S_i$ as a boundary component.

Let $N$ be the maximum of all the constant given by Theorem~\ref{ThBndedDevRat} applied to all the closed tracking geodesics $\wt\gamma_1,\dots,\wt\gamma_k$.
Suppose that there exists an orbit $\wt y_0,\dots,\wt f^{n_0}(\wt y_0)$ crossing at least $2Nk+1$ different lifts of $\gamma$ (a boundary component of $S_i$). Let $\wt\gamma$ be a lift of $\gamma$ to $\wt S$ such that $\wt\gamma\subset \partial \wt S_i$.
Note that as $\gamma$ is the boundary component of $S_i$, either a left neighbourhood of $\wt\gamma$ or a right neighbourhood of $\wt\gamma$ is included in $S_i$. This implies that the orbit $\wt y_0,\dots,\wt f^{n_0}(\wt y_0)$ crosses at least $Nk$ copies of $\wt S_i$: there exists $T_1,\dots, T_{Nk}\in\G$ pairwise different such that for any $1\le j\le Nk$, the points $\wt y_0$ and $\wt f^{n_0}(\wt y_0)$ belong to different connected components of $(T_j \wt S_i)^\complement$. By construction of the geodesics $\gamma_\ell$, this means that for any $j$, there exists $1\le \ell_j\le k$ such that the points $\wt y_0$ and $\wt f^{n_0}(\wt y_0)$ belong to different connected components of $(T_j \wt\gamma_{\ell_j})^\complement$. By the pigeonhole principle, this implies that there exists $1\le \ell\le k$ and $j_1< \dots< j_N$ such that for any $1\le m\le N$, the points $\wt y_0$ and $\wt f^{n_0}(\wt y_0)$ belong to different connected components of $(T_{j_m} \wt\gamma_{\ell})^\complement$. This contradicts Theorem~\ref{ThBndedDevRat} and finishes the proof of the corollary.
\end{proof}

\small
\bibliographystyle{alpha}
\bibliography{Biblio}

\end{document}